\theoremstyle{plain}
    \newtheorem{thm}{Theorem}[section]
    \newtheorem{prop}[thm]{Proposition}
    \newtheorem{lemma}[thm]{Lemma}
    \newtheorem{keylemma}[thm]{Key Lemma}
    \newtheorem{corollary}[thm]{Corollary}
    \newtheorem{subsec}[thm]{}
    \newtheorem*{thma}{Theorem A}
    \newtheorem*{thmb}{Theorem B}
    \newtheorem*{thmc}{Theorem C}
    \newtheorem*{thmd}{Theorem D}
\theoremstyle{definition}
    \newtheorem{defn}[thm]{Definition}
    \newtheorem{example}[thm]{Example}
    \newtheorem{notation}[thm]{Notation}
            \newtheorem{caveat}[thm]{Caveat}
        \newtheorem{remark}[thm]{Remark}
        \newtheorem{assume}[thm]{Assumption}
        \newtheorem{assumes}[thm]{Assumptions}
    \newtheorem{ack}[thm]{Acknowledgements}
\newcommand{\supsect}[2]
{\vspace*{-5mm}\quad\\\begin{center}\textbf{{#1}}\vsm.~~~~\textbf{{#2}}\end{center}}
\newenvironment{myeq}[1][]
{\stepcounter{thm}\begin{equation}\tag{\thethm}{#1}}
{\end{equation}}
\newcommand{\mydiagram}[2][]
{\stepcounter{thm}\begin{equation}
     \tag{\thethm}{#1}\vcenter{\xymatrix{#2}}\end{equation}}
\newcommand{\mysdiag}[2][]
{\stepcounter{thm}\begin{equation}
     \tag{\thethm}{#1}\vcenter{\xymatrix@R=25pt@C=-25pt{#2}}\end{equation}}
\newcommand{\mytdiag}[2][]
{\stepcounter{thm}\begin{equation}
     \tag{\thethm}{#1}\vcenter{\xymatrix@R=25pt@C=20pt{#2}}\end{equation}}
\newcommand{\myudiag}[2][]
{\stepcounter{thm}\begin{equation}
     \tag{\thethm}{#1}\vcenter{\xymatrix@R=27pt@C=12pt{#2}}\end{equation}}
\newcommand{\myvdiag}[2][]
{\stepcounter{thm}\begin{equation}
     \tag{\thethm}{#1}\vcenter{\xymatrix@R=20pt@C=14pt{#2}}\end{equation}}
\newcommand{\mywdiag}[2][]
{\stepcounter{thm}\begin{equation}
     \tag{\thethm}{#1}\vcenter{\xymatrix@R=22pt@C=32pt{#2}}\end{equation}}
\newcommand{\myxdiag}[2][]
{\stepcounter{thm}\begin{equation}
    \tag{\thethm}{#1}\vcenter{\xymatrix@R=30pt@C=25
      pt{#2}}\end{equation}}
\newcommand{\myzdiag}[2][]
{\stepcounter{thm}\begin{equation}
     \tag{\thethm}{#1}\vcenter{\xymatrix@R=10pt@C=25pt{#2}}\end{equation}}
\newenvironment{mysubsection}[2][]
{\begin{subsec}\begin{upshape}\begin{bfseries}{#2.}
\end{bfseries}{#1}}
{\end{upshape}\end{subsec}}
\newcommand{\sect}{\setcounter{thm}{0}\section}
\newcommand{\wh}{\ -- \ }
\newcommand{\wwh}{-- \ }
\newcommand{\w}[2][ ]{\ \ensuremath{#2}{#1}\ }
\newcommand{\ww}[1]{\ \ensuremath{#1}}
\newcommand{\www}[2][ ]{\ensuremath{#2}{#1}\ }
\newcommand{\wb}[2][ ]{\ (\ensuremath{#2}){#1}\ }
\newcommand{\wwb}[1]{\ (\ensuremath{#1})-}
\newcommand{\wwbu}[1]{\ {\upshape(}\ensuremath{#1}{\upshape)}-}
\newcommand{\wref}[2][ ]{\ \eqref{#2}{#1}\ }
\newcommand{\hsp}{\hspace*{7 mm}}
\newcommand{\hs}{\hspace*{4 mm}}
\newcommand{\hsn}{\hspace*{1 mm}}
\newcommand{\hsm}{\hspace*{2 mm}}
\newcommand{\vsn}{\vspace{2 mm}}
\newcommand{\vsm}{\vspace{4 mm}}
\newcommand{\hra}{\hookrightarrow}
\newcommand{\xra}[1]{\xrightarrow{#1}}
\newcommand{\toto}{\substack{{\to}\\ \to}}
\newcommand{\lra}[1]{\langle{#1}\rangle}
\newcommand{\llrra}[1]{\langle\langle{#1}\rangle\rangle}
\newcommand{\llrr}[2]{\langle\langle{#1}\rangle\rangle\sb{#2}}
\newcommand{\epic}{\twoheadrightarrow}
\newcommand{\monic}{\rightarrowtail}
\newcommand{\xepic}[1]{\stackrel{#1}{\epic}}
\newcommand{\xmonic}[1]{\stackrel{#1}{\monic}}
\newcommand{\up}[1]{\sp{(#1)}}
\newcommand{\bp}[1]{\sp{[#1]}}
\newcommand{\lp}[1]{\sb{[#1]}}
\newcommand{\lo}[1]{\sb{(#1)}}
\newcommand{\rest}[1]{\lvert\sb{#1}}
\newcommand{\oS}[1]{\boxtimes S\sp{#1}}
\newcommand{\odisc}[1]{\boxtimes e\sp{#1}}
\newcommand{\Cof}{\operatorname{Cof}}
\newcommand{\Cok}{\operatorname{Cok}}
\newcommand{\Cone}{\operatorname{Cone}}
\newcommand{\CW}{\operatorname{CW}}
\newcommand{\CWA}[1]{\CW\hspace{-1.5mm}\sb{A}{#1}}
\newcommand{\colim}{\operatorname{colim}}
\newcommand{\colimit}[1]
{\raisebox{-1.7ex}{$\stackrel{\textstyle\colim}{\scriptstyle{#1}}$}}
\newcommand{\ev}{\operatorname{ev}}
\newcommand{\bev}{\mathbf{ev}}
\newcommand{\gr}{\operatorname{gr}}
\newcommand{\ho}{\operatorname{ho}}
\newcommand{\hocolim}{\operatorname{hocolim}}
\newcommand{\Hom}{\operatorname{Hom}}
\newcommand{\Ker}{\operatorname{Ker}}
\newcommand{\Id}{\operatorname{Id}}
\newcommand{\inc}{\operatorname{inc}}
\newcommand{\Moore}{\operatorname{M}}
\newcommand{\Obj}{\operatorname{Obj}\,}
\newcommand{\op}{\sp{\operatorname{op}}}
\newcommand{\Path}{\operatorname{Path}}
\newcommand{\proj}{\operatorname{proj}}
\newcommand{\sk}[1]{\operatorname{sk}\sb{#1}}
\newcommand{\SL}[1]{\operatorname{SL}\sb{#1}(\ZZ)}
\newcommand{\csk}[1]{\operatorname{csk}\sp{#1}}
\newcommand{\Sq}[1]{\operatorname{Sq}\sp{#1}}
\newcommand{\Tot}{\operatorname{Tot}}
\newcommand{\map}{\operatorname{map}}
\newcommand{\mapa}{\map\sb{\ast}}
\newcommand{\A}{\mathcal{A}}
\newcommand{\B}{\mathcal{B}}
\newcommand{\C}{\mathcal{C}}
\newcommand{\hC}{\widehat{\C}}
\newcommand{\E}{\mathcal{E}}
\newcommand{\F}{\mathcal{F}}
\newcommand{\G}{\mathcal{G}}
\newcommand{\cL}{\mathcal{L}}
\newcommand{\dgL}{\operatorname{dg}\!\cL}
\newcommand{\M}{{\mathcal M}}
\newcommand{\PP}{\mathcal{P}}
\newcommand{\Pn}[1]{\PP\sp{#1}}
\newcommand{\hP}[1]{\widehat{\PP}\sp{#1}}
\newcommand{\U}{\mathcal{U}}
\newcommand{\hU}{\widehat{\U}}
\newcommand{\ppp}{\hspace*{0.3mm}\sp{\prime}\hspace{-0.3mm}}
\newcommand{\pppb}{\hspace*{0.3mm}\sp{\prime}\hspace{0.8mm}}
\newcommand{\pppp}{\hspace*{0.5mm}\sp{\prime}\hspace{-0.7mm}}
\newcommand{\ppppp}{\hspace*{0.5mm}\sp{\prime}\hspace{-0.3mm}}
\newcommand{\ak}[1]{a\sb{#1}}
\newcommand{\dz}[1]{d\sb{0}\sp{#1}}
\newcommand{\wpar}[1]{\widehat{\partial}\sb{#1}}
\newcommand{\Eu}[1]{E\bp{#1}}
\newcommand{\Euk}[2]{E\sb{#1}\bp{#2}}
\newcommand{\Eup}[1]{\ppp E\bp{#1}}
\newcommand{\Eul}[2]{E\bp{#1}\sb{#2}}
\newcommand{\hEu}[1]{\widehat{E}\bp{#1}}
\newcommand{\hEuk}[2]{\widehat{E}\sb{#1}\bp{#2}}
\newcommand{\tEuk}[2]{\widetilde{E}\sb{#2}\bp{#1}}
\newcommand{\euk}[1]{e\sb{#1}}
\newcommand{\hek}[1]{\widehat{e}\sb{#1}}
\newcommand{\Fk}[1]{F\sb{#1}}
\newcommand{\Fkp}[1]{\pppp\Fk{#1}}
\newcommand{\tF}{\widetilde{F}}
\newcommand{\tG}{\widetilde{G}}
\newcommand{\Htr}{{\mathcal H}}
\newcommand{\Htl}[1]{\Htr\bp{#1}}
\newcommand{\Huk}[2]{H\sb{#1}\bp{#2}}
\newcommand{\Hupk}[2]{\ppp H\sb{#1}\bp{#2}}
\newcommand{\huk}[1]{h\sb{#1}}
\newcommand{\hHuk}[2]{\widehat{H}\sb{#1}\bp{#2}}
\newcommand{\tHuk}[2]{\widetilde{H}\sb{#2}\bp{#1}}
\newcommand{\vpm}{\varphi\sp{M}}
\newcommand{\psm}{\psi\sp{M}}
\newcommand{\ovp}{\overline{\varphi}}
\newcommand{\ophl}[1]{\ovp\sb{#1}}
\newcommand{\prn}[1]{\iota\bp{#1}}
\newcommand{\orh}[1]{\overline{\rho}\sb{#1}}
\newcommand{\vare}{\varepsilon}
\newcommand{\varu}[1]{\vare\up{#1}}
\newcommand{\varep}{\ppp\vare}
\newcommand{\svn}[1]{\overline{\vare\sb{[{#1}]}}}
\newcommand{\bv}{{\bm \vare}}
\newcommand{\bve}[1]{\bv\bp{#1}}
\newcommand{\bvp}[1]{\pppp\bv\bp{#1}}
\newcommand{\vth}{\vartheta}
\newcommand{\oCe}{\overline{Ce}}
\newcommand{\oee}{\overline{e}}
\newcommand{\we}{\widehat{e}}
\newcommand{\en}[1]{e\bp{#1}}
\newcommand{\eni}[2]{e\bp{#1}\lo{#2}}
\newcommand{\enk}[2]{e\bp{#1}\sb{#2}}
\newcommand{\oen}[1]{\oee\sb{#1}}
\newcommand{\oon}[2]{\oee\sb{#1}\bp{#2}}
\newcommand{\hon}[2]{\we\sb{#1}\bp{#2}}
\newcommand{\Con}[2]{\oCe\sp{#1}\sb{#2}}
\newcommand{\es}{e\sb{\#}}
\newcommand{\hen}[1]{\we\bp{#1}}
\newcommand{\hes}{\we\sb{\#}}
\newcommand{\wf}{\widehat{f}}
\newcommand{\wg}{\widehat{g}}
\newcommand{\wj}{\widehat{\mbox{\textit{\j}}}}
\newcommand{\orr}{\overline{r}}
\newcommand{\oCr}{\overline{Cr}}
\newcommand{\wrr}{\widehat{r}}
\newcommand{\rn}[1]{r\bp{#1}}
\newcommand{\rnk}[2]{\rn{#1}\sb{#2}}
\newcommand{\rni}[2]{\rn{#1}\lo{#2}}
\newcommand{\oar}[1]{\orr\,\sb{#1}}
\newcommand{\oor}[2]{\orr\sb{#1}\bp{#2}}
\newcommand{\hor}[2]{\wrr\sb{#1}\bp{#2}}
\newcommand{\Cor}[2]{\oCr\sp{#1}\sb{#2}}
\newcommand{\rs}{r\sb{\#}}
\newcommand{\hrn}[1]{\wrr\bp{#1}}
\newcommand{\snk}[2]{\overline{s}\bp{#1}\sb{#2}}
\newcommand{\Abgp}{\mbox{\sf Abgp}}
\newcommand{\Ch}{\mbox{\sf Ch}}
\newcommand{\ChC}{\Ch\sb{\C}}
\newcommand{\cS}{{\EuScript S}}
\newcommand{\Sa}{\cS\sb{\ast}}
\newcommand{\Gp}{\mbox{\sf Gp}}
\newcommand{\Set}{\mbox{\sf Set}}
\newcommand{\Seta}{\Set\sb{\ast}}
\newcommand{\Top}{\mbox{\sf Top}}
\newcommand{\Topa}{\Top\sb{\ast}}
\newcommand{\Vect}{\mbox{\sf Vect}}
\newcommand{\VF}{\Vect\sb{\Fp}}
\newcommand{\Del}{\mathbf{\Delta}}
\newcommand{\Deln}[1]{\Del\sp{#1}}
\newcommand{\Delnk}[2]{\Deln{#1}\lo{#2}}
\newcommand{\Dop}{\Delta\op}
\newcommand{\res}{\operatorname{res}}
\newcommand{\Dres}{\Delta\sb{\res}}
\newcommand{\Dp}{\Delta\sb{+}}
\newcommand{\Dresp}{\Delta\sb{\res,+}}
\newcommand{\Du}{\Del\sp{\bullet}}
\newcommand{\Val}[1]{\operatorname{Val}({#1})}
\newcommand{\Vals}[1]{\operatorname{Vals}[{#1}]}
\newcommand{\cW}{{\EuScript W}}
\newcommand{\cWp}{\pppb\!\cW}
\newcommand{\cWpp}{\ppp\!\cWp}
\newcommand{\cWpi}[1]{\ppppp\cW\up{#1}}
\newcommand{\ccWp}{\sp{\prime}\cW}
\newcommand{\cuW}[1]{\cW\up{#1}}
\newcommand{\Phip}[1]{\widehat{\Phi}\up{#1}}
\newcommand{\FF}{\mathbb F}
\newcommand{\Fp}{\FF\sb{p}}
\newcommand{\Ft}{\FF\sb{2}}
\newcommand{\LL}[1]{{\mathbb L}\lra{#1}}
\newcommand{\NN}{\mathbb N}
\newcommand{\QQ}{\mathbb Q}
\newcommand{\RR}{\mathbb R}
\newcommand{\ZZ}{\mathbb Z}
\newcommand{\bA}{\mathbf{A}}
\newcommand{\bB}{\mathbf{B}}
\newcommand{\bC}{\mathbf{C}}
\newcommand{\bD}{\mathbf{D}}
\newcommand{\bE}{\mathbf{E}}
\newcommand{\be}[1]{\mathbf{e}\sp{#1}}
\newcommand{\bK}{\mathbf{K}}
\newcommand{\KP}[2]{\bK({#1},{#2})}
\newcommand{\Kp}[1]{\KP{\Fp}{#1}}
\newcommand{\KZ}[1]{\KP{\ZZ}{#1}}
\newcommand{\bP}{\mathbf{P}}
\newcommand{\bS}[1]{\mathbf{S}\sp{#1}}
\newcommand{\bT}{\mathbf{T}}
\newcommand{\bU}{\mathbf{U}}
\newcommand{\bV}{\mathbf{V}}
\newcommand{\bW}{\mathbf{W}\!}
\newcommand{\bX}{\mathbf{X}}
\newcommand{\Xun}[2]{\bX\sb{#1}\up{#2}}
\newcommand{\Xnd}[1]{\Xun{\bullet}{#1}}
\newcommand{\bY}{\mathbf{Y}}
\newcommand{\bYp}{\ppp\bY}
\newcommand{\Yi}[1]{\bY\up{#1}}
\newcommand{\bZ}{\mathbf{Z}}
\newcommand{\ZI}{\Path(\bZ)}
\newcommand{\cpd}[1]{c\sb{+}({#1})\sb{\bullet}}
\newcommand{\cd}[1]{c({#1})\sb{\bullet}}
\newcommand{\Ad}{A\sb{\bullet}}
\newcommand{\oA}[1]{\overline{A}\sb{#1}}
\newcommand{\Bd}{B\sb{\bullet}}
\newcommand{\oB}{\overline{\bB}}
\newcommand{\Cd}{C\sb{\bullet}}
\newcommand{\fu}[1]{f\up{#1}}
\newcommand{\vfu}[1]{\varphi\up{#1}}
\newcommand{\Gnk}[2]{G\bp{#1}\sb{#2}}
\newcommand{\Gn}[1]{\Gnk{#1}{}}
\newcommand{\Gank}[2]{\Gamma\bp{#1}\sb{#2}}
\newcommand{\Gan}[1]{\Gank{#1}{}}
\newcommand{\tGank}[2]{\widehat{\Gamma}\bp{#1}\sb{#2}}
\newcommand{\ttGank}[2]{\widetilde{\Gamma}\bp{#1}\sb{#2}}
\newcommand{\Gd}{G\sb{\bullet}}
\newcommand{\oG}[1]{\overline{G}\sb{#1}}
\newcommand{\Td}{\bT\sb{\bullet}}
\newcommand{\oU}[1]{\overline{U}\sb{#1}}
\newcommand{\Ud}{\bU\sb{\bullet}}
\newcommand{\Vd}{V\sb{\bullet}}
\newcommand{\Vpn}[1]{\ppp V\sb{#1}}
\newcommand{\Vdp}{\Vpn{\bullet}}
\newcommand{\Vud}[1]{\Vd\up{#1}}
\newcommand{\oV}[1]{\overline{V}\sb{#1}}
\newcommand{\oVp}[1]{\ppp\oV{#1}}
\newcommand{\ouV}[2]{\oV{#1}\up{#2}}
\newcommand{\Wd}{\bW\sb{\bullet}}
\newcommand{\Wn}[2]{\bW\sb{#1}\bp{#2}}
\newcommand{\W}[1]{\Wn{\bullet}{#1}}
\newcommand{\Wpn}[2]{\ppp\bW\sb{#1}\bp{#2}}
\newcommand{\Wp}[1]{\Wpn{\bullet}{#1}}
\newcommand{\Wni}[3]{\bW\sb{#1}\sp{[{#2}]({#3})}}
\newcommand{\Wi}[2]{\Wni{\bullet}{#1}{#2}}
\newcommand{\tW}{\widetilde{\bW}}
\newcommand{\tWn}[2]{\tW\quad\hspace*{-4mm}\sb{#1}\bp{#2}}
\newcommand{\tWd}[1]{\tWn{\bullet}{#1}}
\newcommand{\oW}[1]{\overline{\bW}\sb{#1}}
\newcommand{\oWp}[1]{\ppp\oW{#1}}
\newcommand{\hW}[1]{\widehat{\bW}\sb{#1}}
\newcommand{\hWd}[1]{\widehat{\bW}\sb{\bullet}\bp{#1}}
\newcommand{\hWdn}[2]{\widehat{\bW}\sb{#2}\bp{#1}}
\newcommand{\hWp}[1]{\ppp\hW{#1}}
\newcommand{\vW}[1]{\widehat{\bW}\sb{#1}}
\newcommand{\vWn}[2]{\vW{#1}\bp{#2}}
\newcommand{\vWu}[1]{\vWn{\bullet}{#1}}
\newcommand{\unWn}[1]{\underline{\bW}\;\sb{#1}}
\newcommand{\unW}{\unWn{\bullet}}
\newcommand{\unX}{\underline{\bX}}
\newcommand{\Xu}{\bX\sp{\bullet}}
\newcommand{\Xn}[2]{\bX\sp{#1}\lp{#2}}
\newcommand{\vXu}[1]{\widehat{\bX}\sp{\bullet}\lp{#1}}
\newcommand{\X}[1]{\Xn{\bullet}{#1}}
\newcommand{\Yd}{\bY\sb{\bullet}}
\newcommand{\Zd}{\bZ\sb{\bullet}}
\newcommand{\Alg}[1]{{#1}\text{-}{\EuScript Alg}}
\newcommand{\Pa}[1][ ]{$\Pi$-algebra{#1}}
\newcommand{\RPa}[1][ ]{$\Pi\sb{R}$-algebra{#1}}
\newcommand{\PAal}[1][ ]{$\PA$-algebra{#1}}
\newcommand{\pis}{\pi\sb{\ast}}
\newcommand{\piA}{\pis\sp{\A}}
\newcommand{\PA}{\Pi\sb{\A}}
\newcommand{\PAlg}{\Alg{\PA}}
\newcommand{\His}{H\sb{\ast}}
\newcommand{\ua}[1]{\mbox{\b{a}}\sb{#1}}
\newcommand{\ub}[1]{\mbox{\b{b}}\sb{#1}}
\newcommand{\uc}[1]{\mbox{\b{c}}\sb{#1}}
\newcommand{\ue}[1]{\mbox{\b{e}}\sb{#1}}
\newcommand{\uw}[1]{\mbox{\b{w}}\sb{#1}}
\newcommand{\ow}[1]{\mbox{\'{w}}\sb{#1}}
\newcommand{\oow}[1]{\mbox{\H{w}}\sb{#1}}
\newcommand{\hw}[1]{\mbox{\^{w}}\sb{#1}}
\newcommand{\hhw}[1]{\mbox{\u{w}}\sb{#1}}
\newcommand{\ux}[1]{\mbox{\b{x}}\sb{#1}}
\newcommand{\ox}[1]{\mbox{\'{x}}\sb{#1}}
\newcommand{\oox}[1]{\mbox{\H{x}}\sb{#1}}
\newcommand{\uy}[1]{\mbox{\b{y}}\sb{#1}}
\newcommand{\oy}[1]{\mbox{\'{y}}\sb{#1}}
\newcommand{\ooy}[1]{\mbox{\H{y}}\sb{#1}}
\newcommand{\uz}[1]{\mbox{\b{z}}\sb{#1}}
\newcommand{\oz}[1]{\mbox{\'{z}}\sb{#1}}
\newcommand{\ooz}[1]{\mbox{\H{z}}\sb{#1}}
\newcommand{\lin}[1]{\{{#1}\}}
\newcommand{\bbn}{[\mathbf{n}]}
\newcommand{\bbk}{[\mathbf{k}]}
\newcommand{\oCB}{\overline{C\bB}}
\newcommand{\osB}[1]{\overline{\Sigma\sp{#1}\bB}}
\newcommand{\oCsB}[1]{\overline{C\Sigma\sp{#1}\bB}}
\newcommand{\CsoW}[2]{C\Sigma\sp{#1}\overline{\bW\sb{#2}}} 
\newcommand{\osW}[2]{\overline{\Sigma\sp{#1}\bW\sb{#2}}}
\newcommand{\oCsW}[2]{\overline{C\Sigma\sp{#1}\bW\sb{#2}}}
\newcommand{\osWp}[2]{\overline{\Sigma\sp{#1}\mbox{$\ppp\bW$}\sb{#2}}}
\newcommand{\oCsWp}[2]{\overline{C\Sigma\sp{#1}\mbox{$\ppp\bW$}\sb{#2}}}
\newcommand{\Xd}{\bX\sb{\bullet}}
\newcommand{\oX}[1]{\overline{\bX}\sb{#1}}
\newcommand{\osX}[2]{\overline{\Sigma\sp{#1}\bX\sb{#2}}}
\newcommand{\hX}[1]{\widehat{\bX}\sb{#1}}
\newcommand{\hXd}{\hX{\bullet}}
\newcommand{\ooq}{\overline{q}}
\newcommand{\oq}[1]{\ooq\sp{#1}}
\newcommand{\oi}[1]{\overline{\iota}\sp{#1}}
\newcommand{\As}{A\sb{\ast}}
\newcommand{\Bs}{B\sb{\ast}}
\newcommand{\bCs}{\bC\sb{\ast}}
\newcommand{\cM}[1]{C\sp{\Moore}\sb{#1}}
\newcommand{\cMs}{\cM{\ast}}
\newcommand{\Ds}{D\sb{\ast}}
\newcommand{\Dsn}[1]{\bD\sb{\ast}\sp{[#1]}}
\newcommand{\Dsnp}[1]{\ppp\bD\sb{\ast}\sp{[#1]}}
\newcommand{\bDs}{\bD\sb{\ast}}
\newcommand{\uDs}[1]{\bD\sb{\ast}\up{#1}}
\newcommand{\sDd}[1]{\Sigma\bD\sb{\bullet}\bp{#1}}
\newcommand{\Es}{E\sb{\ast}}
\newcommand{\sEd}[1]{\Sigma\bE\sp{\bullet}\lp{#1}}
\newcommand{\bEs}{\bE\sb{\ast}}
\newcommand{\uEs}[1]{\ppp\bEs\up{#1}}
\newcommand{\bPs}{\bP\sb{\ast}}
\newcommand{\bGs}{\mathbf{G}\sb{\ast}}
\newcommand{\uPs}[1]{\bPs\up{#1}}
\newcommand{\Fn}[1]{F\bp{#1}}
\newcommand{\tFn}[1]{\tF\bp{#1}}
\newcommand{\Tn}[1]{T\bp{#1}}
\newcommand{\cZ}[1]{Z\sp{\Moore}\sb{#1}}
\newcommand{\df}[1]{\partial\sb{#1}}
\newcommand{\wdf}[1]{\widehat{\partial}\sb{#1}}
\newcommand{\diff}[2]{\df{#1}\sp{#2}}
\newcommand{\wdiff}[2]{\wdf{#1}\sp{#2}}
\newcommand{\difu}[2]{\df{#1}\up{#2}}
\newcommand{\dif}[1]{\df{#1}\sp{\Moore}}
\newcommand{\od}{\overline{\partial}}
\newcommand{\ud}{\overline{d}}
\newcommand{\odz}[1]{\od\sp{#1}\sb{0}}
\newcommand{\udz}[1]{\ud\sb{0}\sp{#1}}
\newcommand{\szero}[1]{\sigma\sp{\ast}(#1)}
\newcommand{\ofn}[2]{\overline{m}^{[#1]}_{#2}}
\begin{document}
%
%
\title{Higher homotopy invariants for spaces and maps}
%
%
\author[D.~Blanc]{David Blanc}
\address{Department of Mathematics\\ University of Haifa\\ 3498838 Haifa\\ Israel}
\email{blanc@math.haifa.ac.il}
\author [M.W.~Johnson]{Mark W.~Johnson}
\address{Department of Mathematics\\ Penn State Altoona\\
                  Altoona, PA 16601\\ USA}
\email{mwj3@psu.edu}
\author[J.M.~Turner]{James M.~Turner}
\address{Department of Mathematics\\ Calvin University\\
         Grand Rapids, MI 49546\\ USA}
\email{jturner@calvin.edu}
\date{\today}
\subjclass[2010]{Primary: 55Q35; \ secondary: 55P15, 18G30, 55U35}
\keywords{Higher homotopy operation, homotopy invariants,
$\Pi$-algebra, simplicial resolution}

\begin{abstract}
For a pointed topological space $\bX$, we use an inductive construction of a
simplicial resolution of $\bX$ by wedges of spheres to construct a ``higher homotopy
structure'' for $\bX$ (in terms of chain complexes of spaces). This structure is
then used to define a collection of higher homotopy invariants which suffice to
recover $\bX$ up to weak equivalence. It can also be used to distinguish between
different maps \w{f:\bX\to\bY} which induce the same morphism
\w[.]{f\sb{\ast}:\pis\bX\to\pis\bY}
\end{abstract}

\maketitle

\setcounter{section}{0}

%
%
\section*{Introduction}
\label{cint}

We describe two sequences of higher order operations constituting complete invariants
for the homotopy type of a topological space or map, respectively.

Higher homotopy and cohomology operations, such as Massey products and Toda brackets,
are among the earliest known examples of homotopy invariants which are not primary.
They have played an important computational role in algebraic topology (see, e.g.,
\cite{AdHI,TodC}). However, no truly satisfactory theory of general higher
homotopy operations has been proposed so far, despite several attempts
(see, e.g., \cite{SpanS,SpanH}). Here we follow the point of view taken in
\cite{BMarkH,BJTurnHH}, where more precise definitions are given.

\begin{mysubsection}{Higher homotopy operations}
\label{shho}
A higher homotopy operation is an obstruction to rectifying a homotopy
commutative diagram \w{\unX:\Gamma\to\ho\C} in some pointed model category $\C$,
where $\Gamma$ is a finite directed category with
a weakly initial object \w{v\sb{i}} and weakly final object \w[.]{v\sb{f}}
When the longest composable sequence in $\Gamma$
has length \w[,]{n+1} we have an \emph{$n$-th order} operation, with a value in
\w[.]{[\Sigma\sp{n-1}\unX(v\sb{i}),\,\unX(v\sb{f})]}
The obstructions are constructed by induction on initial (or terminal) subdiagrams
$I$ of $\Gamma$ of increasing length: if the $k$-th order obstruction vanishes,
we choose a rectification for the appropriate subdiagram, which allows us to
\emph{define} the \wwb{k+1}st order obstruction. The various choices made along
the way contribute to the \emph{indeterminacy} of the operation: we say that a
\wwb{k+1}st order operation \emph{vanishes} if the obstruction does so for
some such choice. See \cite{BMarkH} and \cite[\S 3]{BJTurnHH} for more details.

In general there is more than one way to define obstructions for a given
rectification problem. The point of view espoused here is that any two constructions
of higher order operations which yield the same answer at each stage are considered
to be equivalent.

In this paper we consider higher \emph{homotopy}  operations in the narrower sense,
where $\C$ is some model for \w[,]{\Topa} and all spaces \w{\unX(v)} (except perhaps
\w[)]{\unX(v\sb{f})} are wedges of spheres. The values of such an operation thus
indeed lie in \w[,]{\pis\unX(v\sb{f})} and in fact the whole diagram $\unX$ can be
described as a collection of elements in \w[,]{\pis\unX(v\sb{f})} which vanish
under the action of certain primary homotopy operations, together with a system of
(higher) relations among such primary operations.
\end{mysubsection}

\begin{mysubsection}{Linear higher homotopy operations}
\label{slhho}
We shall not need any more of the general theory, but we briefly sketch the linear
case, also known as a \emph{long Toda bracket} (see \S \ref{slont} below). This is
not quite the version we need here, but is simpler to describe, and best conveys the
basic ideas we use.

Start with a ``chain complex in \w{\ho\C}'' \wh that
is, a finite sequence of maps
\begin{myeq}\label{eqhochain}
\bX\sb{n}~\xra{\df{n}}~\bX\sb{n-1}~\to\dotsc \to\bX\sb{1}~\xra{\df{1}}~\bX\sb{0}
\end{myeq}
\noindent in a pointed simplicial model category $\C$
with \w{\df{k-1}\circ\df{k}\sim 0} for every \w[.]{n \geq k > 1}
\emph{Rectifying} this homotopy-commutative diagram means replacing each space and map
\w{\dif{k}:\bX\sb{k}\to\bX\sb{k-1}} by a weakly equivalent
\w[,]{\ppp\df{k}:\ppp\bX\sb{k}\to\ppp\bX\sb{k-1}} with
\w{\ppp\df{k-1}\circ\ppp\df{k}} now actually zero.

In line with the general approach of \S \ref{shho}, we use a double induction to
try to rectify \wref[:]{eqhochain}
in the outer (ascending) induction on \w[,]{n\geq 2} we use the vanishing of the
\wwb{n-1}st order Toda bracket to rectify \wref{eqhochain}
through dimension $n$.  In the inner (descending) induction on
\w[,]{1\leq k\leq n} we calculate the next Toda bracket, corresponding to
the final segment of length \w[.]{k+1}

The simplest case is \w[,]{n=2} where there is no obstruction (and thus
no descending induction): by changing \w{\df{1}:\bX\sb{1}\to\bX\sb{0}}
into a fibration \w[,]{\difu{1}{1}:\Xun{1}{1}\to\Xun{0}{1}:=\bX\sb{0}}
a standard model category argument shows that we can then choose
\w{\difu{2}{1}:\Xun{2}{1}:=\bX\sb{2}\to\Xun{0}{1}} so that
\w{\difu{1}{1}\circ\difu{2}{1}=0} (see \cite[Lemma 5.11]{BJTurnR}).

In the $n$-th stage of the outer induction, we assume not only that we have
rectified \wref{eqhochain} through dimension \w[,]{n-1} but also that we have
made it into a fibrant \wwb{n-1}truncated chain complex \w{\Xnd{n-1}}
in $\C$ (in the injective model category structure). This means that if we write
\w[,]{Z\sb{n}\Xnd{n-1}:=\Ker(\df{n})} and use the rectification to
factor \w{\Xnd{n-1}} through dimension \w{n-1} as
\myvdiag[\label{eqmodch}]{
\Xun{n-1}{n-1}~\ar@{->>}[r]\sp(0.5){\wpar{n-1}} \ar@/^{2.5pc}/[rr]\sp{\df{n-1}}&
Z\sb{n-2}\Xnd{n-1}~\ar@{^{(}->}[r]\sp(0.6){v\sb{n-2}}~&
\Xun{n-2}{n-1}~\ar@{->>}[r]\sp(0.45){\wpar{n-2}} \ar@/^{2.5pc}/[rr]\sp{\df{n-2}}&
Z\sb{n-3}\Xnd{n-1}~\ar@{^{(}->}[r]^-{v\sb{n-3}}~&
{\Xun{n-3}{n-1}}\ar@{->>}[r]^-{\wpar{n-3}} & \cdots \Xun{0}{n-1},
}
\noindent then we require each \w{\wpar{k}} to be a fibration, so that
\mydiagram[\label{eqstfibseq}]{
Z\sb{k}\Xnd{n-1} \ar@{^{(}->}[rr]\sp{v\sb{k}} &&
\Xun{k}{n-1} \ar@{->>}[rr]\sp{\wpar{k}} && Z\sb{k-1}\Xnd{n-1}
}
\noindent is a (strict) fibration sequence \wb[.]{1\leq k<n}

Now we choose a nullhomotopy \w[;]{F\sb{n}:\df{n-1}\circ\df{n}\sim 0}
if we could lift it to a nullhomotopy
\w[,]{\widehat{F}\sb{n}:\wpar{n-1}\circ\df{n}\sim 0} we
would be done (by the case \w[).]{n=2} However, in any case we see that
\w{\wpar{n-2}\circ\Fk{n}} is a self nullhomotopy
\w[,]{0=\wpar{n-2}\circ\df{n-1}\circ\df{n}\sim 0} so it
induces a map \w[.]{\ak{n-1}:\Sigma\Xun{n}{n-1}\to\Xun{n-3}{n-1}} This is in fact
a value of the ordinary Toda bracket
\w[.]{\lra{\df{n-2},\,\df{n-1},\,\df{n}}} If \w{\ak{n-1}} is nullhomotopic, we
choose a nullhomotopy \w[,]{\Fk{n-1}:\ak{n-1}\sim 0} and again
see that \w{\wpar{n-3}\circ\Fk{n-1}} is a self nullhomotopy
so it induces
a map \w[,]{\ak{n-2}:\Sigma\sp{2}\Xun{n}{n-1}\to\Xun{n-4}{n-1}} which is a value
of the tertiary Toda bracket \w[.]{\lra{\df{n-3},\df{n-2},\df{n-1},\df{n}}}
If \w{\ak{n-1}} is not nullhomotopic for any choice of \w[,]{\Fk{n}} we cannot
proceed any further, and must backtrack to choose a different rectification
of a shorter final segment of \wref[.]{eqhochain}

As long as the intermediate Toda brackets vanish, we can proceed, until we end up
with the last obstruction, which is the \emph{\wwb{n-1}st order Toda bracket}
\begin{myeq}\label{eqlongtoda}
\lra{\df{1},\dotsc,\df{n-2},\df{n-1},\df{n}}~
\subseteq~[\Sigma\sp{n-2}\Xun{n}{n-1},\ \Xun{0}{n-1}]~.
\end{myeq}
\noindent A more precise description of the process is given in \S \ref{slont} below.

In this paper we elaborate on the idea, first enunciated in \cite{BlaAI}
(see also \cite{BJTurnHA}), that there is a \emph{complete} set of invariants
for weak homotopy types of spaces consisting of higher homotopy operations.
The main improvements on previous results are:

\begin{enumerate}
\renewcommand{\labelenumi}{(\alph{enumi})~}
\item Using higher order operations which are \emph{linear} \wh in the sense of requiring
  a single choice in each simplicial dimension \wh rather than the more complicated
simplicial operations of \cite{BlaAI,BJTurnHA};
\item Making precise the relation between the vanishing of the \wwb{n-1}st
order operations and our ability to define the $n$-th order operation.
\item Explaining how the higher operations based on different algebraic resolutions
are related.
\item Constructing a similar set of invariants for maps.
\end{enumerate}
\end{mysubsection}

\begin{mysubsection}{Main results}\label{smainresults}
We can now describe the most significant results of this paper. For simplicity we
state them here for our main motivating example \wh the usual homotopy groups
\w{\pis\bY} of a pointed connected topological space, with their \Pa structure
coming from the action of the primary homotopy operations on them \wh although
in fact we prove them in a more general model category setting.

We start with two technical facts which play a central role in the proofs\vsn:

\begin{thma}
Any resolution \w{\Vd} of the \Pa \w{\pis\bY} can be realized by
an augmented simplicial space \w[,]{\Wd\to\bY} with each
\w{\bW\sb{n}} a wedge of spheres, obtained as the limit of a sequence of
$n$-truncated approximations \w[.]{\lra{\W{n}}\sb{n\in\NN}}
\end{thma}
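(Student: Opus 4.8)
The plan is to build the simplicial space $\Wd$ by induction on the simplicial dimension, realizing the underlying data of the resolution $\Vd$ one stage at a time, and at each stage producing an $n$-truncated simplicial object $\W{n}$ together with a compatible map to (a cofibrant or fibrant replacement of) $\bY$. The indexing set of $\Vd$ in each simplicial degree is a set of generators for a free $\Pi$-algebra, and we realize each free $\Pi$-algebra on a set $S$ of generators (in specified degrees) by the corresponding wedge of spheres $\bigvee\sb{s\in S}S\sp{n\sb{s}}$, whose homotopy groups (as a $\Pi$-algebra) are exactly that free $\Pi$-algebra. So we set $\bW\sb{n}$ to be the wedge of spheres realizing the degree-$n$ part of $\Vd$.

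First I would handle the low-dimensional base case: realize the degree-$0$ generators by a wedge $\bW\sb{0}$ and, using the fact that $\Vd\to\pis\bY$ is surjective in degree $0$, choose a map $\bW\sb{0}\to\bY$ inducing the chosen surjection on $\pis$. Then realize the degree-$1$ generators by $\bW\sb{1}$ and realize the two face maps $d\sb{0},d\sb{1}:\bW\sb{1}\to\bW\sb{0}$ by maps of wedges of spheres inducing the prescribed $\Pi$-algebra maps (possible since $\bW\sb{1}$ is a wedge of spheres, so maps out of it are detected on $\pis$), arranging that the composite $\bW\sb{1}\toto\bW\sb{0}\to\bY$ is (up to homotopy) independent of the choice of face, i.e. that we have a map from the $1$-truncated object $\W{1}$ to the constant object at $\bY$. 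Here one passes to a suitable model — e.g. replace $\bY$ by a fibrant object and the maps by honest ones — so that $\W{1}\to\bY$ is a strict augmented simplicial diagram; this is the $n=1$ approximation.

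Next comes the inductive step, which is the heart of the argument and where I expect the main obstacle to lie. Suppose we have constructed the $(n-1)$-truncated simplicial space $\W{n-1}$ with augmentation to $\bY$, realizing $\Vd$ through degree $n-1$, and with the property that on homotopy it agrees with the truncated resolution. To extend to degree $n$, I would first form the ($(n-1)$-truncated) matching object $M\sb{n-1}\W{n-1}$ and observe that $\bW\sb{n}$ must map to it compatibly with the (algebraically prescribed) map of the degree-$n$ generators into the corresponding matching object of $\Vd$; since $\bW\sb{n}$ is a wedge of spheres, such a realizing map exists, and we may factor it (after fibrant replacement in the injective/Reedy structure on truncated simplicial spaces) as a cofibration followed by an acyclic fibration, or simply take the honest map into the matching object. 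The key point is that the "extra" homotopy that $\bW\sb{n}$ carries relative to what is already visible in $\W{n-1}$ is exactly accounted for by the degree-$n$ part of the resolution $\Vd$ being free on the prescribed generators — here one uses that a resolution is, degreewise, a free $\Pi$-algebra, so there are no unexpected relations to realize. One must also extend the augmentation: the composite $\bW\sb{n}\to M\sb{n-1}\W{n-1}\to M\sb{n-1}(\text{const }\bY)=\bY$ gives a candidate, and one checks it is compatible with the simplicial identities. The hard part will be bookkeeping the homotopy coherence: ensuring that all the higher simplicial identities (and compatibility of the augmentation with all faces) can be arranged strictly, which is why one works in a model structure on ($n$-truncated) simplicial spaces where Reedy/injective cofibrant-fibrant replacement lets us rigidify the homotopy-commutative data — and checking that the wedge-of-spheres condition is preserved, i.e. that we never need to replace $\bW\sb{n}$ by something other than a wedge of spheres.

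Finally, the full simplicial space $\Wd$ is obtained as the limit $\Wd=\lim\sb{n}\W{n}$ of this tower of truncations (along the truncation functors), and since each $\bW\sb{n}$ was realized as a wedge of spheres once and for all and the transition maps are the identity in low degrees, each $\bW\sb{n}$ in the limit is still a wedge of spheres; the augmentations $\W{n}\to\bY$ are compatible and assemble to $\Wd\to\bY$. That $\Wd\to\bY$ is a resolution (i.e. that $\pis$ of its realization recovers $\pis\bY$, or equivalently that the associated spectral sequence collapses appropriately) follows because degreewise it realizes $\Vd$, which was assumed to be a resolution of $\pis\bY$; this is where one invokes the standard comparison between simplicial resolutions of spaces and free simplicial resolutions of their $\Pi$-algebras. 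I would expect the inductive realization of face/degeneracy coherences at each stage to be the only genuinely delicate point, with everything else being a matter of assembling standard facts about wedges of spheres and Reedy model structures on truncated simplicial objects.
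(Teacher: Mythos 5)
Your overall plan — build a tower of truncated approximations by induction on simplicial dimension, realizing each degree by a wedge of spheres, and take the (co)limit — matches the paper's strategy (Theorem \ref{tres} via the tower \wref{eqftower}). But there is a genuine gap at the exact point you flag as "where the main obstacle lies," and it cannot be resolved the way you suggest.

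Your inductive step claims that the extension from $\W{n-1}$ to $\W{n}$ works because "a resolution is, degreewise, a free $\Pi$-algebra, so there are no unexpected relations to realize." Freeness does let you \emph{pick} maps $\bW\sb{n}\to\bW\sb{n-1}$ inducing the prescribed $\Pi$-algebra maps, but it does not solve the coherence problem: to continue the induction you need the chain condition $\partial\sb{n-1}\circ\partial\sb{n}=0$ to hold \emph{strictly} (or at least to extend to a strict $n$-truncated simplicial object), and this is obstructed. The paper points this out explicitly ("Unfortunately, there are obstructions to doing so in general"), and the content of Proposition \ref{pobst} is precisely to show the obstructions vanish. Critically, what kills them is not freeness but the \emph{acyclicity} of $\Vd$: in the downward induction of Proposition \ref{pobst}, the obstruction class $-[a]$ is shown to lie in the Moore cycles $\cZ\sb{k-1}\Vd\lin{\osB{n-k-1}}$, and then acyclicity supplies a class $[\eta]$ with $-[a]=\diff{k}{\bV}[\eta]$, which is used to modify the previously chosen nullhomotopy $\Fk{k}$ so that the extension exists. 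Without this step your induction can stall: a "bad" choice of realizing maps in lower degrees may simply have no extension, and you must go back and correct it using the acyclicity. Your invocation of the matching object $M\sb{n-1}\W{n-1}$ and Reedy fibrant replacement sets up the right framework but does not by itself solve this problem.

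A second, smaller point: $\bW\sb{n}$ is not the wedge of spheres realizing only the degree-$n$ part $V\sb{n}$ of $\Vd$ as a free $\Pi$-algebra; rather, as in the paper's Lemma \ref{lscwo} and Definition \ref{dscwo}, one realizes the \emph{CW basis} $\oV{n}$ by a wedge $\oW{n}$, and $\bW\sb{n}$ has the form $\oW{n}\amalg L\sb{n}\Wd$ with the latching object accounting for the degenerate part. This is compatible with "wedge of spheres" since $L\sb{n}\Wd$ is also one, but the distinction matters because the attaching map is specified only on the CW basis, and your proof should be phrased in terms of $\oW{n}$ and the attaching map $\odz{V\sb{n}}:\oV{n}\to\cZ\sb{n-1}\Vd$, not in terms of all of $V\sb{n}$. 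The paper handles this by replacing $\oW{n}\oS{n-1}$ with a strongly cofibrant chain complex $\bDs$ (\S \ref{crsar}.A), realizing the attaching map as a strict chain map $F:\bDs\to\cMs\W{n-1}$ (Proposition \ref{pobst}), and then passing back to simplicial objects via the cone construction $\Cone(\tF)$ and adding degeneracies (\S \ref{crsar}.C). You would need to supply an equivalent of this machinery, or at least the obstruction-vanishing argument, for your proof to go through.
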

\noindent See Theorem \ref{tres} below\vsn .

We call the system of successive approximations
\w{\cW=\lra{\W{n}}\sb{n\in\NN}} a \emph{sequential realization}
of \w{\Vd} for $\bY$ (see \S \ref{dsrar}). We then prove\vsn:

\begin{thmb}
Any two sequential realizations $\cW$ and \w{\cWp} of two CW resolutions
\w{\Vd} and \w{\Vdp} for the same space $\bY$ are connected by a zigzag of
split weak equivalences.
\end{thmb}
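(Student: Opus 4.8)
The plan is to reduce Theorem B to Theorem A by a standard ``uniqueness of resolutions'' argument, carried out one truncation level at a time and assembled into a zigzag. First I would recall that any two CW resolutions $\Vd$ and $\Vdp$ of the same \Pa\ $\pis\bY$ are connected by a zigzag of weak equivalences of simplicial \Pa s (this is the classical fact that cofibrant resolutions in the relevant model category are unique up to homotopy, applied in the category of simplicial \Pa s with the resolution model structure). By refining, it suffices to treat the case of a single weak equivalence $\Vd\to\Vdp$ — or better, to pass through a common resolution $\Vd\to\Vdpp\leftarrow\Vdp$, so that WLOG we have a levelwise-surjective weak equivalence and may work with one map at a time.

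The core step is then the lifting argument, performed by induction on the truncation level $n$, using Theorem A as the existence engine at each stage. Given a map of CW resolutions $\psi\colon\Vd\to\Vdp$ and sequential realizations $\cW=\lra{\W{n}}$, $\cWp=\lra{\Wp{n}}$, I would construct a compatible system of maps $\W{n}\to\Wp{n}$ lifting $\psi$ through dimension $n$: at level $n+1$, one has the $n$-truncated map already built, a chosen realization $\bW\sb{n+1}$ (a wedge of spheres mapping to $Z\sb{n}\unW$ appropriately) and likewise $\Wpn{\bullet}{n+1}$, and one must choose a map between them over the previously-constructed data. Since $\bW\sb{n+1}$ is a wedge of spheres, defining a map out of it amounts to choosing, for each sphere summand, an element of the appropriate homotopy group of the target (a matching space / latching construction in the $\Wp{}$-tower); the obstruction to lifting compatibly is measured by the homotopy fiber of the relevant map, and the fact that $\psi$ is a weak equivalence of resolutions — together with the defining property of a sequential realization that $\W{n}$ ``sees'' exactly the data of $\Vd$ up to level $n$ — forces these obstruction groups to vanish, so the lift exists and is a weak equivalence by Whitehead's theorem applied level-wise (each $\bW\sb{n}$ and $\Wpn{\bullet}{n}$ being wedges of spheres with matching homotopy). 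The word ``split'' in the statement comes from choosing at each stage a section of the relevant fibration $\wpar{k}$ in \eqref{eqstfibseq}, which is possible because the fibers and total spaces are again wedges of spheres up to the controlled range; I would make the induction hypothesis carry this splitting data along, so that the maps in the zigzag respect the $Z\sb{k}$-tower structure on both sides.

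Finally I would assemble: running the lifting argument over the (finite) zigzag connecting $\Vd$ to $\Vdp$ produces a zigzag of maps of sequential realizations, each of which is a level-wise weak equivalence of wedges of spheres, hence a split weak equivalence in the intended sense; passing to limits $\bW\sb{n}=\lim\sb{n}\W{n}$ (which exists by Theorem A) upgrades this to the desired zigzag between $\cW$ and $\cWp$ themselves. The main obstacle I anticipate is bookkeeping rather than conceptual: one must verify that the obstruction groups controlling the inductive lifting genuinely vanish, which requires unwinding precisely what data a ``sequential realization'' records at level $n$ versus what is needed to specify a map at level $n+1$ (i.e.\ that the latching/matching corrections in the two towers are identified under $\psi$), and that the chosen splittings in \eqref{eqstfibseq} can be made compatible with the constructed maps — a compatibility that is not automatic and has to be folded into the induction hypothesis from the start rather than patched in afterward. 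A secondary subtlety is ensuring the zigzag can be taken finite and natural enough that passing to the limit over $n$ is harmless; this should follow from the truncation-compatibility built into Theorem A's ``limit of $n$-truncated approximations'' phrasing.
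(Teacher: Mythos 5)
Your outline---factor through the algebraic comparison of $\Vd$ and $\Vdp$ and proceed by induction on the truncation level---matches the paper's, but there is a real gap at the inductive step. You propose to build a \emph{direct} simplicial map $\W{n}\to\Wp{n}$ lifting the algebraic weak equivalence $\psi$, arguing that the relevant obstruction groups vanish because $\psi$ is a weak equivalence and sequential realizations ``see exactly the data of $\Vd$.'' This does not hold: even for two sequential realizations of the \emph{same} $\Vd$, the towers were built by independent choices of strongly cofibrant replacements $\bDs$, of chain realizations $\Fn{n}$ of the attaching maps (with their internal nullhomotopies), and of Reedy (co)fibrant replacements, and there is no reason a direct map of towers respecting all this data exists. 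Acyclicity of $\Vd$ controls the existence of \emph{one} realization of each attaching map (Proposition \ref{pobst}); it does not reconcile two independently chosen ones. The paper deliberately never constructs a direct map between the given realizations. It first enlarges the algebraic resolution via Lemma \ref{lcylind} to a third CW resolution whose CW basis is a coproduct containing copies of both original bases, then builds a \emph{cospan} of comparison maps into sequential realizations of the enlarged resolution, using the cylinder Lemma \ref{lcylin} at the chain-complex level to embed the two strongly cofibrant replacements $\uDs{0}$ and $\uDs{1}$ as split strong deformation retracts of a common $\bEs$ over $\cMs\W{n-1}$, and Lemma \ref{lextnat} to push this through the $\Zd[F]$-construction. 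Making the intermediate realization strictly larger is what removes the need for any obstruction comparison.

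A second inaccuracy concerns the word ``split.'' You attribute it to choices of sections of the fibrations $\wpar{k}$ of \eqref{eqstfibseq}, which are internal to a single realization. In the paper ``split'' refers to the structure of a comparison map (Definition \ref{dcompar}): $\en{n}:\W{n}\hra\Wp{n}$ is a cofibration with a chosen retraction $\rn{n}$, realizing the split inclusion of CW bases $\ophl{n}:\oV{n}\hra\oVp{n}$ with its retraction $\orh{n}$ supplied by Lemma \ref{lcylind}. This retraction is essential to the later use of the zigzag (Lemma \ref{lindvanish} and Key Lemma \ref{lkey} transport values of higher operations along it in both directions), and it has to be built into the comparison maps from the start, not produced afterwards from an unrelated splitting.
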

\noindent See Theorem \ref{tzigzag} below\vsn.

This allows us to compare the system of higher operations \w{\llrra{\bY}}
associated to different sequential realizations for $\bY$, and then show that
we can use any one such $\cW$ to determine their vanishing\vsn:

\begin{thmc}
Given an abstract isomorphism of \Pa[s] \w[,]{\vth:\pis\bY\to\pis\bZ}
the associated system of higher homotopy operations vanishes coherently for
\emph{some} sequential realization $\cW$ for $\bY$ if and only if
it does so for \emph{every} sequential realization if and only
if $\bY$ and $\bZ$ are weakly equivalent.
\end{thmc}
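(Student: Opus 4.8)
The plan is to prove the three-way equivalence by a cyclic chain of implications, using Theorem B as the bridge between ``some'' and ``every'' and a Postnikov-type obstruction-theory argument for the link with weak equivalence. The statement decomposes naturally as: (i) ``every realization'' $\Rightarrow$ ``some realization'' (trivial, since a sequential realization of \emph{some} CW resolution exists by Theorem A); (ii) ``some realization'' $\Rightarrow$ $\bY\simeq\bZ$; (iii) $\bY\simeq\bZ$ $\Rightarrow$ ``every realization''. I would organize the real content around (ii) and (iii), deriving the ``some $\Leftrightarrow$ every'' half of the statement as a formal consequence of Theorem B together with the compatibility of the higher operations $\llrra{\bY}$ under split weak equivalences.

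First I would record the compatibility lemma: if $\cW$ and $\cWp$ are connected by a single split weak equivalence, then the system $\llrra{\bY}$ computed from $\cW$ vanishes coherently iff the one computed from $\cWp$ does. The point is that a split weak equivalence of (truncated) simplicial spaces over $\bY$ carries the fibrant truncated chain complexes $\Xnd{n-1}$ of \wref{eqmodch} to weakly equivalent ones, hence carries each stage of the double induction of \S\ref{slhho} — the choices of nullhomotopies $\Fk{n}$, the induced maps $\ak{n-1}$, and their classes in $[\Sigma\sp{\ast}\Xun{n}{n-1},\Xun{0}{n-1}]$ — to the corresponding stage for the other realization, up to a bijection of indeterminacy cosets. ``Vanishing coherently'' means precisely that compatible choices can be made at \emph{every} stage, and this condition is manifestly invariant under such a bijection. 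Iterating along the zigzag of Theorem B then gives: for two sequential realizations of two CW resolutions of the \emph{same} $\bY$, one vanishes coherently iff the other does. This already yields the first ``iff'' in Theorem C, once we know ``some sequential realization for $\bY$'' exists — which is Theorem A.

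Next, for the equivalence with $\bY\simeq\bZ$: in the direction $\bY\simeq\bZ$ $\Rightarrow$ vanishing, I would take a sequential realization $\cW\to\bY$, transport it along the weak equivalence to a sequential realization $\cWp\to\bZ$ of the \emph{same} underlying resolution $\Vd$ (using that $\vth$ is realized by the equivalence, so $\Vd$ serves as a resolution of $\pis\bZ$ as well), and observe that the augmented simplicial space $\Wp{}\to\bZ$ now provides an \emph{actual} rectification of all the relevant homotopy-commutative chain complexes: the higher operations are, by construction, the obstructions to such a rectification, so they vanish coherently — and then by the ``some $\Leftrightarrow$ every'' half already established, they vanish for every realization. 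Conversely, suppose $\llrra{\bY}$ vanishes coherently for some $\cW$. Then by Theorem A's inductive construction together with the vanishing hypothesis, the double induction of \S\ref{slhho} goes through at every stage and in every simplicial dimension, producing a genuine (non-truncated) rectification: a strict augmented simplicial space $\Xd\to\bZ$ with $\pis\Xd$ realizing the resolution $\Vd$ under $\vth$. Since $\Vd$ is a resolution of $\pis\bY$, the realization $\bW\sb\bullet\to\bY$ and the rectified $\Xd\to\bZ$ both have the homotopy type determined by $|\Wd|\simeq\bY$; matching them via $\vth$ and taking geometric realizations gives $\bY\simeq|\Wd|\simeq|\Xd|\simeq\bZ$.

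The main obstacle I expect is the converse direction — turning ``coherent vanishing'' into an actual weak equivalence — because one must verify that the choices made at each stage of the \emph{inner} (descending) induction, when assembled across \emph{all} simplicial dimensions, really do glue into a single fibrant simplicial object in $\C$ whose realization is $\bZ$, rather than merely a coherent family of truncations. This is where the ``linearity'' emphasized in improvement (a) of \S\ref{slhho} pays off: a single choice per simplicial dimension means the limit $\lra{\W{n}}$ of Theorem A is well-behaved, and the comparison with $\bZ$ reduces to checking that the augmentation $\W{n}\to\bZ$ induces an isomorphism on $\pis$ in the relevant range at each stage — a Postnikov-tower bookkeeping argument. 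The secondary subtlety is ensuring that ``coherently'' is the right quantifier: the choices at stage $n$ must be compatible with those already made at stages $<n$, and one should state this carefully (as in \cite[\S3]{BJTurnHH}) so that the obstruction-theoretic induction actually closes up. I would isolate that as a separate lemma on the structure of coherent choice-systems before assembling the final argument.
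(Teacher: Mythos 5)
Your macro-level plan matches the paper's: derive the ``some $\Leftrightarrow$ every'' equivalence from Theorem B, then prove ``some $\Leftrightarrow$ $\bA$-equivalent'' directly. Several details need correction or sharpening, however.

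For the ``some $\Leftrightarrow$ every'' half, your appeal to ``a bijection of indeterminacy cosets'' under a split weak equivalence is too strong. A comparison map $\Phi:\cW\to\cWp$ realizes a split \emph{inclusion} $\oW{n}\hra\oWp{n}$, not an isomorphism, and Lemma \ref{lindvanish}(b) of the paper explicitly records the consequent asymmetry: $\Val{\Eup{n-1}}=0$ implies $\Val{\es(\Eup{n-1})}=0$, ``\emph{but not necessarily conversely}''. What actually makes the argument work is the specific alternating-cospan shape of the zigzag produced by Theorem B: at each step one applies $\rs$ (an iff, by \ref{lindvanish}(a)) or $\es$ (one-way, in precisely the direction that propagates vanishing). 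This is the content of Key Lemma \ref{lkey}, and the asymmetry is an essential feature, not an incidental technicality.

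For the direction ``some $\Rightarrow$ $\bA$-equivalent'', your description is confused about what coherent vanishing produces. You assert it yields ``a strict augmented simplicial space $\Xd\to\bZ$''. It does not: the simplicial object $\Wd$ is built once and for all by the sequential realization $\cW$ and never changes. Coherent vanishing produces a \emph{strand} $\bve{\infty}$ of compatible maps $\bve{n}:\W{n}\to\Ud$, hence a single homotopy augmentation $\bv:\Wd\to\Ud$ where $\Ud\simeq\cd{\bZ}$ is Reedy fibrant — not a strict augmentation, and not a new simplicial object. The crucial step you omit is assumption \S\ref{setmc}(2) on the realization functor: since $\bv\sb{\#}$ is an isomorphism in $\PAlg$, the induced map $\|\Wd\|\to\|\Ud\|\simeq\bZ$ is an $\bA$-equivalence, which together with $\bY\simeq\|\Wd\|$ gives the cospan of $\bA$-equivalences. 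Consequently, the ``main obstacle'' you identify — gluing choices across simplicial dimensions into a single fibrant simplicial object — does not arise: $\Wd$ is fixed and only its (homotopy) augmentation is being constructed, one dimension at a time, with compatibility guaranteed by the definition of a strand. Your worry seems to stem from still thinking in the motivational linear-Toda-bracket picture of \S\ref{slhho} rather than the actual folding-polytope construction of \S\ref{dvalstrand}-\ref{pvanish}.

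Finally, for ``$\bA$-equivalent $\Rightarrow$ every'', you handle only the case of a direct map $f:\bY\to\bZ$. In the general model-categorical setting, $\bA$-equivalence is a zigzag relation, so the paper separately treats the case $g:\bZ\to\bY$ (Theorem \ref{tvanish}, case (b)). Your argument would need to be supplemented by this case, or by an explicit reduction to it.
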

\noindent See Theorem \ref{tvanish} below\vsn.

By extending the ideas sketched above, one can use any sequential realization
for $\bY$ to define a system of higher homotopy operations associated to any two
maps \w{\fu{0},\fu{1}:\bY\to\bZ} which induce the same map in \w[,]{\pis} and
show:

\begin{thmd}
If $\bY$ and $\bZ$ are CW complexes, the system of higher operations associated
to \w{\fu{0},\fu{1}:\bY\to\bZ} as above vanishes if and
only if \w{\fu{0}} and \w{\fu{1}} are homotopic.
\end{thmd}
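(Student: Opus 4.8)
The plan is to deduce this from Theorems A, B, and C by passing to the mapping-cylinder/path-space comparison, i.e.\ by encoding the pair of maps $\fu{0},\fu{1}:\bY\to\bZ$ as a single homotopy-theoretic datum to which the earlier machinery applies. First I would fix a sequential realization $\cW=\lra{\W{n}}\sb{n\in\NN}$ of a CW resolution $\Vd$ of $\pis\bY$ given by Theorem A, so that $\Wd\to\bY$ is an augmented simplicial space with each $\bW\sb{n}$ a wedge of spheres. Since $\fu{0}\sb{\ast}=\fu{1}\sb{\ast}:\pis\bY\to\pis\bZ$, both composites $\fu{i}\circ(\Wd\to\bY):\Wd\to\bZ$ induce the \emph{same} map of $\Pi$-algebras on $\pis$, namely the common value of $\fu{i}\sb{\ast}$ precomposed with the augmentation. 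The system of higher operations associated to $\fu{0},\fu{1}$ is then built, exactly as in \S\ref{slhho} but with the chain complex replaced by the simplicial resolution, as the successive obstructions to choosing \emph{coherent} simplicial homotopies between the two rectified maps $\Wd\to\bZ$ out of the chosen realization; each obstruction lives in a homotopy group of $\bZ$ (a $\pis$-set twisted over the resolution), and the inductive bookkeeping is the same double induction (outer on simplicial dimension $n$, inner on the length of the final segment) used for the long Toda bracket.

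Next I would prove the ``only if'' direction: if $\fu{0}\simeq\fu{1}$, a homotopy $\fu{0}\simeq\fu{1}$ restricts along $\Wd\to\bY$ to a coherent system of simplicial homotopies between the two maps out of the realization, which provides compatible choices at every stage of the double induction, so every obstruction vanishes for that choice — hence the system vanishes. This direction is essentially formal once the operations are set up correctly, using that $\bY$ is a CW complex so that $\Wd\to\bY$ is a genuine (cofibrant, split) simplicial resolution and homotopies can be transported across it.

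For the ``if'' direction I would argue by induction up the skeleta of the realization, realizing a coherent vanishing of the system as an actual homotopy. Coherent vanishing at stage $n$ lets us choose, compatibly with the lower stages, a homotopy between the two maps on the $n$-truncation $\W{n}$, using the (strict) fibration-sequence structure \eqref{eqstfibseq} of the truncated objects to lift the relevant nullhomotopies step by step through the final segments — this is the same mechanism by which a vanishing long Toda bracket yields a rectification, now applied fiberwise over $\bZ$. Assembling these over all $n$ (and invoking Theorem A's description of the realization as the limit $\lra{\W{n}}$, together with a $\lim\sp{1}$-vanishing/Milnor-sequence argument since each $\bW\sb{n}$ is a wedge of spheres and the tower is built from fibrations) produces a map $\Wd\times\Del\sp{1}\to\bZ$ restricting to $\fu{0}\circ(\Wd\to\bY)$ and $\fu{1}\circ(\Wd\to\bY)$ on the two ends; descending along the weak equivalence $|\Wd|\xra{\simeq}\bY$ gives $\fu{0}\simeq\fu{1}$. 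Finally, independence of the chosen realization $\cW$ — so that ``vanishes'' is unambiguous — follows from Theorem B (any two sequential realizations are linked by a zigzag of split weak equivalences) together with the comparison argument of Theorem C, which shows such zigzags carry coherently-vanishing systems to coherently-vanishing systems.

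The main obstacle I anticipate is the ``if'' direction's assembly step: pushing a \emph{coherent} (rather than stagewise) vanishing through the tower of fibrations while keeping all the higher simplicial homotopies compatible, and controlling the passage to the limit. Concretely, at stage $n$ one must show that the chosen homotopy can be taken compatible with \emph{all} previously chosen homotopies on $\W{n-1}$ — not merely with the underlying maps — which is precisely where ``coherent vanishing'' (as opposed to mere vanishing of each operation in isolation) is needed, and where the bookkeeping of indeterminacies from \S\ref{shho} must be handled with care. The fibration-sequence hypotheses \eqref{eqstfibseq} and the fact that the $\bW\sb{n}$ are wedges of spheres (so the relevant obstruction groups are ordinary homotopy groups of $\bZ$, with no convergence pathologies) are what make this tractable, but verifying the coherence at each inductive step is the technical heart of the argument.
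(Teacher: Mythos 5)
Your high-level strategy matches the paper's proof of Theorem \ref{tvanishmap}: fix a sequential realization, pull a given homotopy back along the augmentation for the ``only if'' direction, assemble a strand into a homotopy on the realization for the ``if'' direction, and invoke Theorems B and C for independence of choices. Two details differ, and one of them is the ``main obstacle'' you flag. The paper does not package the data as a map $\Wd\otimes\Deln{1}\to\bZ$, but as a map $H:\Wd\to\ZI\sp{\Du}$ into the Reedy-fibrant replacement of the constant simplicial object on the path object $\ZI$; a strand is, by definition, a sequence of $n$-homotopies satisfying the \emph{strict} compatibility $\Huk{}{n-1}=\Huk{}{n}\circ\prn{n}$ (Lemma \ref{lextendfmcone}, via \cite[Corollary 4.20]{BJTurnR}, is exactly what lets one strictify a chosen lift), so a strand literally glues to a map out of the colimit $\Wd$ and the $\lim\sp{1}$/Milnor-sequence step you anticipate does not arise at all. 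One then applies geometric realization to the diagram \wref{eqpathobject} and uses that $\|-\|$ carries $\ZI\sp{\Du}$ to a path object for $\bZ$, yielding a homotopy between $\|p\sp{\ast}\|\circ\fu{0}\circ\|\bv\|$ and $\|p\sp{\ast}\|\circ\fu{1}\circ\|\bv\|$, whence $\fu{0}\simeq\fu{1}$ since $\|p\sp{\ast}\|$ is a weak equivalence and $\|\bv\|$ is an $\bA$-equivalence (hence a homotopy equivalence for CW complexes). Finally, a small orientation error: the tower \wref{eqtower} consists of Reedy \emph{cofibrations} with \emph{colimit} $\Wd$; the tower of fibrations of mapping spaces (cf.\ \wref{eqttower} in Section \ref{cfii}) is the Eckmann--Hilton dual picture and is not what is used in this proof.
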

\noindent See Theorem \ref{tvanishmap} below\vsn.

To illustrate our methods, in Section \ref{cfii} we define a filtration index invariant
for mod $p$ cohomology classes, dual to the Adams filtration on homotopy groups,
and show how it may be interpreted in terms of certain higher homotopy operations
using a reverse Adams spectral sequence.
\end{mysubsection}

\begin{mysubsection}{Main techniques}\label{smaintechniques}
As explained above, the ``deconstruction'' of a space $\bY$ (or map) into its
constituent higher order structure is carried out inductively, using a sequence
of  (finite) approximations to a simplicial resolution of $\bY$.

However, simplicial techniques tend to be rather complicated, and the main
technical tool we shall be using is a sort of ``Dold-Kan correspondence for spaces'',
which allows us to do the heavy work in the inductive step
using \emph{chain complexes} of spaces (see Section \ref{cback}.B below).
As one might expect, the passage from simplicial objects to chain complexes is
straightforward, using Moore chains (see \S \ref{dmco}). The reverse direction is
functorial, and thus can be thought of as a formal black box (in which we lose
the ability to describe the resulting simplicial object explicitly).

Nevertheless, the first step in the reverse passage, in which we simply
replace a chain complex by the corresponding restricted simplicial object
(with higher faces zero and no degeneracies, and thus no change in the individual spaces)
is completely explicit, and contains precisely the information needed to fully
describe our higher homotopy operations.
\end{mysubsection}

\begin{notation}\label{snac}
Let $\Delta$ denote the category of non-empty finite ordered sets and order-preserving
maps (see \cite[\S 2]{MayS}), and \w{\Dres} the subcategory with the same
objects, with only monic maps. Similarly, $\Dp$ denotes the category of all
finite ordered sets (and order-preserving maps), and \w{\Dresp} the corresponding
subcategory of monic maps. A \emph{simplicial object} \w{\Gd}
in a category $\C$ is a functor \w[,]{\Dop\to\C} a \emph{restricted}
simplicial object is a functor \w[,]{\Dres\op\to\C} while
an \emph{augmented simplicial object} is a functor \w[,]{\Dp\op\to\C}
and a \emph{restricted augmented}
simplicial object is a functor \w[.]{\Dresp\op\to\C}
We write \w{G\sb{n}} for the value of \w{\Gd} at \w[.]{\bbn=(0<1<\dotsc<n)}
There is a natural embedding
\w[,]{\cd{-}:\C\to\C\sp{\Dop}} with \w{\cd{A}} the constant simplicial object
and similarly \w{\cpd{A}} for the constant augmented simplicial object.
The inclusion of categories \w{\sigma:\Delta \to \Dp} induces a functor
\w{\szero{-}:\C\sp{\Dp\op} \to \C\sp{\Dop}} forgetting the augmentation.

The category of compactly generated Hausdorff spaces (see \cite{SteCC}
and \cite[\S 7.10.1]{PHirM}), called simply \textit{topological spaces},
will be denoted by \w[,]{\Top} that of pointed topological spaces by \w[,]{\Topa}
and that of pointed connected topological spaces by \w[.]{\Top\sb{0}}

The category of simplicial sets will be denoted by \w[,]{\cS=\Set\sp{\Dop}} that of
pointed simplicial sets by \w[,]{\Sa=\Seta\sp{\Dop}} and that of simplicial groups
by \w{\G=\Gp\sp{\Dop}} (see \cite[I, \S 3]{GJarS}).

For maps \w[,]{f:A\to X} \w[,]{g:B\to X} and \w{h:A\to Y} in any (co)complete
category $\C$, we denote by \w{f\bot g:A\amalg B\to X} and
\w[,]{f\top h:A\to X\times Y} respectively, the induced maps from the coproduct
and into the product, and by \w{\inc\sb{A}:A\to A\amalg B} the inclusion.
\end{notation}

\begin{caveat}\label{cave}
The general results (though not the examples nor the application in Section \ref{cfii})
are for the most part Eckmann-Hilton dual to those of \cite{BSenH} and \cite{BBSenH}.
Nevertheless, we feel that they deserve a separate treatment, since
\begin{enumerate}[(a)]
\item this duality is not formal, and the differences need to be spelled out
carefully;
\item a great deal of work is needed to translate these results to the dual setting,
even where the approach is the same; and
\item the invariants here apply to arbitrary weak homotopy types of (connected)
spaces, rather than just to $R$-types of $R$-good spaces for \w{R=\Fp} or $\QQ$.
Therefore, they can potentially be extended to topologically enriched categories.
\end{enumerate}
\end{caveat}

\begin{ack}
We wish to thank the referee for his or her detailed and pertinent comments.
The first author was supported in part by Israel Science
Foundation Grant No.~770/16 and the third author by National Science Foundation
grant DMS-1207746.
\end{ack}

%
%
\sect{Background}
\label{cback}

We first set up the framework in which our theory works, and recall some basic
facts and constructions about simplicial objects and algebraic theories.

\begin{assume}\label{sass}
Throughout this paper we work in a cellular pointed simplicial model
category $\C$ (see \cite[\S 9.1, 11.1, \& 12.1.1]{PHirM}) with
functorial factorizations (see \cite[\S 1.1.1]{HovM}), and assume all objects
in $\C$ are fibrant (so $\C$ is right proper, by \cite[13.1.3]{PHirM}).
The main examples we have in mind are \w{\C=\Topa} or $\G$ (see \cite[II, \S 3]{QuiH}
and  \cite[\S 11.1.9, 13.1.11]{PHirM}), but in \S \ref{srex}
below we also consider the category \w{\dgL} of differential graded Lie algebras
over $\QQ$.

In such a category $\C$ we define the standard \emph{cone} \w{C\bX} and
\emph{suspension} \w{\Sigma\bX} of a (cofibrant) object $\bX$ by the pushouts of
\w{\ast\leftarrow\bX\hra\bX\otimes\Deln{1}} and \w[,]{\ast\leftarrow\bX\hra C\bX}
respectively, with \w{q: C\bX \to \Sigma\bX} the induced map.
\end{assume}

\begin{defn}\label{dconcat}
Given (cofibrant) $\bX$ and $\bY$ in such a model category $\C$, and maps
\w{G: C\bX\to \bY} and \w[,]{\gamma:\Sigma\bX\to\bY}
note that the cofibration sequence \w{\bX\hra\bX\otimes\Deln{1}\to C\bX}
induces a coaction \w{\psi:C\bX\to C\bX\vee \Sigma \bX}
(see \cite[I, 3.5ff.]{QuiH}). The {\it concatenation}
\w{G \star(\gamma\circ q):C\bX\to\bY} is then defined to be
\w[.]{(G\bot\gamma)\circ\psi}
\end{defn}

\supsect{\protect{\ref{cback}}.A}{\PAal[s]}

Let \w{\bA=\Sigma\bA'} be a fixed cofibrant suspension (and thus a homotopy cogroup
object) in a pointed model category $\C$ as in \S \ref{sass}. Denote by $\A$
the full sub-category of \w{\ho\C} generated by $\bA$ under suspensions and
arbitrary coproducts (so all objects in $\A$ may be assumed cofibrant in $\C$), and by
\w{\PA} the full sub-category of $\A$ consisting of all coproducts of cardinality
\www[,]{<\kappa} for a given
limit cardinal $\kappa$ (needed in order to guarantee that \w{\PA} is small, so the functor
categories from it are well-behaved).

\begin{defn}\label{dpaal}
A \emph{\PAal} is a product-preserving functor \w{\Lambda:\PA\op\to\Seta} (where the
products in \w{\PA\op} are the coproducts of $\C$), and the category of such is
denoted by \w[.]{\PAlg} We write \w{\Lambda\lin{\bB}} for
the value of $\Lambda$ at \w[.]{\bB\in\PA} There is a forgetful functor
\w{\hU:\PAlg\to\gr\Seta} to the category of non-negatively graded
pointed sets, with \w[.]{\hU(\Lambda)\sb{k}:=\Lambda\lin{\Sigma\sp{k}\bA}}
A \emph{free} \PAal is one in the image of the left adjoint of $\hU$,
denoted by \w[.]{\F:\gr\Seta\to\PAlg}

For each \w{\bY\in\C} we have a \emph{realizable} \PAal \w[,]{\piA\bY}
defined by setting \w{(\piA\bY)\lin{\bB}:=[\bB,\bY]} for each \w[.]{B\in\PA}
We say that a map \w{f:\bX\to\bY} in $\C$ is an $\bA$-\emph{equivalence} if the
induced map \w{f\sb{\#}:\piA\bX\to\piA\bY} is an isomorphism in \w{\PAlg} \wwh
or equivalently, if \w{f\sb{\#}:\map\sb{\C}(\bA,\bX)\to\map\sb{\C}(\bA,\bY)} is a
weak equivalence of pointed simplicial sets.

In particular, any \PAal of the form \w{\piA\bB} for \w{\bB\in\Obj\PA\subseteq\Obj\C}
is free, as is any coproduct of such. However, we make the additional assumption
that for any \w{\bB=\coprod\sb{i\in I}\,\Sigma\sp{n\sb{i}}\bA\in\A} and \w[,]{k\geq 0}
we have a natural isomorphism
\begin{myeq}\label{eqfreepa}
[\Sigma\sp{k}\bA,\,\bB]\sb{\C}~\cong~\colim\sb{\bB'}[\Sigma\sp{k}\bA,\,\bB']\sb{\C}~,
\end{myeq}
\noindent where the colimit is taken over all sub-coproducts
\w{\bB'=\coprod\sb{i\in I'}\,\Sigma\sp{n\sb{i}}\bA} with \w{I'\subseteq I} of
cardinality \www{<\kappa} (so that \w[).]{\bB'\in\PA} This implies that
\w[,]{\piA\bB=\coprod\sb{i\in I}\,\piA\Sigma\sp{n\sb{i}}\bA}
(as a coproduct in \w[),]{\PAlg} so \w{\piA\bB} is free
for all \w{\bB\in\A} (see \S \ref{egmapalg} below for a specific example).

\end{defn}

We can use the fact that a \PAal $\Lambda$ preserves products
in \w{\A\op} to define \w{\Lambda\lin{\bB}} for any \w[.]{\bB\in\A} The Yoneda Lemma
then implies:

\begin{lemma}\label{lfreeta}
If $\Lambda$ is any \PAal and \w[,]{\bB\in\A} there is a natural isomorphism
\w[.]{\Hom\sb{\PAlg}(\piA\bB,\,\Lambda)\cong\Lambda\lin{\bB}}
\end{lemma}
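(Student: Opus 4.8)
The plan is to deduce this from the ordinary Yoneda lemma, applied in the functor category $[\PA\op,\Seta]$, together with the coproduct decomposition of $\piA\bB$ recorded just above. First I would settle the case $\bB\in\PA$: there $\piA\bB$ is, by definition, the contravariant hom-functor $\bC\mapsto[\bC,\bB]\sb{\C}$ on $\PA$, i.e.\ the functor on $\PA\op$ represented by $\bB$. A representable functor preserves all limits, so $\piA\bB$ sends the products of $\PA\op$ (namely the coproducts of $\C$) to products in $\Seta$, and hence genuinely lies in the full subcategory $\PAlg\subseteq[\PA\op,\Seta]$ of product-preserving functors --- as already noted in Definition~\ref{dpaal}. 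Since $\PAlg$ is \emph{full}, $\Hom\sb{\PAlg}(\piA\bB,\Lambda)$ may be computed in $[\PA\op,\Seta]$, where Yoneda identifies it with $\Lambda\lin{\bB}$, naturally in $\Lambda$ (and in $\bB$).

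For a general $\bB=\coprod\sb{i\in I}\Sigma\sp{n\sb{i}}\bA\in\A$ I would combine two facts. First, the hypothesis \eqref{eqfreepa} was used just above precisely to give $\piA\bB\cong\coprod\sb{i\in I}\piA\Sigma\sp{n\sb{i}}\bA$ as a coproduct in $\PAlg$; mapping out of this coproduct, and applying the case already treated (each $\Sigma\sp{n\sb{i}}\bA\in\PA$), we obtain
\[
\Hom\sb{\PAlg}(\piA\bB,\Lambda)\;\cong\;\prod\sb{i\in I}\Hom\sb{\PAlg}(\piA\Sigma\sp{n\sb{i}}\bA,\Lambda)\;\cong\;\prod\sb{i\in I}\Lambda\lin{\Sigma\sp{n\sb{i}}\bA}\,.
\]
Second, $\Lambda\lin{\bB}$ was \emph{defined} by extending $\Lambda$ along the products of $\A\op$, i.e.\ as $\prod\sb{i\in I}\Lambda\lin{\Sigma\sp{n\sb{i}}\bA}$. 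Matching these two product decompositions factorwise gives the asserted isomorphism; one then checks that it does not depend on the chosen coproduct decomposition of $\bB$ and is natural in $\Lambda$, by naturality of each ingredient.

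The one place that genuinely needs care --- though it was already dealt with before the statement --- is the passage to index sets $I$ of cardinality $\geq\kappa$: the coproduct $\coprod\sb{i\in I}\piA\Sigma\sp{n\sb{i}}\bA$ still exists, since $\PAlg$ is cocomplete, but it need no longer be ``free on a $\kappa$-small graded set'', so to know that the canonical comparison map $\coprod\sb{i\in I}\piA\Sigma\sp{n\sb{i}}\bA\to\piA\bB$ is an isomorphism of \PAal[s] one must invoke \eqref{eqfreepa} itself rather than mere smallness of $\PA$. Granting that, the lemma is a formal consequence of Yoneda, and I do not anticipate any obstacle beyond this bookkeeping.
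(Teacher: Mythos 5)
Your proposal is correct and is essentially the paper's argument, made explicit: the paper simply says that $\Lambda\lin{\bB}$ is defined by extending $\Lambda$ over the products in $\A\op$ and that the Yoneda Lemma then gives the result, while you unpack that one-line assertion by first handling $\bB\in\PA$ directly (literal Yoneda, using fullness of $\PAlg$ in $[\PA\op,\Seta]$) and then reducing the general $\bB\in\A$ to that case via the coproduct decomposition of $\piA\bB$ guaranteed by \eqref{eqfreepa}, matched factorwise against the product definition of $\Lambda\lin{\bB}$.
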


This suggests the notation
\begin{myeq}\label{eqvalfreepa}
\Lambda\lin{V}~:=~\Hom\sb{\PAlg}(V,\,\Lambda)
\end{myeq}
\noindent for any \PAal[s] $\Lambda$ and $V$ with $V$ \emph{free}.

Moreover, we have:

\begin{prop}[see \protect{\cite[\S 6]{BPescF}}]\label{psimptal}
For $\bA$ as above, the category \w{\PAlg\sp{\Dop}} of simplicial
\PAal[s] has a model category structure, in which the weak equivalences
and fibrations are those of the underlying graded simplicial sets.
\end{prop}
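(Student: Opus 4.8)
The plan is to transfer the standard model structure on simplicial objects in a suitable category along the right-lifting property, exactly as in the classical case of simplicial groups or simplicial modules. First I would recall that $\PAlg$ is a category of product-preserving functors from a small category $\PA\op$ to $\Seta$, hence is a locally presentable category: it is closed under all limits and under filtered colimits in the functor category, and the category of pointed sets is itself locally presentable. From this it follows that $\PAlg$ is complete and cocomplete, and that the forgetful functor $\hU:\PAlg\to\gr\Seta$ has a left adjoint $\F$ (already named in Definition~\ref{dpaal}); moreover $\hU$ preserves filtered colimits. These are precisely the ingredients one needs to run a cofibrantly-generated transfer argument.

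Next I would pass to simplicial objects. The category $\gr\Seta$ of non-negatively graded pointed sets is, levelwise, a product of copies of $\Seta$, and $\Sa=\Seta\sp{\Dop}$ carries its usual cofibrantly-generated model structure (Kan); hence $(\gr\Seta)\sp{\Dop}=\gr\Sa$ carries the product/levelwise model structure, which is again cofibrantly generated with generating (trivial) cofibrations the evident graded analogues of $\partial\Delta\sp{n}\hra\Delta\sp{n}$ (resp.\ $\Lambda\sp{n}\sb{k}\hra\Delta\sp{n}$), placed in each internal degree. The adjunction $\F\dashv\hU$ prolongs degreewise to an adjunction between $\gr\Sa$ and $\PAlg\sp{\Dop}$, with the prolonged right adjoint (still called $\hU$) creating filtered colimits and all limits. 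I would then invoke the standard transfer theorem (e.g.\ \cite[\S 11.3]{PHirM} or the Crans/Quillen criterion): define a map in $\PAlg\sp{\Dop}$ to be a weak equivalence, respectively fibration, iff its image under $\hU$ is one in $\gr\Sa$; this is exactly the structure asserted in the proposition.

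The transfer theorem has two hypotheses to verify. The easy one is the smallness/permutability needed to run the small object argument: it holds because $\hU$ (degreewise prolonged) preserves filtered colimits, so the images $\F(A)$ of the generators are small relative to the relevant cell complexes. The one genuine point — and the step I expect to be the main obstacle — is the \emph{acyclicity} condition: every relative $\F(J)$-cell complex, where $J$ is the set of generating trivial cofibrations of $\gr\Sa$, must map to a weak equivalence under $\hU$. In the classical simplicial-group case this is handled using a simplicial contracting homotopy / the fact that the generating trivial cofibrations are pushouts along which the relevant maps become (levelwise) deformation retracts, together with the observation that pushouts of free maps in $\PAlg$ are computed in a controlled way. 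I would argue that the same reasoning applies here: $\PA$ is additive-like enough that $\F$ of a simplicial-set map obtained by pushing along $\Lambda\sp{n}\sb{k}\hra\Delta\sp{n}$ admits a retraction after applying $\hU$, using that $\hU\F$ on a free generator is a coproduct of representables and that $\Lambda\sp{n}\sb{k}\hra\Delta\sp{n}$ is a trivial cofibration in $\Sa$ with a (non-natural) section-up-to-homotopy; pushouts and transfinite composites of such maps are preserved by $\hU$ since it commutes with the relevant colimits along cofibrations. Once acyclicity is in hand, the transfer theorem delivers a cofibrantly-generated model structure on $\PAlg\sp{\Dop}$ whose weak equivalences and fibrations are detected on underlying graded simplicial sets, which is the assertion; I would finish by remarking that this is exactly the structure used in \cite[\S 6]{BPescF}, so no originality is claimed, and refer the reader there for the details of the acyclicity verification.
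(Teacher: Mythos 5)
Your proposal is correct and takes essentially the same approach the paper relies on: the paper gives no argument of its own beyond citing \cite[\S 6]{BPescF}, which constructs exactly this transferred model structure along the free--forgetful adjunction. Your sketch (local presentability and the small-object argument for existence, plus an acyclicity check via contracting homotopies on free cells) is the standard transfer argument, and both you and the paper ultimately defer to the same reference for the acyclicity details.
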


\begin{example}\label{egmapalg}
When \w[,]{\C=\Topa} \w[,]{\bA=\bS{1}} and \w[,]{\kappa=\omega} we see
that \w{\PA} is the full sub-simplicial category of \w{\Topa} whose
objects are finite wedges of spheres. In this case a \PAal is just
a \Pa[,] in the sense of \cite[\S 4]{StoV}, with \w{\piA\bY=\pis\Omega\bY}
(equipped with an action of the primary homotopy operations on it),
and an $\bA$-equivalence is just a weak equivalence of (base point components of)
topological spaces, in the usual sense.  In this case
our assumption \wref{eqfreepa} holds by compactness of \w{\bS{n}} and
\w{\bS{n}\times[0,1]} for all \w[.]{n\geq 1}

We could also let \w{\bA=\bS{n}} for some \w[,]{n>1} or use localized spheres
\w{\bA=\bS{n}\sb{R}} in the category $\C$ of $R$-local pointed spaces, for $R$ a subring
of $\QQ$, or algebraic models thereof (such as differential graded Lie algebras, in the
rational case \wh see \S \ref{srex} below). In the latter two cases we refer to either
(equivalent) notion of a \PAal as a \emph{\RPa[.]}
\end{example}

\begin{assumes}\label{afree}
We shall henceforth assume that, in addition to \wref[,]{eqfreepa} the
category \w{\PAlg} (associated to the given $\bA$ in $\C$ as in \S \ref{sass})
satisfies the following requirements:
\begin{enumerate}[(a)]
\item\label{freemodule} By definition, any free \PAal \w{\Lambda\cong\F(X\sb{\ast})}
  is (non-canonically) isomorphic to a coproduct
  \w[,]{\Lambda=\coprod\sb{n=0}\sp{\infty}\,\Lambda\sb{n}} where
  \w{\Lambda\sb{n}} is isomorphic to \w{\F(X\sb{n})} for \w{X\sb{n}\in\gr\Seta}
  concentrated in degree $n$. We assume that
  \w{L\sb{n}:=\Lambda\sb{n}\lin{\Sigma\sp{n}\bA}} is a free $R$-module for some
  principal ideal domain $R$, or possibly a free group when \w[,]{n=0} and choices
  of generators for \w{L\sb{n}} are in bijection with choices of
  generators for \w[.]{\Lambda\sb{n}}
\item\label{freesplit} If \w{j:\Lambda'\hra\Lambda} is a map of free \PAal[s]
  which has a retraction \w[,]{r:\Lambda\epic\Lambda'} then $\Lambda$ splits
  (non-canonically) as a coproduct \w{\Lambda'\amalg\Lambda''}
  with \w{\Lambda''} also free.
\item\label{splitbit} For any $\bU$ and $\bV$ in $\A$, the inclusions
  \w{i\sb{U}:\bU\to\bU\amalg\bV} and \w{i\sb{V}:\bV\to\bU\amalg\bV} and their
  retractions \w{\rho\sb{U}:\bU\amalg\bV\to\bU} and \w{\rho\sb{V}:\bU\amalg\bV\to\bV}
induce a natural decomposition of groups
\begin{myeq}\label{eqgpdecomp}
[\bA',\bU\amalg\bV]~=~[\bA',\,\bU]~\times~[\bA',\,\bV]~\times~C\sb{\bA'}(\bU,\bV)
\end{myeq}
\noindent for any \w{\bA':=\Sigma\sp{n}\bA} \wb[,]{n\geq 0} with the
\emph{cross-term} \w{C\sb{\bA'}(\bU,\bV)}
(the kernel of \w[)]{(\rho\sb{U})\sb{\#}\top(\rho\sb{V})\sb{\#}}
represented by maps \w{f:\bA'\to\bU\amalg\bV} with
\w[.]{\rho\sb{U}\circ f=\ast=\rho\sb{V}\circ f}

\end{enumerate}
\end{assumes}

We now show:

\begin{lemma}\label{lbasis}
  If \w{R\subseteq\QQ} and \w{\bW=\bigvee\sb{i=1}\sp{N}\,\bS{n}\sb{R}} for \w[,]{n\geq 2}
  any basis \w{\B=\{\kappa\sb{1},\dotsc,\kappa\sb{N}\}} for
\w{\pi\sb{n}\bW\cong\bigoplus\sb{i=1}\sp{N}\,R} is a generating set for
the free \RPa \w[.]{\pis\bW}
\end{lemma}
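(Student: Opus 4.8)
The plan is to show that the $\RPa$ $\pis\bW$, which is free by the discussion after \eqref{eqfreepa}, is in fact \emph{generated} by the basis $\B$ for $\pi_n\bW$. First I would invoke the structure of $\bW=\bigvee_{i=1}^N\bS{n}_R$ as a coproduct in $\A$: writing $\bW=\coprod_{i=1}^N\bS{n}_R$, the assumption following \eqref{eqfreepa} gives a coproduct decomposition $\pis\bW\cong\coprod_{i=1}^N\pis\bS{n}_R$ in $\RPa[s]$, where each $\pis\bS{n}_R$ is the free $\RPa$ on a single generator $\iota_i$ in degree $n$ (this is exactly the case $\bB=\bB$ a single sphere). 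So it suffices to show that the $N$ classes $\iota_1,\dots,\iota_N\in\pi_n\bW$, coming from the canonical inclusions, generate $\pis\bW$ as an $\RPa$, and then that any other basis $\B$ does too.

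Next I would make precise what ``generate'' means: a subset $S\subseteq\bigsqcup_k(\pis\bW)\lin{\Sigma^k\bA}$ generates $\pis\bW$ as an $\RPa$ if the map of free $\RPa[s]$ $\F(S)\to\pis\bW$ adjoint to the inclusion (viewing $S$ as living in the appropriate graded pointed set) is surjective; by Assumption \ref{afree}\eqref{freemodule} and \eqref{freesplit}, for free $\RPa[s]$ this can be checked at the level of the underlying module in each degree, but the key point is that $\pis\bW$ is generated \emph{in degree $n$ alone}. That last fact is where the hypothesis $R\subseteq\QQ$ and $n\geq 2$ enters: I would argue that the free $\RPa$ on one generator in degree $n$, namely $\pis\bS{n}_R$, has its degree-$n$ part equal to $R$ (generated by $\iota$) and is generated as an $\RPa$ by that single element — equivalently, that $\F$ applied to a one-point graded set in degree $n$ maps onto $\pis\bS{n}_R$. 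This is the realizability/freeness input already packaged into the standing assumptions (the localized sphere $\bS{n}_R$ has $\piA\bS{n}_R$ free on $\iota$), so I would cite \eqref{eqfreepa} and Example \ref{egmapalg} together with the identification $\piA\bY=\pis\Omega\bY$ shifted appropriately; here we use $n\ge 2$ so that there is no fundamental-group subtlety and $\pi_n\bW$ is an honest $R$-module.

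Having reduced to: \emph{the canonical generators $\iota_1,\dots,\iota_N$ generate $\pis\bW$}, I would finish by a change-of-basis argument. Given an arbitrary basis $\B=\{\kappa_1,\dots,\kappa_N\}$ of $\pi_n\bW\cong\bigoplus_{i=1}^N R$, there is an invertible matrix $M\in GL_N(R)$ with $\kappa_j=\sum_i M_{ij}\iota_i$. Since $R\subseteq\QQ$, $M$ and $M^{-1}$ have entries in $R$, so each $\iota_i$ is an $R$-linear combination of the $\kappa_j$'s, hence lies in the sub-$\RPa$ generated by $\B$; as the $\iota_i$ generate, so does $\B$. Concretely, the matrix $M$ is realized by a self-map $\phi\colon\bW\to\bW$ with $\phi_*$ sending $\iota_i$-coordinates to $\kappa$-coordinates — using that $[\bW,\bW]$ contains all such ``matrix'' maps because $\bW$ is a wedge of (localized) spheres and $\pi_n\bS{n}_R=R$ — and $\phi$ is a homotopy equivalence since $M\in GL_N(R)$, so $\phi_*\colon\pis\bW\to\pis\bW$ is an automorphism of $\RPa[s]$ carrying $\{\iota_i\}$ to $\{\kappa_j\}$; automorphisms carry generating sets to generating sets.

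The main obstacle is the middle step: verifying that $\pis\bS{n}_R$ is generated \emph{purely in degree $n$} by $\iota$, i.e.\ that no higher-degree generators are needed. This is not a formal consequence of freeness alone — it is the assertion that the ``fundamental class'' of a (localized) sphere generates its whole $\RPa$ — and it rests on the specific geometry encoded in \eqref{eqfreepa} and Assumptions \ref{afree}, together with $n\geq 2$ and $R\subseteq\QQ$ (so $\pi_n\bS{n}_R=R$ with generator $\iota$, and $\bS{n}_R$ is the $n$-fold suspension of $\bS{0}_R$ hence genuinely built from one cell). Once that is in hand, everything else is the coproduct decomposition plus elementary linear algebra over the PID $R\subseteq\QQ$.
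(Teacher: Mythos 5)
Your proposal is correct and takes essentially the same route as the paper: realize the change-of-basis matrix \w{M\in GL_N(R)} by a self-map \w[,]{\varphi^{M}\colon\bW\to\bW} observe that it is a homotopy equivalence via the $R$-local Hurewicz and Whitehead theorems (this is where \w{n\geq 2} and \w{R\subseteq\QQ} enter), and transport the standard generating set along \w[.]{\varphi^{M}_{*}} Where you diverge is in how the conclusion is extracted from the automorphism. The paper makes the primary operations explicit via Hilton's Theorem, writing any \w{\alpha\in\pi_t\bW} uniquely as \w{\sum_\ell\eta_\ell^{\#}\omega_\ell(\lambda_1,\dots,\lambda_N)} for iterated Whitehead products \w{\omega_\ell} in a Hall basis and then applying \w[;]{\varphi^{M}_{*}} this also directly yields the unique-decomposition (``freely generates'') form that is used downstream in Proposition \ref{passume}. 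You instead reason abstractly: \w{\pis\bW\cong\coprod_i\pis\bS{n}\sb{R}} is free on the wedge inclusions \w{\{\iota_i\}} by Lemma \ref{lfreeta}, and \RPa-automorphisms carry (free) generating sets to (free) generating sets; this is sound, arguably cleaner, and silently delivers the ``freely'' version as well. One caution: you flag ``does $\iota$ generate \w[?]{\pis\bS{n}\sb{R}}'' as the key obstacle and propose to resolve it by citing Assumptions \ref{afree}. That would be circular, since Proposition \ref{passume} derives \ref{afree}\eqref{freemodule} for the motivating example \emph{from} this Lemma. But there is in fact no obstacle: by Lemma \ref{lfreeta} (Yoneda), \w{\piA\bS{n}\sb{R}} corepresents evaluation in the relevant degree, so it is tautologically the free \RPa on its fundamental class. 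The only geometric inputs you truly need are the coproduct decomposition following \eqref{eqfreepa} and the $R$-local Whitehead theorem; the rest is formal.
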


\begin{proof}
We first show that if \w{R\subseteq\QQ} and
\w{\bW=\bigvee\sb{i=1}\sp{N}\,\bS{n}\sb{R}} for \w[,]{n\geq 2} then any basis
\w{\B=\{\kappa\sb{1},\dotsc,\kappa\sb{N}\}} for
\w{\pi\sb{n}\bW\cong\bigoplus\sb{i=1}\sp{N}\,R} is a generating set for
the free \RPa \w[:]{\pis\bW}

Let \w{\E=\{\lambda\sb{1},\dotsc,\lambda\sb{N}\}} be the basis for \w{\pi\sb{n}\bW}
corresponding to the standard generators for \w{\pis\bW} (associated to
the given coproduct decomposition of $\bW$), and \w{M\in\SL{N}}
the change of basis matrix with respect to $\B$. The corresponding map
\w{\vpm:\bW\to\bW} induces an automorphism of \w[,]{H\sb{n}(\bW;R)}
so it is a self-homotopy equivalence by the $R$-local Hurewicz and Whitehead Theorems
(cf.\ \cite[II, 1.2]{HMRoiL}), with homotopy inverse \w[,]{\psm:\bW\to\bW} with
\w{\vpm\sb{\ast}(\lambda\sb{i})=\kappa\sb{i}} for \w[.]{1\leq i\leq N}

By Hilton's Theorem (see \cite{HilS} or \cite[XI, Theorem 6.7]{GWhE}),
for any \w{t\geq n} and \w[,]{\alpha\in\pi\sb{t}\bW} we may write
\w{\beta=\psm\sb{\ast}(\alpha)} uniquely in the form
\begin{myeq}\label{eqrelateone}
\beta~=~\sum\sb{\ell}\,\eta\sb{\ell}\sp{\#}
\omega\sb{\ell}(\lambda\sb{1},\dotsc,\lambda\sb{N})
\end{myeq}
\noindent where \w{\omega\sb{\ell}(\lambda\sb{1},\dotsc,\lambda\sb{N})} is some
\ww{k\sb{\ell}}-fold iterated Whitehead product in a chosen Hall basis
in the free Whitehead-Lie algebra on elements of $\E$, and
\w[.]{\eta\sb{\ell}\in\pi\sb{t}\bS{k\sb{\ell}(n-1)+1}\sb{R}}
Therefore,
\begin{myeq}\label{eqrelatetwo}
\alpha~=~\vpm\sb{\ast}(\beta)~=~\sum\sb{\ell}\,\eta\sb{\ell}\sp{\#}
\omega\sb{\ell}(\kappa\sb{1},\dotsc,\kappa\sb{N})
\end{myeq}
\noindent so $\B$ generates \w[.]{\pis\bW}

Conversely, the result of applying any primary operation $\phi$ to the set $\B$
can be written in the form \wref[,]{eqrelatetwo} so \w{\beta:=\psm\sb{\ast}(\alpha)}
has the form \wref{eqrelateone} with respect to $\E$, and this vanishes if and only $\phi$
was trivial. Thus $\B$ generates \w{\pis\bW} \emph{freely}.
\end{proof}

\begin{prop}\label{passume}
  The assumptions of \S \ref{afree} hold for the motivating example of
  \w{\bA=\bS{n}} \wb{n \geq 1} in
\w[,]{\C=\Topa} with \w[,]{R=\ZZ} as well for \w{\bA=\bS{n}\sb{R}} an $R$-local
sphere in \w{\C=\Top\sb{R}} (the $R$-local model category of pointed spaces), where
$R$ is any sub-ring of $\QQ$.
\end{prop}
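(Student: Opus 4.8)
The plan is to verify each of requirements (a)--(c) in \S\ref{afree} by reducing it to a classical fact about the homotopy theory of (possibly infinite) wedges of spheres: the Hurewicz and Whitehead theorems (in their $R$-local versions when $R\subseteq\QQ$, cf.\ \cite[II]{HMRoiL}), Hilton's theorem (\cite{HilS}, \cite[XI]{GWhE}), van Kampen's theorem, and elementary properties of free modules and free groups. Write $\bA=\bS{d}$ (resp.\ $\bS{d}\sb{R}$) with $d\geq 1$, and recall from \eqref{eqfreepa} and the discussion following it that every free \PAal is non-canonically of the form $\piA(\bW)$ for a wedge $\bW=\bigvee_{i}\Sigma^{n_i}\bA$; that such a $\bW$ decomposes as $\bigvee_{k\geq 0}\bW^{(k)}$ with $\bW^{(k)}$ a wedge of copies of $\Sigma^k\bA$; and that $\piA(\bW)=\coprod_{i}\piA(\Sigma^{n_i}\bA)$. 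I will also use repeatedly that every map of free \PAal[s] is realized by a map of the chosen wedges of spheres: such a map is determined by the images of the generators, and each image lies in some $[\Sigma^k\bA,\bW]\sb{\C}$, hence factors through a finite sub-wedge by \eqref{eqfreepa}.

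For (a), the decomposition $\Lambda=\coprod_{k}\Lambda_k$ with $\Lambda_k=\piA(\bW^{(k)})$ free on the degree-$k$ generators is immediate from the above. Then $L_k=\Lambda_k\lin{\Sigma^k\bA}=[\Sigma^k\bA,\bW^{(k)}]\sb{\C}=\pi_{k+d}(\bW^{(k)})$, the homotopy group of a wedge of $(k+d)$-spheres; by the ($R$-local) Hurewicz theorem this is the free $R$-module on the wedge summands when $k+d\geq 2$, whereas for $k+d=1$ --- that is, $\bA=\bS{1}$ and $k=0$ --- van Kampen identifies it with the free group on the summands. The required bijection between generating sets of $L_k$ and of $\Lambda_k$ is exactly Lemma \ref{lbasis} when $k+d\geq 2$ (which applies for any subring $R\subseteq\QQ$, in particular $R=\ZZ$), and for $k+d=1$ it is the elementary fact that the \Pa of a wedge of $N$ circles is free on any free basis of $F_N\cong\pi_1(\bigvee^N\bS{1})$.

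For (c), I would reduce by \eqref{eqfreepa} to finite wedges $\bU,\bV$ and apply Hilton's theorem, $\pi_m(\bU\vee\bV)\cong\pi_m(\bU)\oplus\pi_m(\bV)\oplus\bigoplus_{w}\pi_m(\bS{|w|})$, the last sum taken over basic Whitehead products $w$ that involve generators of both $\bU$ and $\bV$: these ``mixed'' summands are precisely the classes annihilated by both $(\rho\sb{U})\sb{\#}$ and $(\rho\sb{V})\sb{\#}$, so they make up the cross-term in \eqref{eqgpdecomp} whenever $\bA'=\Sigma^n\bA$ has dimension $\geq 2$; the only remaining case is $\bA=\bS{1}$ with $n=0$, where van Kampen gives $\pi_1(\bU\vee\bV)=\pi_1(\bU)\ast\pi_1(\bV)$ and the cartesian (kernel) subgroup plays the role of $C\sb{\bA'}(\bU,\bV)$. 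For (b), I would realize the split monomorphism $j$ and its retraction $r$ by maps $f\colon\bW'\to\bW$ and $g\colon\bW\to\bW'$; since $\piA(g\circ f)=\Id$, the composite $g\circ f$ is a self-equivalence of $\bW'$, so after composing $g$ with a homotopy inverse of it and replacing $f$ by a cofibration we may assume $g\circ f\simeq\Id\sb{\bW'}$. Set $\bW'':=\hocofib(f)$. Then $f\sb{\ast}$ is split injective on homology, so $\widetilde{H}\sb{\ast}(\bW'')\cong\widetilde{H}\sb{\ast}(\bW)/f\sb{\ast}\widetilde{H}\sb{\ast}(\bW')$ is again free over $R$; hence, in the simply-connected case, the Hurewicz and Whitehead theorems show $\bW''$ is a wedge of spheres and --- after realizing a complementary basis of $\widetilde{H}\sb{\ast}$ by a map $h\colon\bW''\to\bW$ --- that $(f\bot h)\colon\bW'\vee\bW''\to\bW$ is a homology isomorphism, hence a homotopy equivalence. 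This yields $\Lambda=\piA(\bW)\cong\Lambda'\amalg\piA(\bW'')$ with $\piA(\bW'')$ free and $j$ the coproduct inclusion. When $\bW$ fails to be simply connected (the $\bA=\bS{1}$ case with circle summands present) one first splits off the degree-$0$ part, using that a retract of a free group is a free factor, and then runs the preceding argument on what is left.

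The step I expect to be the real obstacle is (b): besides the bookkeeping needed to promote the algebraic splitting to an honest wedge decomposition of spaces, one has to treat the non-simply-connected corner case carefully, as there the homological argument must be supplemented by the group-theoretic input about free factors. Conditions (a) and (c), by contrast, are essentially immediate consequences of Lemma \ref{lbasis} and of Hilton's theorem, respectively.
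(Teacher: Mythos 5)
Your handling of (a) and (c) coincides with the paper's: for (a) both proofs reduce to Lemma~\ref{lbasis} (together with the elementary free-group statement in degree zero when $\bA=\bS{1}$), and for (c) both invoke Hilton's decomposition, extended to arbitrary wedges via \eqref{eqfreepa} — the paper simply cites the Hilton--Milnor Theorem \cite{MilnF}, while you also separate out the $\pi_1$ case via van Kampen, which is a harmless elaboration.

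For (b), however, you take a genuinely different route from the paper, and there is a gap in it. The paper never leaves the algebraic world: it splits $\Lambda$ by induction on the degree $n$, using in degree $0$ that a retract of a free group is a free factor, and for $n\geq 1$ that the split inclusion $L'_n\hra L_n$ of free $R$-modules over a PID admits a free complement, after which Lemma~\ref{lbasis} converts the completed basis of $L_n$ into a free generating set of $\Lambda_n$ and hence the coproduct decomposition. You instead realize $j$ and $r$ geometrically by maps $f\colon\bW'\to\bW$ and $g\colon\bW\to\bW'$, and try to split the \emph{spaces}. The step that does not hold up is the claim that Hurewicz and Whitehead show $\bW''=\hocofib(f)$ is a wedge of spheres: a simply-connected CW complex with free reduced homology need not be a wedge of spheres — $\mathbb{C}P^{2}$, the cofiber of the Hopf map, is the standard counterexample — and nothing you have said rules out nontrivial higher structure in the attaching maps of this cofiber. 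The fix is to drop the cofiber entirely and do exactly what your last step suggests: let $\bW''$ be an \emph{abstract} wedge of spheres with $\widetilde{H}_*(\bW'')\cong\widetilde{H}_*(\bW)/f_*\widetilde{H}_*(\bW')$, use that the Hurewicz map $\pi_n(\bW)\to H_n(\bW)$ is surjective for any wedge of spheres to choose $h\colon\bW''\to\bW$ realizing a complementary basis, and then observe that $f\bot h\colon\bW'\vee\bW''\to\bW$ is a homology isomorphism of simply-connected spaces, hence a weak equivalence; applying $\piA$ gives the required $\Pi$-algebra splitting. With this repair your argument works, but note that it buys you nothing the paper's algebraic proof does not already deliver, and the paper's approach sidesteps the space-level realization entirely — so your expectation that (b) is ``the real obstacle'' is not borne out once one sees that the whole issue lives inside Lemma~\ref{lbasis} plus elementary PID / free-group facts.
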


\begin{proof}
The statement of \S \ref{afree}\eqref{freemodule} follows from Lemma \ref{lbasis}
(with the non-finitely generated case following from \wref[,]{eqfreepa} which follows
in turn from the compactness of \w{\bS{n}} and \w[).]{\bS{n} \times [0,1]}

If \w{j:\Lambda'\hra\Lambda} is a map of free \Pa[s] with a retraction
\w[,]{r:\Lambda\epic\Lambda'} we may prove \S \ref{afree}\eqref{freesplit}
by induction on the degree: by our convention we use the loop space grading,
so the fundamental group is in degree $0$ and thus \w{\Lambda\sb{0}} (the sub-\Pa
generated by all elements in \w[)]{\Lambda\lin{\bS{1}}} is just a
free group, as is \w[.]{\Lambda'\sb{0}} One can show that if we set
\w[,]{\Lambda''\sb{0}:=\Ker(r\sb{0}:\Lambda\sb{0}\to\Lambda'\sb{0})} which is
also a free group (and thus a free \Pa[),] then
\w[.]{\Lambda\sb{0}\cong\Lambda'\sb{0}\amalg\Lambda''\sb{0}}

If we assume by induction that
$$
\Lambda\sb{<n}~:=~\coprod\sb{k=0}\sp{n-1}\Lambda\sb{k}~\cong~
\coprod\sb{k=0}\sp{n-1}\Lambda'\sb{k}\amalg\coprod\sb{k=0}\sp{n-1}\Lambda''\sb{k}~,
$$
\noindent we have a map of free \Pa[s]
\w{j\sb{0}:\Lambda'\sb{n}\hra\Lambda\sb{n}} with retraction
\w[,]{r\sb{0}:\Lambda\sb{n}\epic\Lambda'\sb{n}} inducing a split inclusion of
free $R$-modules \w[,]{L'\sb{n}\hra L\sb{n}} and thus a decomposition
\w[.]{L\sb{n}\cong L'\sb{n}\oplus L''\sb{n}} Since $R$ is a PID, \w{L''\sb{n}} is
also a free $R$-module.  This allows us to complete a basis for \w{L'\sb{n}} to
one for \w[,]{L\sb{n}} yielding a corresponding decomposition of free \Pa[s]
\w{\Lambda\sb{n}\cong\Lambda'\sb{n}\amalg\Lambda''\sb{n}} by Lemma \ref{lbasis}.

Finally, \S \ref{afree}(\ref{splitbit}) holds for any
suspension \w{\bA=\Sigma\bA'} in \w[,]{\Topa} by the Hilton-Milnor Theorem
(see \cite{MilnF}). Note that it also holds for any small $\bA$ in a stable model category
(see \cite[\S 7.2]{HovM}), since all cross-terms then vanish.
\end{proof}

\begin{remark}\label{rrightbous}
In fact, the $\bA$-equivalences as defined in \S \ref{dpaal} are the weak
equivalences in the right Bousfield localization of $\C$ with respect to $\bA$
(see \cite[\S 5.1]{PHirM}).
In particular, the natural map \w{\CWA{\bY}\to\bY} is an
$\bA$-equivalence, where the cellularization \w{\CWA{\bY}} serves as a functorial
cofibrant replacement for $\bY$ (see \cite[\S2 A]{DroC}).

Two maps \w{f,g:\bX\to\bY} in $\C$ are \emph{$\bA$-equivalent} if and only if
they are related by a zigzag of $\bA$-equivalences. In particular, if all
objects in $\C$ are fibrant (which will be the case in the examples of interest to us),
this implies that the induced maps \w{\widehat{f},\widehat{g}:\CWA{\bX}\to\CWA{\bY}}
are homotopic. We write \w{[\bX,\bY]\sb{\bA}} for the set of $\bA$-equivalence
classes of maps (i.e., the set of maps in the homotopy category \w{\ho\C} for this
model structure).

In the motivating example, where \w{\C=\Top\sb{0}} (see \S \ref{snac}) and
\w[,]{\bA=\bS{1}} if $\bX$ and $\bY$ are CW-complexes we see that $\bA$-equivalences
are actually homotopy equivalences, and two maps \w{f,g:\bX\to\bY} are $\bA$-equivalent
if and only if they are homotopic, so \w[.]{[\bX,\bY]\sb{\bA}=[\bX,\bY]}
\end{remark}

\newpage

%
\supsect{\protect{\ref{cback}}.B}{Chain complexes}

For $\C$ any pointed category, an augmented \emph{chain complex}
in $\C$ is a diagram \w{\As} of the form
\mydiagram[\label{eqchaincx}]{
\dotsc A\sb{n+1}\ar[r]\sp{\df{n+1}} &
A\sb{n} \ar[r]\sp{\df{n}} &
A\sb{n-1} \ar[r]\sp{\df{n-1}} &
A\sb{n-2} & \dotsc A\sb{0} \ar[r]\sp{\df{0}} & A\sb{-1}
}
\noindent with \w{\df{n}\circ\df{n+1}=0} for all
\w[.]{n\geq 0} We denote the category of such chain complexes by \w[,]{\ChC}
that of $n$-truncated chain complexes by \w[,]{\ChC\sp{\leq n}} and that of
bounded-below chain complexes with \w{A\sb{i}=\ast} for \w{-1\leq i<n} by
\w[,]{\ChC\sp{\geq n}} with the obvious truncation functors
\w{\sk{n}:\ChC\to\ChC\sp{\leq n}} (the usual skeleton, or restriction) and
\w[.]{\csk{n}:\ChC\to\ChC\sp{\geq n}}

\begin{defn}\label{dspherecc}
For any object $\oG{}$ in a pointed category $\C$, let \w{\oG{}\oS{n}}
be the chain complex in \w{\ChC} having $\oG{}$ in dimension $n$
(and $\ast$ elsewhere). Similarly, \w{\oG{}\odisc{n}} has $\oG{}$ in dimensions
$n$ and \w[,]{n-1} with the identity between them as boundary (and $\ast$ elsewhere).
We write \w{\iota\sb{n}:\oG{}\oS{n-1}\hra \oG{}\odisc{n}} for the inclusion.
\end{defn}

\begin{mysubsection}{Model categories of chain complexes}
\label{smodchaincx}
When $\C$ is a pointed model category as in \S \ref{sass}, we
will consider \emph{projective} model category structures
on \w{\ChC} and \w[,]{\ChC\sp{\leq n}} in which the weak equivalences and
fibrations are both defined levelwise, so all objects will be fibrant.
For \w[,]{\ChC\sp{\leq n}} the cofibrant objects are the
\emph{strongly cofibrant} $n$-chain complexes \w[,]{\As} where for each \w{k\leq n}
the natural map \w{\Cok(\df{k+1})\to A\sb{k-1}} is a cofibration
(with \w[).]{A\sb{n+1}:=\ast} See \cite[\S 11.6]{PHirM}.

There is a dual \emph{injective} model category structure
on \w{\ChC} and \w[,]{\ChC\sp{\leq n}} in which the weak equivalences and
cofibrations are defined levelwise, and the fibrant objects are described in
\wref[.]{eqstfibseq}
\end{mysubsection}

\begin{mysubsection}{Attaching cells to chain complexes}
\label{sattach}
The usual way to construct a chain complex \w{\As} in \w{\ChC} is by means
of \emph{attaching maps} \w{\od:\oA{n}\oS{n-1}\to\sk{n-1}\As} in
\w[.]{\ChC\sp{\leq n-1}}
The next skeleton \w{\sk{n}\As} is then
the pushout
\mydiagram[\label{skeletonl}]{
\ar @{} [drr]|(0.71){\framebox{\scriptsize{PO}}}
\oA{n}\oS{n-1} \ar[d]\sb{\iota\sb{n}} \ar[rr]\sp{\od} &&
\sk{n-1}\As \ar[d] \\
\oA{n}\odisc{n} \ar[rr] && \sk{n}\As~,
}
\noindent (see \S \ref{dspherecc}), with $\od$ in degree \w{n-1} equal to
\w[.]{\partial\sb{n}:A\sb{n}\to A\sb{n-1}}

When $\C$ is a model category, in order to make this process homotopy meaningful we
generally use a (strongly) cofibrant replacement of the source \w{\oA{n}\oS{n-1}} of the
attaching map \w[.]{\od}
\end{mysubsection}

\supsect{\protect{\ref{cback}}.C}{Augmented simplicial objects}

We now collect some standard facts and constructions related to
augmented simplicial objects in a category $\C$:

\begin{defn}\label{dmco}
In a pointed and complete category $\C$, the $n$-th \emph{Moore chains} object
of a restricted augmented simplicial object \w{\Gd\in\C\sp{\Dresp\op}} is defined to be:
\begin{myeq}\label{eqmoor}
\cM{n}\Gd~:=~\cap\sb{i=1}\sp{n}\Ker\{d\sb{i}:G\sb{n}\to G\sb{n-1}\}~,
\end{myeq}
\noindent that is, the limit of the diagram
\mydiagram{
  G\sb{n} \ar@/^{1.1pc}/[rr]\sp{d\sb{1}}\sb{\vdots}
  \ar@/_{1.1pc}/[rr]\sb{d\sb{n}} &&
  G\sb{n-1} && \ast \ar[ll]
}
with differential
$$
\dif{n}:=d\sb{0}\rest{\cM{n}\Gd}:\cM{n}\Gd\to\cM{n-1}\Gd~.
$$
The $n$-th \emph{Moore cycles} object is \w{\cZ{n}\Gd:=\Ker(\dif{n})} (the analogous
limit including \w[).]{d\sb{0}} Write \w{w\sb{n}:\cM{n}\Gd\hra G\sb{n-1}} and
\w{v\sb{n}:\cZ{n}\Gd\hra\cM{n}\Gd} for the inclusions.

We use the same notation for unrestricted or unaugmented \w[,]{\Gd}
although the reader should note that for non-trivial augmented \w[,]{\Gd}
\w{\cZ{0}(\Gd)} differs from \w[.]{\cZ{0}(\sigma\sp{\ast}(\Gd))=G\sb{0}}
\end{defn}

\begin{defn}\label{dlmo}
For a (possibly \wwb{n-1}truncated) simplicial object \w{\Gd\in\C\sp{\Dop}} in a
cocomplete category $\C$, the $n$-th \emph{latching object} for \w{\Gd} is the colimit
\begin{myeq}\label{eqlatch}
L\sb{n}\Gd~:=~\colimit{\theta\op:\bbk\to\bbn}\,G\sb{k}~,
\end{myeq}
\noindent where $\theta$ ranges over the surjective maps \w{\bbn\to\bbk} in
$\Del$ (for \w[).]{k < n} There is a natural map \w{\sigma\sb{n}:L\sb{n}\Gd\to G\sb{n}}
induced by the indexing maps $\theta$ of the colimit for any $n$-truncated simplicial
object, and any iterated degeneracy map \w{s\sb{I}=\theta\sb{\ast}:G\sb{k}\to G\sb{n}}
factors as
\begin{myeq}\label{equnivdeg}
s\sb{I}~=~\sigma\sb{n}\circ \inc\sb{\theta}~,
\end{myeq}
\noindent where \w{\inc\sb{\theta}:G\sb{k}\to L\sb{n}\Gd} is the structure
map for the copy of \w{G\sb{k}} indexed by $\theta$.

Note that the inclusion \w{\Dres\hra\Delta} induces a forgetful functor
\w[,]{\U:\C\sp{\Dop}\to\C\sp{\Dres\op}} and its left adjoint
\w{\cL:\C\sp{\Dres\op}\to\C\sp{\Dop}} is given by
\w[,]{(\cL\Gd)\sb{n}=G\sb{n}\amalg L\sb{n}\Gd} with degeneracies given by
\wref{equnivdeg} and face maps coming from the simplicial identities.
It follows that any augmentation of
\w{\Gd} also serves as an augmentation of \w{\cL\Gd} and vice versa, so this
remains an adjunction for the augmented categories.

Dually, the $n$-th \emph{matching object} for \w{\Gd\in\C\sp{\Dop}} is defined to be
\begin{myeq}\label{eqmatch}
M\sb{n}\Gd~:=~\lim\sb{\phi\op:\bbn\to\bbk}\,G\sb{k}~,
\end{myeq}
\noindent where $\phi$ ranges over the injective maps \w{\bbk\to\bbn} in
$\Delta$. As above, there is a natural map
\w{\zeta\sb{n}:G\sb{n}\to M\sb{n}\Gd} induced by the structure maps of the limit
for any $n$-truncated restricted simplicial object,
and every (iterated) face map factors through it (see \cite[X,\S 4.5]{BKanH}).

For an augmented \w[,]{\Gd\in\C\sp{\Dp\op}} matching objects are defined similarly, but
now \w[,]{M\sb{0}\Gd=G\sb{-1}} and \w{M\sb{1}\Gd} is the pullback of
\w[,]{G\sb{0} \to G\sb{-1} \leftarrow G\sb{0}} rather
than a product.
\end{defn}

\begin{remark}\label{rmodsso}
When $\C$ is a model category, we shall use the Reedy model structure
of \cite[\S 15.3]{PHirM}, which differs from the projective structure,
on \w[,]{\C\sp{\Dop}} \w[,]{\C\sp{\Dres\op}}
\w[,]{\C\sp{\Dp\op}} and \w[.]{\C\sp{\Dresp\op}}
Note that the constant augmented object \w{\cpd{A}} for a fibrant object \w{A \in \C}
is Reedy fibrant in \w{\C\sp{\Dp\op}} but \w{\cd{A}} is not Reedy fibrant in
\w{\C\sp{\Dop}} (see \S \ref{raug} below).
\end{remark}

\begin{mysubsection}{Comparing chain complexes and simplicial objects}
\label{schaincx}
If $\C$ is a pointed category, the Moore chain functor
\w{\cMs:\C\sp{\Dresp\op}\to\ChC} just described has a left
adjoint (and right inverse) \w{\E:\ChC\to\C\sp{\Dresp\op}} with
\w[,]{(\E\A\sb{\ast})\sb{n}=A\sb{n}} \w[,]{d\sb{0}\sp{n}=\dif{n}} and
\w{d\sb{i}\sp{n}=0} for \w[.]{i\geq 1} This also holds for \w{\ChC\sp{\leq n}}
if we truncate \w[.]{\C\sp{\Dresp\op}} Moreover:
\end{mysubsection}

\begin{lemma}\label{lfibwe}
For \w{\C=\Topa} or \w[,]{\G=\Gp\sp{\Dop}} the functor
\w{\cMs:\C\sp{\Dresp\op}\to\Ch\sb{\C}} preserves fibrancy and weak equivalences
among fibrant objects with respect to the Reedy model structure of \S \ref{rmodsso}
in \w{\C\sp{\Dresp\op}} and the injective model structure of \S \ref{smodchaincx} on
\w[.]{\Ch\sb{\C}}
\end{lemma}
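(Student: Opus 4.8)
The plan is to express the Moore chains $\cM{n}\Gd$, together with its differential $\dif{n}$, as a pullback of the matching map $\zeta\sb{n}\colon G\sb{n}\to M\sb{n}\Gd$, and then to read off both assertions from the Reedy axioms and the right properness of $\C$ (\S \ref{sass}). For $n\ge 2$ the matching object is the object of compatible families of codimension-one faces,
\[
M\sb{n}\Gd~=~\{(x\sb{0},\dots,x\sb{n})\in (G\sb{n-1})\sp{n+1}\ :\ d\sb{i}x\sb{j}=d\sb{j-1}x\sb{i}\ \text{ for }\ i<j\}\ ,
\]
with $\zeta\sb{n}=(d\sb{0},\dots,d\sb{n})$, and in low degrees one uses instead $M\sb{0}\Gd=G\sb{-1}$ and $M\sb{1}\Gd=G\sb{0}\times\sb{G\sb{-1}}G\sb{0}$ as in Definition \ref{dmco}. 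The simplicial identities show that the sub-object of $M\sb{n}\Gd$ cut out by $x\sb{1}=\dots=x\sb{n}=\ast$ is precisely the Moore cycles $\cZ{n-1}\Gd=\bigcap\sb{i=0}\sp{n-1}\Ker(d\sb{i})$ (with the analogue $\cZ{0}\Gd=\Ker(d\sb{0}\colon G\sb{0}\to G\sb{-1})$ in degree one), and that $\zeta\sb{n}$ pulls it back to $\bigcap\sb{i=1}\sp{n}\Ker(d\sb{i})=\cM{n}\Gd$, with $\zeta\sb{n}$ restricting on $\cM{n}\Gd$ to $d\sb{0}=\dif{n}$. This gives a natural pullback square with right-hand column $\zeta\sb{n}\colon G\sb{n}\to M\sb{n}\Gd$, top row the inclusion $\cM{n}\Gd\hookrightarrow G\sb{n}$, left-hand column $\dif{n}\colon\cM{n}\Gd\to\cZ{n-1}\Gd$, and bottom row the inclusion $\cZ{n-1}\Gd\hookrightarrow M\sb{n}\Gd$. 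When $\Gd$ is Reedy fibrant the matching map $\zeta\sb{n}$ is a fibration, so its pullback $\dif{n}$ is too; since $\cZ{n-1}\Gd=\Ker(\dif{n-1})$ (and in degree zero $\dif{0}=\zeta\sb{0}$ is a fibration directly), this is exactly the fibrancy condition of \eqref{eqstfibseq}, so $\cMs\Gd$ is injectively fibrant.

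For weak equivalences, let $f\colon\Gd\to H\sb{\bullet}$ be a levelwise weak equivalence between Reedy fibrant objects; I would prove $\cM{n}f$ is a weak equivalence by induction on $n$, the cases $n=-1,0$ being immediate. For the step, $\cZ{n-1}\Gd=\cM{n-1}\Gd\times\sb{\cM{n-2}\Gd}\ast$ is a pullback along the fibration $\dif{n-1}$ (the fibrancy part at level $n-1$), so by the inductive hypothesis and right properness $\cZ{n-1}f$ is a weak equivalence; similarly $M\sb{n}f$ is a weak equivalence, since the matching-object functor sends levelwise weak equivalences between Reedy fibrant objects to weak equivalences (cf.\ \cite[\S 15.3]{PHirM}, or by the same induction over pullbacks along the fibrations $G\sb{k}\to M\sb{k}\Gd$). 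Applying $f$ to the pullback square above, the induced map is a weak equivalence on the three corners $G\sb{n}$, $\cZ{n-1}\Gd$ and $M\sb{n}\Gd$; as both squares are pullbacks along the fibrations $\zeta\sb{n}$, hence homotopy pullbacks, the gluing lemma (cf.\ \cite[\S 13.3]{PHirM}) gives that $\cM{n}f$ is a weak equivalence.

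The only non-formal ingredients are the explicit description of $M\sb{n}\Gd$ and the identification of the sub-object $\{x\sb{1}=\dots=x\sb{n}=\ast\}$ with $\cZ{n-1}\Gd$ — this is where the concrete computation of limits in $\Topa$ and in $\G$ enters, and where the augmentation in low degrees must be tracked carefully — together with the fact that $M\sb{n}f$ is a weak equivalence. I expect the latter, or rather folding it cleanly into the induction, to be the main point needing attention; the rest is a formal consequence of right properness of $\C$ and the Reedy fibrancy of $\Gd$ and $H\sb{\bullet}$.
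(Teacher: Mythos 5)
The paper disposes of this lemma by citing \cite[Prop.\ 5.7]{DKStB} and \cite[Lemma 2.7]{StoV}, so your attempt at a direct, self-contained proof is a genuinely different route. Your central observation --- that the square
\[
\xymatrix@R=15pt@C=25pt{
\cM{n}\Gd \ar[r]\ar[d]_{\wpar{n}} & G\sb{n}\ar[d]^{\zeta\sb{n}}\\
\cZ{n-1}\Gd \ar[r] & M\sb{n}\Gd
}
\]
is a pullback, where $\wpar{n}$ denotes the corestriction of $\dif{n}$ to $\cZ{n-1}\Gd$ --- is correct, and it gives the fibrancy claim cleanly: $\zeta\sb{n}$ is a fibration by Reedy fibrancy, so $\wpar{n}$ is one too, and that is precisely the condition \eqref{eqstfibseq} for injective fibrancy of $\cMs\Gd$ (the low-degree cases $n=0,1$ need the augmented versions of $M\sb{n}$, as you note). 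The identification of the subobject $\{x\sb{1}=\dots=x\sb{n}=\ast\}$ with $\cZ{n-1}\Gd$ is also right, including the check (via $d\sb{i}d\sb{0}=d\sb{0}d\sb{i+1}$) that $d\sb{0}$ carries $\cM{n}\Gd$ into $\cZ{n-1}\Gd$.

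The weak-equivalence argument, however, has a gap. You assert that $\cZ{n-1}\Gd = \cM{n-1}\Gd\times\sb{\cM{n-2}\Gd}\ast$ \emph{is a pullback along the fibration $\dif{n-1}$}, and invoke only the inductive hypothesis ``$\cM{k}f$ is a weak equivalence for $k<n$''. But the map that your own pullback square (one level down) exhibits as a fibration is not the full differential $\dif{n-1}\colon\cM{n-1}\Gd\to\cM{n-2}\Gd$; it is its corestriction $\wpar{n-1}\colon\cM{n-1}\Gd\to\cZ{n-2}\Gd$. The full differential factors as $v\sb{n-2}\circ\wpar{n-1}$ with $v\sb{n-2}\colon\cZ{n-2}\Gd\hookrightarrow\cM{n-2}\Gd$ a fiber inclusion, and a composite of a fibration with such an inclusion is not, in general, a fibration. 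So as written, $\cM{n-1}\Gd\times\sb{\cM{n-2}\Gd}\ast$ is not visibly a homotopy pullback, and the step does not close on ``$\cM{k}f$'' alone.

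The repair is straightforward but requires strengthening the induction. Since $v\sb{n-2}$ is monic, $\cZ{n-1}\Gd=\Ker(\dif{n-1})=\Ker(\wpar{n-1})=\cM{n-1}\Gd\times\sb{\cZ{n-2}\Gd}\ast$, a pullback along the genuine fibration $\wpar{n-1}$. But then you need $\cZ{n-2}f$ to be a weak equivalence, not just $\cM{n-2}f$. So run a joint induction proving simultaneously that $\cM{k}f$ \emph{and} $\cZ{k}f$ are weak equivalences for $k<n$: base case $\cZ{0}\Gd=\Ker(\zeta\sb{0}\colon G\sb{0}\to G\sb{-1})$, fiber of a matching fibration, so $\cZ{0}f$ is a weak equivalence by right properness; then derive $\cZ{n-1}f$ from $\cM{n-1}f$ and $\cZ{n-2}f$ as above; then derive $\cM{n}f$ from your main pullback square and the weak equivalences $f\sb{n}$, $\cZ{n-1}f$, $M\sb{n}f$; finally derive $\cZ{n}f$ from $\cM{n}f$ and $\cZ{n-1}f$. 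With that modification, and the standard fact that the matching functor $M\sb{n}$ sends levelwise weak equivalences between Reedy fibrant objects to weak equivalences (a right Quillen functor plus Ken Brown, or the same pullback induction you gesture at), your argument goes through.
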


\begin{proof}
See \cite[Proposition 5.7]{DKStB} and \cite[Lemma 2.7]{StoV}.
\end{proof}

We recall the following augmented dual of \cite[X, Proposition 6.3(ii)]{BKanH}:

\begin{lemma}\label{lmoore}
Let \w{\Xd\in\C\sp{\Dp\op}} be a Reedy fibrant augmented simplicial object
over a model category $\C$, and $\oB$ a cofibrant homotopy
cogroup object in $\C$. Then for any Moore chain \w{\beta\in\cM{n}[\oB,\Xd]}
for the augmented simplicial group \w[\upshape :]{[\oB,\Xd]}
\begin{enumerate}[(a)]
\item $\beta$ can be realized by a map \w[.]{b:\oB\to\cM{n}\Xd}
\item If $\beta$ is a Moore \emph{cycle}, in \w[,]{\cZ{n}[\oB,\Xd]} we can choose a
  nullhomotopy for \w[,]{\dif{n}\circ b} \w[.]{H:C\oB\to\cM{n-1}\Xd}
\end{enumerate}
\end{lemma}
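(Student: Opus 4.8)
The plan is to unwind the definitions so that the two assertions become concrete statements about maps out of the cofibrant cogroup object $\oB$, and then exploit Reedy fibrancy of $\Xd$ together with the injective-model-structure description of fibrant chain complexes in \eqref{eqstfibseq}. First I would note that $[\oB,\Xd]$ is an augmented simplicial group because $\oB$ is a homotopy cogroup object, and that its Moore chains object $\cM{n}[\oB,\Xd]$ is by \eqref{eqmoor} the subgroup of $[\oB,X\sb{n}]$ of homotopy classes killed by $d\sb{1},\dots,d\sb{n}$. For part (a), the key point is that the \emph{space-level} Moore chains object $\cM{n}\Xd=\cap\sb{i=1}\sp{n}\Ker(d\sb{i}:X\sb{n}\to X\sb{n-1})$ is a fibrant object of $\C$ (being a finite limit of fibrant objects along fibrations, using Reedy fibrancy of $\Xd$ — this is exactly the content behind Lemma \ref{lfibwe} and the description of injectively-fibrant chain complexes), and that the canonical map $\cM{n}\Xd\hra X\sb{n}$ is the inclusion $w\sb{n}$. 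So I would argue that a class $\beta\in\cM{n}[\oB,\Xd]$, represented by some $\beta':\oB\to X\sb{n}$ with $d\sb{i}\circ\beta'$ null for $1\le i\le n$, can be rigidified: because each $d\sb{i}:X\sb{n}\to X\sb{n-1}$ is a fibration (Reedy fibrancy, as the $d\sb{i}$ are among the maps built from matching objects) and $\oB$ is cofibrant, we may successively homotope $\beta'$ so that $d\sb{i}\circ\beta'$ is \emph{strictly} the basepoint map; iterating over $i=1,\dots,n$ and being careful that fixing a later $d\sb{i}$ does not disturb an earlier one — this is where one uses that the relevant limit is along a tower of fibrations, so corrections can be made compatibly — yields a strict lift $b:\oB\to\cM{n}\Xd$ with $[w\sb{n}\circ b]=\beta$.

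For part (b), suppose further that $\beta$ is a Moore \emph{cycle}, i.e.\ $d\sb{0}\circ\beta$ is also nullhomotopic; equivalently $\dif{n}\circ[b]=0$ in $\cM{n-1}[\oB,\Xd]$, where $\dif{n}=d\sb{0}\rest{\cM{n}\Xd}$. Then $\dif{n}\circ b:\oB\to\cM{n-1}\Xd$ is nullhomotopic, so since $\oB\hra C\oB$ is a cofibration and $\cM{n-1}\Xd$ is fibrant, the nullhomotopy extends to a map $H:C\oB\to\cM{n-1}\Xd$ with $H\rest{\oB}=\dif{n}\circ b$ and $H$ sending the cone point (i.e.\ the collapsed end) to the basepoint — this is the desired nullhomotopy of $\dif{n}\circ b$ in the standard cone sense of \S\ref{sass}. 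The only subtlety is to make sure the nullhomotopy lands in $\cM{n-1}\Xd$ rather than merely in $X\sb{n-1}$: this again follows because $\cM{n-1}\Xd$ is fibrant and the homotopy class $\dif{n}\circ[b]$ literally vanishes in the group $[\oB,\cM{n-1}\Xd]=\cM{n-1}[\oB,\Xd]$, using that $\cM{\ast}$ commutes with $[\oB,-]$ on the nose at the level of point-sets since $\oB$ is a single object and limits in $\C$ are computed before applying $[\oB,-]$.

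The main obstacle I anticipate is part (a): getting from a \emph{homotopy} class $\beta$ of Moore chains to an \emph{honest} map $b:\oB\to\cM{n}\Xd$ requires rectifying $n$ separate nullhomotopies simultaneously, and naively straightening $d\sb{i}\circ\beta'$ one index at a time risks reintroducing nontriviality in the $d\sb{j}$ for $j<i$. The clean way around this is to observe that $\cM{n}\Xd$ can be built as an iterated pullback $\cM{n}\Xd=\lim\bigl(\,\cdots\bigr)$ in which each stage adds one more kernel condition along a \emph{fibration} (thanks to Reedy fibrancy — precisely the structure recorded in \eqref{eqstfibseq} for the associated chain complex, or directly from the fact that $(d\sb{1},\dots,d\sb{n}):X\sb{n}\to M$ into the appropriate matching-type object is a fibration), so that a class in $[\oB,X\sb{n}]$ mapping to the basepoint class in $[\oB, M]$ lifts to a \emph{strict} preimage in $[\oB,\cM{n}\Xd]$ by the lifting property of $\oB$ cofibrant against that fibration. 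I would therefore cite the Reedy-fibrancy description of \S\ref{rmodsso} together with Lemma \ref{lfibwe} to identify $\cM{n}\Xd\to X\sb{n}$ and the requisite fibrations, and then the argument becomes a one-line obstruction-free lift. This is, as the statement says, just the augmented dual of \cite[X, Proposition 6.3(ii)]{BKanH}, so beyond setting up the fibrancy bookkeeping there is nothing new to prove.
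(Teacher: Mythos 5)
Your part (a) plan is sound but takes a more hands-on route than the paper. The paper's proof is one paragraph: it sets $\Ud := \mapa(\oB, \Xd)$, observes that $\Ud$ is a Reedy fibrant augmented simplicial space, and cites Stover's \cite[Lemma~2.7]{StoV} for the isomorphism $\pi\sb{j}\cM{n}\Ud \cong \cM{n}\pi\sb{j}\Ud$, extended to $j=0$ via the cogroup structure on $\oB$; since $\cM{n}\Ud = \mapa(\oB,\cM{n}\Xd)$, this gives the whole lemma (both directions of the comparison map) in one citation. You instead want to re-prove the surjectivity half directly from the fibration of $X\sb{n}$ onto a partial matching object, which is exactly the mechanism behind Stover's lemma and the cited dual \cite[X, 6.3(ii)]{BKanH}, so the two approaches are the same argument at different levels of abstraction. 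Nothing wrong with that.

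Part (b) has a genuine gap. You correctly isolate the needed fact, $[\oB,\cM{n-1}\Xd]=\cM{n-1}[\oB,\Xd]$, but your justification of it --- ``$\cM{\ast}$ commutes with $[\oB,-]$ on the nose at the level of point-sets since $\oB$ is a single object and limits in $\C$ are computed before applying $[\oB,-]$'' --- is false. The functor that commutes with the finite limit defining $\cM{n-1}$ is $\mapa(\oB,-)$, giving $\cM{n-1}\mapa(\oB,\Xd)=\mapa(\oB,\cM{n-1}\Xd)$. But $[\oB,-]$ is $\pi\sb{0}\mapa(\oB,-)$, and $\pi\sb{0}$ does \emph{not} commute with the kernels/pullbacks defining $\cM{n-1}$. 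The natural map $[\oB,\cM{n-1}\Xd]=\pi\sb{0}\cM{n-1}\Ud\to\cM{n-1}\pi\sb{0}\Ud=\cM{n-1}[\oB,\Xd]$ is therefore not a formal equality: its surjectivity is your part (a), and its \emph{injectivity} is precisely what part (b) needs, so that $\dif{n}\circ[b]=0$ in $\cM{n-1}[\oB,\Xd]$ forces $[\dif{n}\circ b]=0$ in $[\oB,\cM{n-1}\Xd]$ (rather than merely after postcomposing with $w\sb{n-1}$). That injectivity requires the same kind of Reedy-fibrancy lifting argument you use for surjectivity --- it is the other half of Stover's isomorphism --- and your proposal supplies no argument for it. (A smaller point: once $\dif{n}\circ b$ is known to be nullhomotopic as a map into $\cM{n-1}\Xd$, the nullhomotopy \emph{is} a map $H:C\oB\to\cM{n-1}\Xd$; the ``extend along $\oB\hra C\oB$'' step is not a separate move.)
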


\begin{proof}
Since \w{\Xd} is Reedy fibrant (see \cite[Ch.\ 15]{PHirM}), the augmented simplicial
space \w{\Ud=\mapa(\oB,\Xd)\in \Sa\sp{\Dp\op}} is Reedy fibrant, so by
\cite[Lemma 2.7]{StoV}, for every \w{j > 0} the inclusion
  \w{\iota:\cM{n}\Ud\hra\bU\sb{n}} induces an isomorphism
  \w[.]{\iota\sb{\ast}:\pi\sb{j}\cM{n}\Ud\to\cM{n}\pi\sb{j}\Ud}
Since \w{\cM{n}} is a limit, \w[.]{\cM{n}\Ud=\mapa(\oB,\cM{n}\Xd)}
Since $\oB$ is a homotopy cogroup object, \w{\pi\sb{0}\Ud} is still a group,
so the above holds for \w{j=0} too.

Note that in both the augmented and non-augmented case \w[,]{\cM{0}\Ud=\bU\sb{0}}
so the result also holds in dimensions \w[.]{n=0,-1}
\end{proof}

By analogy with the mapping cone for chain complexes
(see \cite[\S 1.5]{WeibHA}) we have the following notion, which will play a key
technical role in what follows:

\begin{defn}\label{dchaincof}
For any map \w{f:\Ad\to\Bd} in \w{\C\sp{\Dresp\op}}
we define the restricted augmented simplicial object \w{\Cd=\Cone(f)}
by setting \w{C\sb{n}:= B\sb{n}\amalg A\sb{n-1}} (where \w[),]{A_{-2}=\ast} with
\[
d\sb{i}\sp{C\sb{n}}~:=~
\begin{cases}
  \inc\sb{B\sb{n-1}} \circ (d\sb{0}\sp{B\sb{n}} \bot\, f\sb{n-1}) & \text{if}~i=0\\
 d\sb{i}\sp{B\sb{n}} \amalg\, d\sb{i-1}\sp{A\sb{n-1}} & \text{if}~i\geq 1~,
\end{cases}
\]
\noindent in the notation of \S \ref{snac}, and a natural inclusion of restricted
augmented simplicial objects \w{\ell:\Bd\hra\Cone(f)} which is the identity
in degree \w[.]{-1}

For the required face identity, we may verify that
$$
(d\sb{0}\circ d\sb{j})\rest{A\sb{n-1}}~=
(d\sb{0}\sp{C\sb{n-1}})\rest{A\sb{n-2}}\circ d\sb{j-1}\sp{A\sb{n-1}}=
f\sb{n-2}\circ d\sb{j-1}\sp{A\sb{n-1}}~=
d\sb{j-1}\sp{B\sb{n-1}}\circ f\sb{n-1}~=(d\sb{j-1}\circ d\sb{0})\rest{A\sb{n-1}}
$$
\noindent for all \w[,]{0<j} while
$$
(d\sb{i}\circ d\sb{j})\rest{A\sb{n-1}}~=~
d\sb{i-1}\sp{A\sb{n-2}}\circ d\sb{j-1}\sp{A\sb{n-1}}~=~
d\sb{j-2}\sp{A\sb{n-2}}\circ d\sb{i-1}\sp{A\sb{n-1}}~=~
(d\sb{j-1}\circ d\sb{i})\rest{A\sb{n-1}}
$$
\noindent for all \w[.]{1\leq i<j}
\end{defn}

\begin{example}\label{xcone}
  Suppose that \w{\Ad} is concentrated in one dimension, for example,
  \w{\Ad=\E(\oG{n}\oS{n-1})} and \w{\Bd} is \wwb{n-1}truncated.
  Then in dimensions \w[,]{k<n} the inclusion \w{\ell\sb{k}} is an isomorphism,
  \w{B\sb{k} \cong B\sb{k} \amalg \ast} (since $\C$ is pointed), with the
  face maps defined through these isomorphisms.
  In dimension \w[,]{k=n} we have \w{\Cone(f)\sb{n}=\ast \amalg \oG{n} \cong \oG{n}}
  with \w{d\sb{0} \cong f\sb{n-1}} and all higher face maps zero.
\end{example}

\begin{defn}\label{dscwo}
  An unaugmented simplicial object \w{\Gd\in\C\sp{\Dop}} over a pointed category $\C$
  is called a \emph{CW object} if it is equipped with a \emph{CW basis}
\w{(\oG{n})\sb{n=0}\sp{\infty}} in $\C$ such that
\w[,]{G\sb{n}=\oG{n}\amalg L\sb{n}\Gd} and \w{d\sb{i}\rest{\oG{n}}=0}
for \w[.]{1\leq i\leq n} By the simplicial identities the
restriction of the $0$-th face map \w{d\sb{0}\rest{\oG{n}}:\oG{n}\to G\sb{n-1}}
factors as the composite
\mydiagram[\label{eqattach}]{
\oG{n} \ar[r]^(0.4){\odz{G\sb{n}}} &
\cZ{n-1}\Gd~ \ar@{^{(}->}[r]\sp{v\sb{n-1}} &
\cM{n-1}\Gd~ \ar@{^{(}->}[r]^(0.6){w\sb{n-1}} & G\sb{n-1}
}
\noindent (in the notation of \S \ref{dmco}, with
\w[),]{v\sb{n-1}\circ\odz{G\sb{n}}=(\dif{n})\rest{\oG{n}}} and we call
\w{\odz{G\sb{n}}} the \emph{$n$-th attaching map} for \w[.]{\Gd}
\end{defn}

The following observation essentially follows from Example \ref{xcone}
and the construction of $\cL$.

\begin{lemma}\label{lscwo}
Any CW object \w{\Gd} over $\C$ with CW basis \w{(\oG{n})\sb{n=0}\sp{\infty}}
can be constructed inductively as follows, starting with \w{\sk{0}\Gd:=\cd{\oG{0}}}
(see \S \ref{snac}): given the \wwbu{n-1}truncated simplicial object
\w[,]{\sk{n-1}\Gd} the attaching map \w{\odz{G\sb{n}}:\oG{n}\to\cZ{n-1}(\sk{n-1}\Gd)}
is equivalent to a chain map \w{f:\oG{n}\oS{n-1}\to\cMs(\sk{n-1}\Gd)}
(see \S \ref{dspherecc}) and so to an adjoint restricted simplicial map
\w{\widetilde{f}:\E(\oG{n}\oS{n-1})\to\U\sk{n-1}\Gd} (see \S \ref{schaincx});
we define \w{\sk{n}\Gd} to be the pushout in $n$-truncated simplicial objects
\mydiagram[\label{eqnskeleton}]{
\ar@{}[drr]|(0.71){\framebox{\scriptsize{PO}}}
\cL\U\sk{n-1}\Gd \ar[d]\sb{\cL\ell} \ar[rr]\sp{\vartheta} &&
\sk{n-1}\Gd \ar[d] \\
\cL\Cone(\widetilde{f}) \ar[rr] && \sk{n}\Gd~,
}
\noindent where \w{\vartheta:\cL\U\to\Id} is the counit for the adjunction
of \S \ref{dlmo}, and $\ell$ is as in \S \ref{dchaincof} (see \wref[).]{skeletonl}
\end{lemma}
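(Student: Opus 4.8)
The plan is to induct on $n$, matching the pushout \eqref{eqnskeleton} with $\sk{n}\Gd$ one dimension at a time, using that pushouts of (truncated) simplicial objects are computed levelwise. The base case $n=0$ is immediate: $L\sb{0}\Gd=\ast$, so the CW condition forces $G\sb{0}=\oG{0}$, matching $\sk{0}\Gd=\cd{\oG{0}}$.

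For the inductive step I would first describe $\cL\Cone(\widetilde{f})$ explicitly. Since $\widetilde{f}:\E(\oG{n}\oS{n-1})\to\U\sk{n-1}\Gd$ has source concentrated in dimension $n-1$ and $(n-1)$-truncated target, Example \ref{xcone} applies verbatim: $\Cone(\widetilde{f})$ agrees with $\U\sk{n-1}\Gd$ in dimensions $<n$, with $\ell$ the identity there, while $\Cone(\widetilde{f})\sb{n}\cong\oG{n}$ with $d\sb{0}\cong\widetilde{f}\sb{n-1}$ and all higher faces zero. Unwinding the adjunction $\E\dashv\cMs$ --- under which $\widetilde{f}$ is $\E$ applied to the chain map $f$ corresponding to $\odz{G\sb{n}}$, postcomposed with the counit $\E\cMs\to\Id$ --- identifies $\widetilde{f}\sb{n-1}$ with the composite $w\sb{n-1}\circ v\sb{n-1}\circ\odz{G\sb{n}}$ of \eqref{eqattach}, that is, with $d\sb{0}\rest{\oG{n}}$. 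Applying $\cL$, which adjoins only a free degenerate summand built from dimensions $<n$ and is otherwise the identity on underlying objects, one sees that $\cL\ell$ is an isomorphism in dimensions $<n$, while in dimension $n$ one has $(\cL\Cone(\widetilde{f}))\sb{n}\cong\oG{n}\amalg D$ and $(\cL\U\sk{n-1}\Gd)\sb{n}\cong D$ for a common free degenerate object $D$, with $(\cL\ell)\sb{n}$ the inclusion of the summand complementary to $\oG{n}$.

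It then remains to compute the pushout \eqref{eqnskeleton}. In dimensions $<n$ the map $\cL\ell$ is an isomorphism, so the pushout coincides with $\sk{n-1}\Gd$, which is exactly $\sk{n}\Gd$ in those dimensions. In dimension $n$, since $(\cL\ell)\sb{n}:D\to\oG{n}\amalg D$ is a coproduct inclusion, pushing out along it simply adjoins $\oG{n}$ (the pushout of $X\leftarrow D\to\oG{n}\amalg D$ along a coproduct inclusion, for any map $D\to X$, is just $\oG{n}\amalg X$; here $X=(\sk{n-1}\Gd)\sb{n}$ and $D\to X$ is $\vartheta\sb{n}$): thus the pushout in dimension $n$ is $\oG{n}\amalg(\sk{n-1}\Gd)\sb{n}=\oG{n}\amalg L\sb{n}\Gd=G\sb{n}$. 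Moreover its degeneracies restrict on $L\sb{n}\Gd$ to those of \eqref{equnivdeg} inherited from $\sk{n-1}\Gd$, the summand $\oG{n}$ carries no degeneracy (being the non-degenerate cell of $\Cone(\widetilde{f})$), and the faces on $\oG{n}$ are zero for $i\ge 1$ and $d\sb{0}\rest{\oG{n}}$ for $i=0$, by Example \ref{xcone} and the identification above. This is precisely the CW structure of $\sk{n}\Gd$ in dimension $n$, so the pushout equals $\sk{n}\Gd$ and the induction closes.

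The delicate point --- really the only content beyond formal manipulation --- is the dimension-$n$ bookkeeping: that the free degenerate summand $D$ manufactured by $\cL\Cone(\widetilde{f})$ is glued, via the counit $\vartheta$, onto the genuine latching object $L\sb{n}\Gd$ of $\sk{n-1}\Gd$ without producing spurious degeneracies, and that $\oG{n}$ is adjoined as a purely non-degenerate summand. This is transparent once one recalls that $\cL$ is a left adjoint, hence preserves the pushout, and that $\vartheta$ is by construction the comparison that turns free degeneracies into the genuine ones of $\Gd$; everything else follows from the explicit form of $\cL$ in \S \ref{dlmo} and Example \ref{xcone}, which is exactly why the lemma was asserted to follow from those two ingredients.
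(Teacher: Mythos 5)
Your proof is correct and fills in, in the intended way, the argument the paper only sketches (the paper says the lemma ``essentially follows from Example \ref{xcone} and the construction of $\cL$'' without elaborating). You correctly identify $\widetilde{f}\sb{n-1}$ with $d\sb{0}\rest{\oG{n}}$ via the counit $w\sb{n-1}$ of $\E\dashv\cMs$, correctly use Example \ref{xcone} to see $\cL\ell$ is an isomorphism below dimension $n$ and a coproduct inclusion in dimension $n$, and correctly compute the levelwise pushout; the verification that the resulting face and degeneracy maps agree with the CW structure of $G\sb{n}$ is exactly what is needed.
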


This yields an explicit description of \w[,]{G\sb{n}=\oG{n}\amalg L\sb{n}\Gd}
since by induction we see that the $n$-th latching object of \w{\Gd} is given by:
\begin{myeq}\label{eqslatch}
L\sb{n}\Gd~:=~
\coprod\sb{0\leq k\leq n-1}~~\coprod\sb{0\leq i\sb{1}<\dotsc<i\sb{n-k-1}\leq n-1}~
\oG{k}~,
\end{myeq}
\noindent where the iterated degeneracy map
\w[,]{s\sb{i\sb{n-k-1}}\dotsc s\sb{i\sb{2}}s\sb{i\sb{1}}} restricted to the
basis \w[,]{\oG{k}} is the inclusion into the copy of \w{\oG{k}} indexed by
$k$ (in the first coproduct) and \w{(i\sb{1},\dotsc,i\sb{n-k-1})} (in the second).

We note for future reference the following useful fact (which we shall not need here):

\begin{lemma}\label{lfreesimp}
Every free simplicial \PAal \w{\Vd} has a CW basis \w[.]{\{\oV{n}\}\sb{n=0}\sp{\infty}}
\end{lemma}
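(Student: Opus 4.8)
The goal is to show that any free simplicial \PAal \w{\Vd} admits a CW basis, i.e.\ a collection \w{\{\oV{n}\}\sb{n=0}\sp{\infty}} of free sub-\PAal[s] with \w{V\sb{n}=\oV{n}\amalg L\sb{n}\Vd} and all the positive face maps \w{d\sb{i}\rest{\oV{n}}} (for \w[)]{1\leq i\leq n} trivial. The plan is to proceed by induction on the simplicial dimension $n$, at each stage producing \w{\oV{n}} as a free complement to the latching object \w{L\sb{n}\Vd} inside \w[.]{V\sb{n}} First, for \w{n=0} there is nothing to prove: set \w{\oV{0}:=V\sb{0}} (which is free by hypothesis, since \w{\Vd} is levelwise free because a free simplicial \PAal is free in each degree), and \w{L\sb{0}\Vd=\ast} so the condition \w{V\sb{0}=\oV{0}\amalg L\sb{0}\Vd} is automatic, with no positive face maps to check.

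For the inductive step, suppose \w{\oV{k}} has been chosen for all \w[.]{k<n} The first task is to identify \w{L\sb{n}\Vd} as a free sub-\PAal of \w{V\sb{n}} which is a \emph{retract}. The natural map \w{\sigma\sb{n}:L\sb{n}\Vd\to V\sb{n}} is a split monomorphism: the latching object is built out of iterated degeneracies of the \w{V\sb{k}} for \w[,]{k<n} and by the simplicial identity \w{d\sb{i}s\sb{i}=\Id} one obtains a retraction onto each summand, hence (assembling these) a retraction \w[.]{r:V\sb{n}\epic L\sb{n}\Vd} Here I use that \w{L\sb{n}\Vd} is free: since by induction \w{\Vd} is levelwise free and degeneracies of free objects are free, \w{L\sb{n}\Vd} is a coproduct of free \PAal[s], hence free. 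Now invoke Assumption \S \ref{afree}\eqref{freesplit}: given the split inclusion \w{j:L\sb{n}\Vd\hra V\sb{n}} of free \PAal[s] with retraction $r$, the object \w{V\sb{n}} splits (non-canonically) as \w{L\sb{n}\Vd\amalg\oV{n}} with \w{\oV{n}:=\Ker(r)} again free. This defines the candidate basis object \w{\oV{n}} and gives \w[.]{V\sb{n}=\oV{n}\amalg L\sb{n}\Vd}

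It remains to verify that \w{d\sb{i}\rest{\oV{n}}=0} for \w[.]{1\leq i\leq n} This is where a little care is needed, and I expect it to be the main (though not deep) obstacle, since it is not automatic from an arbitrary splitting: the subtlety is that the splitting produced by \S \ref{afree}\eqref{freesplit} need not a priori be compatible with the faces. The resolution is that the CW basis element should be defined not as a naive complement but via the \emph{normalization}: one takes \w{\oV{n}} to be (a free complement inside) the intersection \w{N\sb{n}\Vd:=\bigcap\sb{i=1}\sp{n}\Ker(d\sb{i}:V\sb{n}\to V\sb{n-1})} of the positive-face kernels, which is the \PAal analogue of the Moore normalization \w{\cM{n}} of \S \ref{dmco} (this uses that \w{\PAlg} is semi-abelian enough — or concretely, that one may form such kernels and that the classical Dold--Kan / normalization decomposition \w{V\sb{n}=N\sb{n}\Vd\amalg L\sb{n}\Vd} holds for simplicial objects in a sufficiently algebraic category, cf.\ the Moore-chains formalism of \S \ref{schaincx}). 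Thus one applies \S \ref{afree}\eqref{freesplit} to the split inclusion \w{L\sb{n}\Vd\hra V\sb{n}} with its canonical retraction, \emph{and} simultaneously notes that the canonical complement is exactly \w[;]{N\sb{n}\Vd} freeness of \w{N\sb{n}\Vd} then follows from \S \ref{afree}\eqref{freesplit} itself (applied with the roles arranged so that the free complement to the free retract \w{L\sb{n}\Vd} is identified with \w[),]{N\sb{n}\Vd} or alternatively from \S \ref{afree}\eqref{freemodule} since over a PID a direct summand of a free module is free. By construction every positive face map vanishes on \w[,]{\oV{n}:=N\sb{n}\Vd} which is precisely the CW-basis condition; and \w{d\sb{0}\rest{\oV{n}}} then factors through \w{\cZ{n-1}} exactly as in \wref[,]{eqattach} so the inductive construction of Lemma \ref{lscwo} applies and \w{\Vd} is reconstructed from the \w[.]{\{\oV{n}\}} This completes the induction and the proof.
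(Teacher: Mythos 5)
Your approach matches the paper's proof, which is quite terse: induct on $n$, observe via $d_i s_i = \Id$ that $L_n\Vd$ (described explicitly as a coproduct of copies of the $\oV{k}$ for $k<n$, as in \eqref{eqslatch}) is a retract of $V_n$, and then apply \S\ref{afree}\eqref{freesplit} to split off a free complement $\oV{n}$. Where you go beyond the paper is in explicitly raising the requirement $d_i|_{\oV{n}}=0$ for $1\leq i\leq n$, which is part of Definition \ref{dscwo} and which a generic splitting supplied by \S\ref{afree}\eqref{freesplit} has no reason to satisfy. The paper's one-sentence proof is silent on this, so you have spotted a genuine gap in its brevity. Your proposed fix --- take $\oV{n}$ inside the Moore normalization $N_n\Vd:=\bigcap_{i\geq 1}\Ker(d_i)$ --- is the right idea, but the decomposition $V_n\cong N_n\Vd\amalg L_n\Vd$ should not be waved through as ``classical Dold--Kan'': for simplicial objects in non-abelian algebraic categories (even simplicial groups) the corresponding decomposition is a semidirect product, not a coproduct, so the claim needs an argument in the $\PAlg$ setting. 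The cleanest way to finish is instead to start from \emph{any} free complement $\Lambda''$ supplied by \S\ref{afree}\eqref{freesplit}, choose free generators of $\Lambda''$, and normalize each generator by peeling off its degenerate corrections (the standard simplicial-normalization trick); each correction lies in $L_n\Vd$, so the modified generators still freely generate a complement to $L_n\Vd$ and now lie in $N_n\Vd$. With that step made explicit, your argument --- and the paper's --- is complete.
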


\begin{proof}
This follows from \S \ref{afree}\eqref{freesplit} by induction on the
simplicial dimension \w[,]{n\geq 0} since
the simplicial identity \w{d\sb{i}s\sb{i}=\Id} shows that \w{V\sb{n-1}} splits
off \w{V\sb{n}} in various ways, so \w{L\sb{n}\Vd} does, too,
as in \wref[.]{eqslatch}
\end{proof}

\begin{defn}\label{dcwres}
A \emph{CW-resolution} of a \PAal \w{\Lambda\in\PAlg} is a cofibrant replacement
\w{\vare:\Gd\xra{\simeq}\cd{\Lambda}} (in the model category of simplicial \PAal[s]
from \ref{psimptal}), which is also a CW object with CW basis
\w{(\oG{n})\sb{n=0}\sp{\infty}} consisting of free \PAal[s.]
\end{defn}

\begin{assumes}\label{setmc}
In order to formulate our results most efficiently, in addition to the assumptions
of \S \ref{sass} and \S \ref{afree} we henceforth also require:
\begin{enumerate}[(1)]
\item The category \w{\C\sp{\Dop}} of simplicial objects over $\C$ has a
resolution model category structure (see \cite{JardB} and
compare \cite{DKStE}) with respect to $\bA$.
\item\label{realfun} There is a \emph{realization functor}
  \w[,]{\|-\|:\C\sp{\Dop}\to\C} equipped with initial augmentation
  \w[,]{\eta:\Wd\to\|\Wd\|} such that for any
augmented simplicial object \w{\vare:\Wd\to\bY} over $\C$ where the
associated augmented simplicial \PAal \w{\vare\sb{\#}:\piA \Wd\to\piA\bY} is
acyclic (that is, \w{\vare\sb{\#}:\piA \Wd\to\cd{\piA\bY}} is a weak equivalence, as in
\S \ref{psimptal}), the natural map \w{\|\Wd\|\to\bY} induces an isomorphism in
\w[.]{\PAlg}

This would typically be defined as a coend, as for the usual geometric realization
(but see \cite[4.10]{BJTurnR}).
\end{enumerate}
\end{assumes}

These assumptions hold in our motivating example of \S \ref{egmapalg}:

\begin{example}\label{egfree}
Let \w{\C=\Top\sb{0}} (see \S \ref{snac}) and \w{\bA=\bS{n}} for some \w[.]{n\geq 1}
In this case, \w{\|\Wd\|} is the geometric realization, and
condition \eqref{realfun} follows from the collapse of the Bousfield-Friedlander spectral
sequence under the given hypotheses (see \cite[Theorem B.5]{BFrieH}).
However, $R$-local spaces in \w{\Top\sb{0}} also satisfy
these assumptions, as do differential graded (Lie) algebras over $\QQ$
(see \cite{QuiR}), with \w{\|-\|} a suitable homotopy colimit \wh and more
generally, for other \ww{E\sp{2}}-model categories in the sense of
\cite[\S 4.8]{BJTurnR}.
\end{example}

\begin{remark}\label{rcw}
If we set \w{Z\sb{-1}\Gd:=\Lambda} and\w[,]{\odz{G\sb{0}}:=\vare}
any CW object \w{\Gd} for which each \w{\oG{n}} is a free \PAal
and each attaching map \w{\odz{G\sb{n}}} surjects onto \w{Z\sb{n-1}\Gd}
\wb{n\geq 0} is a CW-resolution of $\Lambda$. We can then make \w{\Gd}
into an \emph{augmented} simplicial CW object by setting \w{G\sb{-1}:=\Lambda}
with \w{\vare\sb{0}:G\sb{0} \to \Lambda} as the augmentation.
\end{remark}

%
%
\sect{Realizing simplicial \PAal resolutions}
\label{crsar}

The main technical tool needed in this paper is an explicit version,
and generalization, of \cite[Theorem 3.16]{BlaCW}, which states that any algebraic
resolution \w{\Vd} of a realizable \Pa $\Lambda$ may be realized by a simplicial
space \w[.]{\Wd} This \w{\Wd} must be of a particular form, which we now describe.
Throughout this section we assume that \w{\bA\in\C} is as in \S \ref{setmc},
and \w{\PA} as in \S \ref{cback}.A.

Our goal here is to show how to realize a CW (algebraic) resolution \w{\Vd}
of a realizable \PAal \w[,]{\Lambda=\piA\bY}  with CW basis
\w[,]{\{\oV{n}\}\sb{n=0}\sp{\infty}} by an augmented simplicial object
\w{\Wd \to \bY} in $\C$.  We would like to mimic the CW construction of
\w{\Vd} by exhibiting \w{\Wd} as a homotopy colimit of a sequence of maps
\begin{myeq}\label{eqftower}
\W{0}~\xra{\prn{1}}~\W{1}~\xra{\prn{2}}~\W{2}~\to~\dotsc~
\W{n-1}~\xra{\prn{n}}~\W{n}~\to~\dotsc~,
\end{myeq}
\noindent where \w{\W{n}} realizes \w{\Vd} through simplicial dimension $n$.

In the induction step, we pass from \w{\Xd=\W{n-1}} to \w{\W{n}} by attaching an
object \w{\oB} realizing \w{\oV{n}} in simplicial dimension $n$, as for \w[.]{\Vd}
By Lemma \ref{lscwo}, it is enough to find an attaching map \w{f:\oB\oS{n-1}\to\cMs{\Xd}}
in \w[.]{\ChC}  Unfortunately, there are obstructions to doing so
in general (see \cite{BJTurnR,BJTurnHA}), hence we must:
\begin{enumerate}[(1)]
\item replace \w{\oB\oS{n-1}} with a (strongly) cofibrant object \w[;]{\bDs}
\item realize the algebraic attaching map $f$ of Lemma \ref{lscwo} by a map
  \w{F:\bDs \to \cMs(\Xd)} in \w[;]{\ChC} and
\item modify the result of Lemma \ref{lscwo} to obtain a Reedy cofibration
  \w[,]{\Xd \to \Xd[F]} with Reedy fibrant target
  (see \wref[),]{eqdoublereplace} playing the role of \w{\prn{n}} above.
\end{enumerate}
This section will treat each of these steps separately.

\supsect{\protect{\ref{crsar}}.A}{Strongly cofibrant chain complexes}

\label{smakesc}
Recall from \S \ref{smodchaincx} that weak equivalences in \w{\ChC\sp{\leq n}}
are defined entrywise and that an $n$-chain complex \w{\bDs}  is
strongly cofibrant precisely when the structure map \w{\Cof(\df{k+1})\to \bD\sb{k-1}}
(out of ``$k$-chains modulo boundaries'') is a cofibration for each $k$.
Thus if \w{\bDs\in\ChC\sp{\leq n-1}} is a strongly cofibrant approximation
to \w[,]{\oB\oS{n-1}} \w{\bD\sb{k}} must be contractible for \w[,]{k\neq n-1} since
then \w[.]{(\oB\oS{n-1})\sb{k}=\ast}

As explained in \cite[\S 15.2]{PHirM}, it is natural to construct \w{\bDs}
by a descending induction on \w[,]{0\leq k\leq n-1} starting with
\w{\bD\sb{n-1}=\oB} (assumed cofibrant by \S \ref{cback}.A). Since \w[,]{\bD\sb{n}=\ast}
also \w[,]{\Cof(\df{n})=\oB} therefore by construction \w{\bD\sb{n-2}} must be a
cone on \w{\oB} in the sense of \cite[I, \S 2]{QuiH}.

We could of course choose \w{\bD\sb{n-2}} to be the standard cone \w{C\oB}
of \S \ref{sass}, but we will require more general (strongly) cofibrant objects,
in order to replace certain maps by fibrations (see Lemma \ref{tresext}).
Therefore, for each \w{0\leq k\leq n-1} we merely require that there be
given a (strict) cofibration sequence in $\C$:
\mydiagram[\label{eqconesus}]{
\osB{k}~\quad \ar@{^{(}->}[r]\sp{\oi{k}} & ~\oCsB{k}~
\ar@{->>}[r]\sp{\oq{k}} & ~\osB{k+1}~
}
\noindent with \w{\oCsB{k}\simeq\ast} for \w{0\leq k\leq n-1} (with the convention
that \w[).]{\oCsB{-1}:=\oB} Thus \w{\osB{k}} is indeed a model for the
suspension of \w[,]{\osB{k-1}} and, as a consequence, the $k$-th suspension of
\w[,]{\oB} in the sense of \cite[\textit{loc.\ cit.}]{QuiH} (see \wref{eqmodconesusp}
below).

If we let \w[,]{\bD\sb{k}:=\oCsB{n-k-2}} the differential
\w{\diff{k}{\bD}:\bD\sb{k}\to \bD\sb{k-1}} is defined to be the composite of
\mydiagram[\label{eqdif}]{
\oCsB{n-k-2}~\ar@{->>}[rr]\sp{\oq{n-k-2}} &&
\osB{n-k-1}~\ar@{^{(}->}[rr]\sp{\oi{n-k-1}} && \oCsB{n-k-1}~,
}
\noindent even for \w[.]{k=0} Moreover, \w{\osB{n-k}=\Cof(\diff{k}{\bD})} (a strict
cofiber, since \w{\oq{n-k-2}} is epic), so the cofibration
\w{\oi{n-k}} shows that \w{\bDs} is indeed strongly cofibrant.

The first three stages of the process are depicted in the following
commutative diagram:
\mydiagram[\label{eqmark}]{
&&&& \ast \ar@{^{(}->}[d]\sp{\partial\sb{n}\sp{\bD}} \ar[rr] &&
\ast \ar[d]\sp{\partial\sb{n}} \\
&&&& \oB \ar[dll] \ar@{^{(}->}[d]\sb{\oi{0}=}\sp{\partial\sb{n-1}\sp{\bD}}
\ar[rr]\sp{\Id} &&
\oB \ar[d]\sp{\partial\sb{n-1}} \\
&& \ast \ar@{^{(}->}[d] &&
\oCB \ar[dll]\sb{\oq{0}} \ar[d]\sp{\partial\sb{n-2}\sp{\bD}} \ar[rr]\sp{\sim} &&
\ast \ar[d]\sp{\partial\sb{n-2}} \\
&& \osB{} \ar[dll] \ar@{^{(}->}[rr]\sp{\oi{1}} &&
\oCsB{} \ar[dll]\sb{\oq{1}} \ar[d]\sp{\partial\sb{n-3}\sp{\bD}}
\ar[rr]\sp{\sim} && \ast \ar[d]\sp{\partial\sb{n-3}} \\
\ast \ar@{^{(}->}[rr] && \osB{2} \ar@{^{(}->}[rr]\sp{\oi{2}} &&
\oCsB{2}  \ar[rr]\sp{\sim} && \ast
}

The parallelograms on the left are (homotopy) pushouts,
and the triangles are used to define the differentials, with cofibrations and
weak equivalences as indicated.

When we use standard cones and suspensions throughout, we obtain the
\emph{standard} cofibrant replacement for \w[,]{\oB\oS{n-1}} which we denote
by \w[.]{\Dsn{n}(\oB)}

\supsect{\protect{\ref{crsar}}.B}{Realizing attaching maps}

Assume given a CW resolution \w{\Vd} of \w{\Lambda=\piA\bY} in \w[,]{\PAlg\sp{\Dop}}
with CW basis \w[,]{\{\oV{n}\}\sb{n=0}\sp{\infty}} and a Reedy fibrant
\wwb{n-1}truncated augmented simplicial object \w{\Xd} in $\C$,
realizing \w{\Vd} through simplicial dimension \w[,]{n-1} with \w[.]{\bX\sb{-1}=\bY}
In addition, assume we have (cofibrant) $\oB$ realizing \w[,]{\oV{n}}
and we would like to construct a map \w{\oB\oS{n-1}\to\cMs{\Xd}} in \w{\ChC\sp{\leq n-1}}
realizing the (algebraic) chain map \w[,]{f:\oV{n}\oS{n-1}\to\cMs(\sk{n-1}\Vd)}
in order to apply Lemma \ref{lscwo}.  As noted above, we must first replace
\w{\oB\oS{n-1}} by a (strongly) cofibrant \w{\bDs} to produce
\w[,]{F:\bDs\to\cMs{\Xd}} using the following

\begin{prop}\label{pobst}
Given a CW resolution \w[,]{\Vd} a Reedy fibrant \wwbu{n-1}truncated augmented
simplicial object \w[,]{\Xd} an object \w{\oB\in\C} realizing \w[,]{\oV{n}}
and a strongly cofibrant \w{\bDs} as above,
the algebraic attaching map \w{f:\oV{n}\oS{n-1}\to\cMs(\sk{n-1}\Vd)} can be
realized by a chain map \w[.]{F:\bDs \to\cMs{\Xd}}
\end{prop}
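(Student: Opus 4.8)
The plan is to realize \w{f} one simplicial degree at a time. Both its source \w{\oV{n}\oS{n-1}} and the strongly cofibrant replacement \w{\bDs} are concentrated in degree \w{n-1} up to contractibility, since \w{\bD\sb{n-1}=\oB} while \w{\bD\sb{k}=\oCsB{n-k-2}\simeq\ast} for \w[,]{k<n-1} and the only nonzero component of \w{f} is the algebraic attaching map \w[,]{f\sb{n-1}=v\sb{n-1}\circ\odz{V\sb{n}}:\oV{n}\to\cM{n-1}\Vd} so it is enough to produce a single map \w{b:\oB\to\cZ{n-1}\Xd} realizing \w[,]{\odz{V\sb{n}}} and then to let \w{F} be \w{v\sb{n-1}\circ b} in degree \w{n-1} and the trivial map in every lower degree. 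Given such a \w[,]{b} all the chain-map identities \w{\dif{k}\circ F\sb{k}=F\sb{k-1}\circ\diff{k}{\bD}} reduce to \w{0=0} -- in degree \w{n-1} because \w{v\sb{n-1}\circ b} lands in the \emph{strict} kernel \w[,]{\cZ{n-1}\Xd=\Ker(\dif{n-1})} and in lower degrees because \w{F\sb{k}=0} there -- and \w{\piA F} agrees with \w{f} under the identifications \w{\piA\bDs\cong\oV{n}\oS{n-1}} (homotopy invariance of \w{\piA} kills the contractible \w[)]{\bD\sb{k}} and \w[.]{\piA\cMs(\Xd)\cong\cMs(\sk{n-1}\Vd)}

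The substance is thus the construction of \w[,]{b} whose one nontrivial input is that \w{\piA} of the \emph{strict} cycle object \w{\cZ{n-1}\Xd} computes the algebraic cycles \w[.]{\cZ{n-1}\Vd} First I would record that, since \w{\Xd} is Reedy fibrant, Lemma \ref{lfibwe} makes \w{\cMs(\Xd)} an injective-fibrant chain complex in \w[,]{\C} so that each \w{\cZ{k}\Xd\xmonic{v\sb{k}}\cM{k}\Xd\epic\cZ{k-1}\Xd} is a \emph{strict} fibration sequence (cf.\ \wref[),]{eqstfibseq} and that, exactly as in the proof of Lemma \ref{lmoore} (applying \cite[Lemma 2.7]{StoV} to the Reedy fibrant simplicial sets \w[),]{\mapa(\Sigma\sp{j}\bA,\Xd)} one has natural isomorphisms \w[.]{\piA\cM{k}\Xd\cong\cM{k}\piA\Xd} Then I would prove, by induction on \w[,]{-1\leq k\leq n-1} that \w[,]{\piA\cZ{k}\Xd\cong\cZ{k}\piA\Xd} starting from \w[:]{\piA\cZ{-1}\Xd=\piA\bY=\Lambda=\cZ{-1}\Vd} applying \w{\mapa(\Sigma\sp{j}\bA,-)} to the above fibration sequence gives a long exact sequence whose relevant connecting map is, via the realization of \w{\sk{n-1}\Vd} by \w{\Xd} and the inductive hypothesis, the algebraic differential \w[,]{\dif{k}:\cM{k}\Vd\to\cZ{k-1}\Vd} which is surjective because \w{\Vd} is a resolution (so \w{H\sb{k-1}(\cMs\Vd)=0} for \w[,]{k\geq 1} while the augmentation \w{V\sb{0}\epic\Lambda} is onto when \w[);]{k=0} surjectivity at every \w{\Sigma\sp{j}\bA} then forces \w[,]{\piA\cZ{k}\Xd\cong\Ker(\dif{k}:\cM{k}\piA\Xd\to\cM{k-1}\piA\Xd)=\cZ{k}\piA\Xd} closing the induction. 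In particular \w[,]{\piA\cZ{n-1}\Xd\cong\cZ{n-1}\piA\Xd\cong\cZ{n-1}\Vd} the last isomorphism being induced by the realization since \w{\cZ{n-1}} is the top homology of the \wwbu{n-1}truncated complex.

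With this in hand, \w{\odz{V\sb{n}}} becomes a map of \PAal[s] \w{\oV{n}\to\piA\cZ{n-1}\Xd} out of the \emph{free} \PAal \w[.]{\oV{n}=\piA\oB} Writing \w{\oB=\coprod\sb{i}\Sigma\sp{n\sb{i}}\bA} -- so that \w{\piA\oB=\coprod\sb{i}\piA\Sigma\sp{n\sb{i}}\bA} by \wref{eqfreepa} -- this \PAal map is determined by its restrictions to the summands \w[,]{\Sigma\sp{n\sb{i}}\bA} which by Lemma \ref{lfreeta} are elements of \w[.]{(\piA\cZ{n-1}\Xd)\lin{\Sigma\sp{n\sb{i}}\bA}=[\Sigma\sp{n\sb{i}}\bA,\,\cZ{n-1}\Xd]} Choosing a representative map \w{\Sigma\sp{n\sb{i}}\bA\to\cZ{n-1}\Xd} for each \w{i} and taking their coproduct yields \w{b:\oB\to\cZ{n-1}\Xd} with \w[,]{\piA b=\odz{V\sb{n}}} using \wref{eqfreepa} once more to identify \w{\piA} of the coproduct. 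Assembling \w{F} as in the first paragraph then gives the required chain map \w[.]{F:\bDs\to\cMs(\Xd)}

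I expect the main obstacle to be the cycle identification \w{\piA\cZ{k}\Xd\cong\cZ{k}\piA\Xd} of the second paragraph: one must keep the strict fibration sequences coming from injective fibrancy of \w[,]{\cMs(\Xd)} the comparisons \w[,]{\piA\cM{k}\Xd\cong\cM{k}\piA\Xd} and the acyclicity of \w{\Vd} correctly aligned through the long exact sequences, with the bottom of the induction -- where the augmentation \w{\bX\sb{0}\to\bY} stands in for a differential -- requiring separate attention. Everything else is formal: once \w{b} exists the lower-degree components of \w{F} are forced to be trivial, and the chain-map identities hold automatically because \w{v\sb{n-1}\circ b} factors through the strict kernel of \w[.]{\dif{n-1}}
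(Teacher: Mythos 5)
Your proposal takes a genuinely different route from the paper.

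The paper starts by choosing \w{F_{n-1}:\oB\to\cM{n-1}\Xd} landing only in \emph{Moore chains} (not strict cycles), using Lemma~\ref{lmoore}(a), and then builds the lower components \w{F_k} as compatible nullhomotopies by a \emph{downward} induction on \w[,]{k} modifying each \w{F_k} by a cross-term \w{\eta\circ\oq{n-k-2}} so that the intermediate obstruction classes \w{a_k} can be killed; this is where the acyclicity of \w{\Vd} enters. Your approach instead collapses everything: you choose \w{F_{n-1}=v_{n-1}\circ b} landing in the \emph{strict} kernel \w{\cZ{n-1}\Xd} and set \w{F_k=0} for \w[,]{k<n-1} so that every chain-map square is either the tautology \w{0=0} or holds because \w{F_{n-1}} lands in \w[.]{\Ker(\dif{n-1})} The price is the stronger claim \w[,]{\piA\cZ{k}\Xd\cong\cZ{k}\Vd} which is \emph{not} established in the paper (Lemma~\ref{lmoore} identifies Moore \emph{chains} but says nothing about Moore \emph{cycles}); you correctly identify that this needs a separate argument and you give an essentially sound inductive proof via the fibration sequences \w[,]{\cZ{k}\Xd\to\cM{k}\Xd\epic\cZ{k-1}\Xd} with acyclicity of \w{\Vd} guaranteeing the surjectivity needed to close the long exact sequence. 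So both proofs use acyclicity; the paper uses it to kill the \w[,]{a_k} you use it to make the connecting map vanish.

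Two caveats are worth flagging. First, you cite Lemma~\ref{lfibwe} to get the injective fibrancy of \w[,]{\cMs(\Xd)} but that lemma is only stated (and cited to hold) for \w{\C=\Topa} or \w[,]{\G} whereas Proposition~\ref{pobst} is formulated for arbitrary \w{\C} as in~\S\ref{sass}; to match the paper's generality you should instead do what Lemma~\ref{lmoore}'s proof does and apply \w{\mapa(\oB,-)} \emph{first}, obtaining a Reedy fibrant augmented simplicial set to which \cite[Lemma~2.7]{StoV} and the injective structure on \w{\Ch_{\Sa}} apply directly. (In fact, once one knows \w{\widehat\partial_{n-1}:\cM{n-1}\Xd\to\cZ{n-2}\Xd} is a fibration at the mapping-space level, one can get your \w{b} even more cheaply from Lemma~\ref{lmoore}(a)--(b) together with \cite[Lemma~5.11]{BJTurnR}, which is exactly the tool for replacing a map whose composite with a fibration is nullhomotopic by a homotopic one whose composite is strictly zero; this bypasses most of your cycle-identification induction.) Second, your argument is \emph{shorter} but also \emph{loses structure} the paper wants to keep: the downward induction in the paper's proof is deliberately set up so that the intermediate obstruction classes \w{a_k} exist and can be reinterpreted in \S\ref{slont} as the successive values of the long Toda bracket. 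By setting the lower \w{F_k} to zero you make every \w{a_k} trivially vanish, which is fine for the statement of Proposition~\ref{pobst} but hides the Toda-bracket data that \S\ref{slont} is built around.
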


\begin{proof}
We construct \w{\Fk{k}} by a downward induction on coskeleta (see \S 1.B), for
\w[.]{-1\leq k\leq n-1}

To start the induction we must choose \w[.]{\Fk{n-1}:\bD\sb{n-1}=\oB\to\cM{n-1}\Xd}
Since \w{\piA\bX\sb{k}\cong V\sb{k}} for all \w{0\leq k\leq n-1} by assumption,
the algebraic attaching map \w{\odz{V\sb{n}}:\oV{n}\to V\sb{n-1}} can be thought of
as a homotopy class
\begin{myeq}\label{eqmoorecyc}
\begin{split}
\alpha~\in&~[\oB,\,\bX\sb{n-1}]
~=~\piA\bX\sb{n-1}\lin{\oB}~\cong~V\sb{n-1}\lin{\oB}\\
~\cong&~\Hom\sb{\PAlg}(\piA\oB,\,V\sb{n-1})
~\cong~\Hom\sb{\PAlg}(\oV{n},\,V\sb{n-1})~,
\end{split}
\end{myeq}
\noindent where the next to last isomorphism follows from Lemma \ref{lfreeta}.

Since by Definition \ref{dscwo} \w{\odz{V\sb{n}}:\oV{n}\to V\sb{n-1}} actually
lands in \w[,]{\cM{n-1}\Vd} this $\alpha$ is a Moore chain in
\w[,]{\pi\sb{0}\map\sb{\C}(\oB,\,\Xd)} so by Lemma \ref{lmoore}(a),
\w{\alpha} can be represented by a map \w[.]{\Fk{n-1}:\oB\to\cM{n-1}\Xd}
By \wref[,]{eqattach} \w{\odz{V\sb{n}}} lands in the \wwb{n-1}Moore cycles, so
by Lemma \ref{lmoore}(b) the \wwb{n-2}Moore chain
\w{\ak{n-2}:=\diff{n-1}{C}\circ\Fk{n-1}} has a nullhomotopy
\w[,]{\Fk{n-2}:\diff{n-1}{C}\circ\Fk{n-1}\sim 0} and thus a map
\w[\vsn.]{\Fk{n-2}:\bD\sb{n-2}=\oCB{} \to \cM{n-2}\Xd}

In the $k$-th stage of the induction, we assume given \w{F:\csk{k}\bDs\to\csk{k}\cMs{\Xd}}
for \w{\Xd} and \w{\bDs} as above, with \w[.]{0 \leq k \leq n-2}
We shall show that we can always extend $F$ to the \wwbu{k-1}coskeleta by
modifying \w[.]{F\sb{k}}

Note that we can decompose \w{\diff{k}{C}} as \w[,]{v\sb{k-1}\circ\wdiff{k}{C}}
and already \w[.]{\wdiff{k}{C}\circ v\sb{k}=0}  As a consequence,
\w[,]{0=\wdiff{k}{C}\circ\diff{k+1}{C}\circ\Fk{k+1}=\wdiff{k}{C}\circ\Fk{k}
  \circ\diff{k+1}{D}=\wdiff{k}{C}\circ\Fk{k}\circ\oi{n-k-2}\circ\oq{n-k-3}}
and \w{\oq{n-k-3}} is epic, so we see \w[.]{\wdiff{k}{C}\circ\Fk{k}\circ\oi{n-k-2}=0}
Thus, the pushout property in \eqref{eqmark} implies there is a unique
\w{\ak{k-1}:\osB{n-k-1}\to\cZ{k-1}\Xd} in
\mydiagram[\label{eqgrid}]{
  \bD\sb{k+1}=\oCsB{n-k-3} \ar[dd]_{\diff{k+1}{\bD}} \ar[dr]^{\oq{n-k-3}}
  \ar[rrr]^{F\sb{k+1}} & & &
  \cM{k+1}{\Xd} \ar[dl]^{\wdiff{k+1}{C}} \ar[dd]^{\diff{k+1}{C}}\\
& \osB{n-k-2} \ar[dl]^{\oi{n-k-2}} \ar[r]^{a\sb{k}}& \cZ{k}{\Xd} \ar[dr]^{v\sb{k}}\\
  \bD\sb{k}=\oCsB{n-k-2} \ar[dd]_{\diff{k}{\bD}} \ar[dr]^{\oq{n-k-2}} \ar[rrr]^{F\sb{k}} & & &
  \cM{k}{\Xd}\ar[dl]^{\wdiff{k}{C}}
    \ar[dd]^{\diff{k}{C}} \\
& \osB{n-k-1} \ar[dl]^{\oi{n-k-1}} \ar[r]^{a\sb{k-1}} &\cZ{k-1}{\Xd} \ar[dr]^{v\sb{k-1}}\\
\bD\sb{k-1}=\oCsB{n-k-1} \ar@{-->}[rrr]^{F\sb{k-1}}  & & & \cM{k-1}{\Xd}
}
\noindent satisfying
\begin{myeq}\label{eqikak}
v\sb{k-1}\circ\ak{k-1}\circ\oq{n-k-2}~=~\diff{k}{C}\circ\Fk{k}~.
\end{myeq}
\noindent  Note that \w{a\sb{k}} in \wref{eqgrid} is constructed similarly,
satisfying \wref{eqikak} for $k$ rather than \w[,]{k-1}
and that \w{F\sb{k}} makes the upper square commute precisely when it is a
nullhomotopy for \w[,]{v\sb{k}\circ\ak{k}} that is
\begin{myeq}\label{eqak}
v\sb{k}\circ\ak{k}~=~\Fk{k}\circ\oi{n-k-2}~.
\end{myeq}
\noindent Similarly, \w{a:=v\sb{k-1}\circ\ak{k-1}} is nullhomotopic
if and only if \w{\Fk{k-1}} extends the chain map to dimension \w[.]{k-1}
Hence it remains to show that there is a choice of nullhomotopy \w{\Fk{k}} such
that the induced map $a$ will also be nullhomotopic.

Recall from \cite[\S 2]{SpanS} that choices of (homotopy classes of) nullhomotopies
for the map \w{v\sb{k} \circ a\sb{k}:\osB{n-k-2} \to \cM{k}{\Xd}} are in one-to-one
correspondence with homotopy classes
\w[,]{[\eta] \in [\Sigma\osB{n-k-2},\cM{k}{\Xd}]} where $\eta$ acts on \w{\Fk{k}}
by concatenation to yield \w{\Fk{k} \star (\eta\circ \oq{n-k-2})}
(see \S \ref{dconcat}). Furthermore, replacing \w{\Fk{k}} by
\w{\Fk{k}':=\Fk{k} \star (\eta\circ \oq{n-k-2})} changes
\w{[a]} to \w[.]{[a']:=[a] + [\diff{k}{C} \circ \eta]=[a] + \diff{k}{\bV} [\eta]}

Since \w[,]{0=\wdiff{k-1}{C}\circ v\sb{k-1} \circ a\sb{k-1}} it follows that
\w{0=[\diff{k-1}{C}]\circ (-[a])=\diff{k-1}{\bV}(-[a])} in
$$
\piA\cM{k-2}\Xd\lin{\osB{n-k-1}}~=~\cM{k-2}\piA\Xd\lin{\osB{n-k-1}}~=~
\cM{k-2}\Vd\lin{\osB{n-k-1}}
$$
\noindent using Lemma \ref{lmoore}, so \w[.]{-[a] \in \cZ{k-1}\Vd\lin{\osB{n-k-1}}}
By acyclicity of \w[,]{\Vd} there is a class
$$
  [\eta] \in \cM{k}\Vd\lin{\osB{n-k-1}}~=~\piA\cM{k}\Xd\lin{\osB{n-k-1}}~=~
  [\Sigma\osB{n-k-2},\cM{k}\Xd]
$$
\noindent with \w[.]{-[a]= \diff{k}{\bV}[\eta]}  Therefore, replacing \w{\Fk{k}} by
\w{\Fk{k}' = \Fk{k} \star (\eta\circ \oq{n-k-2})}
yields a nullhomotopic \w[,]{a'} and thus allows us to extend $F$ to
dimension \w[.]{k-1}
\end{proof}

\begin{mysubsection}{Long Toda brackets}
\label{slont}
Proposition \ref{pobst} suggests the following quick (if somewhat ad hoc)
definition of long Toda brackets as the last in a bigraded collection of obstructions
for rectifying certain diagrams:

Assume given an \wwb{n+1}homotopy chain complex
\begin{myeq}\label{eqhochan}
\bY\sb{n}~\xra{d\sb{n}}~\bY\sb{n-1}~\xra{d\sb{n-1}}~\bY\sb{n-2}~\to~\dotsc~
\to\bY\sb{0}~\xra{d\sb{0}}~\bY\sb{-1}
\end{myeq}
\noindent in \w{\ho\C} \wwh so \w{d\sb{k-1}\circ d\sb{k}\sim 0} for
\w[.]{1\leq k\leq n} Assume further by induction that we have rectified the final
$n$-segment and replaced it by a diagram:
\begin{myeq}\label{eqnmochain}
\bC\sb{n-1}~\xra{\df{n-1}}~\bC\sb{n-2}~\to\dotsc \to\bC\sb{0}~\xra{\df{0}}~
\bC\sb{-1}~,
\end{myeq}
\noindent which is (strongly) fibrant (in the injective model structure on
\w[).]{\ChC\sp{\leq n-1}}
This means that each \w{\bC\sb{k}\simeq\bY\sb{k}} and \w{\df{k-1}\circ\df{k}=0}
for  \w[,]{1\leq k<n} and that we have a map
\w{\widehat{\partial}:\bY\sb{n}\to\bC\sb{n-1}} such that
\w[.]{\df{n-1}\circ\widehat{\partial}\sim 0} This defines a
``chain map up to homotopy'' \w{\Phi:\bY\sb{n}\oS{n-1}\to\bCs} between two
\wwb{n-1}truncated (augmented) chain complexes over $\C$.

Using the standard strongly cofibrant replacement \w{\Dsn{n}(\bY\sb{n})}
for \w{\bY\sb{n}\oS{n-1}} (see \S \ref{crsar}.A), we can try to realize $\Phi$
by a strict map of chain complexes \w[,]{F:\bDs\to\bCs} constructed by a
downward induction on \w[.]{-1\leq k\leq n-1} The successive obstructions
to doing so are the maps \w{\ak{k}:\Sigma\sp{n-k-1}\bY\sb{n}\to\cZ{k}\bCs}
of the proof of Proposition \ref{pobst}.

As we saw in that proof, a partial chain map
\w{(\Fk{i}:\bD\sb{i}\to\bC\sb{i})\sb{i=k+1}\sp{n}}
can be extended to dimension $k$ if and only if \w[.]{v\sb{k}\circ\ak{k}\sim 0}
Thus we think of the homotopy classes of \w{v\sb{k}\circ\ak{k}} \wb{k\geq0} as the
\emph{intermediate obstructions} to obtaining the \emph{value}
\w{[v\sb{-1}\circ\ak{-1}]\in[\Sigma\sp{n-1}\bY\sb{n},\,\bC\sb{-1}]}
of the \emph{$n$-th order Toda bracket} \w[.]{\lra{d\sb{0},\dotsc,d\sb{n}}}
(In fact, \w{v\sb{-1}} is the identity in this last case.)

See \cite{BBGondH,BMarkH,BJTurnHH,BJTurnC,BBSenT} for more conceptual alternative
definitions of higher Toda brackets.
\end{mysubsection}

\supsect{\protect{\ref{crsar}}.C}{Passage to simplicial objects}

Having constructed a realization \w{F:\bDs\to\cMs{\Xd}=\cMs{\U\Xd}} (see \S \ref{dmco})
of the $n$-th algebraic attaching
map for \w[,]{\Vd} as described in \S \ref{crsar}.B, we wish to complete the passage
from \w{\Xd=\W{n-1}} to a new augmented simplicial object \w{\Xd[F]=\W{n}} as
in \wref[,]{eqftower} in such a way that \w{\Xd[F]} will still be Reedy fibrant
and cofibrant, and the inclusion \w{j:\Xd\hra\Xd[F]} will be a Reedy cofibration
(two properties which will be needed for future applications).

For this purpose, let \w{\tF:\E\bDs\to\U\Xd} be the adjoint of $F$ (see
\S \ref{schaincx}), with \w{\ell:\U\Xd \to\Cone(\tF)} the natural inclusion
into the cone (see \S \ref{dchaincof}). Note that $\ell$ is an acyclic cofibration in
simplicial dimensions \www[,]{\leq n-1} so the same is true of \w[.]{\cL(\ell)}

We add on the degeneracies to obtain \w[,]{\hXd[F]} defined to be the following pushout
in the category of augmented simplicial objects (all having the given object $\bY$
in degree $-1$):
\mydiagram[\label{zhatzero}]{
\cL\U\Xd \ar[d]_{\cL(\ell)} \ar[r]^{\theta} & \Xd \ar[d]^{\wj} \\
\cL \Cone(\tF)\ar[r] & \hXd[F]
}
\noindent where $\theta$ is the counit of the adjunction
(compare \wref[).]{eqnskeleton} Again, $\wj$ is an acyclic cofibration in
dimensions \www[.]{<n}

Choose a Reedy fibrant replacement \w{p':\hXd[F]\xra{\simeq} \Xd'[F]} by
factoring \w{\hXd[F]\to\ast} as an acyclic cofibration followed by a fibration
in the model category \w{\C\sp{\Dp\op}} of
\S \ref{rmodsso}.  Since $\bY$ is fibrant in $\C$, we can choose
\w{\Xd'[F]} to still have $\bY$ in degree $-1$, because no compatibility
is required in that lowest degree.

Finally, factor the composite \w{\Xd\xra{\widehat j}\hXd[F]\xra{p'} \Xd'[F]}
as a cofibration followed by an acyclic fibration to obtain
\w[,]{\Xd \xra{j} \Xd[F] \xepic{p} \Xd'[F]} where \w[,]{\Xd[F]} our candidate for
\w{\W{n}} in \wref[,]{eqftower} is now Reedy fibrant and cofibrant,
since \w{\Xd} is Reedy cofibrant by assumption, and the map
\w{\prn{n}:\W{n-1}\to\W{n}} is the Reedy cofibration $j$ (which is the identity
in degree $-1$):
\mydiagram[\label{eqdoublereplace}]{
  \W{n-1}\ar[d]\sb{\prn{n}} \ar@{=}[r] &\Xd \ar@{^{(}->}[d]_{j} \ar[rr]^{\wj} &&
  \hXd[F] \ar@{^{(}->}[d]^{\sim}_{p'} \ar[dr] \\
\W{n} \ar@{=}[r] & \Xd[F] \ar@{->>}[rr]^{\sim}\sb{p} && \Xd'[F] \ar@{->>}[r] & \ast
}

\begin{lemma}\label{lconef}
The objects \w{\hXd[F]} and \w{\Xd'[F]} constructed as above are Reedy cofibrant
\end{lemma}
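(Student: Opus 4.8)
The plan is to reduce the lemma to a single claim about the functor $\cL$ of \S\ref{dlmo} and to obtain everything else formally. Once we know that $\wj:\Xd\to\hXd[F]$ is a Reedy cofibration, Reedy cofibrancy of $\hXd[F]$ is immediate from the inductive hypothesis that $\Xd=\W{n-1}$ is Reedy cofibrant (see \eqref{eqftower}), since then $\ast\to\Xd\xra{\wj}\hXd[F]$ is a composite of Reedy cofibrations. Reedy cofibrancy of $\Xd'[F]$ follows in turn, because $p':\hXd[F]\to\Xd'[F]$ was chosen as an acyclic cofibration, so $\ast\to\hXd[F]\xra{p'}\Xd'[F]$ is again a Reedy cofibration. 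Since $\hXd[F]$ is the pushout \eqref{zhatzero} of $\cL(\ell)$ along the counit $\theta$, and cofibrations in a model category are stable under pushout, it therefore suffices to prove that $\cL(\ell):\cL\U\Xd\to\cL\Cone(\tF)$ is a Reedy cofibration in $\C\sp{\Dp\op}$.

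The heart of the matter is thus to understand $\cL$ on cofibrations, and this splits into two observations. First, $\ell:\U\Xd\to\Cone(\tF)$ is a Reedy cofibration in $\C\sp{\Dresp\op}$: that category is an inverse Reedy category (its only non-identity morphisms are faces, which strictly lower degree), so its Reedy cofibrations are exactly the levelwise cofibrations, and by Definition \ref{dchaincof} the map $\ell$ is in each degree $m$ the coproduct inclusion $\bX\sb{m}\hra\bX\sb{m}\amalg\bD\sb{m-1}$ (with the convention $\bD\sb{-2}=\ast$), which is a pushout of $\ast\to\bD\sb{m-1}$; here $\bD\sb{m-1}$ is cofibrant, being either $\ast$, or $\oB$, or one of the objects $\oCsB{j}$ built up from the cofibrant $\oB$ along the cofibration sequences \eqref{eqconesus} — this is precisely how $\bDs$ is made strongly cofibrant. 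Second, $\cL$ preserves cofibrations: its right adjoint $\U:\C\sp{\Dp\op}\to\C\sp{\Dresp\op}$ merely forgets degeneracies, and matching objects are formed from the coface maps alone, which are the same in $\Dp$ and $\Dresp$; hence $\U$ carries relative matching maps to relative matching maps, and since it obviously preserves levelwise weak equivalences it is right Quillen for the Reedy structures, so $\cL$ is left Quillen. Combining the two, $\cL(\ell)$ is a Reedy cofibration.

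Pushing $\cL(\ell)$ out along $\theta$ then shows that $\wj$ is a Reedy cofibration, and the reduction of the first paragraph completes the argument. The only genuinely non-formal ingredients are the identification of the relevant Reedy categories — in particular noticing that $\Dresp\op$ is inverse, so its cofibrations are just the levelwise ones — together with the fact that forgetting degeneracies preserves Reedy fibrancy. I expect the main obstacle to be bookkeeping rather than substance: one has to extend the $(n-1)$-truncated data $\bDs$ (and, where needed, $\W{n-1}$) appropriately and then carry out and compare all the cofibrancy statements inside the single category $\C\sp{\Dp\op}$ with $\bY$ fixed in degree $-1$, exactly as the pushout \eqref{zhatzero} and the factorizations \eqref{eqdoublereplace} are arranged.
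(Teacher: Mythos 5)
Your proof is correct, and it takes a genuinely different route from the paper's. The paper proves the lemma by direct computation: it writes out $\hXd[F]\sb{k}=\bX\sb{k}\amalg\oCsB{n-k-1}\amalg L\sb{k}\Gd$, uses \eqref{eqslatch} to identify $L\sb{k}\hXd[F]$ as the coproduct $L\sb{k}\Xd\amalg L\sb{k}\Gd$, and then checks by hand that the latching map $\sigma\sb{k}\colon L\sb{k}\hXd[F]\to\hXd[F]\sb{k}$ is a coproduct of two cofibrations \wh the latching map of $\Xd$ (a cofibration because $\Xd$ is assumed Reedy cofibrant) and the inclusion $L\sb{k}\Gd\hra L\sb{k}\Gd\amalg\oG{k}$ (a cofibration because the $\oG{i}=\oCsB{n-i-1}$ are cofibrant in $\C$). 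You instead package the same latching-object information once and for all into the statement that $\cL\dashv\U$ is a Quillen adjunction, and reduce to the observation that $\ell$ is a levelwise cofibration with cofibrant complementary summand $\bD\sb{m-1}$; the identification of $\Dresp\op$ as an inverse Reedy category, and the fact that $\U$ preserves matching objects (so is right Quillen), do the combinatorial bookkeeping that the paper does explicitly. Both arguments use exactly the same two inputs \wh the inductive Reedy cofibrancy of $\Xd$ and the cofibrancy of the objects $\oCsB{j}$ built in \S\ref{smakesc} \wh and both deduce cofibrancy of $\Xd'[F]$ from the acyclic cofibration $p'$. What your version buys is modularity and brevity; what the paper's version buys is the explicit coproduct formula \eqref{eqdopc}, which is reused later (e.g.\ in \S\ref{sifib} and \S\ref{slnullh}), so the authors may have preferred to record it in any case. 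One minor caveat worth being explicit about in a cleaned-up write-up: since $\Xd=\W{n-1}$ is $(n-1)$-truncated and $\hXd[F]$ is $n$-truncated, the pushout \eqref{zhatzero} and your Quillen-adjunction argument are really taking place in the truncated Reedy model categories, but as you note this is only bookkeeping, since the Reedy machinery (latching/matching objects, the adjunction $\cL\dashv\U$, inverse-category identification) restricts verbatim to truncations.
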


\begin{proof}
By Definition \ref{dchaincof} we have the following explicit description
of \w[:]{\Cone(\tF)}
\begin{myeq}\label{eqdopb}
\Cone(\tF)\sb{k}~=~\bX\sb{k}\amalg\oCsB{n-k-1}
\end{myeq}
\noindent for \w[,]{0\leq k\leq n} where the new $0$-th face map is
\w{d\sb{0}\rest{\oCsB{n-k-1}}=\Fk{k-1}} (landing in
\w[)]{\cM{k-1}\Xd\subseteq\bX\sb{k-1}\subseteq\Cone(\tF)\sb{k-1}}
and the new first face map is
\w{d\sb{1}\rest{\oCsB{n-k-1}}=\partial\sp{\bDs}\sb{k-1}=\oi{n-k}\circ\oq{n-k}}
(landing in
\w[).]{\oCsB{n-k}\subseteq\cM{k-1}\Xd\subseteq\bX\sb{k-1}\subseteq\Cone(\tF)\sb{k-1}}
All other face maps \w{d\sb{j}} for \w{j\geq 2} restrict to $0$ on
\w[.]{\oCsB{n-k-1}}

If we use \wref{eqslatch} to define \w{L\sb{n}\Gd} (where
\w[,]{\oG{k}:=\oCsB{n-k-1})} we see that \w{\hXd[F]} may be described explicitly by
\begin{myeq}\label{eqdopc}
\begin{split}
  \hXd[F]\sb{k}~=&~\bX\sb{k}\amalg\oCsB{n-k-1}\amalg L\sb{k}\Gd\\
  =&~ \bX\sb{k}~\amalg~
\coprod\sb{0< k\leq r}~\coprod\sb{0\leq i\sb{1}<\dotsc<i\sb{k}\leq r-1}~\oG{r-k}~,
\end{split}
\end{myeq}
\noindent as in the proof of Lemma \ref{lscwo}.  Moreover,
\w{L\sb{k}\hXd[F]} splits naturally as the coproduct of \w{L\sb{k}\Xd} and
\w[,]{L\sb{k}\Gd} and the map \w{\sigma\sb{k}:L\sb{k}\hXd[F]\to\hXd[F]\sb{k}}
of \S \ref{dlmo} is the coproduct of \w{\sigma\sb{k}:L\sb{k}\Xd\to\bX\sb{k}}
(which is a cofibration in $\C$, since \w{\Xd} is Reedy cofibrant) and the
inclusion \w{L\sb{k}\Gd\hra L\sb{k}\Gd\amalg\oG{k}} (which is also a cofibration
since each \w[,]{\oG{i}} and thus \w{L\sb{k}\Gd} and \w[,]{\oG{k}} are
cofibrant in $\C$).
\end{proof}

\begin{lemma}\label{lextnat}
The construction of \w{\Zd \to \Zd[F]} is natural in the sense that whenever the diagram
\mydiagram{
\Ds \ar[r]^-{F} \ar[d]_{H} & \cMs(\Yd) \ar[d]^{\cMs(h)} \\
\Es \ar[r]^-{G} & \cMs(\Zd)
}
\noindent commutes in \w[,]{\ChC} there is an induced commutative diagram
\mydiagram{
\Yd \ar[r]^-{j_F} \ar[d]_{h} & \Yd[F] \ar[d] \\
\Zd \ar[r]^-{j_G} & \Zd[G] ~.
}
\noindent in \w[.]{\C\sp{\Dp\op}}
\end{lemma}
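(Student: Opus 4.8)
The plan is to observe that each stage of the construction $\Zd\mapsto\Zd[F]$ in \S\ref{crsar}.C is functorial in its input, so that a morphism of the data $(\Zd,F)$ induces a morphism of the outputs $\Zd[F]$; the lemma then follows by threading functoriality through the construction, with no genuinely new idea required. The one step needing real attention is the behaviour in the lowest Reedy degree, and I expect this to be the only obstacle.

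In detail, I would first pass to adjoint transposes under the adjunction $\E\dashv\cMs$ of \S\ref{schaincx}: using naturality of the transpose in both variables, the given commutative square in $\ChC$ becomes a commutative square relating the adjoints $\widetilde{F}\colon\E\Ds\to\U\Yd$ and $\widetilde{G}\colon\E\Es\to\U\Zd$, with vertical legs $\E(H)$ and $\U(h)$. Next I would use that $\Cone(-)$ of Definition \ref{dchaincof} is functorial for commutative squares of maps — immediate from the formula $\Cone(f)\sb{n}=B\sb{n}\amalg A\sb{n-1}$, whose face maps are assembled coordinatewise from those of the source and target — and that the inclusions $\ell$ are natural, to obtain a map $\Cone(\widetilde F)\to\Cone(\widetilde G)$ over $\ell$. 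Applying the left adjoint $\cL$ of \S\ref{dlmo} and invoking naturality of the counit $\theta\colon\cL\U\to\Id$ then yields a morphism between the two corresponding instances of the pushout square \eqref{zhatzero}, so the universal property of the pushout produces a canonical comparison map between the two resulting objects, compatible with the inclusions $\wj$.

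Finally, the remaining two steps — the Reedy fibrant replacement $p'$ and the factorization of $\Xd\to\Xd'[F]$ into a cofibration followed by an acyclic fibration — use the functorial factorizations on $\C\sp{\Dp\op}$ granted by \S\ref{sass}. Carrying these out in parallel for $F$ and $G$, naturality of the functorial factorizations upgrades the comparison map just obtained to the asserted map $\Yd[F]\to\Zd[G]$ and simultaneously makes the square with $j\sb{F}$, $j\sb{G}$, and $h$ commute. The delicate point is degree $-1$: the Reedy fibrant replacement must be chosen so that the degree $-1$ object is left unchanged and the comparison map restricts there to $h\sb{-1}$. This is handled exactly as in \S\ref{crsar}.C: since $\bX\sb{-1}$, $\bY\sb{-1}$, $\bZ\sb{-1}$ are fibrant, the associated constant augmented objects are Reedy fibrant by Remark \ref{rmodsso}, and one factors over these rather than over $\ast$; as $\wj$, $p'$, and the last factorization are all identities in degree $-1$ (the lowest Reedy degree, with trivial matching object), the degree $-1$ component of the naturality square reduces to $h\sb{-1}=h\sb{-1}$, so nothing further need be checked. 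I expect this bookkeeping around degree $-1$, together with keeping the two parallel functorial factorizations in step, to be the only real work; everything else is formal.
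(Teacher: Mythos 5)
Your proposal follows the same route as the paper's proof: pass to adjoint transposes, use functoriality of $\Cone(-)$ and naturality of $\ell$, apply $\cL$ together with naturality of the counit $\theta$ to compare the pushout squares \eqref{zhatzero}, and then invoke the functorial factorizations from Assumption \ref{sass} to handle the remaining two steps. Your additional explicit discussion of the degree $-1$ bookkeeping is correct and does no harm, but is not strictly needed beyond what the paper records.
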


\begin{proof}
  Take the adjoint of the original square and extend to cones by the naturality
  in Definition \ref{dchaincof} to produce a commutative diagram
\mydiagram{
  \E(\Ds) \ar[d]_{\E(H)} \ar[r]^{\tF} & \U\Yd \ar[d]^{\U(h)} \ar[r]^-{\ell_Y} &
  \Cone(\tF) \ar[d] \\
\E(\Es) \ar[r]^{\tG} & \U\Zd \ar[r]^-{\ell_Z} & \Cone(\tG) ~.
}
\noindent The right square, together with the naturality square for the unit
of adjunction $\theta$, combine to produce a map of pushouts sitting in a
commutative square of simplicial objects
\mydiagram{
\Yd \ar[d]_{h} \ar[r] & \widehat{\Yd}[F] \ar[d] \\
 \Zd  \ar[r] & \widehat{\Zd}[G]
}
\noindent in \w[.]{\C\sp{\Dp\op}} Now recall that by Assumption \ref{sass} we
have functorial factorizations in $\C$, and thus in \w{\C\sp{\Dp\op}}
with respect to the Reedy model category (see the constructions in \cite[\S 15.3]{PHirM}).
\end{proof}

\supsect{\protect{\ref{crsar}}.D}{Sequential realizations of algebraic resolutions}

We may now summarize the procedure described above in the following

\begin{defn}\label{dsrar}
Assume given a CW-resolution \w{\Vd} of a realizable \PAal \w[,]{\Lambda=\piA\bY}
with CW basis \w[.]{\{\oV{n}\}\sb{n=0}\sp{\infty}}
A \emph{sequential realization of \w{\Vd} for $\bY$} is a tower
\begin{myeq}\label{eqtower}
\W{0}~\xra{\prn{1}}~\W{1}~\xra{\prn{2}}~\W{2}~\to~\dotsc~
\W{n-1}~\xra{\prn{n}}~\W{n}~\to~\dotsc~
\end{myeq}
\noindent (see (2.15)) of Reedy cofibrations between Reedy fibrant and cofibrant augmented
simplicial objects (in \w[)]{\C\sp{\Dp\op}} together with objects \w{\oW{n}}
realizing the given CW basis \PAal \w[,]{\oV{n}} such that:
\begin{enumerate}[(i)]
\item The augmented simplicial object \w{\W{n}} realizes
  \w{\Vd\to\Lambda} through simplicial dimension $n$ \wh that is, the $n$-truncation
  of the augmented simplicial \PAal \w{\piA\W{n}\to\piA\bY} is isomorphic to
  the $n$-truncation of \w[,]{\Vd\to\Lambda} with \w{\Wn{-1}{n}=\bY} and
  \w[.]{\prn{n}\sb{-1}=\Id\sb{\bY}}
\item \label{extendseqreal} Each \w{\W{n}=\W{n-1}[\Fn{n}]} (as in \S \ref{crsar}.C) where
  \w{\Fn{n}:\Dsn{n}\to\cMs(\W{n-1})} realizes the attaching map \w{\odz{V\sb{n}}}
  as in \S \ref{crsar}.B.
\item \label{approxseqreal} We have an acyclic cofibration
  \w{\Tn{n}:\Dsn{n}(\oW{n})\to\Dsn{n}} of chain complexes in the projective model
  category structure, where \w{\Dsn{n}(\oW{n})} is the standard strongly cofibrant
  replacement for \w[,]{\oW{n}\oS{n-1}} as in \S \ref{crsar}.A.
\end{enumerate}

A finite tower as in \wref{eqtower} ending at \w{\W{N}} will be called
an $N$-\emph{stage sequential realization of \w{\Vd} for $\bY$}.
\end{defn}

\begin{lemma}\label{rcwreal}
The colimit  \w{\Wd}of \wref{eqtower} (with \w[)]{\bW\sb{-1}=\bY}
realizes \w{\Vd} in the sense that
\w{\piA\Wd\to\piA\bY} is isomorphic to \w[.]{\Vd\to\Lambda}
\end{lemma}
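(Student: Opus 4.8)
The plan is to show that $\piA$ commutes with the sequential colimit defining $\Wd$, and then to invoke the realization property stage by stage. First I would observe that each $\prn{n}:\W{n-1}\to\W{n}$ is a Reedy cofibration between Reedy cofibrant objects (by Definition \ref{dsrar} and Lemma \ref{lconef}), so the colimit $\Wd = \colim\sb{n}\W{n}$ is a sequential colimit along cofibrations, hence a homotopy colimit in $\C\sp{\Dp\op}$ with the Reedy model structure. Applying $\piA$ levelwise: for each simplicial degree $k$, $\bW\sb{k}=\colim\sb{n}\Wn{k}{n}$, and from the explicit formula \wref{eqdopc} one sees that the maps $\prn{n}\sb{k}$ are \emph{isomorphisms} for $n>k$ (attaching the $n$-cell only changes degrees $\geq n$), so in fact the colimit stabilizes degreewise: $\bW\sb{k}\cong\Wn{k}{k}$ already. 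Thus $\piA\Wd$ is computed degreewise by a stabilizing sequence, and there is no lim$^1$ or convergence issue to worry about.

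The second step is to match $\piA\Wd$ with $\Vd$. By Definition \ref{dsrar}(i), the $n$-truncation $\sk{n}(\piA\W{n}\to\piA\bY)$ is isomorphic to $\sk{n}(\Vd\to\Lambda)$. Since $\prn{m}\sb{k}$ is an isomorphism for all $m>k$, passing to the colimit gives $\piA\bW\sb{k}=\piA\Wn{k}{k}\cong V\sb{k}$ compatibly with the simplicial structure maps and the augmentation, for every $k$. These isomorphisms are compatible across $k$ because the truncation isomorphisms in successive stages agree on their common truncation (this is built into the inductive construction of \S \ref{crsar}.C, where $\W{n-1}\hra\W{n}$ is the identity through degree $n-1$ up to the acyclic cofibration $\wj$, which induces an isomorphism on $\piA$ in those degrees). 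Assembling these gives an isomorphism of augmented simplicial \PAal[s] $\piA\Wd\to\piA\bY\ \cong\ \Vd\to\Lambda$.

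The one genuine point requiring care is the compatibility of the degreewise isomorphisms $\piA\bW\sb{k}\cong V\sb{k}$ with the face and degeneracy maps and with the augmentation: a priori Definition \ref{dsrar}(i) only asserts an isomorphism of $n$-truncated diagrams at stage $n$, and one must check these isomorphisms are coherent under the transition maps $\prn{n}$, so that they glue to a single isomorphism of the full augmented simplicial \PAal. This is where I expect the main (though not deep) work to lie; it follows from Lemma \ref{lextnat} and the fact that $\prn{n}$ is constructed as a pushout that is the identity in degrees $<n$ up to acyclic cofibration, so that on $\piA$ it restricts to the identity on the previously-identified truncation. Once this coherence is in hand, $\piA\Wd\to\piA\bY$ is the desired isomorphism with $\Vd\to\Lambda$, and since the augmented simplicial \PAal $\piA\W{n}\to\cd{\Lambda}$ is acyclic for each $n$ (being a CW-resolution through the relevant range), the colimit $\piA\Wd\to\cd{\Lambda}$ is a weak equivalence, so $\Wd$ realizes $\Vd$ in the stated sense.
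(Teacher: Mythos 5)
Your overall strategy (degreewise analysis of the colimit, then matching with $\Vd$) is the same one the paper takes, and your conclusion is correct, but the central claim in your step~2 is false. You assert that ``from the explicit formula \eqref{eqdopc} one sees that the maps $\prn{n}\sb{k}$ are isomorphisms for $n>k$,'' so that the colimit stabilizes degreewise with $\bW\sb{k}\cong\Wn{k}{k}$. This does not hold, for two reasons. First, \eqref{eqdopc} describes $\hWd{n-1}[\Fn{n}]$, not $\W{n}$: even at that intermediate stage, for $k<n$ the term $\hWd{n-1}[\Fn{n}]\sb{k}=\bX\sb{k}\amalg\oCsW{n-k-1}{n}\amalg L\sb{k}\Gd$ contains genuine (if contractible) extra coproduct summands, so $\wj\sb{k}$ is only an \emph{acyclic cofibration}, not an isomorphism — exactly what the paper states (``$\wj$ is an acyclic cofibration in dimensions $<n$''). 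Second, $\W{n}$ is obtained from $\hWd{n-1}[\Fn{n}]$ by a Reedy fibrant replacement followed by a (cofibration, acyclic fibration) factorization as in \eqref{eqdoublereplace}, and Reedy fibrant replacement in particular is not the identity in \emph{any} fixed degree, since matching objects entangle the degrees. So the transition maps $\prn{n}\sb{k}$ are acyclic cofibrations for $k<n$, not isomorphisms, and the colimit does not literally stabilize.

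The argument is saved because the colimit is along cofibrations that are \emph{acyclic} in a given degree for $n>k$, so $\bW\sb{k}$ is a homotopy colimit of $\Wn{k}{n}$ over $n\ge k$, weakly equivalent to $\Wn{k}{k}$, and hence still realizes $V\sb{k}$ — this is exactly the formulation the paper uses and it is not merely a cosmetic restatement of yours. Your step~3 concern about coherence of the degreewise identifications is legitimate, but the justification you offer (``the identity through degree $n-1$ up to $\wj$'') rests on the same incorrect isomorphism claim; the coherence in fact comes from the compatibility built into the inductive construction in \S\ref{crsar}.B--C and Lemma~\ref{lextnat}, with the degreewise weak equivalences (rather than isomorphisms) doing the work.
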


\begin{proof}
  We can deduce from \eqref{extendseqreal} that
  \w{\bW\sb{k}} is the homotopy colimit (over \w[)]{n\geq k} of the objects
  \w[,]{\Wn{k}{n}} so it realizes \w[.]{V\sb{k}}
\end{proof}

\begin{remark}
  Condition \eqref{approxseqreal} of Definition \ref{dsrar} implies that for each
  \w{0\leq k\leq n-1} we have a commutative diagram of horizontal cofibration sequences
\mydiagram[\label{eqmodconesusp}]{
\Sigma\sp{k}\oW{n}~\quad \ar[d]_(0.4){\simeq}^(0.4){\sigma\sp{k}}
\ar@{^{(}->}[r]\sp{\iota\sp{k}} &
~C\Sigma\sp{k}\oW{n}~\ar@{^{(}->}[d]^(0.4){\tau\sp{k}}_(0.4){\simeq}
\ar@{->>}[r]\sp{q\sp{k}} &
~\Sigma\sp{k+1}\oW{n} \ar@{^{(}->}[d]^(0.4){\sigma\sp{k+1}}_(0.4){\simeq}\\
\osW{k}{n}~\quad \ar@{^{(}->}[r]\sp{\oi{k}} & ~\oCsW{k}{n}~
\ar@{->>}[r]\sp{\oq{k}} & ~\osW{k+1}{n}
}
\noindent in $\C$, in which the vertical maps are all acyclic cofibrations.

By convention, we set \w{\osW{-1}{n}:=\ast} and \w[,]{\osW{0}{n}=\oCsW{-1}{n}:=\oW{n}}
with the identity map as
\begin{myeq}\label{eqconvent}
\oq{-1}~:~\oCsW{-1}{n}~\xra{=}~\osW{0}{n}~.
\end{myeq}
\end{remark}

We now have the following analogue of \cite[Theorem 2.33]{BSenH}:

\begin{thm}\label{tres}
For \w{\bA\in\C} as in \S \ref{setmc}, any CW-resolution \w{\Vd} of a realizable
\PAal \w{\Lambda=\piA\bY} has a sequential realization
$$
\cW=\lra{\W{n},\,\prn{n},\, \Dsn{n},\, \Fn{n},\, \Tn{n}}\sb{n=0}\sp{\infty}.
$$
\end{thm}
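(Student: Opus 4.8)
The plan is to construct the sequential realization \w{\cW} by induction on \w[,]{n\geq 0} assembling at each stage the three ingredients of Section \ref{crsar}: the strongly cofibrant chain-complex replacement of \S \ref{crsar}.A, the realization of the algebraic attaching map of \S \ref{crsar}.B (Proposition \ref{pobst}), and the passage to a new Reedy fibrant and cofibrant augmented simplicial object of \S \ref{crsar}.C. For the base case one chooses a cofibrant \w{\oW{0}\in\PA} realizing the free \PAal \w[,]{\oV{0}} which exists because \w{\oV{0}} is a coproduct of copies of \w{\piA\Sigma\sp{k}\bA} and is therefore realized by the corresponding coproduct in \w[;]{\C} by Lemma \ref{lfreeta} and realizability the augmentation \w{\vare:\oV{0}\epic\Lambda=\piA\bY} is represented by a map \w{\oW{0}\to\bY} in \w[.]{\C} One then takes \w{\W{0}} to be the Reedy fibrant and cofibrant replacement \wh produced by the double factorization of \S \ref{crsar}.C \wh of the augmented simplicial object having \w{\oW{0}} in degree \w[,]{0} \w{\bY} in degree \w[,]{-1} and this map as augmentation; since \w{\bY} is fibrant, the replacement may be chosen to leave \w{\bY} in degree \w[.]{-1} We set \w{\Dsn{0}:=\Dsn{0}(\oW{0})} and \w[,]{\Tn{0}:=\Id} so that \eqref{approxseqreal} of Definition \ref{dsrar} holds while \eqref{extendseqreal} is vacuous when \w[.]{n=0}

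For the inductive step, suppose \w{\W{n-1}} has been produced: Reedy fibrant and cofibrant, with \w[,]{\Wn{-1}{n-1}=\bY} realizing \w{\Vd} through simplicial dimension \w[,]{n-1} so that \w{\piA\Wn{k}{n-1}\cong V\sb{k}} for \w[.]{0\leq k\leq n-1} Choose a cofibrant \w{\oW{n}\in\PA} realizing \w[,]{\oV{n}} and apply \S \ref{crsar}.A with \w{\oB=\oW{n}} (using standard cones) to form the standard strongly cofibrant replacement \w{\Dsn{n}(\oW{n})} of \w[;]{\oW{n}\oS{n-1}} set \w{\Dsn{n}:=\Dsn{n}(\oW{n})} and \w[,]{\Tn{n}:=\Id} although any strongly cofibrant \w{\Dsn{n}} weakly equivalent to \w{\oW{n}\oS{n-1}} and receiving an acyclic cofibration from \w{\Dsn{n}(\oW{n})} would serve equally well (the extra freedom being what is needed for Lemma \ref{tresext}). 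Regarding the algebraic attaching map \w{\odz{V\sb{n}}} via Lemma \ref{lscwo} as a chain map \w{f:\oV{n}\oS{n-1}\to\cMs(\sk{n-1}\Vd)} and applying Proposition \ref{pobst} \wh whose hypotheses hold since \w{\W{n-1}} is Reedy fibrant and realizes \w{\Vd} through dimension \w{n-1} \wh we obtain a realization \w{\Fn{n}:\Dsn{n}\to\cMs(\W{n-1})} in \w[.]{\ChC} Running the procedure of \S \ref{crsar}.C (diagrams \eqref{zhatzero}--\eqref{eqdoublereplace}) on \w{\Xd=\W{n-1}} and \w{F=\Fn{n}} then produces \w[,]{\W{n}:=\W{n-1}[\Fn{n}]} together with the Reedy cofibration \w{\prn{n}:\W{n-1}\hra\W{n}} which is the identity in degree \w[;]{-1} by Lemma \ref{lconef} and the construction of \S \ref{crsar}.C, \w{\W{n}} is again Reedy fibrant and cofibrant, with \w[.]{\Wn{-1}{n}=\bY}

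The only non-routine part of the argument \wh the substantive obstruction theory having already been handled in Proposition \ref{pobst} \wh is verifying condition (i) of Definition \ref{dsrar}: that \w{\W{n}} realizes \w{\Vd} through dimension \w[.]{n} Here we use that the Reedy replacements of \S \ref{crsar}.C are levelwise weak equivalences in \w[,]{\C} that \w{\piA} sends weak equivalences to isomorphisms, and that \w{\piA} carries a coproduct of suspensions of \w{\bA} to the corresponding coproduct of free \PAal[s] (by \eqref{eqfreepa}); so it suffices to read \w{\piA} off the explicit form \eqref{eqdopc} of the intermediate object \w[.]{\hXd[\Fn{n}]} In simplicial dimensions \w{k\leq n-1} that object differs from \w{\Wn{k}{n-1}} only by a coproduct of contractible cones \w[,]{\oCsB{j}} and \w{\prn{n}} is an acyclic cofibration in those dimensions, so \w{\piA\W{n}} agrees with \w[,]{\piA\W{n-1}} hence with \w[,]{\sk{n-1}\Vd} through dimension \w[.]{n-1} In dimension \w[,]{n} \eqref{eqdopc} gives \w{\Wn{n}{n}\simeq\Wn{n}{n-1}\amalg\oW{n}} up to a latching coproduct of contractible cones, so \w{\piA\Wn{n}{n}} acquires exactly one new free coproduct summand \w[,]{\piA\oW{n}\cong\oV{n}} on which \w{d\sb{0}} realizes \w{\odz{V\sb{n}}} (by the choice of \w[)]{\Fn{n}} while all higher face maps vanish \wh precisely accounting for the passage from \w{\sk{n-1}\Vd} to \w{\sk{n}\Vd} in the CW structure of Lemma \ref{lscwo}. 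Hence the $n$-truncation of \w{\piA\W{n}\to\piA\bY} is isomorphic to that of \w[,]{\Vd\to\Lambda} and \w[,]{\prn{n}\sb{-1}=\Id\sb{\bY}} so the induction goes through; the colimit \w{\Wd} of the resulting tower realizes \w{\Vd} by Lemma \ref{rcwreal}, which completes the construction of \w[.]{\cW}
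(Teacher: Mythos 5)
Your proposal follows essentially the same route as the paper's own proof: realize each CW-basis object $\oV{n}$ by a cofibrant $\oW{n}$, handle the base case by picking a representative of the augmentation via Lemma \ref{lfreeta}, and run the three-stage machine of \S \ref{crsar}.A--C (standard strongly cofibrant replacement, Proposition \ref{pobst} to realize the attaching map, and the double factorization of \S \ref{crsar}.C) to pass from $\W{n-1}$ to $\W{n}$. The only substantive addition is that you explicitly verify condition (i) of Definition \ref{dsrar} by reading $\piA$ off the description \eqref{eqdopc}, which the paper leaves implicit in the phrase ``described in \S \ref{crsar}.A--C''; this is a welcome bit of extra rigor rather than a different method. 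Two small nits: $\oW{n}$ should be chosen in $\A$ rather than $\PA$ (since $\oV{n}$ may be a coproduct of cardinality $\geq\kappa$), and the paper actually treats \eqref{extendseqreal} as non-vacuous at $n=0$ by introducing $\W{-1}:=\cpd{\bY}$ and writing $\W{0}=\W{-1}[\Fn{0}]$, whereas you declare it vacuous; your construction of $\W{0}$ is nevertheless the same object.
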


\begin{proof}
For each \w[,]{n\geq 0} we choose once and for all a fibrant and
cofibrant object \w{\oW{n}\in\A} realizing \w[.]{\oV{n}}
We construct a sequential realization $\cW$ by induction on $n$.

We begin the induction with  \w{\W{-1}:=\cpd{\bY}} (which is Reedy (co)fibrant
in \w[,]{\C\sp{\Dp\op}} since $\bY$ is (co)fibrant
in $\C$ \wh see Remark \ref{rmodsso}).

Note that because \w{\oV{0}} is a free \PAal[,] the \PAal augmentation
\w{\vare:\oV{0}\to\Lambda} corresponds to a unique element
\w[,]{[\svn{0}]\in\Lambda\lin{\oV{0}}} (see \wref[)]{eqvalfreepa} for which we
may choose a representative \w[,]{\bve{0}\sb{0}:\oW{0}\to\bY} by Lemma \ref{lfreeta}.
Since \w{\Dsn{0}:=\oW{0}\oS{-1}} is already
strongly cofibrant, such a choice of \w{\bve{0}\sb{0}:\oW{0}\to\bY}
defines \w{\Fn{0}:\Dsn{0}\to\cMs\W{-1}} and thus
\w{\hWd{-1}[\Fn{0}]=\cd{\oW{0}}} (augmented to $\bY$), with
\w{\W{0}=\W{-1}[\Fn{0}]} a fibrant and cofibrant replacement for this,
as in \S \ref{crsar}.C.
The general induction step (for \w[)]{n\geq 1} is described in \S \ref{crsar}.A-C. In
particular, Proposition \ref{pobst} yields \w[,]{\Fn{n}:\Dsn{n}(\oW{n}) \to \cMs\W{n-1}}
and thus \w[.]{\W{n}=\W{n-1}[\Fn{n}]}
\end{proof}

\begin{example}\label{egminusone}
  In the case \w{n=1} in the proof of Theorem \ref{tres} (covered in the general
  induction step), we choose a map \w{\oW{1}\to\oW{0}} realizing the first
attaching map \w[,]{\odz{1}:\oV{1}\to V\sb{0}=\oV{0}}
and let \w{\oCsW{0}{1}:=C\oW{1}} (the usual cone). We then
have a 1-truncated augmented simplicial object depicted by
\mydiagram[\label{eqwone}]{
\hWdn{1}{1} \ar@/_{1.5pc}/[d]\sb{\dz{0}} \ar@/^{1.5pc}/[d]\sp{d\sb{1}\sp{0}} &=&
\oW{0} \ar@/_{0.5pc}/[d]\sb{\dz{0}=d\sb{1}\sp{0}=\Id}
& \amalg & \oW{1} \ar@{->>}[dll]\sp{\udz{0}} \ar[drr]\sb{d\sb{1}\sp{0}=\iota}
& \amalg & C\oW{1} \ar@/^{0.5pc}/[d]\sp{\dz{0}=d\sb{1}\sp{0}=\Id} \\
\hWdn{1}{0} \ar[d]^(0.4){\bve{1}} \ar[u]\sp{s\sb{0}} &=&
\oW{0} \ar[d]^(0.4){\bve{0}} \ar@/_{0.5pc}/[u]\sb{=} && \amalg &&
C\oW{1} \ar@/^{0.5pc}/[u]\sp{=} \ar@{.>}[dllll]^{\Fk{-1}} \\
\hWdn{1}{-1} & =& Y ~.}

To define the augmentation \w{\bve{1}:\hWdn{1}{0}\to\bY} extending
\w[,]{\bve{0}} we use the fact that \w{\vare\circ\odz{0}=0} in \w{\PAlg}
to deduce that \w{\bve{0}\circ\udz{0}} is nullhomotopic, and any nullhomotopy
\w{\Fk{-1}} defines \w{\bve{1}} on \w[.]{C\oW{1}}  Now apply the process
of \S \ref{crsar}.C to obtain \w[.]{\W{1}}
\end{example}

%
%
\sect{Comparing sequential realizations}
\label{ccsr}

Sequential realizations, and the resulting simplicial resolutions as
constructed in Section \ref{crsar}, play a central role in our theory of higher
homotopy operations, but they depend on many particular choices.  We now explain
how any two such simplicial spaces are related by a zigzag of maps of a particularly
simple form.

We first note the following general fact about model categories, which allows us to
embed any two weakly equivalent objects as strong deformation retracts of a common target:

\begin{lemma}[\protect{\cite[Lemma 3.1]{BSenH}}]\label{lcylin}
If $X$ and $Y$ are two weakly equivalent fibrant and cofibrant objects in a
pointed simplicial model category $\C$,  there are maps as in the following
commuting diagram
\mydiagram[\label{eqmodelcat}]{
&Y\ar@/_6em/[dddd]^{\Id_Y} \ar@/_1em/[ddrrr]^(0.3){u}_(0.3){\simeq} \ar@{^{(}->}[r]^-{\inc}
    & X\amalg Y \ar[dddd]_{\phi} \ar[rrdd]\sp{k\bot u} \ar@{^{(}->}[rrrrd]\sp{k'\bot u'}
\ar@/^{4.0pc}/[rrrrrrdd]_{F} \\
X \ar[dd]^{\Id_X} \ar@{^{(}->}[rru]^(0.3){\inc}
\ar@/_1em/[rrrrd]_(0.3){k}^(0.3){\simeq} |!{[rr];[rrdd]}\hole &&
 &&&&
Z' \ar@{->>}[lld]^{p}_{\simeq}
\ar[dd]\sb{i\circ p}\sp{\simeq}\ar@{^{(}->}[rrd]\sb{e}\sp{\simeq}&&\\
&&&& \hat{Z} \ar@{^{(}->}[rrd]\sp{i}\sb{\simeq} \ar[lldd]\sp{r\top\ell}
        \ar@/_1em/[ddlll]^(0.7){\ell}_(0.7){\simeq}
\ar@/_1em/[lllld]^(0.7){\simeq}_(0.7){r} |!{[llu];[lld]}\hole &&&&
Z \ar@{->>}[lld]\sb{q}\sp{\simeq} \ar@/^{4.0pc}/[lllllldd]_{G} \\
X &&  &&&&
Z'' \ar@{->>}[lllld]\sp{r'\top\ell'} \vsm & \\
& Y & X\times Y \ar@{->>}[llu]^(0.7){\proj} \ar@{->>}[l]^-{\proj}
}
\noindent with (co)fibrations and weak equivalences as indicated, such that
\[
\phi=(\Id\sb{X}\bot r\circ u)\top(\ell\circ k\bot\Id\sb{Y})=
    (\Id\sb{X}\top \ell\circ k)\bot(r\circ u \top\Id\sb{Y}):X\amalg Y\to X\times Y
\]
\noindent factors as \w[,]{X\amalg Y \xmonic{F} Z \xepic{G} X\times Y}
where $F$ is a cofibration which is an acyclic cofibration on each summand,
and $G$ is a fibration which is an acyclic fibration onto each factor.
\end{lemma}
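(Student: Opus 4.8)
The statement is \cite[Lemma 3.1]{BSenH}, so the plan is to reconstruct its proof. \emph{First}, rectify the equivalence: since $X$ and $Y$ are fibrant, cofibrant and weakly equivalent, they are isomorphic in $\ho\C$, and that isomorphism is represented by an honest weak equivalence $k\colon X\to Y$, which has a homotopy inverse $u\colon Y\to X$ (again a weak equivalence) by Whitehead's theorem in model categories. Everything below is assembled from $k$ and $u$. \emph{Second}, introduce the ``mixed'' map $\phi\colon X\amalg Y\to X\times Y$ with $\proj_X\circ\phi=\Id_X\bot u$ and $\proj_Y\circ\phi=k\bot\Id_Y$ (its two displayed forms being the compatibility of coproducts with products); this is the map we must factor.

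The key difficulty, which dictates everything, is that $F$ must be a cofibration that is only \emph{acyclic on each summand}, while $G$ must be a fibration that is only \emph{acyclic onto each factor}: a single (acyclic cofibration, fibration) factorization of $\phi$ would force $Z\simeq X\amalg Y$ and a single (cofibration, acyclic fibration) one would force $Z\simeq X\times Y$, whereas what is needed is $Z\simeq X\simeq Y$. So I would proceed in two factorization steps through an intermediate object. \emph{Step one}: factor $\Id_X\bot u\colon X\amalg Y\to X$ as a cofibration $a\colon X\amalg Y\hookrightarrow\widehat{Z}$ followed by an acyclic fibration $\ell\colon\widehat{Z}\xepic{\simeq}X$. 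Since $\inc_X,\inc_Y$ are cofibrations ($X,Y$ being cofibrant, hence $\ast\to X$, $\ast\to Y$ cofibrations) and $\ell\circ a\circ\inc_X=\Id_X$, $\ell\circ a\circ\inc_Y=u$ are weak equivalences, two-out-of-three makes $a\circ\inc_X$ and $a\circ\inc_Y$ acyclic cofibrations, so $\widehat{Z}\simeq X\simeq Y$. The weak equivalence $k\circ\ell\colon\widehat{Z}\to Y$ satisfies $(k\circ\ell)\circ a=k\bot(k\circ u)\simeq k\bot\Id_Y$, and since $a$ is a cofibration and $Y$ is fibrant the homotopy extension property replaces $k\circ\ell$ by a weak equivalence $r\colon\widehat{Z}\to Y$ with $r\circ a=k\bot\Id_Y$ on the nose; hence $(\ell\top r)\circ a=\phi$.

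\emph{Step two}: factor $\ell\top r\colon\widehat{Z}\to X\times Y$ as an acyclic cofibration $i\colon\widehat{Z}\xmonic{\simeq}Z$ followed by a fibration $G\colon Z\epic X\times Y$ (so $Z$ is fibrant and still $Z\simeq X\simeq Y$), and set $F:=i\circ a$. Then $F$ is a cofibration with $G\circ F=\phi$; $F\circ\inc_X$, $F\circ\inc_Y$ are weak equivalences by two-out-of-three (being $i$ composed with the acyclic cofibrations above), hence acyclic cofibrations; and $\proj_X\circ G$, $\proj_Y\circ G$ are fibrations (projections out of a product of fibrant objects) which are weak equivalences by two-out-of-three (precomposition with $i$ gives $\ell$, resp.\ $r$), hence acyclic fibrations. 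As $(\proj_X\circ G)\circ(F\circ\inc_X)=\proj_X\circ\phi\circ\inc_X=\Id_X$ and $F\circ\inc_X$ is an acyclic cofibration between fibrant cofibrant objects in a simplicial model category, $X$ — and symmetrically $Y$ — is a strong deformation retract of $Z$. The remaining objects $Z',Z''$ in the displayed diagram are produced by running the dual construction on the $X\times Y$ side (factoring there first to obtain $Z''$ with acyclic fibrations onto $X$ and $Y$) and then, by a further lifting, a common refinement $Z'$ mapping to $\widehat{Z}$ by an acyclic fibration and into $Z$ by an acyclic cofibration; commutativity of the full diagram is then a diagram chase through the pushouts, products and factorizations.

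The main obstacle is exactly this asymmetry in the required strength of $F$ and $G$, which forces the interposition of $\widehat{Z}\simeq X\simeq Y$ and a double factorization (plus the homotopy-rectification producing $r$) rather than a single one; the rest is repeated two-out-of-three together with the simplicial model category axioms and the properness coming from all objects being fibrant, and the bookkeeping needed to keep every map in the large diagram strictly compatible.
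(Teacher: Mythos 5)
Your proof is correct and proceeds by what is almost certainly the same strategy as the cited source: rectify the weak equivalence to an honest $k\colon X\to Y$ with homotopy inverse $u$, interpose an intermediate object $\hat{Z}\simeq X\simeq Y$ by factoring $\Id_X\bot u\colon X\amalg Y\to X$ as a cofibration $a$ followed by an acyclic fibration, rectify $k\circ\ell$ to a strict map $r$ via the homotopy extension property (valid since $a$ is a cofibration and $Y$ is fibrant, cf.\ \cite[Lemma~5.11]{BJTurnR}), and then factor $\ell\top r\colon\hat{Z}\to X\times Y$ a second time. The two-out-of-three checks that make $F$ acyclic on each summand and $G$ acyclic onto each factor, and the verification that $\proj_X\circ G\circ F\circ\inc_X=\Id_X$, are all carried out correctly.

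You correctly identify that the structural obstacle is the asymmetry in the strengths required of $F$ and $G$, which rules out a single factorization of $\phi$ and forces the passage through $\hat{Z}\simeq X$. Your account of the auxiliary objects $Z'$ and $Z''$ is admittedly sketchy, but since your $a\colon X\amalg Y\hookrightarrow\hat{Z}$ is already a cofibration, your $\hat{Z}$ absorbs the role of $Z'$, and your $Z$ absorbs $Z''$; the displayed diagram with separate $Z'$, $Z''$ arises if one constructs $\hat{Z}$ without building in the cofibration from $X\amalg Y$ from the outset, so that $Z'$ and $Z''$ are needed to restore cofibrancy on the left and fibrancy on the right before splicing. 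In other words, your version is a streamlined form of the argument that establishes the factorization claim (which is what is actually used downstream, e.g.\ in diagram~\eqref{eqmaparrows}), while the paper's diagram records a slightly more redundant chain of replacements. No genuine gap.
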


\begin{defn}\label{dacomp}
  Given two CW resolutions \w{\vare:\Vd\to\Lambda} and \w{\varep:\Vdp\to\Lambda}
  of the same \PAal $\Lambda$, with CW bases \w{(\oV{n})\sb{n=0}\sp{\infty}}
  and \w[,]{(\oVp{n})\sb{n=0}\sp{\infty}} respectively, an
  \emph{algebraic comparison map} \w{\Psi:\Vd\to\Vdp} between them is a system
\begin{myeq}\label{eqacomp}
\Psi~=~\lra{\varphi,\,\rho,\,(\ophl{n},\,\orh{n})\sb{n=0}\sp{\infty}}~,
\end{myeq}
\noindent where \w{\varphi:\Vd\to\Vdp} is a split monic weak equivalence of
simplicial \PAal[s] with retraction \w{\rho:\Vdp\to\Vd}
(satisfying \w[),]{\varep\circ\ophl{0}=\vare} induced by inclusions of coproduct
summands \w{\ophl{n}:\oV{n}\hra\oVp{n}} with retractions \w{\orh{n}:\oVp{n}\to\oV{n}}
for each \w[.]{n\geq 0}
\end{defn}

In this context we can sharpen Lemma \ref{lcylin} as follows:

\begin{lemma}[\protect{\cite[Lemma 3.7]{BSenH}}]\label{lcylind}
For any two free CW resolutions \w{\varu{i}:\Vud{i}\to\Lambda} of the same
\PAal $\Lambda$, with CW bases \w{\left(\ouV{n}{i}\right)\sb{n=0}\sp{\infty}}
\wb[,]{i=0,1} there is a CW resolution \w{\vare:\Vd\to\Lambda} with CW basis
\w{\left(\oU{n}\amalg\ouV{n}{0}\amalg\ouV{n}{1}\right)\sb{n=0}\sp{\infty}} and  algebraic
comparison maps \w{\Psi\up{i}:\Vud{i}\to\Vd} for \w[.]{i=0,1}
\end{lemma}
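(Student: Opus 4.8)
The plan is to imitate the proof of the Eckmann--Hilton dual statement \cite[Lemma 3.7]{BSenH}: one builds the common resolution $\Vd$ one skeleton at a time, while simultaneously building the $n$-truncations of the two comparison maps $\Psi\up{i}$. The mechanism is that (by \S \ref{dscwo} and Lemma \ref{lscwo}) a CW object is completely determined by its CW basis and its attaching maps, and that a map of simplicial \PAal[s] out of a CW object is determined by its values on the basis, subject only to compatibility with $d\sb{0}$.

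First I would handle $n=0$: set the $0$-th basis piece of $\Vd$ to be $\oU{0}\amalg\ouV{0}{0}\amalg\ouV{0}{1}$ with $\oU{0}$ any free \PAal (for instance $\ast$), let the augmentation $\vare$ restrict to $\varu{i}$ on $\ouV{0}{i}$, and take the coproduct inclusions $\ouV{0}{i}\hra\oU{0}\amalg\ouV{0}{0}\amalg\ouV{0}{1}$ and projections as the remaining data. For the inductive step, suppose $\sk{n-1}\Vd$ has been built, together with the $(n-1)$-truncated $\Psi\up{i}$ in which $\varphi\up{i}$ is the cellular inclusion given on bases by the coproduct inclusions and $\rho\up{i}\circ\varphi\up{i}=\Id$. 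I would define the $n$-th attaching map $\odz{V\sb{n}}$ of $\Vd$ to restrict, on the summand $\ouV{n}{i}$, to the composite of the $n$-th attaching map of $\Vud{i}$ with the map $\cZ{n-1}(\sk{n-1}\Vud{i})\to\cZ{n-1}(\sk{n-1}\Vd)$ induced by $\varphi\up{i}$ --- a simplicial map inducing a map on Moore cycles by functoriality of the limit in Definition \ref{dmco}. I would then choose a free \PAal $\oU{n}$ together with a map $\oU{n}\to\cZ{n-1}(\sk{n-1}\Vd)$ so that the resulting total attaching map on $\oV{n}:=\oU{n}\amalg\ouV{n}{0}\amalg\ouV{n}{1}$ surjects onto $\cZ{n-1}(\sk{n-1}\Vd)$ --- a free cover of the target will do --- and form $\sk{n}\Vd$ as in Lemma \ref{lscwo}. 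Since each $\oV{n}$ is a coproduct of free \PAal[s] (hence free) and each attaching map hits all the relevant Moore cycles, the colimit $\Vd$ is a CW-resolution of $\Lambda$ with the prescribed CW basis, by Remark \ref{rcw}.

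It remains to extend the comparison maps over degree $n$. The inclusion $\varphi\up{i}$ extends as the cellular inclusion given on the $\ouV{n}{i}$-summand by the coproduct inclusion into $\oV{n}$, compatibility with $d\sb{0}$ being automatic since $\odz{V\sb{n}}$ restricted to $\ouV{n}{i}$ was defined through $\varphi\up{i}$; and it is a weak equivalence, because $\vare\circ\varphi\up{i}=\varu{i}$ (both being the map to $\cd{\Lambda}$ determined by their common degree-$0$ augmentation) while $\vare$ and $\varu{i}$ are weak equivalences, so two-out-of-three applies. For the retraction $\rho\up{i}$: on the $n$-th latching object it is forced by $\rho\up{i}$ on $\sk{n-1}$ and the degeneracies; on a generator $g$ of the $\ouV{n}{i}$-summand of $\oV{n}$ I would set $\rho\up{i}(g):=g\in\ouV{n}{i}\subseteq V\sb{n}\up{i}$, which is consistent with $d\sb{0}$ because $\rho\up{i}\circ\varphi\up{i}=\Id$ already holds on Moore cycles in degree $n-1$; and on a generator $g$ of $\oU{n}$ or of $\ouV{n}{1-i}$ I would pick a preimage of $\rho\up{i}\sb{\ast}(\odz{V\sb{n}}(g))\in\cZ{n-1}(\sk{n-1}\Vud{i})$ under the $n$-th attaching map of $\Vud{i}$ --- which exists because $\Vud{i}$, being a resolution, has that attaching map onto $\cZ{n-1}(\sk{n-1}\Vud{i})$ (see \cite{BlaCW}) --- and let $\rho\up{i}(g)$ be its image in $V\sb{n}\up{i}$. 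A routine check with the simplicial identities then shows $\rho\up{i}$ is a well-defined simplicial map with $\rho\up{i}\circ\varphi\up{i}=\Id$ on $\sk{n}$; the coproduct projections $\oV{n}\to\ouV{n}{i}$ complete the data of $\Psi\up{i}$, and passing to the colimit over $n$ finishes the construction.

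The hard part is this last step: arranging the simplicial retractions $\rho\up{i}$ to be compatible with the face maps on the newly attached free cells $\oU{n}$ and on the cross-summand $\ouV{n}{1-i}$. This is precisely where the hypothesis that each $\Vud{i}$ is a genuine resolution --- so that its attaching maps hit all the Moore cycles --- enters; everything else is bookkeeping with the explicit description \eqref{eqslatch} of the latching objects and the functoriality of the Moore-cycle construction $\cZ{n-1}$.
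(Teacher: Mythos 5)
Your overall strategy --- building $V_\bullet$ skeleton by skeleton, pushing the attaching maps of $V_\bullet^{(i)}$ into $V_\bullet$ through the cellular inclusions $\varphi^{(i)}$, adjoining a free $\overline{U}_n$ covering the Moore cycles $Z_{n-1}$, and then defining the simplicial retractions $\rho^{(i)}$ by lifting along the (surjective) attaching maps of $V_\bullet^{(i)}$ --- is the right one, and you correctly identify the delicate point: $\rho^{(i)}$ cannot simply be induced by the coproduct projections on the bases, since that would fail to commute with $d_0$ on the cross-summands $\overline{U}_n$ and $\overline{V}_n^{(1-i)}$. (The paper does not prove this lemma but cites the companion paper; your outline matches the intended construction.)

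There is, however, a genuine slip at the base case $n=0$, and it breaks the very first inductive step. You take $\rho^{(i)}_0$ to be the coproduct projection $\overline{V}_0\to\overline{V}_0^{(i)}$, but then $\varepsilon^{(i)}_0\circ\rho^{(i)}_0\neq\varepsilon_0$: a generator $g$ of $\overline{V}_0^{(1-i)}$ has $\varepsilon_0(g)=\varepsilon^{(1-i)}_0(g)$, generically nonzero in $\Lambda$, while $\rho^{(i)}_0(g)$ is the basepoint. Hence $\rho^{(i)}_0$ does \emph{not} carry $Z_0(V_\bullet)=\ker\varepsilon_0$ into $Z_0(V_\bullet^{(i)})=\ker\varepsilon^{(i)}_0$ --- and your lifting step at $n=1$ requires exactly this, since you propose to lift $\rho^{(i)}_{\ast}(\overline{d}_0^{V_1}(g))$ along the attaching map of $V_\bullet^{(i)}$, which surjects only onto $Z_0(V_\bullet^{(i)})$. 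If $\rho^{(i)}_0(\overline{d}_0^{V_1}(g))\notin\ker\varepsilon^{(i)}_0$, there is no element of the Moore chain object in dimension $1$ mapping to it under $d_0$ (the image of $d_0$ on Moore $1$-chains lies in $\ker\varepsilon^{(i)}_0$), so $\rho^{(i)}_1(g)$ cannot exist. This difficulty is invisible for $n\geq 2$, where $Z_{n-1}$ is cut out purely by face maps, which any simplicial map automatically respects --- exactly as you say; it is the augmentation hidden inside $Z_0$ that the projection fails to preserve. The fix is to apply your own inductive recipe already at $n=0$: on a generator $g$ of $\overline{V}_0^{(1-i)}$ or $\overline{U}_0$, define $\rho^{(i)}_0(g)$ to be a chosen preimage of $\varepsilon_0(g)$ under the surjection $\varepsilon^{(i)}_0$. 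Then $\rho^{(i)}_0$ is augmentation-compatible, $Z_0$ is preserved, and the rest of the induction goes through as you describe.
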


\begin{defn}\label{dcompar}
Given an algebraic comparison map
\w{\Psi=\lra{\varphi,\,\rho,\,(\ophl{n},\,\orh{n})\sb{n=0}\sp{\infty}}}
between \w{\Vd} and \w[,]{\Vdp}
as in \S \ref{dacomp} and sequential realizations $\cW$ of \w{\Vd} and
\w{\ccWp} of \w[,]{\Vdp} a \emph{comparison map \w{\Phi:\cW\to\,\ccWp} over $\Psi$}
is a system
\begin{myeq}\label{eqcorresp}
\Phi~=~\lra{\en{n},\,\rn{n},~
\hon{}{n},~
\hor{}{n},~\oon{}{n},~
\oor{}{n}\,}\sb{n=0}\sp{\infty}
\end{myeq}
\noindent consisting of:
\begin{enumerate}
\renewcommand{\labelenumi}{(\roman{enumi})~}
\item A split augmented simplicial map \w{\en{n}:\W{n}\to\Wp{n}}
  with retraction \w[,]{\rn{n}:\Wp{n}\to\W{n}} realizing \w{\varphi:\Vd\to\Vdp}
and  $\rho$, respectively,  through simplicial dimension $n$.
\item a split cofibration of chain complexes, \w{\hon{}{n}:\Dsn{n} \to \Dsnp{n}}
  with retraction \w[.]{\hor{}{n}:\Dsnp{n} \to \Dsn{n}}
\item a split cofibration \w{\oon{}{n}:\oW{n} \to \oWp{n}} of objects in $\C$,
  realizing \w{\ophl{n}} with retraction \w{\oor{}{n}:\oWp{n} \to \oW{n}}
  realizing \w[.]{\orh{n}}
\end{enumerate}

Note that \w{\oon{}{n}} induces a map of chain complexes
\w{\oon{\ast}{n}:\Dsn{n}(\oW{n}) \to \Dsnp{n}(\oWp{n})} with
retraction \w{\oor{\ast}{n}:\Dsnp{n}(\oWp{n}) \to \Dsn{n}(\oW{n})}
induced by \w[.]{\oor{}{n}} We then require that the following diagram
in chain complexes commute (in each vertical direction):
\mydiagram[\label{eqmapchaincpx}]{
\Dsn{n}(\oW{n})~ \ar@{^{(}->}[d]\sp{\oon{\ast}{n}} \ar@{>->}[rr]^{\Tn{n}} &&
\Dsn{n} \ar@{^{(}->}[d]\sp{\hon{}{n}} \ar@{->>}[rr]^{\Fn{n}} &&
\cMs\W{n-1} \ar@{^{(}->}[d]\sp{\cMs\en{n-1}} \\
\Dsnp{n}(\oWp{n})~ \ar@/^{1.1pc}/[u]\sp{\oor{\ast}{n}} \ar@{>->}[rr]_{\ppp\Tn{n}} &&
\Dsnp{n} \ar@/^{1.1pc}/[u]\sp{\hor{}{n}} \ar@{->>}[rr]_{\ppp\Fn{n}} &&
\cMs\Wp{n-1} \ar@/^{1.1pc}/[u]\sp{\cMs\rn{n-1}} ~.
}

If \w{\enk{n}{}:\W{n}\hra\Wp{n}} (hence also \w[)]{\rnk{n}{}:\Wp{n}\hra\W{n}}
is a Reedy weak equivalence, and in addition each induced map
\w[,]{\hon{k}{n}:\osW{k}{n}\hra\osWp{k}{n}} (hence each map
\w[)]{\hor{k}{n}:\osWp{k}{n}\hra\osW{k}{n}} is a weak equivalence in $\C$,
we say that $\Phi$ is a \emph{trivial} comparison map.

If we only have
\begin{myeq}\label{eqncorresp}
\Phi~=~\lra{\en{n},\,\rn{n},~
\hon{}{n},~
\hor{}{n},~\oon{}{n},~
\oor{}{n}\,}\sb{n=0}\sp{N}
\end{myeq}
\noindent as above, we say that \w{\Phi:\cW\to\,\ccWp} is an \emph{$N$-stage
comparison map} over $\Psi$. This completes our Definition.
\end{defn}

\begin{remark}\label{rtechdesccm}
  We note  for future reference that a comparison map $\Phi$
  as above yields maps fitting into commuting diagrams as follows:

\mydiagram[\label{eqmapmodconesusp}]{
\osW{k}{n} \ar@{^{(}->}[d]\sp{\oon{k}{n}}  \ar@{^{(}->}[rr]\sp{\oi{k}} &&~
\oCsW{k}{n} \ar@{^{(}->}[d]\sp{\Con{k}{n}} \ar@{->>}[rr]\sp{\oq{k}} &&
\osW{k+1}{n} \ar@{^{(}->}[d]\sp{\oon{k+1}{n}}\\
\osWp{k}{n}  \ar@/^{1.1pc}/[u]\sp{\oor{k}{n}} \ar@{^{(}->}[rr]\sp{\ppp\oi{k}} &&
\oCsWp{k}{n} \ar@/^{1.1pc}/[u]\sp{\Cor{k}{n}} \ar@{->>}[rr]\sp{\ppp\oq{k}} &&
\osWp{k+1}{n} \ar@/^{1.1pc}/[u]\sp{\oor{k+1}{n}}
}
\noindent for each \w[,]{0\leq k<n} in which both upward and downward squares
commute, as well as satisfying
\w{\Cor{k}{n}\circ\Con{k}{n}=\Id} and \w[.]{\oor{k}{n}\circ \oon{k}{n}=\Id}

Moreover, for each \w[,]{0\leq k<n} both squares in the following diagram commute:
\myudiag[\label{eqcompf}]{
\oCsW{n-k-1}{n} \ar@{^{(}->}[rr]\sp{\Fk{k}} \ar@{_{(}->}[d]\sp{\Con{n-k-1}{n}}
&& \cM{k-1}\W{n-1}  \ar@{_{(}->}[d]\sb{\cM{k-1}\en{n-1}}
&&
\oCsW{n-k-1}{n} \ar@{^{(}->}[rr]\sp{\Fk{k}}
&& \cM{k-1}\W{n-1}  \\
\oCsWp{n-k-1}{n} \ar@{^{(}->}[rr]\sp{\Fkp{k}}
&& \cM{k-1}\Wp{n-1} &&
\oCsWp{n-k-1}{n} \ar[u]\sb{\Cor{n-k-1}{n}} \ar@{^{(}->}[rr]\sp{\Fkp{k}}
&& \cM{k-1}\Wp{n-1}  \ar[u]\sp{\cM{k-1}\rn{n-1}}
}
\end{remark}

\begin{remark}\label{rcomp}
Consider the (strict) cofibration sequence:
\begin{myeq}\label{eqcofseq}
\osW{k}{n}~\xra{\oon{k}{n}}~\osWp{k}{n}~\xra{\ofn{n}{k}}~\osX{k}{n}
\end{myeq}
\noindent (which defines the right map and space).
Because of the splitting \w{\oor{k}{n}} for \w[,]{\oon{k}{n}} mapping \wref{eqcofseq}
into \w{\osWp{k}{n}} yields a Puppe exact sequence. Since
\w{[\Id-\oon{k}{n}\circ \oor{k}{n}]\in[\osWp{k}{n},\,\osWp{k}{n}]} is in
\w[,]{\Ker((\oon{k}{n})\sp{\#})} we obtain a map
\w{\snk{n}{k}:\osX{k}{n}\to \osWp{k}{n}} with
\w[.]{\snk{n}{k}\circ\ofn{n}{k}\sim\Id-\oon{k}{n}\circ\oor{k}{n}}
Thus
\mydiagram[\label{eqwkprod}]{
\osWp{k}{n} \ar[rr]_(0.42){\simeq}^(0.42){\ofn{n}{k} + \oor{k}{n}} &&
\osX{k}{n}\amalg\osW{k}{n}\ar[rr]_(0.6){\simeq}^(0.6){\snk{n}{k}\bot\oon{k}{n}} &&
\quad~~\osWp{k}{n}
}
\noindent are inverse weak equivalences for each \w[.]{0\leq k< n}
\end{remark}

\begin{defn}\label{dwesr}
We say two $n$-stage  sequential realizations $\cW$ and \w{\cWp} for \w{\bY\in\C}
are \emph{weakly equivalent} if there is a finite
zigzag of cospans of $n$-stage comparison maps connecting \w{\cWp} to $\cW$, say
\mydiagram[\label{eqnzigzag}]{
& \cuW{1} & \dotsc & \cuW{N-1} & \\
\cW=\cuW{0} \ar[ru]\sp{\Phip{1}} &&
\cuW{2} \ar[lu]\sb{\Phip{2}} & \dotsc & \cuW{N}=\cWp \ar[lu]\sb{\Phip{N}}
}
\noindent We say sequential realizations $\cW$ and \w{\cWp} for \w{\bY\in\C}
are \emph{weakly equivalent} if each of their $n$-stage approximations are such, with
respect to a given (possibly infinite) zigzag which is ``locally finite'' in the sense
that for each $n$, all but a finite number of maps in the zigzag are isomorphisms
on the $n$-truncations.
\end{defn}

The following result is used below to show that our main constructions
  are independent of choices of sequential realizations;
it is dual to \cite[Theorem 3.20]{BBSenH}. The proof is in Appendix \ref{Appa}.

\begin{thm}\label{tzigzag}
Given two $\bA$-equivalent spaces $\bY$ and \w{\bYp}  with
\w[,]{\Lambda\cong\piA\bY\cong\piA\bYp} any two sequential realizations
$\cW$ and \w{\cWp} of two CW resolutions \w{\Vd\to\Lambda}
  and \w{\Vdp\to\Lambda} for $\bY$ and \w[,]{\bYp} respectively, are weakly
      equivalent in the sense of Definition \ref{dwesr}.
\end{thm}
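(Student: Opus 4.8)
The plan is to reduce the comparison of two arbitrary sequential realizations to a chain of elementary moves, each of which is either a comparison map (in the sense of Definition \ref{dcompar}) or the reverse of one, built by inductively applying the constructions of Section \ref{crsar}.C together with the sharpened cylinder Lemma \ref{lcylind} at the algebraic level and Lemma \ref{lcylin} at the level of spaces.

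First I would assemble the \emph{algebraic} zigzag. Given the CW resolutions $\Vd\to\Lambda$ and $\Vdp\to\Lambda$ (using $\bY\simeq\bYp$ and the isomorphism $\piA\bY\cong\piA\bYp\cong\Lambda$ to put both augmentations over the same $\Lambda$, via the identifications of \S\ref{dpaal}), Lemma \ref{lcylind} produces a third CW resolution $\Vd[\prime\prime]\to\Lambda$ with CW basis $\bigl(\oU{n}\amalg\oV{n}\amalg\oVp{n}\bigr)\sb{n=0}\sp{\infty}$ and algebraic comparison maps $\Psi\up{0}:\Vd\to\Vd[\prime\prime]$ and $\Psi\up{1}:\Vdp\to\Vd[\prime\prime]$. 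This gives the cospan $\Vd\xrightarrow{\Psi\up0}\Vd[\prime\prime]\xleftarrow{\Psi\up1}\Vdp$ at the algebraic level; the whole problem is thus to lift each leg of this cospan to an $n$-stage comparison map of sequential realizations, for every $n$, compatibly.

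The core of the argument is the \emph{inductive lifting step}: given an algebraic comparison map $\Psi=\lra{\varphi,\rho,(\ophl{n},\orh{n})\sb{n}}:\Vd\to\Vdp$, a sequential realization $\cW$ of $\Vd$, and a sequential realization $\cWp$ of $\Vdp$ chosen \emph{en route} (not given in advance), construct a comparison map $\Phi:\cW\to\cWp$ over $\Psi$ by induction on $n$. At stage $n$ one has $\en{n-1}:\W{n-1}\to\Wp{n-1}$ with retraction $\rn{n-1}$ already realizing $\varphi,\rho$ through dimension $n-1$; one first chooses $\oon{}{n}:\oW{n}\hra\oWp{n}$ realizing $\ophl{n}$ with retraction $\oor{}{n}$ (possible since $\oV{n}\hra\oVp{n}$ is a split inclusion of free $\PAal$s and Lemma \ref{lbasis}/\S\ref{afree}\eqref{freesplit} let us split the realizing wedges of spheres in $\A$), then extends the standard strongly cofibrant replacements to a split cofibration $\hon{}{n}:\Dsn{n}\to\Dsnp{n}$ as in diagram \eqref{eqmapmodconesusp}, and finally uses Lemma \ref{lextnat} — the naturality of $\Zd\mapsto\Zd[F]$ — applied to the commuting square relating $\Fn{n}$ and $\ppp\Fn{n}$ (built so that \eqref{eqmapchaincpx} commutes, using $\cMs\en{n-1}$ and Proposition \ref{pobst} to realize $\ppp\Fn{n}$ compatibly) to produce $\en{n}:\W{n}\to\Wp{n}$ with its retraction. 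One must check the retraction survives the Reedy fibrant/cofibrant replacements in \eqref{eqdoublereplace}; this is where the functoriality of factorizations in Assumption \ref{sass} is essential, since it makes the whole $\Zd\mapsto\Zd[F]$ construction, retractions included, functorial. When $\varphi$ is a weak equivalence one upgrades to a \emph{trivial} comparison map: here one invokes Lemma \ref{lcylin} (via \eqref{eqwkprod} of Remark \ref{rcomp}) to see that the split inclusions $\osW{k}{n}\hra\osWp{k}{n}$ are then automatically weak equivalences, forcing $\en{n}$ to be a Reedy weak equivalence.

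Finally I would handle the \emph{change of starting realization}: the $\Vd$-side of the cospan must be connected to the \emph{given} $\cW$, not an arbitrary sequential realization of $\Vd$, and similarly for $\cWp$. This is dealt with by first proving that any two sequential realizations of the \emph{same} CW resolution $\Vd$ of $\bY$ are connected by a zigzag of trivial comparison maps over $\Id_{\Vd}$ — an easier special case of the lifting step, since there the algebraic data is rigid and one only juggles the choices of $\oW{n}$, $\Dsn{n}$, $\Fn{n}$, $\Tn{n}$, using Lemma \ref{lcylin} at each simplicial level to interpolate. Splicing this in front of (and behind) the cospan built from $\Psi\up0,\Psi\up1$, and checking local finiteness of the resulting infinite zigzag (each $n$-truncation only involves the finitely many stages $\leq n$, which stabilize), completes the proof of Definition \ref{dwesr}. \textbf{The main obstacle} I anticipate is the bookkeeping in the inductive lifting step: ensuring that the retraction $\rn{n}$ is preserved through the two successive factorizations producing $\Xd[F]$ from $\hXd[F]$, and that diagrams \eqref{eqmapchaincpx}, \eqref{eqmapmodconesusp}, \eqref{eqcompf} all remain compatible — this is precisely the kind of non-formal Eckmann–Hilton-dual translation flagged in Caveat \ref{cave}, and is presumably why the proof is deferred to Appendix \ref{Appa}.
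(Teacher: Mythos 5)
Your overall decomposition matches the paper's: apply Lemma \ref{lcylind} to produce the algebraic cospan, lift each leg to a comparison map of sequential realizations, and then compare the two resulting sequential realizations of the common CW resolution. The lifting step you sketch is essentially the content of Proposition \ref{pralgcomp} in Appendix \ref{Appa}, and the ingredients you single out --- Lemma \ref{lcylin}, Lemma \ref{lextnat}, functorial factorizations, the split-chain-complex diagram \eqref{eqmapchaincpx} --- are exactly those of the paper's proof.

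That said, two things go wrong or stop short. First, your parenthetical that ``when $\varphi$ is a weak equivalence one upgrades to a trivial comparison map'' cannot be right as stated: by Definition \ref{dacomp} the $\varphi$ of \emph{any} algebraic comparison map is already a weak equivalence of simplicial \PAal[s] (both sides resolve $\Lambda$), yet the individual $\ophl{n}\colon\oV{n}\hra\oVp{n}$ are typically proper split inclusions of free \PAal[s], so the realizing maps $\oon{}{n}$ and $\hon{k}{n}$ are not weak equivalences and $\Phi$ is not trivial. Trivial comparison maps arise (as you then correctly exploit) precisely when the algebraic comparison map is the \emph{identity}, which is why the paper first reduces to a single common CW resolution before aiming for triviality.

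Second --- and this is the substantive gap --- the step you flag as the ``main obstacle,'' comparing two sequential realizations of the \emph{same} CW resolution, is where Appendix \ref{Appa} does work that ``use Lemma \ref{lcylin} at each simplicial level to interpolate'' does not capture. At stage $n$ the two chain maps $F\up{i}\colon\uDs{i}\epic\cMs\Wi{n-1}{i}$ have \emph{different} targets, so one cannot interpolate between them directly. The paper first pulls both back (diagram \eqref{eqpbattach}) over the common $\cMs\W{n-1}$ furnished by the inductive $(n-1)$-stage trivial comparison maps, takes cofibrant replacements of these pullbacks, and only then invokes Lemma \ref{lcylin} --- once per stage $n$, and crucially in the slice category $\ChC\sp{\leq n}\,/\,\cMs\W{n-1}$ rather than levelwise in simplicial dimension. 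This pullback-plus-slice-category compatibility step is the load-bearing move your sketch would need to supply.
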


%
%
\sect{Higher homotopy operations}
\label{chho}

We are now in a position to define our notion of higher homotopy operations
based on sequential realizations. These are simpler than the full simplicial
operations studied in \cite{BMarkH,BJTurnHH,BBSenH}, though not strictly linear
in the sense of \cite[\S 6]{BJTurnHA} (see also \cite[\S 7]{BJTurnC}).

Our operations appear as the successive obstructions to augmenting a given simplicial
object \w[,]{\Wd} obtained as the colimit of \wref{eqtower} for some sequential
realization $\cW$, to a fixed object \w[:]{\bZ\in\C} the $n$-th operation
will be the obstruction to extending an augmentation \w{\W{n}\to\bZ} to \w[.]{\W{n+1}}
Thus in this and the following three sections we will be working with unaugmented
(possibly restricted) simplicial objects, implicitly applying \w{\szero{-}}
(see \ref{snac}) throughout.

\begin{remark}\label{raug}
An augmentation from a simplicial object \w{\Wd} to $\bX$ in $\C$ is just a map
\w{\Wd\to\cd{\bX}} in \w[.]{\C\sp{\Dop}} However, since the target is not fibrant as
an unaugmented simplicial object (see Remark \ref{rmodsso}), we choose once and for
all a fibrant replacement \w{\cd{\bX}\xra{\simeq}\Ud\epic\ast} in the Reedy model
category \w[.]{\C\sp{\Dop}} We thus can think of a \emph{homotopy augmentation}
\w{\bv:\Wd\to\Ud} as a homotopy meaningful version of an augmentation (which
need not factor through a strict augmentation \w[,]{\vare:\Wd\to\cd{\bX}} in general).

Recall now that the standard cosimplicial simplicial set \w{\Du} is given by the
standard simplicial maps between the standard simplices
\w[,]{(\Deln{k})\sb{k=0}\sp{\infty}} where \w{\eta\sp{i}:\Deln{k-1}\hra\Deln{k}}
is the inclusion of the $i$-th facet, and \w{\sigma\sp{j}:\Deln{k}\epic\Deln{k-1}}
is the $j$-th collapse map (see \cite[X, 2.2]{BKanH}). Applying the simplicial
exponentiation \w{\bX\sp{(-)}} to each \w{\Deln{k}} (for the fixed fibrant object
$\bX$) yields a simplicial object \w{\Ud:=\bX\sp{\Du}} in \w[,]{\C\sp{\Dop}}
which will serve as our canonical fibrant replacement for \w[.]{\cd{\bX}}
The Reedy weak equivalence \w{p\sp{\ast}:\cd{\bX}\to\bX\sp{\Du}}
is induced by \w[.]{p:\Du\to\ast}
\end{remark}

\begin{defn}\label{dshho}
We distinguish two levels of data needed to define our higher operations:
the \emph{basic initial data} consists of
\begin{equation*}
\begin{split}
(\star)=\lra{\bY,\bX,\vth},\hsn&\text{where $\bY$ and $\bX$ are objects in $\C$ and
    \w{\vth:\piA\bY\to\piA\bX}}\\
    & \text{is a map of \PAal[s.]}
\end{split}
\end{equation*}
\noindent while the \emph{specific initial data} consists in addition of
$$
(\star\star)=\lra{\cW,\Eu{0}},\hsn\text{with}\begin{cases}
\text{$\cW$ a sequential realization  of a CW-resolution
    \w[,]{\Vd\xra{\vare}\piA\bY}}& \\
\text{and \w{\Eu{0}:\W{0}\to\Ud} realizing}
  \w[.]{\vth\circ\vare\sb{0}:V\sb{0}\to\piA\bX}
\end{cases}
$$
\noindent Further conditions on the map \w{\Eu{0}} will depend on the specific
contexts we have in mind in Sections \ref{chhio} and \ref{chhim} below.

Given \w[,]{(\star\star)} our goal is to extend \w{\Eu{0}} inductively to
\emph{$n$-maps} \w{\Eu{n}:\W{n}\to\Ud} realizing \w{\vth\circ\vare:\Vd\to\piA\bX}
through simplicial dimension $n$. A \emph{strand} for \w{(\star\star)} is an
infinite sequence \w{\Eu{\infty}=(\Eu{0},\Eu{1},\dotsc)} of such $n$-maps satisfying
\w{\Eu{n-1}=\Eu{n}\circ\prn{n}} for each \w[.]{n \geq 1}
\end{defn}

\begin{remark}\label{ridea}
We would like to think of
\begin{myeq}\label{eqcofseqr}
\E\bDs~\xra{\tF}~\U\W{n-1}~\xra{\ell}~\Cone(\tF)
\end{myeq}
\noindent of \S \ref{crsar}.C  as a homotopy cofibration sequence of restricted
simplicial objects, with the homotopy class of \w{\Eu{n-1}\circ\tF:\E\bDs\to\U\Ud} \wwh
more precisely, of the realization of corresponding full simplicial objects \wh as our
obstruction to extending the \wwb{n-1}map \w{\Eu{n-1}} to a map
  \w{\widetilde{E}:\Cone(\tFn{n})\to\U\Ud} \wwh and so, by the pushout property of
\wref[,]{zhatzero} to a map  \w[.]{\hEu{n}:\hWd{n-1}[\Fn{n}]\to\Ud}
\end{remark}

\begin{lemma}\label{lextendfmcone}
If we can extend an \wwb{n-1}map \w{\Eu{n-1}:\W{n-1}\to\Ud} for \w{(\star\star)}
to \w[,]{\hEu{n}:\hWd{n-1}[\Fn{n}]\to\Ud} then it extends further to an $n$-map
\w[.]{\Eu{n}:\W{n}\to\Ud}
\end{lemma}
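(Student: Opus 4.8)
The plan is to read the result off the explicit two-step construction of \w{\W{n}=\W{n-1}[\Fn{n}]} in \S \ref{crsar}.C, together with a single lifting in the Reedy model structure. Recall from \wref{eqdoublereplace} that \w{\W{n}} is built from the pushout \w{\hWd{n-1}[\Fn{n}]} of \wref{zhatzero} in two stages: one first chooses a Reedy fibrant replacement \w{p':\hWd{n-1}[\Fn{n}]\xmonic{\simeq}\Xd'[F]} (an acyclic cofibration), and then factors the composite \w{\W{n-1}\xra{\wj}\hWd{n-1}[\Fn{n}]\xra{p'}\Xd'[F]} as a Reedy cofibration \w{\prn{n}=j:\W{n-1}\hra\W{n}} followed by a Reedy acyclic fibration \w[,]{p:\W{n}\xepic{\simeq}\Xd'[F]} so that \w[.]{p\circ j=p'\circ\wj} After applying \w{\szero{-}} (as we do throughout this section) \w{p'} remains a trivial cofibration of \emph{unaugmented} simplicial objects, since Reedy cofibrations and levelwise weak equivalences are both preserved by forgetting the augmentation (only the matching object in degree $0$ is affected).

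First I would lift \w{\hEu{n}} across \w[.]{p'} Since the target \w{\Ud=\bX\sp{\Du}} is Reedy fibrant in \w{\C\sp{\Dop}} (Remark \ref{raug}) and \w{p'} is a trivial cofibration there, there is a map \w{E':\Xd'[F]\to\Ud} with \w[.]{E'\circ p'=\hEu{n}} I would then set \w[.]{\Eu{n}:=E'\circ p:\W{n}\to\Ud} Because the hypothesis that \w{\hEu{n}} extends \w{\Eu{n-1}} means \w[,]{\hEu{n}\circ\wj=\Eu{n-1}} we obtain
\[
\Eu{n}\circ\prn{n}~=~E'\circ p\circ j~=~E'\circ p'\circ\wj~=~\hEu{n}\circ\wj~=~\Eu{n-1}~,
\]
so \w{\Eu{n}} indeed restricts along \w{\prn{n}:\W{n-1}\to\W{n}} to \w[.]{\Eu{n-1}}

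It remains to verify that \w{\Eu{n}} is an \emph{$n$-map} for \w[,]{(\star\star)} i.e.\ that it realizes \w{\vth\circ\vare:\Vd\to\piA\bX} through simplicial dimension $n$. Through dimension \w{n-1} this is immediate, as \w{\Eu{n}} extends the \wwb{n-1}map \w[.]{\Eu{n-1}} For dimension $n$, note that \w{p} and \w{p'} are Reedy weak equivalences, so \w{\piA\Eu{n}} is carried to \w{\piA\hEu{n}} by the induced isomorphisms \w[;]{\piA\W{n}\cong\piA\Xd'[F]\cong\piA\hWd{n-1}[\Fn{n}]} hence it is enough that \w{\hEu{n}} realize \w{\vth\circ\vare} through dimension $n$. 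This is automatic: a map of simplicial \PAal[s] into the constant object \w{\piA\Ud\cong\cd{\piA\bX}} is determined through dimension $n$ by its \wwbu{n-1}truncation, since the CW generators satisfy \w{d\sb{i}\rest{\oV{n}}=0} for \w{1\leq i\leq n} (Definition \ref{dscwo}), which forces the image of \w{\oV{n}} to be trivial \wh in agreement with \w{\vth\circ\vare} \wh so agreement of \w{\piA\hEu{n}} with \w{\vth\circ\vare} through dimension \w{n-1} forces agreement through dimension $n$. There is no substantive difficulty here once \S \ref{crsar}.C and Remark \ref{raug} are in hand; the one point needing care is the bookkeeping of the maps \w[,]{\wj} \w[,]{j} \w[,]{p} \w{p'} and their composites, together with the observation that the required lift is against the fibrancy of \w{\Ud} in the \emph{unaugmented} Reedy structure, to which \w{p'} descends as a trivial cofibration under \w[.]{\szero{-}}
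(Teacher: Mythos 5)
Your proof is correct and follows essentially the same route as the paper's: lift $\hEu{n}$ across the trivial cofibration $p'$ against the Reedy fibrant $\Ud$, then postcompose the lift $E'$ with $p$. Where you genuinely improve on the paper is in the verification that $\Eu{n}\circ\prn{n}=\Eu{n-1}$. The paper's proof asserts that the composite $E'\circ p\circ j$ agrees with $\Eu{n-1}$ ``only up to homotopy'' and then invokes \cite[Corollary~4.20]{BJTurnR} (a rectification lemma along a cofibration) to replace $E'\circ p$ within its homotopy class. You instead read the strict equality $p\circ j=p'\circ\wj$ directly off the factorization used to define $\W{n}$ in \eqref{eqdoublereplace}, so that
\[
E'\circ p\circ j~=~E'\circ p'\circ\wj~=~\hEu{n}\circ\wj~=~\Eu{n-1}
\]
holds on the nose, and the rectification step is unnecessary. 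This is a valid and cleaner conclusion, provided one interprets ``extend'' in the hypothesis as the strict equality $\hEu{n}\circ\wj=\Eu{n-1}$, which is consistent with how $\hEu{n}$ is produced via the pushout \eqref{zhatzero} (see Remark~\ref{ridea}) and how the Lemma is applied in Proposition~\ref{pvanish}. You also add an explicit check that the result realizes $\vth\circ\vare$ in dimension $n$, which the paper leaves implicit; your argument is essentially right (the constancy of $\piA\Ud$ means the behavior in dimension $n$ is determined by the truncation together with the simplicial identities), although the aside about $d\sb{i}\rest{\oV{n}}=0$ forcing triviality could be stated more directly in terms of $\vare\sb{n}=\vare\sb{n-1}\circ d\sb{1}$. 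One small nit: your justification for $p'$ remaining a trivial Reedy cofibration under $\szero{-}$ mentions matching objects, whereas cofibrations are governed by latching objects; the relevant observation is that the latching objects over $\Dp$ in degrees $\geq 0$ coincide with those over $\Delta$, and in degree $-1$ the latching object is initial, so a Reedy (trivial) cofibration over $\Dp$ restricts to one over $\Delta$. This does not affect the correctness of the argument.
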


\begin{proof}
Choosing a lift \w{E'} in the diagram
\mydiagram[\label{eqsquara}]{
\hWd{n-1}[\Fn{n}] \ar[rr]^-{\hEu{n}} \ar@{>->}[d]^{\sim}_{p'} && \Ud \ar@{->>}[d] \\
{\W{n-1}}'[\Fn{n}] \ar[rr] \ar@{.>}[urr]^{E'} && \ast
}
\noindent (compare \wref[)]{eqdoublereplace} defines a map
\w[,]{E'':=E' \circ p:\W{n-1}[\Fn{n}]\to\Ud} fitting into the following
solid diagram:
\mydiagram[\label{eqsquarb}]{
  & \W{n-1} \ar[r]^{\wj} \ar@{>->}[d]^{j} \ar@{>->}[dl]_{\prn{n}}
  \ar@{.>}@/^{3.0pc}/[rrr]\sp{\Eu{n-1}}
  & \hWd{n-1}[\Fn{n}] \ar[rr]^-{\hEu{n}} \ar@{>->}[d]_{p'}^{\sim} && \Ud  \\
\W{n} \ar@{=}[r] \ar@{.>}@/_{5.0pc}/[urrrr]\sb{\Eu{n}} & \W{n-1}[\Fn{n}] \ar@{->>}[r]^{p} &
    {\W{n-1}}'[\Fn{n}] \ar[urr]^{E'} \ar@{.>}@/_{2.0pc}/[l]\sb{s} &&
}
\noindent where the composite \w{\W{n-1}\to\Ud} agrees with \w{\Eu{n-1}} only up to
homotopy. Since \w{\prn{n}} is a cofibration, by \cite[Corollary 4.20]{BJTurnR}, we can
alter \w{E''} within its homotopy class to obtain the required
\w[,]{\Eu{n}} with \w[.]{\Eu{n-1}=\Eu{n}\circ\prn{n}}
\end{proof}

\begin{mysubsection}{Extending $n$-maps}
\label{sextnmap}
We need a more concrete identification of the extension of a given
\wwb{n-1}map \w{\Eu{n-1}:\W{n-1}\to\Ud} to \w{\hEu{n}:\hWd{n-1}[\Fn{n}]\to\Ud}
in order both to exhibit the obstruction to obtaining \w{\hEu{n}} as the value
of a higher operation, and to verify that it has the necessary properties
(in particular, the ability to compare values for various sequential realizations).

From the proof of Lemma \ref{lconef}, we see that in order to construct
\w{\hEu{n}} we need entries \w{\hEuk{k}{n}:\oCsW{n-k-1}{n}\to\bX\sp{\Deln{k}}}
for \w{0\leq k\leq n} (see \wref[).]{eqgrid} In order for this \w{\widehat{E}\bp{n}}
to be a simplicial map, we must have:
\begin{myeq}\label{eqneweface}
\begin{cases}
~(\eta\sp{0})\sp{\ast}\circ\hEuk{k}{n} ~=&~
\Euk{k-1}{n-1}\circ w\sb{k-1}\circ\Fk{k-1}~,\\
~(\eta\sp{1})\sp{\ast}\circ\hEuk{k}{n}~=&~\hEuk{k-1}{n-1}\circ\diff{n-k-1}{\bD}~,\\
~(\eta\sp{i})\sp{\ast}\circ\hEuk{k}{n}~=&~0\hsm\text{for}\hsm i\geq 2~,
\end{cases}
\end{myeq}
\noindent where \w{\diff{j}{\bD}} is the differential of \w[,]{\bDs} given by
\wref[,]{eqdif} and \w{\eta\sp{i}:\Deln{k-1}\to\Deln{k}} is the $i$-th coface map
of \w[,]{\Du} given by \w[.]{\Deln{k-1}\cong\partial\sb{i}\Deln{k}\hra\Deln{k}}
\end{mysubsection}

\begin{mysubsection}{Folding polytopes}
\label{sfoldp}
For any \w[,]{\bK\in\C} we can iterate the quotient maps \w{C\bK\epic\Sigma\bK}
to obtain \w[.]{\theta\sp{n-k-1}:C\sp{n-k-1}\oW{n}\to\Sigma\sp{n-k-1}\oW{n}}
Precomposing \w{\hEuk{k}{n}:\oCsW{n-k-1}{n}\to\bX\sp{\Deln{k}}} with
this, together with \w{\tau\sp{n-k-1}} of \wref[,]{eqmodconesusp} yields
$$
C\sp{n-k}\oW{n}~\xra{C\theta\sp{n-k-1}}~
C\Sigma\sp{n-k-1}\oW{n}~\xra{\tau\sp{n-k-1}}~\oCsW{n-k-1}{n}~\xra{\hEuk{k}{n}}~
\bX\sp{\Deln{k}}~.
$$
\noindent Since \w{\Deln{i+1}\cong C\Deln{i}} for each $i$, this composite is
adjoint to a pointed map \w[.]{\tEuk{n}{k}:\oW{n}\otimes\Deln{n}\to\bX}
There are \w{n+1} such maps, corresponding to (co)simplicial dimensions
\w[.]{0\leq k\leq n}

If we denote the copy of \w{\Deln{n}} associated to \w{\hEuk{k}{n}} by
\w[,]{\Delnk{n}{k}} then the first \w{k+1} facets
\w{\partial\sb{0}\Delnk{n}{k},\dotsc,\partial\sb{k}\Delnk{n}{k}} of \w{\Delnk{n}{k}}
are associated with the corresponding facets of the \w{\Deln{k}} in
\w{\bX\sp{\Deln{k}}} under the adjunction,
the next \w{n-k-1} facets are associated to the suspension directions of
\w{C\Sigma\sp{n-k-1}\oW{n}} (so \w{\hEuk{k}{n}} maps them to the basepoint), and the
$n$-th (so last) facet corresponds to the cone direction.

For each \w[,]{n\geq 1} the $n$-th \emph{folding polytope} \w{\Pn{n}}
is then constructed from the disjoint union of the \w{n+1} $n$-simplices
\w{\Delnk{n}{0},\dotsc,\Delnk{n}{n}} by identifying the $n$-facet
\w{\partial\sb{n}\Delnk{n}{k}} of \w{\Delnk{n}{k}} with the $1$-facet
\w{\partial\sb{1}\Delnk{n}{k+1}} of \w{\Delnk{n}{k+1}} for each \w[,]{0\leq k<n}
in keeping with the second line of \eqref{eqneweface}
(see \cite[\S 4]{BBSenH}, and compare the cubical version in \cite[\S 5]{BSenH}).

The sub-simplicial complex of the boundary \w{\partial\Pn{n}} consisting of the
union of the $n$ \wwb{n-1}-facets \w{\partial\sb{0}\Delnk{n}{k}}
\w{k=1,2,\dotsc,n} will be called the \emph{edge} of \w[,]{\Pn{n}} and
denoted by \w[.]{E\Pn{n}} See Figures \ref{fig1} and \ref{fig2},
as well as Figure \ref{fig4}
below.
\end{mysubsection}

%
%
\stepcounter{thm}
\begin{figure}[htbp]
\begin{center}
\begin{picture}(400,50)(30,170)
%
%
%
%
\put(130,200){\circle*{3}}
\put(127,205){\scriptsize $1$}
\put(130,200){\line(1,0){70}}
\put(156,170){$\Delnk{1}{0}$}
\put(200,200){\circle*{3}}
\put(199,205){\scriptsize $0$}
%
%
\bezier{12}(207,195)(215,190)(223,195)
\put(210,193){\vector(-2,1){8}}
\put(220,193){\vector(2,1){8}}
\put(200,180){\scriptsize identify}
%
%
\put(230,200){\circle*{3}}
\put(227,205){\scriptsize $0$}
\put(230,200){\line(1,0){70}}
\put(260,170){$\Delnk{1}{1}$}
\put(300,200){\circle*{5}}
\put(299,205){\scriptsize $1$}
\put(321,180){{\scriptsize the edge} $E\Pn{1}$}
\put(319,184){\vector(-1,1){13}}
\end{picture}
\end{center}
\caption{The folding polytope \ $\Pn{1}$}
\label{fig1}
\end{figure}

%
%
\stepcounter{thm}
\begin{figure}[htbp]
\begin{center}
\begin{picture}(400,150)(50,-5)
%
%
%
%
\put(125,30){\circle*{3}}
\put(123,20){\scriptsize $0$}
\put(125,30){\line(-2,3){46}}
\put(125,30){\line(2,3){46}}
\put(118,-5){$\Delnk{2}{0}$}
\put(78,100){\circle*{3}}
\put(70,98){\scriptsize $2$}
\put(78,100){\line(1,0){94}}
\put(172,100){\circle*{3}}
\put(176,98){\scriptsize $1$}
%
%
\bezier{42}(155,60)(190,40)(225,60)
\put(160,57){\vector(-2,1){8}}
\put(220,57){\vector(2,1){8}}
\put(172,35){\scriptsize identify}
%
%
\put(255,30){\circle*{3}}
\put(253,20){\scriptsize $0$}
\put(255,30){\line(-2,3){46}}
\put(255,30){\line(2,3){46}}
\put(248,-5){$\Delnk{2}{1}$}
\put(208,100){\circle*{3}}
\put(200,98){\scriptsize $2$}
\linethickness{2pt}
\put(208,100){\line(1,0){94}}
\thinlines
\put(302,100){\circle*{3}}
\put(306,98){\scriptsize $1$}
%
%
\bezier{42}(285,60)(320,40)(355,60)
\put(290,57){\vector(-2,1){8}}
\put(350,57){\vector(2,1){8}}
\put(302,35){\scriptsize identify}
\bezier{12}(312,108)(320,113)(328,108)
\put(317,110){\vector(-2,-1){8}}
\put(323,110){\vector(2,-1){8}}
%
%
\put(385,30){\circle*{3}}
\put(383,20){\scriptsize $0$}
\put(385,30){\line(-2,3){46}}
\put(385,30){\line(2,3){46}}
\put(378,-5){$\Delnk{2}{2}$}
\put(338,100){\circle*{3}}
\put(330,98){\scriptsize $2$}
\linethickness{2pt}
\put(338,100){\line(1,0){94}}
\thinlines
\put(432,100){\circle*{3}}
\put(436,98){\scriptsize $1$}
%
%
\put(205,125){$\overbrace{\hspace*{81mm}}\sp{\text{the edge} \ E\Pn{2}}$}
\end{picture}
\end{center}
\caption{The folding polytope \ $\Pn{2}$}\label{fig2}
\end{figure}

We readily see by induction:

\begin{lemma}\label{lfoldpoly}
  For each \w[,]{n\geq 1} the realization of the triple
  \w{(\Pn{n},\partial\Pn{n},E\Pn{n})} is homeomorphic to
  \w[,]{(\bB\sp{n},\,\bS{n-1},\,\bB\sp{n-1}\sb{+})} where
\w{\bB\sp{n-1}\sb{+}} is the upper hemisphere of \w{\bS{n-1}} in the unit ball
\w[.]{\bB\sp{n}}
\end{lemma}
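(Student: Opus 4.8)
The plan is to argue by induction on $n$. The base case $n=1$ is immediate: $\Pn{1}$ consists of two copies of $\Del\sp{1}$ glued at a vertex, so it realizes to an arc $\cong\bB\sp{1}$, with $\partial\Pn{1}$ its two endpoints ($\cong\bS{0}$) and $E\Pn{1}=\partial\sb{0}\Delnk{1}{1}$ one of those endpoints ($\cong\bB\sp{0}\sb{+}$).

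For $n\geq 2$, the first thing I would observe is that $\Pn{n}$ is a cone. The identification defining $\Pn{n}$ glues $\partial\sb{n}\Delnk{n}{k}$ to $\partial\sb{1}\Delnk{n}{k+1}$ by the order-preserving simplicial isomorphism between these two $(n-1)$-simplices, which carries the $0$-th vertex to the $0$-th vertex; hence the initial vertices of all of $\Delnk{n}{0},\dotsc,\Delnk{n}{n}$ are identified to a single vertex $v\sb{0}$ of $\Pn{n}$, and every glued facet contains $v\sb{0}$. Setting $L\sb{n}:=\bigcup\sb{k=0}\sp{n}\partial\sb{0}\Delnk{n}{k}\subseteq\Pn{n}$ and using the simplicial identity $\partial\sb{0}\partial\sb{n}=\partial\sb{n-1}\partial\sb{0}$, the face identifications of $\Pn{n}$ restrict to $L\sb{n}$ and present it as $n+1$ copies of $\Del\sp{n-1}$ glued successively, each along one facet to a facet of the next, and they give $\Pn{n}=v\sb{0}\ast L\sb{n}$. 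I would then invoke the standard fact that a simplicial complex of this ``chain'' type realizes to a PL $(n-1)$-ball — attach the simplices one at a time, each along a facet which is still free (hence an $(n-2)$-ball in the boundary sphere of the partial union), using that two PL $m$-balls glued along an $(m-1)$-ball in the boundary of each form a PL $m$-ball. It follows that $\Pn{n}=v\sb{0}\ast L\sb{n}$ realizes to a PL $n$-ball, and that $\partial\Pn{n}=L\sb{n}\cup\bigl(v\sb{0}\ast\partial L\sb{n}\bigr)$, the union of the two $(n-1)$-balls $L\sb{n}$ and $v\sb{0}\ast\partial L\sb{n}$ along their common boundary $(n-2)$-sphere $\partial L\sb{n}$, hence a PL $(n-1)$-sphere.

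Finally I would locate $E\Pn{n}$ inside $\partial\Pn{n}$. Since $E\Pn{n}=\bigcup\sb{k=1}\sp{n}\partial\sb{0}\Delnk{n}{k}$ is $L\sb{n}$ with its ``first'' link $\partial\sb{0}\Delnk{n}{0}$ removed, it is again a simplicial chain, so a PL $(n-1)$-ball; and $\overline{\partial\Pn{n}\setminus E\Pn{n}}$ is the union of that leftover simplex $\partial\sb{0}\Delnk{n}{0}$ with the ball $v\sb{0}\ast\partial L\sb{n}$ along a common $(n-2)$-sub-ball of the boundary of each, so it too is a PL $(n-1)$-ball. Thus $\partial\Pn{n}$ is exhibited as the union of two PL $(n-1)$-balls along their common boundary $(n-2)$-sphere $\partial E\Pn{n}$, one summand being $E\Pn{n}$; by the standard fact that such a decomposition of a PL sphere is PL-homeomorphic to the equatorial decomposition of $\bS{n-1}$, this gives $(\Pn{n},\partial\Pn{n},E\Pn{n})\cong(\bB\sp{n},\bS{n-1},\bB\sp{n-1}\sb{+})$ — first PL, and hence topologically.

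The step I expect to demand the most care is the bookkeeping around the cone presentation: checking that no face identifications beyond the listed ones are forced, that for $n\geq2$ the complexes $\Pn{n}$, $L\sb{n}$ and $E\Pn{n}$ are genuine simplicial complexes — in particular that the two facets of each $\Delnk{n}{k}$ that get glued are distinct — and that at each stage of building the chains the facet used for the next attachment remains free, so the elementary ``ball $\cup$ ball $=$ ball'' step applies; the degenerate case $n=1$, where facets collapse, is precisely the base of the induction. Everything else is formal PL topology, which justifies the paper's ``we readily see by induction''.
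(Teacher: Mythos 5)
The paper gives no argument here at all — the lemma is preceded only by ``We readily see by induction:'' — so there is nothing in the source to compare your proof against except the bare claim. Your proposal is correct and supplies exactly the missing details. The cone description $\Pn{n}=v\sb{0}\ast L\sb{n}$ with $L\sb{n}=\bigcup\sb{k}\partial\sb{0}\Delnk{n}{k}$ is accurate: the order-preserving gluings identify all initial vertices to $v\sb{0}$, and the restriction to the zeroth facets presents $L\sb{n}$ as a chain of $n+1$ copies of $\Del\sp{n-1}$ (each glued to the next along its last facet versus the next one's $0$th facet), hence a PL $(n-1)$-ball. Your bookkeeping worries are the right ones, and they check out: for $n\geq 2$ the two glued facets of each interior $\Delnk{n}{k}$ are distinct, no two faces of any one simplex become identified in the quotient, and the intersection of any two $n$-simplices in $\Pn{n}$ is a single common face.

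Two small remarks. First, despite the opening sentence, your argument for $n\geq 2$ never invokes the case $n-1$; the only inductions you actually run are on the number of simplices in the chains $L\sb{n}$ and $E\Pn{n}$ (repeatedly applying ``ball $\cup$ ball along a boundary ball is a ball''), with $n=1$ treated separately because the gluing degenerates. So this is really a direct PL argument rather than an induction on $n$. Second, the last step passes a little quickly from the decomposition $\partial\Pn{n}=E\Pn{n}\cup\overline{\partial\Pn{n}\setminus E\Pn{n}}$ into two $(n-1)$-balls to the homeomorphism of the full \emph{triple}. From that decomposition one first gets a PL homeomorphism of pairs $(\partial\Pn{n},E\Pn{n})\cong(\bS{n-1},\bB\sp{n-1}\sb{+})$ by matching each ball with a hemisphere along the common boundary sphere (one Alexander trick to extend over the second ball); then, since $\Pn{n}$ is a PL $n$-ball, a second Alexander trick extends this boundary homeomorphism over $\Pn{n}$, giving $(\Pn{n},\partial\Pn{n},E\Pn{n})\cong(\bB\sp{n},\bS{n-1},\bB\sp{n-1}\sb{+})$. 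These are routine, and once spelled out your proof is complete.
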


\begin{example}\label{egfoldpoly}
  We show how to use the folding polytopes in the case \w[.]{n=3}
  Given the solid $2$-homotopy for \w{\W{2}} in the following diagram, we wish
  to extend it by the dotted maps to the (given) $3$-truncated restricted
  augmented simplicial space \w[.]{\tWd{3}} For simplicity of notation,
  we assume here that we have used the standard cofibrant replacement
  \w[.]{\Dsn{3}(\oW{3})}
\mywdiag[\label{eqexttothree}]{
& \oW{3} \ar[ld]\sb{d\sb{0}=w\sb{2}\Fk{2}}
\ar[d]^(0.3){d\sb{1}=\diff{2}{\bD}}\ar@{.>}[rrrr]^(0.5){\hEuk{3}{3}}&&&&
\bX\sp{\Deln{3}} \ar@<-4.5ex>[d]\sb{(\eta\sp{0})\sp{\ast}}
\sp{(\eta\sp{1})\sp{\ast}} \ar[d] \ar@<1ex>[d]\sp{(\eta\sp{2})\sp{\ast}}
\ar@<5.5ex>[d]\sp{(\eta\sp{3})\sp{\ast}} \\
\Wn{2}{2} \ar@/^{2.0pc}/[rrrrr]\sp{\Euk{2}{2}}
\ar@<-2.5ex>[d]\sb{d\sb{0}}\sp{d\sb{1}} \ar[d] \ar@<2ex>[d]\sp{d\sb{2}} &
\amalg
\hspace*{4mm}\CsoW{0}{3} \ar[ld]_(0.4){d\sb{0}=w\sb{1}\Fk{1}}
\ar[d]^(0.3){d\sb{1}=\diff{1}{\bD}} \ar@{.>}[rrrr]\sp{\hEuk{2}{3}} &&&&
\bX\sp{\Deln{2}} \ar@<-3.5ex>[d]\sb{(\eta\sp{0})\sp{\ast}}
\sp{(\eta\sp{1})\sp{\ast}} \ar@<1ex>[d] \ar@<3ex>[d]\sp{(\eta\sp{2})\sp{\ast}} \\
\Wn{1}{2} \ar@/^{2.0pc}/[rrrrr]\sp{\Euk{1}{2}}
\ar@<-0.5ex>[d]\sb{d\sb{0}} \ar@<0.5ex>[d]\sp{d\sb{1}} &
\amalg
\hspace*{4mm}\CsoW{1}{3} \ar[ld]\sb(0.4){d\sb{0}=w\sb{0}\Fk{0}}
\ar[d]^(0.3){d\sb{1}=\diff{0}{\bD}}
\ar@{.>}[rrrr]\sp{\hEuk{1}{3}} &&&&
\bX\sp{\Deln{1}} \ar@<-0.5ex>[d]\sb{(\eta\sp{0})\sp{\ast}}
\ar@<0.5ex>[d]\sp{(\eta\sp{1})\sp{\ast}} \\
\Wn{0}{2} \ar@/^{2.0pc}/[rrrrr]\sp{\Euk{0}{2}} & \amalg
\hspace*{4mm}\CsoW{2}{3} \ar@{.>}[rrrr]\sb{\hEuk{0}{3}} &&&& \bX
}

Using the identifications of \S \ref{sfoldp}, the putative map
\w{\hEuk{0}{3}:\oCsW{2}{3}\to\bX} would be given by a map
\w{\tEuk{3}{0}:\oW{3}\times\Delnk{3}{0}\to\bX} whose restriction to the \emph{boundary}
is described in adjoint form on the left in Figure \ref{fig4}.
%
%
\stepcounter{thm}
\begin{figure}[htbp]
\begin{center}
\begin{picture}(410,250)(-9,-25)
%
%
\put(30,215){$\partial\Delnk{3}{0}$}
%
%
\put(60,60){\circle*{3}}
\put(59,50){\scriptsize $2$}
\put(23,125){\circle*{3}}
\put(16,121){\scriptsize $3$}
\put(-15,190){\circle*{3}}
\put(-17,194){\scriptsize $2$}
\put(60,190){\circle*{3}}
\put(59,194){\scriptsize $1$}
\put(135,190){\circle*{3}}
\put(135,194){\scriptsize $2$}
\put(98,125){\circle*{3}}
\put(101,121){\scriptsize $0$}
%
%
\put(-15,191){\line(1,0){150}}
\put(23,126){\line(1,0){75}}
\bezier{400}(60,60)(23,125)(-15,190)
\bezier{400}(60,60)(98,125)(135,190)
\bezier{400}(23,125)(42,157)(60,190)
\bezier{400}(98,125)(79,157)(60,190)
%
%
\put(17,165){$\ast$}
\put(57,150){$\ast$}
\put(88,165){\scriptsize $\widetilde{\hEuk{0}{2}\diff{0}{\bD}}$}
\put(57,97){$\ast$}
%
%
\bezier{40}(115,180)(120,135)(125,90)
\put(116,175){\vector(-1,4){2}}
\put(124,95){\vector(1,-4){2}}
%
%
\put(130,-20){$\partial\Delnk{3}{1}$}
%
%
\put(70,10){\circle*{3}}
\put(69,0){\scriptsize $2$}
\put(108,75){\circle*{3}}
\put(100,73){\scriptsize $3$}
\put(145,140){\circle*{3}}
\put(144,145){\scriptsize $2$}
\put(145,10){\circle*{3}}
\put(144,0){\scriptsize $1$}
\put(220,10){\circle*{3}}
\put(219,0){\scriptsize $2$}
\put(182,75){\circle*{3}}
\put(186,73){\scriptsize $0$}
%
%
\put(70,11){\line(1,0){150}}
\bezier{400}(70,10)(108,75)(145,140)
\bezier{400}(220,10)(182,75)(145,140)
\linethickness{2pt}
\put(70,11){\line(1,0){75}}
\bezier{400}(70,10)(89,42)(108,75)
\bezier{400}(108,75)(127,43)(145,10)
\thinlines
\put(145,11){\line(1,0){75}}
\bezier{400}(145,11)(163,43)(182,75)
\bezier{400}(220,11)(201,43)(182,75)
\put(108,76){\line(1,0){75}}
\bezier{400}(182,75)(164,43)(145,10)
%
%
\put(142,48){$\ast$}
\put(135,98){\scriptsize $\widetilde{\hEuk{0}{2}\diff{0}{\bD}}$}
\put(170,27){\scriptsize $\widetilde{\hEuk{1}{3}\diff{1}{\bD}}$}
\put(90,27){\scriptsize $\widetilde{\Euk{0}{2}w\sb{0}\Fk{0}}$}
%
%
\bezier{40}(180,50)(205,70)(230,90)
\put(185,54){\vector(-1,-1){7}}
\put(227,86){\vector(1,1){7}}
%
%
\put(230,215){$\partial\Delnk{3}{2}$}
%
%
\put(240,60){\circle*{3}}
\put(239,49){\scriptsize $2$}
\put(203,125){\circle*{3}}
\put(196,121){\scriptsize $3$}
\put(165,190){\circle*{3}}
\put(164,194){\scriptsize $2$}
\put(240,190){\circle*{3}}
\put(239,194){\scriptsize $1$}
\put(315,190){\circle*{3}}
\put(314,194){\scriptsize $2$}
\put(278,125){\circle*{3}}
\put(281,121){\scriptsize $0$}
%
%
\put(165,191){\line(1,0){150}}
\put(203,126){\line(1,0){75}}
\linethickness{2pt}
\put(165,191){\line(1,0){75}}
\linethickness{1.5pt}
\bezier{400}(203,125)(184,158)(165,190)
\bezier{400}(203,125)(221,158)(240,191)
\thinlines
\bezier{400}(240,60)(203,125)(165,190)
\bezier{400}(240,60)(278,125)(315,190)
\bezier{400}(203,125)(222,157)(240,190)
\bezier{400}(278,125)(259,157)(240,190)
%
%
\put(235,143){$\ast$}
\put(265,165){\scriptsize $\widetilde{\hEuk{2}{3}\diff{2}{\bD}}$}
\put(185,165){\scriptsize $\widetilde{\Euk{1}{2}w\sb{1}\Fk{1}}$}
\put(228,102){\scriptsize $\widetilde{\hEuk{1}{3}\diff{1}{\bD}}$}
%
%
\bezier{40}(300,180)(310,135)(320,90)
\put(301,175){\vector(-1,4){2}}
\put(319,95){\vector(1,-4){2}}
%
%
\put(330,-20){$\partial\Delnk{3}{3}$}
%
%
\put(260,10){\circle*{3}}
\put(258,0){\scriptsize $2$}
\put(298,75){\circle*{3}}
\put(291,72){\scriptsize $0$}
\put(335,140){\circle*{3}}
\put(334,145){\scriptsize $2$}
\put(335,10){\circle*{3}}
\put(334,0){\scriptsize $1$}
\put(410,10){\circle*{3}}
\put(409,0){\scriptsize $2$}
\put(372,75){\circle*{3}}
\put(376,72){\scriptsize $3$}
%
%
\put(260,11){\line(1,0){150}}
\put(298,76){\line(1,0){75}}
\linethickness{2pt}
\put(335,11){\line(1,0){75}}
\linethickness{1.5pt}
\bezier{400}(335,11)(353,43)(372,75)
\bezier{400}(410,11)(391,43)(372,75)
\thinlines
\bezier{400}(260,10)(298,75)(335,140)
\bezier{400}(410,10)(372,75)(335,140)
\bezier{400}(298,75)(317,43)(335,10)
\bezier{400}(372,75)(354,43)(335,10)
%
%
\put(291,30){$\ast$}
\put(333,48){$\ast$}
\put(325,98){\scriptsize $\widetilde{\hEuk{2}{3}\diff{2}{\bD}}$}
\put(352,27){\scriptsize $\widetilde{\Euk{2}{2}w\sb{2}\Fk{2}}$}
\end{picture}
\end{center}

\caption{Maps from $3$-simplices corresponding to \protect{\eqref{eqexttothree}}}
\label{fig4}
\end{figure}

In \w[,]{\partial\Delnk{3}{0}} facet $3$ corresponds to the cone base \w[,]{\osW{2}{3}}
facets $1$ and $2$ correspond to the two suspension directions (and thus map to
$\ast$ in $\bX$). In general, facets \w{2,\dotsc, n-1} of
\w{\partial\Delnk{n}{k}} all map to $\ast$ as in \wref[,]{eqneweface} while the
$n$-th facet, corresponding to the cone base, agrees with the
$1$-facet \w{\partial\sb{1}\Delnk{n}{k+1}} of \w{\Delnk{n}{k+1}} if \w[.]{k<n}

The remaining maps \w{\tEuk{3}{1}} and \w{\tEuk{3}{2}} can be read off Figure \ref{fig4},
where only the boundaries of the four tetrahedra are shown. The edge \w{E\Pn{3}}
consists of the three $2$-simplices outlined in bold.
The full explanation of how each facet of every \w{\Delnk{n}{k}} maps to
\w{\bX\sp{\oW{3}}} is given in \cite[\S 5]{BSenH}, where each $n$-simplex is thought
of as a quotient of an $n$-cube.
\end{example}

\begin{prop}\label{pfoldpolymap}
Given an \wwb{n-1}map \w{\Eu{n-1}} for data \w{(\star\star)=\lra{\cW,\Eu{0}}}
as in \S \ref{dshho}, any choice of maps
\w{\hEuk{k}{n}:\oCsW{n-k-1}{n}\to\bX\sp{\Deln{k}}}
\wb{0\leq k\leq n} satisfying \wref{eqneweface}
determines a unique map \w{\tEuk{n}{}:\oW{n}\otimes\Pn{n}\to\bX} with the restriction
\w{\tEuk{n}{k}} to \w{\oW{n}\otimes\Delnk{n}{k}} adjoint as in \S \ref{sextnmap} to
\w{\hEuk{k}{n}\circ\tau\sp{n-k-1}:C\Sigma\sp{n-k-1}\oW{n}\to\bX\sp{\Deln{k}}}
for each \w[,]{1\leq k\leq n} and conversely.
\end{prop}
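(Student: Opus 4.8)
\noindent The plan is to present both sides of the claimed bijection as descriptions of one and the same simplicial-set-parametrised family of maps into $\bX$, and to recognise the compatibility conditions along the glued facets of $\Pn{n}$ as precisely \eqref{eqneweface}. First I would record the colimit presentation of the folding polytope: by its construction in \S\ref{sfoldp} (following \cite[\S 4]{BBSenH}), $\Pn{n}$ is the iterated pushout in $\cS$ of the $n$-simplices $\Delnk{n}{0},\dotsc,\Delnk{n}{n}$ along the face inclusions identifying $\partial\sb{n}\Delnk{n}{k}$ with $\partial\sb{1}\Delnk{n}{k+1}$ for $0\le k<n$. Since $\oW{n}\otimes(-)\colon\cS\to\C$ is a left adjoint (its right adjoint being $\map\sb{\C}(\oW{n},-)$), it carries this pushout to the corresponding iterated pushout in $\C$; hence a map $\tEuk{n}{}\colon\oW{n}\otimes\Pn{n}\to\bX$ is the same datum as a family of maps $\tEuk{n}{k}\colon\oW{n}\otimes\Delnk{n}{k}\to\bX$ whose members agree on the glued facets, and such a compatible family determines $\tEuk{n}{}$ uniquely. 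This yields the uniqueness clause and reduces the proposition to a local statement on each $\Delnk{n}{k}$ together with the $n$ gluing equalities.

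Next I would set up the local dictionary $\tEuk{n}{k}\leftrightarrow\hEuk{k}{n}$ used implicitly in \S\ref{sfoldp}. The iterated cone isomorphisms $\Delnk{n}{k}\cong\Deln{n}\cong C\sp{n-k}\Deln{k}$, the compatibility of the cone functor with the tensor, and the tensor--cotensor adjunction (a map $\oW{n}\otimes\bK\to\bX$ being the same as a map $\oW{n}\to\bX\sp{\bK}$) together turn a map $\tEuk{n}{k}$ of the prescribed shape --- which by \S\ref{sfoldp} is the adjoint of $\hEuk{k}{n}\circ\tau\sp{n-k-1}$ after collapsing the suspension directions by $\theta\sp{n-k-1}$ --- into the single map $\hEuk{k}{n}\colon\oCsW{n-k-1}{n}\to\bX\sp{\Deln{k}}$, and conversely; for $k=0$ this is the piece landing in $\bX\sp{\Deln{0}}=\bX$, whose restriction to $E\Pn{n}$ carries the value of the operation. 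Under this dictionary the facet $\partial\sb{i}\Delnk{n}{k}$ is sent, for $0\le i\le k$, to the adjoint of $(\eta\sp{i})\sp{\ast}\circ\hEuk{k}{n}$; for $k+1\le i\le n-1$, to the constant map at the basepoint, since $\theta\sp{n-k-1}$ collapses the suspension directions; and for $i=n$ (the cone direction) its restriction is computed, via the diagram \eqref{eqmodconesusp} relating $\tau$, $\sigma$ and the cone inclusions, in terms of $\hEuk{k}{n}$ composed with the cone-to-suspension quotient and the differential of $\bDs$ from \eqref{eqdif}.

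It then remains to match the gluing equalities with \eqref{eqneweface}. The only identification internal to $\Pn{n}$ equates $\partial\sb{n}\Delnk{n}{k}$ with $\partial\sb{1}\Delnk{n}{k+1}$, and after unwinding the adjunctions above the resulting equality of restrictions of $\tEuk{n}{k}$ and $\tEuk{n}{k+1}$ is precisely the second line of \eqref{eqneweface}. Hence a compatible family $(\tEuk{n}{k})$ of maps of the prescribed shape exists exactly when that line holds for all $k$; the first and third lines of \eqref{eqneweface} then record the prescribed restriction of $\tEuk{n}{}$ to the rest of the boundary --- on the edge $E\Pn{n}$ it is $\Euk{k-1}{n-1}\circ w\sb{k-1}\circ\Fk{k-1}$, which is built into the choice of $\hEuk{k}{n}$ and its compatibility with the given $(n-1)$-map $\Eu{n-1}$, and on the facets $\partial\sb{i}\Delnk{n}{k}$ with $i\ge2$ it is the basepoint. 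Running the dictionary backwards gives the converse: any map $\oW{n}\otimes\Pn{n}\to\bX$ of the stated form restricts to such a compatible family, and well-definedness of that family forces \eqref{eqneweface}.

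The step I expect to be the main obstacle is this last one: making the combinatorial bookkeeping precise --- tracking, through the iterated identification $\Deln{n}\cong C\sp{n-k}\Deln{k}$ and the cone/suspension maps of \eqref{eqmodconesusp}, exactly which geometric facet of $\Deln{n}$ is the cone base of $\Delnk{n}{k}$ and which is the $1$-facet of $\Delnk{n}{k+1}$, and then verifying that the two restrictions coincide once one substitutes for the differential of $\bDs$ (an inclusion after a cone-to-suspension quotient, by \eqref{eqdif}). Rather than redo this from scratch, I would deduce it from the cubical model of \cite[\S 5]{BSenH}, in which each $\Delnk{n}{k}$ is presented as a quotient of an $n$-cube with its face behaviour spelled out, together with its simplicial reformulation in \cite[\S 4]{BBSenH}; the statement here is the Eckmann-Hilton dual of the relevant lemma there.
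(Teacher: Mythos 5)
Your proposal correctly handles the forward direction (using the pushout presentation of $\Pn{n}$ and the tensor--cotensor adjunction, essentially as the paper does), and the identification of the gluing conditions with the second line of \eqref{eqneweface} is right. But there is a genuine gap in the converse direction, and it is not where you flag it.

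The issue is the acyclic cofibration $\tau\sp{n-k-1}\colon C\Sigma\sp{n-k-1}\oW{n}\to\oCsW{n-k-1}{n}$ of \eqref{eqmodconesusp}. Given $\tEuk{n}{}$, restricting to the simplices and unwinding the adjunctions hands you only the \emph{composites} $\ppp\hEuk{k}{n}:=\hEuk{k}{n}\circ\tau\sp{n-k-1}\colon C\Sigma\sp{n-k-1}\oW{n}\to\bX\sp{\Deln{k}}$, not the maps $\hEuk{k}{n}$ out of $\oCsW{n-k-1}{n}$ themselves. These are different sources: $\oCsW{n-k-1}{n}$ is a possibly non-standard cone, chosen to make the $\Fn{n}$ into fibrations. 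Your ``local dictionary'' treats precomposition with $\tau\sp{n-k-1}$ as invertible, but it is only an acyclic cofibration, so passing back requires an extension along it --- a lifting problem, not a formal consequence of the adjunctions you list. This is exactly what the paper's proof supplies: an induction on $k$, in which the previously-constructed $\hEuk{k-1}{n}$ and the vanishing on higher faces are assembled into a map $L\colon\oCsW{n-k-1}{n}\to\bX\sp{\partial\Deln{k}}$, and $\hEuk{k}{n}$ is produced as a lift in a square whose right vertical is the fibration $\inc\sp{\ast}\colon\bX\sp{\Deln{k}}\epic\bX\sp{\partial\Deln{k}}$ (SM7) and whose left vertical is $\tau\sp{n-k-1}$. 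That inductive lifting argument is the actual content of the converse; your sketch does not contain it or any substitute for it, and the family $(\hEuk{k}{n})$ is not literally unique, only unique up to the choices in these lifts.

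A secondary concern: you flag the combinatorial facet-bookkeeping as the main obstacle and propose to deduce it from the cubical model of \cite[\S 5]{BSenH}, citing Eckmann-Hilton duality with \cite[\S 4]{BBSenH}. The paper's Caveat explicitly warns that this duality ``is not formal, and the differences need to be spelled out carefully,'' so this deferral cannot be taken for granted; but more to the point, even a perfect dictionary for the facets would not fill the lifting gap above, which is model-categorical rather than combinatorial.
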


\begin{proof}
Given maps \w{\hEuk{k}{n}:\oCsW{n-k-1}{n}\to\bX\sp{\Deln{k}}}
\wb[,]{0\leq k\leq n} we obtain maps
\w{\tEuk{n}{k}:\oW{n}\otimes\Deln{n}\to\bX} as in \S \ref{sfoldp}.
The first condition in \wref{eqneweface} says that the restriction of
\w{\hEuk{k}{n}} to \w{\partial\sb{0}\Deln{k}} equals
\w[.]{\Euk{k-1}{n-1}\circ w\sb{k-1}\circ\Fk{k-1}} The second condition says
that its restriction to \w{\partial\sb{1}\Deln{k}} equals
\w[,]{\hEuk{k-1}{n-1}\circ\diff{n-k-1}{\bD}=\hEuk{k-1}{n-1}\circ\oi{k}\circ\oq{k-1}}
 corresponding to the restriction to the base of the cone and thus to the restriction
of \w{\tEuk{n}{k}} to \w[,]{\partial\sb{n}\Deln{n}} by the convention of
\S \ref{sfoldp}. Since the face maps \w{d\sb{i}} on \w{\oCsW{n-k-1}{n}} vanish for
\w[,]{i\geq 2} and \w{\hEuk{k-1}{n}} also vanishes at the other end of the cone
direction and at both ends of the suspension directions, we obtain a map
\w{\tEuk{n}{}:\oW{n}\otimes\Pn{n}\to\bX} as required.

Conversely, given such a map \w[,]{\tEuk{n}{}} its restrictions to the $n$-simplices
\w{\Delnk{n}{0},\dotsc,\Delnk{n}{n}} define the maps \w[,]{\tEuk{n}{k}} and thus
maps \w[.]{\ppp\hEuk{k}{n}:C\Sigma\sp{n-k-1}\oW{n}\to\bX\sp{\Deln{k}}}
We now show by induction on \w{k\geq 0} that these extend to maps
\w{\hEuk{k}{n}:\oCsW{n-k-1}{n}\to\bX\sp{\Deln{k}}} with
\begin{myeq}\label{eqnewold}
\ppp\hEuk{k}{n}~=~\hEuk{k}{n}\circ\tau\sp{n-k-1}
\end{myeq}
\noindent satisfying
\begin{myeq}\label{eqnewfaces}
\begin{cases}
~(\eta\sp{0})\sp{\ast}\circ\ppp\hEuk{k}{n} ~=&~
\Euk{k-1}{n-1}\circ w\sb{k-1}\circ\Fk{k-1}~,\\
~(\eta\sp{1})\sp{\ast}\circ\ppp\hEuk{k}{n}~=&~\ppp\hEuk{k-1}{n-1}\circ\diff{n-k-1}{\bD}~,\\
~(\eta\sp{i})\sp{\ast}\circ\ppp\hEuk{k}{n}~=&~0\hsm\text{for}\hsm i\geq 2~.
\end{cases}
\end{myeq}
\noindent To start the induction for \w[,]{k=0} where \wref{eqnewfaces} is vacuous,
use the fact that \w{\tau\sp{n-1}} is an acyclic cofibration and
\w{\bX\sp{\Deln{0}}=\bX} is fibrant to extend \w{\ppp\hEuk{0}{n}} to \w[.]{\hEuk{0}{n}}

In the induction step, the given map \w{\hEuk{k-1}{n}} induces a map
\w{L:\oCsW{n-k-1}{n}\to\bX\sp{\partial\Deln{k}}} fitting into
the following solid commutative diagram:
\myxdiag[\label{eqliftingh}]{
C\Sigma\sp{n-k-1}\oW{n} \ar@{^{(}->}[d]\sp{\simeq}\sb{\tau\sp{n-k-1}}
\ar[rrrr]\sp{\ppp\hEuk{k}{n}} &&&&
\bX\sp{\Deln{k}} \ar@{->>}[d]\sp{\inc\sp{\ast}} \\
\oCsW{n-k-1}{n} \ar@{.>}[rrrru]\sp{\hEuk{k}{n}} \ar[d]\sb{\Fk{k-1}}
\ar[rrd]\sb{\diff{n-k-2}{\bD}}  \ar[rrrd]\sp{0} \ar[rrrr]\sb{L} &&&&
\bX\sp{\partial\Deln{k}} \ar@{->>}[dd]\sp{(\eta\sp{0})\sp{\ast}}
\ar@{->>}[ldd]\sp{(\eta\sp{1})\sp{\ast}} \ar@{->>}[ld]\sb{(\eta\sp{i})\sp{\ast}}\\
\cM{k-1}\W{n-1} \ar[d]\sp{w\sb{k-1}} && \oCsW{n-k}{n} \ar[rd]\sb{\hEuk{k-1}{n}} &
\bX\sp{\Delnk{k-1}{i}} \\
\Wn{k-1}{n-1} \ar@/_{1.5pc}/[rrrr]\sb{\Euk{k-1}{n-1}}
&&& \bX\sp{\Delnk{k-1}{1}}  & \bX\sp{\Delnk{k-1}{0}} \\
}
\noindent where the bottom portion of the diagram fits together to define $L$
by \wref[,]{eqnewfaces} and the whole diagram commutes
by \wref{eqneweface} and \wref[.]{eqnewold}

Since the cofibration \w{\inc:\partial\Deln{k}\hra\Deln{k}} induces a
fibration \w{\inc\sp{\ast}} by \cite[II, \S 2, SM7]{QuiH}, and
\w{\tau\sp{n-k-1}} of \wref{eqmodconesusp} is an acyclic cofibration,
we have the lifting \w{\hEuk{k}{n}}
by the left lifting property. The fact that \wref{eqliftingh} commutes implies that
\wref{eqnewfaces} and \wref{eqnewold} hold for $k$ as well,
completing the induction step.
\end{proof}

\begin{assume}\label{spup}
Assume now that the pointed simplicial model category $\C$
has an underlying unpointed simplicial model category $\hC$ (see \cite[\S 1.1.8]{HovM}).
This is the case when \w{\C=\Sa} or \w[,]{\Topa} for example. Note that
the two simplicial tensorings are different: thus in \w{\hC=\Top}
we have the product \w{\bA\times K} for \w{\bA\in\hC} and
\w{K\in\cS} as the simplicial tensor \w[,]{\bA\otimes K} while for
\w{\C=\Topa} we must set \w[,]{\bA\otimes K=\bA\times
K/(\ast\times K)} where $\ast$ is the given basepoint in $\bA$
(because \w{\bA\otimes K\in\C} must itself be pointed, while
\w{\bA\times K} has no chosen basepoint, since $K$ is in $\cS$,
not \w[).]{\Sa} See \cite[\S 9.1.14]{PHirM}.

However, when $K$ has a basepoint $k$, we write \w{\bA\wedge K} for
\w[.]{\bA\otimes K/(\bA\otimes\{k\})}
We also write \w{K\times L} for the product in $\cS$.
\end{assume}

In this case we have an explicit description of the following classical
fact (see \cite{BJiblSL}):

\begin{lemma}\label{rhalfsmash}
  If $\bA$ is cofibrant in $\C$ as in \S \ref{spup}, and \w{B\in\Sa=\Seta\sp{\Dop}}
  is connected, with basepoint $b$, there is a canonical weak equivalence
\begin{myeq}\label{eqhalfsmash}
  \varphi:\Sigma\bA\otimes B~\xra{\simeq}~
  \Sigma\bA/(\Sigma\bA\otimes\{b\})\otimes B~\amalg~\Sigma\bA\otimes\{b\}~\simeq~
  (\Sigma\bA\wedge B)\vee\Sigma\bA
\end{myeq}
\noindent where $\varphi$ onto the first summand is the natural projection, and
the reverse weak equivalence on the second summand is induced by \w[.]{\{b\}\hra B}
\end{lemma}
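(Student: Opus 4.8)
The plan is to realize the abstract half-smash splitting by an explicit map, and then verify that this map is a weak equivalence with the asserted description of its two components. First I would record the relevant cofibration sequence. Writing the simplicial tensor out, $\Sigma\bA\otimes B=(\Sigma\bA\times B)/(\ast\times B)$, with $\Sigma\bA\otimes\{b\}\cong\Sigma\bA$ (as $\{b\}=\Deln{0}$ is the tensoring unit), and the basepoint inclusion $\{b\}\hra B$ is a monomorphism, hence a cofibration, of simplicial sets. Since $\Sigma\bA$ is cofibrant, the pushout--product (SM7) axiom \cite[II, \S 2]{QuiH} shows that $j:\Sigma\bA=\Sigma\bA\otimes\{b\}\hra\Sigma\bA\otimes B$ is a cofibration of cofibrant objects, so its strict cokernel $\pi:\Sigma\bA\otimes B\epic\Sigma\bA\wedge B$ is the homotopy cofiber. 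Moreover the collapse map $B\to\Deln{0}$ induces a retraction $r:\Sigma\bA\otimes B\to\Sigma\bA$ with $r\circ j=\Id$; applying naturality of the Puppe sequence to the square witnessing this retraction shows that the connecting map $\partial:\Sigma\bA\wedge B\to\Sigma\Sigma\bA$ of the cofiber sequence $\Sigma\bA\xra{j}\Sigma\bA\otimes B\xra{\pi}\Sigma\bA\wedge B$ is null-homotopic; in particular $\pi\sp{\ast}:[\Sigma\bA\wedge B,T]\to[\Sigma\bA\otimes B,T]$ is injective for every $T\in\C$.

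Next I would use the suspension structure to assemble $\varphi$. The object $\Sigma\bA$, being a suspension, is a homotopy cogroup with comultiplication $\mu:\Sigma\bA\to\Sigma\bA\vee\Sigma\bA$; since $-\otimes B$ is a left adjoint it preserves coproducts, and the identity $\Sigma\bA\otimes(K\times L)\cong(\Sigma\bA\otimes K)\otimes L$ lets one transport the cogroup identities, so $\mu\otimes\Id\sb{B}$ makes $\Sigma\bA\otimes B$ a homotopy cogroup and exhibits $j$ as a map of cogroups. Working in the group $[\Sigma\bA\otimes B,\Sigma\bA\otimes B]$, the class $[\Id]-[j\circ r]$ is represented by a map that restricts trivially along $j$ (because $r\circ j\simeq\Id$), hence factoring as $s\circ\pi$ for a unique $s:\Sigma\bA\wedge B\to\Sigma\bA\otimes B$; the injectivity of $\pi\sp{\ast}$ then forces $\pi\circ s\simeq\Id$, $r\circ s\simeq\ast$, and $[s\circ\pi]+[j\circ r]=[\Id]$. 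I would then set $\varphi:=(\pi\vee r)\circ(\mu\otimes\Id\sb{B}):\Sigma\bA\otimes B\to(\Sigma\bA\wedge B)\vee\Sigma\bA$ and $\psi:=s\bot j$ in the reverse direction. Writing $p\sb{1},p\sb{2}$ for the projections of $(\Sigma\bA\wedge B)\vee\Sigma\bA$ onto its summands and $\iota\sb{2}$ for the inclusion of $\Sigma\bA$, the co-unit axioms give $p\sb{1}\circ\varphi\simeq\pi$, $p\sb{2}\circ\varphi\simeq r$ and $\varphi\circ j\simeq\iota\sb{2}$ — so $\varphi$ has precisely the components named in the statement and $\psi$ restricts to $j$ on the $\Sigma\bA$ summand — while $[s\circ\pi]+[j\circ r]=[\Id]$ reads $\psi\circ\varphi\simeq\Id$, and combining $p\sb{1}\varphi\simeq\pi$, $p\sb{2}\varphi\simeq r$ with $\pi\circ s\simeq\Id$, $r\circ s\simeq\ast$ (together with the nullity of $\partial$ to cancel the single possible cross-term) gives $\varphi\circ\psi\simeq\Id$. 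Hence $\varphi$ is a weak equivalence.

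The main obstacle is the homotopy bookkeeping in the second step: verifying carefully that $\mu\otimes\Id\sb{B}$ satisfies the cogroup axioms up to homotopy and that $j$ respects them, that the connecting map $\partial$ vanishes, and that the residual cross-term in $\varphi\circ\psi\simeq\Id$ genuinely cancels. None of this is deep, but it is delicate; in a write-up I would isolate ``the connecting map of a split cofibration sequence is null-homotopic'' as a small auxiliary lemma, and --- since the bare existence of a decomposition $\Sigma\bA\otimes B\simeq(\Sigma\bA\wedge B)\vee\Sigma\bA$ is the classical half-smash splitting recalled from \cite{BJiblSL} (compare its Eckmann--Hilton dual in \cite{BSenH}) --- one may alternatively quote that result for the weak equivalence and use the explicit $\varphi$ and $\psi$ above only to pin down which maps realize it, which is all the statement claims.
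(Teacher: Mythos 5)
Your argument takes a genuinely different route from the paper's. The paper's proof (diagram \eqref{eqhspo}) builds an explicit simplicial model of $\Sigma\bA\otimes B$ as $\bA\otimes(\widetilde{B\ltimes S^1})$ via a grid of pushouts and then cites the classical weak equivalence from the unreduced suspension $SB$ to $\Sigma B$ to split off the $\Sigma\bA$ wedge summand. You instead work entirely in $\ho\C$, exploiting the cogroup structure of $\Sigma\bA$ transported along $-\otimes B$, together with the retraction $r$ coming from $B\to\Deln{0}$, to manufacture the splitting map $s$ of $\pi$ and assemble $\varphi$ and $\psi$. This is the abstract ``splitting lemma for a cofiber sequence with a cogroup source and a retraction'', and it has the virtue of not depending on any particular simplicial model of $C\bA$ or $\Sigma\bA$ and of isolating exactly where the cogroup structure is used. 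The steps establishing $j^*([\Id]-[jr])=0$, $\partial\simeq 0$ by naturality of the Puppe sequence, the injectivity of $\pi^*$, $\pi s\simeq\Id$, $rs\simeq\ast$, $p_1\varphi\simeq\pi$, $p_2\varphi\simeq r$, $\varphi j\simeq\iota_2$, and $\psi\varphi\simeq\Id$ are all correct.

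The one gap is the parenthetical claim that nullity of $\partial$ ``cancels the single possible cross-term'' so that $\varphi\psi\simeq\Id$. Concretely, you have shown $p_1(\varphi s)\simeq\Id$ and $p_2(\varphi s)\simeq\ast$, so $\varphi s - \iota_1$ lies in the cross-term subgroup of $[\Sigma\bA\wedge B,\,(\Sigma\bA\wedge B)\vee\Sigma\bA]$ (in the sense of \S\ref{afree}\eqref{splitbit}), but $\partial\simeq 0$ lives in a different group and does not obviously force that element to vanish; absent a stability hypothesis, the cross-term is genuinely nontrivial and this needs an actual argument. Fortunately the lemma only asserts that $\varphi$ is a weak equivalence, which your intermediate facts already give without $\varphi\psi\simeq\Id$: the homotopy-commutative ladder
$$
\xymatrix@C=18pt{
\Sigma\bA \ar[r]^-{j} \ar@{=}[d] & \Sigma\bA\otimes B \ar[r]^-{\pi} \ar[d]^{\varphi} & \Sigma\bA\wedge B \ar@{=}[d] \\
\Sigma\bA \ar[r]^-{\iota_2} & (\Sigma\bA\wedge B)\vee\Sigma\bA \ar[r]^-{p_1} & \Sigma\bA\wedge B
}
$$
is a map of (strict) cofiber sequences in which the outer verticals are identities, so applying $[-,T]$ to the two Puppe sequences and invoking the five lemma (valid since all objects in sight are cogroups, so these are exact sequences of groups) shows $\varphi^*$ is a bijection for every $T$, i.e.\ $\varphi$ is an isomorphism in $\ho\C$. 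With this replacing your last sentence, your proof is complete and is, to my mind, a cleaner alternative to the paper's model-building argument.
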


\begin{proof}
We have the following diagram of pushout squares with vertical cofibrations:
\mytdiag[\label{eqhspo}]{
  \bA\otimes(B\times\{0,1\}) \ar@{^{(}->}[d]\sp{\inc\sb{1}}
  \ar@{^{(}->}[r]^-{\inc\sb{2}} &
  \bA\otimes(B\times I\sb{2}) \ar@{^{(}->}[d] \ar[r]^-{\proj\sb{\ast}} &
  \bA\otimes I\sb{2} \ar@{^{(}->}[d] \ar[r]\sp{\simeq} &
  \bA\otimes \ast \ar@{^{(}->}[d] \\
  \bA\otimes(B\times I\sb{1}) \ar@{^{(}->}[r] & \bA\otimes (B\times S\sp{1}) \ar[r] &
  \bA\otimes(\widetilde{B\ltimes S\sp{1}}) \ar[r]\sp{\simeq} &
  \bA\otimes(B\ltimes S\sp{1})
}
\noindent where \w{I\sb{1}} and \w{I\sb{2}} are two $1$-simplices and \w{\inc\sb{i}}
\wb{i=1,2} are the inclusions of the two endpoints \w{\{0,1\}} into each,
so that the resulting pushout is a two-cell model of the circle \w[,]{S\sp{1}}
and \w{\proj:B\times I\sb{1}\to I\sb{1}} is the projection. Note that
\w[.]{\bA\otimes (B\times S\sp{1})=(\bA\otimes B)\otimes S\sp{1}}

Here \w{\widetilde{B\ltimes S\sp{1}}} (the case where \w[)]{\bA=\ast} is a model of
the half-smash in $\cS$ consisting of the unreduced suspension \w{SB} with
an arc connecting the two suspension points. Under the quotient map
\w{SB\to\Sigma B} to the reduced suspension (which is a weak equivalence,
by \cite{GrayH}) this becomes a wedge \w[.]{\Sigma B\vee S\sp{1}}

Note that  \w[.]{\bA\otimes(B\ltimes S\sp{1})\cong(\bA\wedge S\sp{1})\otimes B}
Thus \w{\bA\otimes(\widetilde{B\ltimes S\sp{1}})} is a model for
\w[,]{\Sigma\bA\otimes B} which is weakly equivalent to
\w[.]{\Sigma\bA\wedge B\vee \Sigma\bA}
\end{proof}

\begin{defn}\label{dvalstrand}
We associate to any \wwb{n-1}map \w{\Eu{n-1}} as in \S \ref{dshho} a map
\w{\euk{n-1}:\oW{n}\otimes\partial\Pn{n}\to\bX} which sends
\w{\oW{n}\otimes \partial\sb{1}\Delnk{n}{k}} to $\bX$ by
\w{\Euk{k-1}{n-1}\circ\Fk{k-1}} for each \w[,]{1\leq k\leq n} and all other
\wwb{n-1}simplices of \w{\partial\Pn{n}} to the basepoint.
Here we use the convention of the beginning of the proof of Proposition \ref{pobst},
so \w[.]{\Fk{n-1}=\dz{n}}

Since at most two additional \wwb{n-1}facets of \w{\Delnk{n}{k}} are
identified with \wwb{n-1}facets of \w[,]{\Delnk{n}{k\pm 1}} we may think of
\w{C\Sigma\sp{n-k}\oW{n}} as a quotient of \w[,]{\oW{n}\otimes\Delnk{n}{k}}
so the map induced by \w{\Euk{k-1}{n}\circ\Fk{k-1}} is well-defined.
Moreover, these maps are compatible for adjacent values of $k$
(see Figure \ref{fig4} and \wref[).]{eqexttothree}

Assuming $\C$ satisfies the assumptions of \S \ref{spup},
by Lemma \ref{lfoldpoly} we can think of \w{\euk{n-1}} as a map
\w[,]{\oW{n}\otimes\bS{n-1}\to\bX} and because $\C$ is pointed, the source is
a half-smash \w{\oW{n}\ltimes\bS{n-1}:=(\oW{n}\times\bS{n-1})/(\ast\times\bS{n-1})}
in the corresponding unpointed simplicial model category.

We see from the previous description that if \w{v\sb{0}} is the $0$-vertex of
\w[,]{\Delnk{n}{0}} then \w{\Euk{}{n-1}} maps \w{\oW{n}\otimes\{v\sb{0}\}} to
$\ast$. Therefore, if we choose \w{v\sb{0}} as the basepoint of
\w[,]{\bS{n-1}\cong\partial\Pn{n}} we see that \w{\huk{n-1}} is trivial when
restricted to the second summand in \wref[,]{eqhalfsmash} and is thus uniquely
determined up to homotopy by the induced map
\w[.]{g\sb{n-1}:\Sigma\sp{n-1}\oW{n}\to\bX} We define
the \emph{value} of the \wwb{n-1}map \w{\Euk{}{n-1}} to be the class
\begin{myeq}\label{eqvalue}
\Val{\Euk{}{n-1}}~:=~[g\sb{n-1}]~\in~[\Sigma\sp{n-1}\oW{n},\,\bX]~\cong~
\Lambda\lin{\Sigma\sp{n-1}\oW{n}}~.
\end{myeq}
\noindent for \w[.]{\Lambda:=\piA\bX}
\end{defn}

\begin{prop}\label{pvanish}
Given data \w{(\star\star)=\lra{\cW,\Eu{0}}}  as in \S \ref{dshho},
the value for a corresponding \wwb{n-1}map \w{\Eu{n-1}:\W{n-1}\to\bX\sp{\Du}}
is zero if and only if it extends to an $n$-map.
\end{prop}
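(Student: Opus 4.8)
The plan is to reduce the statement to the earlier machinery, specifically Lemma~\ref{lextendfmcone} and Proposition~\ref{pfoldpolymap}, together with the half-smash splitting of Lemma~\ref{rhalfsmash}. First I would recall from \S\ref{sextnmap} that extending the \wwb{n-1}map \w{\Eu{n-1}} to \w{\hEu{n}:\hWd{n-1}[\Fn{n}]\to\Ud} amounts precisely to choosing maps \w{\hEuk{k}{n}:\oCsW{n-k-1}{n}\to\bX\sp{\Deln{k}}} for \w{0\leq k\leq n} satisfying \wref{eqneweface}, and that by Lemma~\ref{lextendfmcone} any such \w{\hEu{n}} extends further to an honest $n$-map \w{\Eu{n}:\W{n}\to\Ud}. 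So the proposition is equivalent to: such a compatible family \w{\{\hEuk{k}{n}\}} exists if and only if \w[.]{\Val{\Eu{n-1}}=0}

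For the ``if'' direction: assuming \w[,]{\Val{\Eu{n-1}}=[g\sb{n-1}]=0} I would choose a nullhomotopy of \w[.]{g\sb{n-1}:\Sigma\sp{n-1}\oW{n}\to\bX} By the half-smash description in Definition~\ref{dvalstrand} (using Lemma~\ref{rhalfsmash} and the choice of basepoint \w{v\sb{0}} as in that discussion), this nullhomotopy assembles with the boundary map \w{\euk{n-1}:\oW{n}\otimes\partial\Pn{n}\to\bX} into a map \w[,]{\oW{n}\otimes\Pn{n}\to\bX} since by Lemma~\ref{lfoldpoly} the pair \w{(\Pn{n},\partial\Pn{n})} realizes \w[,]{(\bB\sp{n},\bS{n-1})} and a map out of \w{\bB\sp{n}} extending a given boundary map exists exactly when that boundary map is nullhomotopic. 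Then Proposition~\ref{pfoldpolymap} converts this map \w{\tEuk{n}{}:\oW{n}\otimes\Pn{n}\to\bX} back into the desired family \w{\{\hEuk{k}{n}\}} satisfying \wref[,]{eqneweface} which by the first paragraph yields the $n$-map.

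For the ``only if'' direction: given an $n$-map \w[,]{\Eu{n}} restrict along \w{\prn{n}} (respectively use the induced \w{\hEu{n}} on \w[)]{\hWd{n-1}[\Fn{n}]} to extract the family \w{\{\hEuk{k}{n}\}}, then run Proposition~\ref{pfoldpolymap} forward to get \w[;]{\tEuk{n}{}:\oW{n}\otimes\Pn{n}\to\bX} its restriction to \w{\partial\Pn{n}} is \w{\euk{n-1}} by construction, so \w{\euk{n-1}} bounds, hence \w{g\sb{n-1}} is nullhomotopic and the value vanishes. I would also need to observe that the value is well-defined independently of the choices in Definition~\ref{dvalstrand} (the trivialization on the second summand of \wref[,]{eqhalfsmash} the vertex \w[),]{v\sb{0}} which follows from that definition; strictly one should note that if \emph{some} extension exists the value is zero, and conversely zero value produces \emph{some} extension, which is exactly the asymmetric phrasing of the statement.

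The main obstacle I expect is the careful bookkeeping identifying the boundary data: one must check that the map \w{\euk{n-1}} of Definition~\ref{dvalstrand}, built from the restrictions \w{\Euk{k-1}{n-1}\circ\Fk{k-1}} on the \w{\partial\sb{1}\Delnk{n}{k}} facets and the basepoint elsewhere, matches exactly the boundary constraints \wref{eqneweface} imposed on any candidate \w{\{\hEuk{k}{n}\}} after passing through the folding-polytope identification of \S\ref{sfoldp} (facet $n$ of \w{\Delnk{n}{k}} glued to facet $1$ of \w{\Delnk{n}{k+1}}, facets \w{2,\dotsc,n-1} mapping to \w[).]{\ast} This compatibility is essentially the content of Example~\ref{egfoldpoly} and Figure~\ref{fig4}, so the argument is more a matter of invoking that correspondence precisely than of new computation; the half-smash point (that \w{\euk{n-1}} kills the \w{\Sigma\sp{n-1}\oW{n}}-summand complement and is thus determined by \w[)]{g\sb{n-1}} is the one genuinely delicate step, and it is already handled in Definition~\ref{dvalstrand} via Lemma~\ref{rhalfsmash}.
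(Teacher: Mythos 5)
Your proposal follows the paper's own argument quite closely and invokes the same chain of tools: Lemma \ref{lextendfmcone}, Proposition \ref{pfoldpolymap}, Lemma \ref{rhalfsmash}, Lemma \ref{lfoldpoly}, and the description in Definition \ref{dvalstrand}. The ``if'' direction is essentially correct as sketched.

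The one genuine gap is in the ``only if'' direction, in the claim that the restriction of the extracted $\tEuk{n}{}$ to $\partial\Pn{n}$ ``is $\euk{n-1}$ by construction.'' The $n$-map $\Eu{n}$ lives on $\W{n}$, but the family $\{\hEuk{k}{n}\}$ to which Proposition \ref{pfoldpolymap} applies lives on $\hWd{n-1}[\Fn{n}]$, which is the object with the explicit coproduct decomposition \eqref{eqdopc}. There is no direct restriction between the two: passing from one to the other requires choosing a section $s$ of the acyclic fibration $p:\W{n}\epic{\W{n-1}}'[\Fn{n}]$ in diagram \eqref{eqdoublereplace} and setting $\hEu{n}:=\Eu{n}\circ s\circ p'$. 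Because $s\circ p$ is only \emph{homotopic} to the identity (not equal), the composite $\hEu{n}\circ\wj$ on $\W{n-1}$ agrees with $\Eu{n-1}$ only up to homotopy, so the restriction of the resulting $\tEuk{n}{}$ to $\partial\Pn{n}$ is homotopic to, not equal to, $\euk{n-1}$. The conclusion still goes through since the value is a homotopy class, but you must add the observation the paper makes explicitly: because $s$ and $p'$ are weak equivalences, $\W{n-1}$ is Reedy cofibrant and $\bX\sp{\Du}$ is Reedy fibrant, the two maps are homotopic, hence $g\sb{n-1}\sim\widetilde{g}\sb{n-1}$, hence $\Val{\Eu{n-1}}=0$. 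Without that remark, ``by construction'' is unjustified; your closing paragraph shows you sensed a delicate point but located it in the half-smash identification rather than here.
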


\begin{proof}
Evidently, \w{g\sb{n-1}} is nullhomotopic if and only if the original map
\w{\euk{n-1}} extends to \w[.]{\oW{n}\otimes\Pn{n}}

If \w{\Euk{}{n-1}} extends to an $n$-map \w[,]{\Euk{}{n}:\W{n}\to\bX\sp{\Du}}
as in the proof of Lemma \ref{lextendfmcone}, the acyclic fibration
\w{p:\W{n}\to{\W{n-1}}'[\Fn{n}]} in \wref{eqsquarb} has a section
\w[,]{s:{\W{n-1}}'[\Fn{n}]\to\W{n}}  and precomposition with the acyclic
  cofibration \w{p':\hWd{n-1}[\Fn{n}]\hra{\W{n-1}}'[\Fn{n}]} yields the restriction
  \w{\hEuk{}{n}:=\Euk{}{n}\circ s\circ p':\hWd{n-1}[\Fn{n}]\to\bX\sp{\Du}}
  (an $n$-map).

  Since $s$ and \w{p'} are weak equivalences, \w{\W{n-1}} is Reedy cofibrant, and
  \w{\bX\sp{\Du}} is Reedy fibrant, \w{\hEuk{}{n}\circ\wj:\W{n-1}\to\bX\sp{\Du}} is
  homotopic to \w[.]{\Euk{}{n-1}=\Euk{}{n}\circ\oi{n-1}}  By the description in
  Definition \ref{dvalstrand}, we see that the map
  \w{\euk{n-1}:\oW{n}\times\bS{n-1}\to\bX} induced by \w{\Euk{}{n-1}} is
  thus homotopic to the corresponding map \w{\hek{n-1}:\oW{n}\times\bS{n-1}\to\bX}
  induced by \w[.]{\hEuk{}{n}\circ\wj} The same is therefore true for the
  restrictions of the induced maps,
  \w[.]{g\sb{n-1}\sim\widetilde{g}\sb{n-1}:\Sigma\sp{n-1}\oW{n}\to\bX}
  Since \w{\widetilde{g}\sb{n-1}} is nullhomotopic (because \w{\hEuk{}{n}\circ\wj}
  extends to \w[),]{\hEuk{}{n}} so is \w[,]{g\sb{n-1}} and thus
  \w[.]{\Val{\Eu{n-1}}=0}

Conversely, if \w{g\sb{n-1}} is nullhomotopic, then \w{\euk{n-1}} extends
to \w[,]{\tEuk{n}{}:\oW{n}\times\Pn{n}\to\bX} and we let \w{\tEuk{n}{k}}
denote the restriction of \w{\tEuk{n}{}} to \w{\Delnk{n}{k}} for each
\w[.]{0\leq k\leq n} By Definition \ref{dvalstrand} for \w{\huk{n-1}} we see that
the maps \w{\tEuk{n}{k}} satisfy \wref[,]{eqneweface} so together with the original
\wwb{n-1}map \w{\Euk{}{n-1}} they define a map of $n$-truncated
restricted simplicial spaces \w[,]{\widetilde{E}:\Cone(\tFn{n})\to\bX\sp{\Du}}
and thus \w{\hEu{n}:\hWd{n-1}[\Fn{n}]\to\Ud} (see \S \ref {ridea}).
Lemma \ref{lextendfmcone} then yields the required extension.
\end{proof}

%
%
\sect{Comparing obstructions}
\label{ccompobst}
The value we have assigned to an \wwb{n-1}map serves as
the obstruction to extending it to an $n$-map, but only
with respect to a fixed sequential realization $\cW$. We now wish to
investigate to what extent the vanishing or otherwise depends on this choice of
$\cW$. For this purpose we require the following

\begin{defn}\label{dcorrespstr}
Let \w{(\star)=\lra{\bY,\bX,\vth}} be basic initial data as in \S \ref{dshho},
with \w{\cd{\bX}\stackrel{\simeq}\to\Ud} a fibrant replacement in
\w[.]{\C\sp{\Dop}} Assume given in addition \w{(\cW,\Eu{0})} and \w{(\ccWp,\Eup{0})}
as two choices of specific initial data \w[,]{(\star\star)} equipped with
extensions to \wwb{n-1}maps \w{\Eu{n-1}} and \w[,]{\Eup{n-1}} respectively.

If \w{\Phi:\cW\to\,\ccWp} is an $n$-stage comparison map, as in \wref[,]{eqncorresp}
write \w{\Eup{n-1}=\rs(\Eu{n-1})} if \w{\Eup{n-1}=\Eu{n-1}\circ\rn{n-1}}
and \w{\Eu{n-1}=\es(\Eup{n-1})} if \w[.]{\Eu{n-1}=\Eup{n-1}\circ\en{n-1}}

By \wref[,]{eqmapmodconesusp} \wref[,]{eqcompf} and \wref[,]{eqexttothree} we see that
\begin{myeq}\label{eqcorrvals}
\Val{\rs(\Eu{n})}=(\oor{n-1}{n})\sp{\ast}(\Val{\Eu{n}})\hsm \text{and}\hsm
\Val{\es(\Eup{n})}=(\oon{n-1}{n})\sp{\ast}(\Val{\Eup{n}})~,
\end{myeq}
\noindent in the notation of \wref[.]{eqmapmodconesusp}
\end{defn}

As a result we have:

\begin{lemma}\label{lindvanish}
Assume given an $n$-stage comparison map \w[,]{\Phi:\cW\to\cWp} an \wwb{n-1}map
  \w{\Eu{n-1}} for \w[,]{(\cW,\bX,\vth)} and an \wwb{n-1}map \w{\Eup{n-1}}
  for \w[.]{(\ccWp,\bX,\vth)} Then:
\begin{enumerate}[(a)]
\item \w{\Val{\Eu{n-1}}=0} if and only if \w[.]{\Val{\rs(\Eu{n-1})}=0}
\item If \w{\Val{\Eup{n-1}}=0} then \w[,]{\Val{\es(\Eup{n-1})}=0}
  but not necessarily conversely.
\end{enumerate}
\end{lemma}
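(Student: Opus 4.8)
The plan is to read off everything from the comparison formulas \eqref{eqcorrvals}, which already express the two values in terms of one another via the induced maps on the suspended CW-basis objects. Recall from Remark \ref{rcomp} and \eqref{eqcofseq} that $\oon{n-1}{n}:\osW{n-1}{n}\to\osWp{n-1}{n}$ is a \emph{split} cofibration, with retraction $\oor{n-1}{n}$, and that the cofiber $\osX{n-1}{n}$ fits into the weak equivalence \eqref{eqwkprod} exhibiting $\osWp{n-1}{n}\simeq\osX{n-1}{n}\amalg\osW{n-1}{n}$. Passing to the functor $[-,\bX]$, the map $(\oon{n-1}{n})\sp{\ast}$ is a split \emph{epimorphism} of groups with a natural section $(\oor{n-1}{n})\sp{\ast}$, and $(\oor{n-1}{n})\sp{\ast}\circ(\oon{n-1}{n})\sp{\ast}=\Id$ on $[\osW{n-1}{n},\bX]$.

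For part (a): by \eqref{eqcorrvals} we have $\Val{\rs(\Eu{n-1})}=(\oor{n-1}{n})\sp{\ast}(\Val{\Eu{n-1}})$. If $\Val{\Eu{n-1}}=0$ then applying the homomorphism $(\oor{n-1}{n})\sp{\ast}$ immediately gives $\Val{\rs(\Eu{n-1})}=0$. Conversely, if $\Val{\rs(\Eu{n-1})}=0$, apply the section $(\oon{n-1}{n})\sp{\ast}$: since $(\oon{n-1}{n})\sp{\ast}\circ(\oor{n-1}{n})\sp{\ast}=\Id$ on $[\osW{n-1}{n},\bX]$ (because $\oor{n-1}{n}\circ\oon{n-1}{n}=\Id$), we recover $\Val{\Eu{n-1}}=(\oon{n-1}{n})\sp{\ast}(\Val{\rs(\Eu{n-1})})=0$. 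So (a) is a completely formal consequence of the splitting.

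For part (b): again by \eqref{eqcorrvals}, $\Val{\es(\Eup{n-1})}=(\oon{n-1}{n})\sp{\ast}(\Val{\Eup{n-1}})$, so if $\Val{\Eup{n-1}}=0$ then applying the homomorphism $(\oon{n-1}{n})\sp{\ast}$ gives $\Val{\es(\Eup{n-1})}=0$. The failure of the converse is exactly the failure of $(\oon{n-1}{n})\sp{\ast}$ to be injective: by \eqref{eqwkprod}, $[\osWp{n-1}{n},\bX]\cong[\osX{n-1}{n},\bX]\times[\osW{n-1}{n},\bX]$, and $(\oon{n-1}{n})\sp{\ast}$ kills the cross-term factor $[\osX{n-1}{n},\bX]$, which is in general nonzero (it is the obstruction group associated to the extra basis cells present in $\oWp{n}$ but not in $\oW{n}$). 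Thus a value $\Val{\Eup{n-1}}$ lying entirely in that factor is nonzero yet maps to zero. I would round this off with the remark that one can exhibit this concretely whenever $\oWp{n}\neq\oW{n}$ and $\bX$ has homotopy detecting the difference.

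The only mildly delicate point — and the one I'd be most careful about — is making sure the splitting identities are being applied in the correct variance: $\oon{n-1}{n}$ goes $\osW{}{}\to\osWp{}{}$ but has retraction $\oor{n-1}{n}$, so on cohomotopy $(\oon{n-1}{n})\sp{\ast}$ is the \emph{epi} with section $(\oor{n-1}{n})\sp{\ast}$, not the other way around; the formulas in \eqref{eqcorrvals} already line up with this, so once that is pinned down both halves are immediate. No new input beyond \eqref{eqcorrvals}, Remark \ref{rcomp}, and the elementary fact that a split surjection of groups is injective precisely when its kernel is trivial is needed.
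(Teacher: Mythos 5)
Your argument is correct and is exactly the argument the paper intends: Lemma \ref{lindvanish} appears directly after \eqref{eqcorrvals} with no separate proof, being an immediate consequence of those formulas together with the split mono/split epi structure of the maps induced by $\oon{n-1}{n}$ and $\oor{n-1}{n}$, which is precisely what you spell out.

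One slip in your opening paragraph should be corrected: you write $(\oor{n-1}{n})\sp{\ast}\circ(\oon{n-1}{n})\sp{\ast}=\Id$ on $[\osW{n-1}{n},\bX]$, but that composite has the factors in the wrong order (and lives on the wrong group, namely $[\osWp{n-1}{n},\bX]$, where it is \emph{not} the identity in general). The identity you actually need, and do correctly state and use inside part (a), is $(\oon{n-1}{n})\sp{\ast}\circ(\oor{n-1}{n})\sp{\ast}=\Id$ on $[\osW{n-1}{n},\bX]$, which follows from $\oor{n-1}{n}\circ\oon{n-1}{n}=\Id$ by contravariance of $[-,\bX]$. Relatedly, in part (a) you call $(\oon{n-1}{n})\sp{\ast}$ a ``section,'' whereas it is the retraction (split epi), with $(\oor{n-1}{n})\sp{\ast}$ as the section --- your final paragraph has this variance right. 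These are terminological slips only; the formulas you actually compute with are the correct ones, and the identification of the kernel of $(\oon{n-1}{n})\sp{\ast}$ with $[\osX{n-1}{n},\bX]$ via \eqref{eqwkprod} (valid since $[-,\bX]$ takes coproducts to products) gives exactly the right explanation for why the converse in (b) can fail. So the proof is sound.
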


This explains the need for the following:

\begin{defn}\label{dequivstr}
Assume given basic data \w[,]{(\star)=\lra{\bY,\bX,\vth}} with two choices
of specific data \w{(\star\star)} of the form
\w{\lra{\cW,\Eu{0}}} and \w[.]{\lra{\ccWp,\Eup{0}}}
If \w{\Eu{n}} and \w{\Eup{n}} are $n$-maps associated respectively to these choices,
we  write \w{\Eup{n}\sim\Eu{n}} if there is an $n$-stage comparison map
\w{\Phi:\cW\to\,\ccWp} such that \w[.]{\es(\Eup{n})=\Eu{n}} The equivalence relation
generated by \w{``\sim"} is called the {\it weak equivalence} relation on $n$-maps, and
equivalence classes are denoted by \w[.]{[\Eu{n}]}
\end{defn}

\begin{mysubsection}{The universal homotopy operations}
\label{snouho}
Given basic data \w{(\star)=\lra{\bY,\bX,\vth}} as above, we think of each
sequential realization $\cW$ for $\bY$ as a template for an infinite sequence
\w{\llrra{\star}=(\llrr{\star}{n})\sb{n=1}\sp{\infty}}
of higher homotopy operations, with \w{\Cone(\tFn{n})}
of its \wwb{n-1}st stage \w{\W{n-1}} serving as the template for the
\emph{universal $n$-th order homotopy operation} \w[,]{\llrr{\star}{n}}
for each \w[,]{n\geq 2} as in \wref[.]{eqcofseqr}

Formally, \w{\llrr{\star}{n}} is the function which assigns to
any choice of specific data \w{(\star\star)=(\cW,\Eu{0})} and \wwb{n-1}map
\w{\Eu{n-1}} for \w{(\star\star)} the value \w{\Val{\Eu{n-1}}} in
\w[,]{\Lambda\lin{\Sigma\sp{n-1}\oV{n}}} as in \wref[.]{eqvalue}
We write \w{\Vals{\Eu{n-1}}} for the set of all values at
all such \wwb{n-1}maps \w[.]{\Eup{n-1}\in[\Eu{n-1}]}

We say that \w{\llrra{\star}} \emph{vanishes coherently for}
\w{(\star\star)=(\cW,\Eu{0})} if for each \w[,]{n\geq 2} we are given an
\wwb{n-1}map \w{\Eu{n-1}} for \w{(\star\star)} such that \w{\Val{\Eu{n-1}}=0}
(and thus \w[),]{0\in\Vals{\Eu{n-1}}} so that
\w{\Eu{n-1}} extends by Proposition \ref{pvanish} to an $n$-map
\w[.]{\Eu{n}} Taken together, we thus obtain a strand \w{\Eu{\infty}}
for \w[.]{(\star\star)}

Finally, we say that \w{\llrr{\star}{n}} \emph{vanishes for $\bX$} if there is
\emph{some} \w{(\star\star)=(\cW,\Eu{0})} with an \wwb{n-1}map \w{\Eu{n-1}}
such that \w[:]{\Val{\Eu{n-1}}=0} that is, if \w[.]{0 \in \Vals{\Eu{n-1}}}
\end{mysubsection}

The following consequence of Theorem \ref{tzigzag} shows that
we can in fact disregard the notion of \w{\Vals{-}} defined for
equivalence classes of \wwb{n-1}maps \w[,]{[\Eu{n-1}]} and concentrate
instead on any one sequential realization $\cW$ of $\bY$ to determine
vanishing of \w[:]{\llrr{\star}{n}}

\begin{keylemma}\label{lkey}
Given \w{(\star)=\lra{\bY,\bX,\vth}} as in \S \ref{dshho},
\w{\llrr{\star}{n}} vanishes for $\bX$ if and only if for
\emph{every} \w{(\star\star)=(\cW,\Eu{0})} (in fact, for \emph{any} $n$-stage sequential
realization $\cW$ for $\bY$), there is an \wwb{n-1}map \w{\Eu{n-1}} with
\w[.]{\Val{\Eu{n-1}}=0}
\end{keylemma}

\begin{proof}
By definition, \w{\llrr{\vth}{n}} vanishes for $\bX$ if there is \emph{some}
$n$-stage sequential realization \w{\cWp} of $\bY$
and an \wwb{n-1}map \w{\Eup{n-1}} for \w{(\star\star)=(\cWp,\Eup{0})} such that
\w[.]{\Val{\Eup{n-1}}=0}  By Theorem \ref{tzigzag} (with \w[)]{\bY=\bYp}
we know that there is a finite
zigzag of cospans of comparison maps connecting \w{\cWp} to $\cW$, say
$$
\Phip{1}:\cuW{0}=\cWp\to\cuW{1},\hsp \Phip{2}:\cuW{2}\to\cuW{1},\hsp
\Phip{3}:\cuW{2}\to\cuW{3}~,
$$
\noindent and so on until \w[.]{\Phip{N}:\cuW{N-1}\to\cuW{N}=\cW} If
\w{\Phip{1}=\lra{\en{k},\,\rn{k},\dotsc\,}\sb{k=0}\sp{n}} as in \wref[,]{eqncorresp}
we set \w{\Eul{n-1}{1}:=\rs(\Eup{n-1})} (an \wwb{n-1}map for \w[),]{\cuW{1}}
and see from \wref{eqcorrvals} that \w[.]{\Val{\Eul{n-1}{1}}=0}
Similarly, if \w{\Phip{2}=\lra{\hen{k},\,\hrn{k},\dotsc\,}\sb{k=0}\sp{n}}
we set \w{\Eul{n-1}{2}:=\hes(\Eul{n-1}{1})} (an \wwb{n-1}map for
\w[),]{\cuW{2}} and again see from \wref{eqcorrvals} that
\w[.]{\Val{\Eul{n-1}{2}}=0} Continuing in this way we finally obtain
an \wwb{n-1}map \w{\Eu{n-1}=\Eul{n-1}{N}} for \w{\cuW{N}=\cW}
with \w[,]{\Val{\Eu{n-1}}=0} as required.
\end{proof}

%
%
\sect{Higher homotopy invariants for objects}
\label{chhio}

In this section we assume given a free simplicial \PAal resolution \w{\Vd} of
a realizable \PAal \w[,]{\Lambda=\piA\bY} for some \w{\bA\in\C} as
in \S \ref{cback}.A and any \w[.]{\bY\in\C}  Because each \w{V\sb{n}} is a free
\PAal[,] \w{\Vd\to\Lambda} can be realized by an augmented simplicial object
\w{\unW\to\bY} in the homotopy category \w[,]{\ho\C} unique up to weak
equivalence. Theorem \ref{tres} showed that this can always be rectified
to \w{\Wd\to\bY} in $\C$ through a sequential realization $\cW$ of \w[.]{\Vd}

The main question we address in this section is the following:  if we are given some
other (cofibrant) realization $\bZ$ of $\Lambda$ \wh or equivalently, an isomorphism
\w{\vth:\Lambda\to\piA\bZ} \wwh can we similarly rectify \w[?]{\unW\to\bZ}
More precisely, can we augment the given rectification \w{\Wd} of \w{\unW} to $\bZ$
instead, at least up to homotopy?

\begin{mysubsection}{The $0$-augmentation}
\label{szeroaug}
From the proof of Theorem \ref{tres} we expect a $0$-augmen\-ta\-tion,
the analog of a $0$-map in this context, to be completely determined by a choice of a
realization \w{e\sb{0}:\oW{0}\to\bZ} of \w[.]{\vth\circ\vare:V\sb{0}\to\piA\bZ}
Indeed, such a map always exists, is unique up to homotopy, and defines a
map \w[.]{\bv\sb{0}:\cd{\oW{0}}\to\cd{\bZ}}
Composing it with \w{p\sp{\ast}:\cd{\bZ}\to\bZ\sp{\Du}=\Ud} (see \S \ref{raug})
yields \w[.]{\widehat{\bv}\bp{0}:\cd{\oW{0}}\to\Ud}

As in \wref{eqsquara} we obtain \w[,]{{\bv'}\bp{0}:{\W{-1}}'[\Fn{0}] \to \Ud} and
the composite
$$
\W{0}=\W{-1}[\Fn{0}]~\xra{p}~{\W{-1}}'[\Fn{0}]~\xra{{\bv'}\bp{0}}~\Ud
$$
defines the $0$-augmentation \w[.]{\bve{0}:\W{0}\to\Ud} Thus, even though
  \w{\bve{0}} is formally part of the specific initial data \w[,]{(\star\star)}
  we shall omit mention of it henceforth.
\end{mysubsection}

\begin{defn}\label{dshhoo}
In this version of \S \ref{dshho}, the basic initial data consists
of \w[,]{(\star)=(\bY,\bZ,\vth)}
with $\bZ$ cofibrant and \w{\vth:\Lambda\to\piA\bZ} an \emph{isomorphism} of \PAal[s]
while the specific initial data \w{(\star\star)} consists of a sequential
realization $\cW$ of a CW-resolution \w{\vare:\Vd\to\Lambda:=\piA\bY} for $\bY$.
As before, we let \w{\Ud=\bZ\sp{\Du}} be our Reedy fibrant replacement for \w[.]{\cd{\bZ}}

The corresponding $n$-maps will then be called \emph{$n$-augmentations} \wh
that is, maps \w{\bve{n}:\W{n}\to\Ud} realizing
\w{\vth\circ\vare:\Vd\to\piA\bZ\cong\piA\|\Ud\|} though simplicial
dimension $n$.
\end{defn}

\begin{defn}\label{dextnaug}
As in \S \ref{dvalstrand}, given an \wwb{n-1}augmentation
\w{\bve{n-1}:\W{n-1}\to\Ud} for \w[,]{(\star\star)} we define
its \emph{value} \w{\Val{\bve{n-1}}} in \w{\Lambda\lin{\Sigma\sp{n-1}\oW{n}}} using
\wref[,]{eqvalue} and deduce from Proposition \ref{pvanish} that
this is zero if and only if \w{\bve{n-1}} extends to an
$n$-augmentation \w[.]{\bve{n}:\W{n}\to\Ud}

We denote by \w{\llrra{\bY}=(\llrr{\bY}{n})\sb{n=1}\sp{\infty}}
the universal homotopy operation
\w{\llrra{\star}=(\llrr{\star}{n})\sb{n=1}\sp{\infty}} as in \S \ref{snouho}
associated to \w[.]{(\star):=(\bY,\bZ,\vth:\piA\bY\xra{\cong}\piA\bZ)}
\end{defn}

\begin{example}\label{egcohervan}
For \w{(\star)=(\bY,\bZ,\vth)} with \w{\vth=f\sb{\#}} induced by an
$\bA$-equivalence \w[,]{f:\bY\to\bZ}  we see that  \w{\llrra{\bY}} vanishes
  coherently for $\bZ$ at any sequential realization \w{(\star\star)=\lra{\cW}} of $\bY$,
  since by assumption we have an actual augmentation \w[,]{\bv:\Wd\to\cd{\bY}}
  inducing a homotopy augmentation \w{p\sp{\ast}\circ\cd{f}\circ\bv:\Wd\to\Ud}
  (in the notation of \S \ref{raug}). Restricting this to each \w{\W{n}}
yields \w[.]{\bve{n}}
This shows that \w{\Val{\bve{n}}=0} for each \w[,]{n\geq 1} by Proposition \ref{pvanish}.
\end{example}

We may then formulate our next main result as follows:

\begin{thm}\label{tvanish}
For \w{\bA\in\C} as in \S \ref{setmc}, let \w{\vth:\piA\bY\to\piA\bZ} be an
isomorphism of \PAal[s.] Then the following are equivalent:
\begin{enumerate}[(i)]
\item The system of higher homotopy operations \w{\llrra{\bY}} vanishes coherently
for \emph{some} sequential realization $\cW$ for $\bY$;
\item The system \w{\llrra{\bY}} vanishes coherently for \emph{every}
sequential realization for $\bY$;
\item $\vth$ is realizable by a zigzag of $\bA$-equivalences between $\bY$ and
  $\bZ$ (that is, $\bY$ and $\bZ$ are $\bA$-equivalent).
\end{enumerate}
\end{thm}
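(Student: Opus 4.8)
The plan is to prove Theorem~\ref{tvanish} as a cycle of implications (i)~$\Rightarrow$~(ii)~$\Rightarrow$~(iii)~$\Rightarrow$~(i), or more efficiently (iii)~$\Rightarrow$~(ii)~$\Rightarrow$~(i)~$\Rightarrow$~(iii), exploiting that (ii)~$\Rightarrow$~(i) is trivial. The implication (iii)~$\Rightarrow$~(ii) is essentially Example~\ref{egcohervan}: if $\bY$ and $\bZ$ are $\bA$-equivalent, then after replacing the zigzag by a single $\bA$-equivalence up to homotopy (using that all objects are fibrant and $\CWA{-}$ is a cofibrant replacement, as in Remark~\ref{rrightbous}), we get an honest $\bA$-equivalence $f$, hence an actual augmentation $\bv:\Wd\to\cd{\bY}$ for \emph{any} sequential realization $\cW$ of $\bY$ coming from Theorem~\ref{tres}; composing with $p\sp{\ast}\circ\cd{f}$ gives a homotopy augmentation, whose restrictions to each $\W{n}$ are $n$-augmentations with vanishing value by Proposition~\ref{pvanish}. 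Since this works for every $\cW$, (ii) follows. The implication (ii)~$\Rightarrow$~(i) is immediate once we know some sequential realization exists, which is Theorem~\ref{tres}.

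The substantive direction is (i)~$\Rightarrow$~(iii). Suppose $\llrra{\bY}$ vanishes coherently for some sequential realization $\cW$: by \S\ref{snouho} this yields a strand $\Eu{\infty}=(\Eu{0},\Eu{1},\dotsc)$ of compatible $n$-augmentations $\bve{n}:\W{n}\to\Ud=\bZ\sp{\Du}$ with $\bve{n-1}=\bve{n}\circ\prn{n}$. Passing to the colimit $\Wd$ of the tower \wref{eqtower} (Lemma~\ref{rcwreal}), the compatibility of the $\bve{n}$ assembles into a map $\bv\sb{\infty}:\Wd\to\Ud$ of simplicial objects over $\C$, i.e.\ a homotopy augmentation of $\Wd$ to $\bZ$. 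The key point is then to apply the realization functor $\|-\|$ of Assumption~\ref{setmc}(\ref{realfun}) and argue that the induced map $\|\Wd\|\to\|\Ud\|$ is an $\bA$-equivalence. On one side, Lemma~\ref{rcwreal} gives $\piA\Wd\cong\Vd\to\Lambda=\piA\bY$, so the augmented simplicial \PAal\ $\piA\Wd\to\piA\bY$ is acyclic, whence by \wref[]{realfun} the natural map $\|\Wd\|\to\bY$ induces an isomorphism on $\piA$. On the other side, because $\bv\sb{\infty}$ realizes $\vth\circ\vare:\Vd\to\piA\bZ$ and $\vth$ is an \emph{isomorphism}, the augmented simplicial \PAal\ $\piA\Wd\to\piA\|\Ud\|\cong\piA\bZ$ is again acyclic (it is $\vth$ applied to the acyclic augmentation above), so $\|\Wd\|\to\|\Ud\|\simeq\bZ$ also induces an isomorphism on $\piA$. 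Concatenating, the zigzag
\[
\bY~\longleftarrow~\|\Wd\|~\longrightarrow~\|\Ud\|~\simeq~\bZ
\]
consists of $\bA$-equivalences, giving (iii).

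The main obstacle I expect is making the passage ``strand $\Rightarrow$ map out of the colimit $\Rightarrow$ statement about $\|-\|$'' precise within the bookkeeping set up in \S\ref{chho}. Specifically: (a) the $\Eu{n}$ are maps to the fibrant replacement $\Ud=\bZ\sp{\Du}$ rather than to $\cd{\bZ}$ itself, so one must either argue the colimit of a compatible system of maps into $\bX\sp{\Du}$ still realizes the intended \PAal-level map, or work with $\U$-restricted simplicial objects throughout and invoke that $\|-\|$ (or the relevant homotopy colimit) is insensitive to the Reedy-fibrant replacement; (b) one must check that ``$\bve{n}$ realizes $\vth\circ\vare$ through dimension $n$ for all $n$'' genuinely forces $\piA(\text{colimit augmentation})$ to be the acyclic map $\vth\circ\vare:\Vd\to\cd{\piA\bZ}$, which is where the hypothesis that $\vth$ is an isomorphism (not merely a morphism) is used — an epimorphism would not suffice to conclude $\bZ$ is $\bA$-equivalent to $\bY$. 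Everything else is an application of Theorem~\ref{tres}, Lemma~\ref{rcwreal}, Proposition~\ref{pvanish}, and Assumption~\ref{setmc}, together with the observation from Example~\ref{egcohervan} for the converse direction; Theorem~\ref{tzigzag} and Key Lemma~\ref{lkey} are what let us phrase (i) and (ii) interchangeably up to the choice of $\cW$, so they should be cited to justify that coherent vanishing is genuinely independent of the sequential realization.
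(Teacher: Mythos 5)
Your directions (i)~$\Leftrightarrow$~(ii) (via Key Lemma~\ref{lkey} and Proposition~\ref{pvanish}) and (i)~$\Rightarrow$~(iii) (assemble the strand into a homotopy augmentation $\bv:\Wd\to\Ud$, realize, and invoke Assumption~\ref{setmc}(\ref{realfun}) to see that both $\|\Wd\|\to\bY$ and $\|\Wd\|\to\|\Ud\|\simeq\bZ$ are $\bA$-equivalences since $\vth$ is an isomorphism) match the paper's argument and are sound.

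The gap is in your (iii)~$\Rightarrow$~(ii) step, where you ``replace the zigzag by a single $\bA$-equivalence~$f$'' and then post-compose the augmentation $\bv:\Wd\to\cd{\bY}$ with $\cd{f}$. That reduction is not available in the generality of the theorem. Being $\bA$-equivalent means being weakly equivalent in the right Bousfield localization of $\C$ at $\bA$, and two weakly equivalent objects in a model category are in general connected only by a \emph{span} (or cospan) of weak equivalences. Collapsing further to a single morphism $\bY\to\bZ$ would require $\bY$ to be \emph{cofibrant} in the localized structure, i.e.\ $\bA$-cellular, which the hypotheses do not provide. Remark~\ref{rrightbous} gives you a homotopy equivalence $\CWA{\bY}\to\CWA{\bZ}$, but there is no map out of $\bY$ itself to compose with $\cd{-}$.

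This is precisely the point the paper's proof has to work to get around: it reduces to a span and treats the two legs separately. The leg pointing \emph{into} $\bZ$ is case~(a), which is Example~\ref{egcohervan}; but the leg pointing \emph{into} $\bY$ goes the ``wrong way'' relative to the source of the system of operations, and case~(b) handles it by manufacturing a \emph{new} sequential realization $\cWpp$ for $\bY$ out of one for the middle object (re-augmenting via $\wg$) and showing the original strand descends. Your proposal omits this construction entirely. Note that lifting each $\bve{n}:\W{n}\to\bY$ through $\CWA{\bY}\to\bY$ (possible since $\W{n}$ is $\bA$-cellular) only produces lifts determined up to homotopy, whereas a strand requires the strict compatibility $\bve{n}=\bve{n+1}\circ\prn{n+1}$; restoring that strictness is exactly what the case~(b) bookkeeping does. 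Since you already have (i)~$\Leftrightarrow$~(ii), you only truly need (iii)~$\Rightarrow$~(i) for \emph{some} realization, but even that weaker version requires the span argument; the single-equivalence shortcut does not close the cycle.
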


\begin{proof}
The equivalence of the first two conditions follows from Key Lemma \ref{lkey}
and Proposition \ref{pvanish}.
As noted in \S \ref{raug}, the equivalence of the first and third conditions then
reduces to the existence of suitable homotopy augmentations:

If the system of higher operations vanishes coherently for some
sequential realization $\cW$ of $\bY$, there is a strand
\w{\bve{\infty}} for \w[,]{(\cW,\bZ,\vth)} and thus augmentations
\w{\bve{n}:\W{n}\to\Ud} for  all \w[.]{n\geq 0}
These fit together to define a homotopy augmentation \w{\bv:\Wd\to\Ud}
for \w[,]{\Wd:=\hocolim\W{n}} which induces an isomorphism
\w[.]{\pi\sb{0}\piA\Wd\to\piA\|\Ud\|}
By assumption \S \ref{setmc}(2), $\bY$ is $\bA$-equivalent to the realization
\w[,]{\|\Wd\|}  and thus the map \w{\bv\sb{\ast}:\|\Wd\|\to\|\Ud\|\simeq\bZ}
induced by the augmentation $\bv$ realizes $\vth$, so $\bY$ and $\bZ$ are related
by a cospan of $\bA$-equivalences.

Conversely, if $\bY$ and $\bZ$ are $\bA$-equivalent, they are related by
a span (or cospan) of $\bA$-equivalences, so it suffices to consider the
following two cases:

\begin{enumerate}[(a)]
\item Given an $\bA$-equivalence \w{f:\bY\to\bZ} and a sequential realization
$\cW$ for $\bY$, we may assume $f$ lifts to a map \w{\wf:\Td\xra{\simeq}\Ud}
  between the fibrant replacements for \w{\cd{\bY}} and \w[,]{\cd{\bZ}} respectively
  (using the functorial factorizations in $\C$, and thus in \w[,]{\C\sp{\Dop}}
  assumed in \S \ref{sass}). Postcomposing the $n$-augmentations
  \w{\bve{n}:\W{n}\to\Td} with \w[,]{\wf} we obtain $n$-augmentations
  \w{\wf\circ\bve{n}:\W{n}\to\Ud} still realizing \w[,]{\Vd\to\Lambda} since
  \w{f\sb{\#}:\Lambda\to\piA\bZ} is an isomorphism.
Thus, \w{\llrra{\bY}} vanishes coherently for \w{(\cW,\bZ,\vth)} by
Proposition \ref{pvanish}.
\item On the other hand, given an $\bA$-equivalence \w{g:\bZ\to\bY}
and a sequential realization \w{\cWp} for $\bZ$,
by postcomposing the $n$-augmentations \w{\bve{n}:\W{n}\to\Ud}
with \w{\wg:\Ud\xra{\simeq}\Td} as in (a), we obtain $n$-augmentations
\w{\wg\circ\bve{n}:\W{n}\to\Td}
realizing \w[,]{\Vd\to\Lambda} and thus making \w{\cWp} itself, with the
corresponding actual augmentations \w[,]{\alpha\sb{\bY}\circ\wg\circ\bve{n}} into a
sequential realization \w{\cWpp} for $\bY$.
By \S \ref{egcohervan} we thus have a strand \w{\bvp{\infty}} for
\w[,]{(\cWpp,\bY,\Id\sb{\Lambda})} and of course the actual augmentations
\w{\alpha\sb{\bZ}\circ\bve{n}:\W{n}\to\bZ} themselves form a corresponding strand
\w{\bve{\infty}} for \w[,]{\cWpp} showing that
the system of higher operations vanishes coherently for \w[.]{(\cWpp,\bZ,\vth)}
\end{enumerate}
\noindent This completes the proof.
\end{proof}

\begin{corollary}\label{csecoper}
If $\bY$ and \w{\bY'} are weakly equivalent $\bA$-cellular spaces, any sequential
realization $\cW$ for $\bY$ is also a sequential realization for \w[.]{\bY'}
In particular, \w{\Wd} has an augmentation to its realization \w{\|\Wd\|}
inducing an $\bA$-equivalence.
\end{corollary}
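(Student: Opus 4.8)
The plan is to deduce Corollary \ref{csecoper} directly from Theorem \ref{tvanish} together with the construction of sequential realizations in Section \ref{crsar}. First I would record the key point: if $\bY$ and $\bY'$ are weakly equivalent $\bA$-cellular spaces, then the identity on $\Lambda:=\piA\bY\cong\piA\bY'$ is trivially an isomorphism of \PAal[s,] realizable by a zigzag of $\bA$-equivalences (indeed by a single weak equivalence, which is an $\bA$-equivalence since both spaces are $\bA$-cellular — see Remark \ref{rrightbous}). Hence, taking $\bZ:=\bY'$ and $\vth:=\Id\sb{\Lambda}$ (transported along the given weak equivalence), Theorem \ref{tvanish} applies: condition (iii) holds, so condition (ii) holds, meaning $\llrra{\bY}$ vanishes coherently for \emph{every} sequential realization $\cW$ for $\bY$.

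Next I would unwind what "vanishes coherently for $\cW$" yields. By Definition \ref{dextnaug} and \S \ref{snouho}, coherent vanishing for $(\cW,\bY',\vth)$ produces a strand $\bve{\infty}=(\bve{0},\bve{1},\dotsc)$, that is, a compatible family of $n$-augmentations $\bve{n}:\W{n}\to\Ud$ (where $\Ud=(\bY')\sp{\Du}$ is the Reedy fibrant replacement of $\cd{\bY'}$), each realizing $\vth\circ\vare:\Vd\to\piA\bY'$ through simplicial dimension $n$, with $\bve{n-1}=\bve{n}\circ\prn{n}$. Passing to the colimit as in Lemma \ref{rcwreal}, these assemble into a homotopy augmentation $\bv:\Wd\to\Ud$ of $\Wd=\hocolim\W{n}$. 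Since $\bv$ realizes $\vth$ (an isomorphism) on \PAal[s,] Assumption \ref{setmc}\eqref{realfun} shows the induced map $\|\Wd\|\to\|\Ud\|\simeq\bY'$ is an $\bA$-equivalence; composing with (the homotopy inverse of) the $\bA$-equivalence $\bY\simeq\bY'$ exhibits $\|\Wd\|$ as $\bA$-equivalent to $\bY$. This is exactly the assertion that $\cW$ — with the augmentations $\bve{n}$, suitably rectified to strict augmentations using the functorial factorizations of \S \ref{sass} as in case (b) of the proof of Theorem \ref{tvanish} — is a sequential realization for $\bY'$ in the sense of Definition \ref{dsrar}.

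For the last sentence of the corollary, I would specialize to the case $\bY'=\bY$: the identity isomorphism $\vth=\Id\sb{\Lambda}$ is visibly realizable (by $\Id\sb{\bY}$), so by Example \ref{egcohervan} (or directly by Theorem \ref{tvanish}) $\llrra{\bY}$ vanishes coherently for any $\cW$, and the resulting homotopy augmentation $\bv:\Wd\to\Ud\simeq\bY$ induces an isomorphism on $\piA$, hence by Assumption \ref{setmc}\eqref{realfun} the natural map $\|\Wd\|\to\bY$ is an $\bA$-equivalence. Since $\Wd$ is the realization-target in the sense that $\|\Wd\|$ is the relevant homotopy colimit, this gives the desired augmentation of $\Wd$ to $\|\Wd\|$ inducing an $\bA$-equivalence.

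The main obstacle I anticipate is purely bookkeeping rather than conceptual: one must check that a strand of homotopy augmentations $\bve{n}:\W{n}\to\Ud$ can genuinely be upgraded to the \emph{strict} augmentation data required by Definition \ref{dsrar} — in particular that each $\W{n}$ with its augmentation still satisfies conditions (i)–(iii) of that definition for the space $\bY'$, not merely for $\bY$. This is handled exactly as in part (b) of the proof of Theorem \ref{tvanish}: one postcomposes with the lift $\wg$ of the $\bA$-equivalence between the fibrant replacements of $\cd{\bY}$ and $\cd{\bY'}$, uses that $g\sb{\#}$ is an isomorphism to see the truncated augmented \PAal is unchanged, and applies the functorial factorizations to replace the homotopy augmentations by strict ones without disturbing the Reedy (co)fibrancy or the cofibrations $\prn{n}$. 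No new argument beyond what already appears in Section \ref{chhio} is needed.
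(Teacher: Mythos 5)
Your proof detours through Theorem \ref{tvanish} when a much more direct argument is available, and in doing so it creates a gap that is not closed by the cited reference. Invoking Theorem \ref{tvanish}(ii) produces a strand of \emph{homotopy} augmentations $\bve{n}:\W{n}\to\Ud=(\bY')^{\Du}$, but Definition \ref{dsrar} requires genuine strict augmentations, i.e.\ augmented simplicial objects with $\Wn{-1}{n}=\bY'$. You claim the rectification is ``handled exactly as in part~(b) of the proof of Theorem \ref{tvanish},'' but that argument does not rectify homotopy augmentations into strict ones: it starts from the \emph{strict} augmentation built into a sequential realization and postcomposes it with a strict map (the ``actual augmentations $\alpha_\bY\circ\wg\circ\bve{n}$'' there rely on the strict data already in hand). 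There is no simplicial retraction $(\bY')^{\Du}\to\cd{\bY'}$ to strictify a homotopy augmentation in general, so the step remains unjustified as written.

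The short route is exactly the observation you make in passing but do not exploit. Since $\bY$ and $\bY'$ are $\bA$-cellular, they are \emph{both fibrant and cofibrant} in the right Bousfield localization of $\C$ at $\bA$ (Remark \ref{rrightbous}); hence a zigzag of weak equivalences between them can be replaced by an actual homotopy equivalence $h:\bY\to\bY'$. Postcomposing the strict augmentation $\Wd\to\bY$ (present by Definition \ref{dsrar}) with $h$ yields a strict augmentation $\Wd\to\bY'$. Since $h_\#$ is an isomorphism on $\piA$, condition (i) of Definition \ref{dsrar} still holds, and conditions (ii)--(iii) are untouched; so $\cW$ is a sequential realization for $\bY'$. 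This requires none of the higher-operations machinery. For the second sentence, one applies this to $\bY'=\|\Wd\|$ (not $\bY'=\bY$ as you propose): by Lemma \ref{rcwreal} together with Assumption \ref{setmc}\eqref{realfun} the natural map $\|\Wd\|\to\bY$ is an $\bA$-equivalence and $\|\Wd\|$ is $\bA$-cellular, so the first part supplies the required strict augmentation $\Wd\to\|\Wd\|$.
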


\begin{proof}
Since $\bY$ and \w{\bY'} are fibrant and cofibrant in the $\bA$-model category
structure of \S \ref{rrightbous}, there is a homotopy equivalence
\w{h:\bY\to\bY'} which we may compose with the augmentation \w{\Wd\to\bY}
to obtain a strict augmentation to \w[.]{\bY'}
\end{proof}

\begin{mysubsection}{The moduli space of weak homotopy types}
\label{smswht}
Theorem \ref{tvanish} provides a more geometric approach to the ``moduli
space'' \w{\M\sb{\A}} of all $\bA$-homotopy types in the model category $\C$,
described in \cite{BDGoeR} for \w{\bA=\bS{1}} in \w{\C=\Topa} (see also
\cite{PstrM}):

The primary decomposition of \w{\M\sb{\A}} is, of course, into connected
components corresponding to non-isomorphic realizable \PAal[s] $\Lambda$.
For a given $\Lambda$, we first choose some $\bY$ with
\w{\piA\bY\cong\Lambda} as our base point, with a sequential simplicial
realization $\cW$ for $\bY$.

We then filter the other realizations $\bZ$ of $\Lambda$ (not weakly equivalent
to $\bY$) by the greatest $n$ for which some $n$-augmentation
\w{\bve{n}\sb{(\cW,\bY,\Id\sb{\Lambda})}} exists for $\bZ$. Up to a shift in indexing,
this corresponds to the cohomological filtration of the component of $\bY$ in
\w{\M\sb{\A}}  given in \cite{BDGoeR} (see also \cite{BJTurnR}).
\end{mysubsection}

\begin{mysubsection}{An example in rational homotopy theory}
\label{srex}
As noted in \S \ref{egfree}, we may apply our theory to Quillen's model \w{\dgL} of
reduced differential graded Lie algebras over $\QQ$ (DGLs) for the homotopy theory
of simply-connected rational spaces (see \cite[\S 1]{QuiR}), with $\bA$ the standard
model for \w[,]{\bS{2}\sb{\QQ}} as in the following example:

Let \w{\As:=\LL{a,b,c,x,y,z,w,e}} denote the free DGL with generators in degrees
\w{|a|=|b|=|c|=m} (for $m$ even), \w[,]{|x|=|y|=|z|=2m+1} \w[,]{|e|=3m+1}
and \w[,]{|w|=3m+2} and with differentials \w[,]{d(x)=[b,c]} \w[,]{d(y)=[c,a]}
\w[,]{d(z)=[a,b]} and \w{d(w)=f} for the Lie-Massey product
\w{f:=[a,x]+[b,y]+[c,z]} of degree \w[.]{3m+1} All other differentials are zero.

Similarly, let \w{\Bs:=\LL{a,b,c,x,y,z}} with
\w[,]{|a|=|b|=|c|=m} \w[,]{|x|=|y|=|z|=2m+1} and non-zero differentials
\w[,]{d(x)=[b,c]} \w[,]{d(y)=[c,a]} and \w[.]{d(z)=[a,b]}
We truncate \w{\As} and \w{\Bs} in degree \w[,]{4m} so all Lie brackets vanish
in \w[.]{\His\As\cong\His\Bs}

Using the obvious free simplicial \PAal resolution (as graded Lie algebras), we
obtain the following augmented simplicial DGL \w{\Wd\to\As} in simplicial
dimensions \www{\leq 2} (with degrees indicated by subscripts):

\begin{enumerate}[(a)]
\item In dimension $0$ we have
\w[,]{\bW\sb{0}=\oW{0}\amalg C\oW{1}\amalg C\Sigma\oW{2}} where
\begin{enumerate}[(i)]
\item \w{\oW{0}=\LL{\ua{m},\ub{m},\uc{m},\ue{3m+1}}} with simplicial augmentation
\w{\vare:\bW\sb{0}\to\As} given by \w[,]{\ua{m}\mapsto a}
\w[,]{\ub{m}\mapsto b} \w[,]{\uc{m}\mapsto c} and \w[.]{\ue{3m+1}\mapsto e}
\item \w{C\oW{1}=\LL{\ox{2m},\oy{2m},\oz{2m},\oox{2m+1},\ooy{2m+1},\ooz{2m+1}}}
with differential \w[,]{d(\oox{2m+1})=\ox{2m}}
\w[,]{d(\ooy{2m+1})=\oy{2m}} and \w[.]{d(\ooz{2m+1})=\oz{2m}}

The simplicial augmentation is given by \w[,]{\ox{2m}\mapsto[b,c]}
\w[,]{\oy{2m}\mapsto[c,a]} and \w[,]{\oz{2m}\mapsto[a,b]} while
\w[,]{\oox{2m+1}\mapsto x} \w[,]{\ooy{2m+1}\mapsto y} and \w[.]{\ooz{2m+1}\mapsto z}
\item \w{C\Sigma \oW{2}=\LL{\hw{3m+1},\hhw{3m+2}}}
with differential \w{d(\hhw{3m+2})=\hw{3m+1}} and augmentation
\w[,]{\hw{3m+1}\mapsto f} and \w{\hhw{3m+2}\mapsto -w} (the sign is the usual
one for the suspension in chain complexes).

\end{enumerate}
\item In dimension $1$ we have
\w[,]{\bW\sb{1}=\oW{1}\amalg C\oW{2}\amalg s\sb{0}\bW\sb{0}} where
\w[,]{s\sb{0}\bW\sb{0}} as a coproduct summand, is the image of \w{\bW\sb{0}}
under the simplicial degeneracy \w[.]{s\sb{0}:\bW\sb{0}\to\bW\sb{1}} We have:
\begin{enumerate}[(i)]
\item \w{\oW{1}=\LL{\ux{2m},\uy{2m},\uz{2m}}} with simplicial face maps
\w[,]{d\sb{0}(\ux{2m})=[\ub{m},\uc{m}]}
\w[,]{d\sb{0}(\uy{2m})=[\uc{m},\ua{m}]} and \w[,]{d\sb{0}(\uz{2m})=[\ua{m},\ub{m}]}
while \w[,]{d\sb{1}(\ux{2m})=\ox{2m}} \w[,]{d\sb{1}(\uy{2m})=\oy{2m}}
and \w[.]{d\sb{1}(\uz{2m})=\oz{2m}}
\item \w{C\oW{2}=\LL{\ow{3m},\oow{3m+1}}}
with differential \w{d(\oow{3m+1})=\ow{3m}} and simplicial face maps
\w[,]{d\sb{0}(\ow{3m})=-[\ua{m},\ox{2m}]-[\ub{m},\oy{2m}]-[\uc{m},\oz{2m}]} and
\w[,]{d\sb{0}(\oow{3m+1})=
-[\ua{m},\oox{2m+1}]-[\ub{m},\ooy{2m+1}]-[\uc{m},\ooz{2m+1}]}
\w[,]{d\sb{1}(\oow{3m+1})=\hw{3m+1}} while
\w{d\sb{1}(\ow{3m})=d\sb{1}(d(\oow{3m+1}))=d(d\sb{1}(\oow{3m+1}))
 =d(\hw{3m+1})=dd(\hhw{3m+2})=0} (see above).
\end{enumerate}
\item Finally, in dimension $2$ \w[,]{\oW{2}=\LL{\uw{3m}}} with simplicial face maps
$$
d\sb{0}(\uw{3m})=[s\sb{0}\ua{m},\ux{2m}]+[s\sb{0}\ub{m},\uy{2m}]+
[s\sb{0}\uc{m},\uz{2m}]
-s\sb{0}[\ua{m},\ux{2m}]-s\sb{0}[\ub{m},\uy{2m}]-s\sb{0}[\uc{m},\uz{2m}]
$$
and \w[.]{d\sb{1}(\uw{3m})=\ow{3m}}
\end{enumerate}

If we try to augment this simplicial DGL to \w[,]{\Bs} rather than \w[,]{\As} we see
that necessarily \w[,]{\hw{3m+1}\mapsto f} but then we have nowhere to map
\w[,]{\hhw{3m+2}} precisely because the Massey product
\w{f:=[a,x]+[b,y]+[c,z]} survives in \w[.]{\Bs}

This shows us that \w{\As} and \w{\Bs} are not homotopy equivalent, as expected.
\end{mysubsection}

%
%
\sect{Higher homotopy invariants for maps}
\label{chhim}

The systems of higher homotopy operations described in Section \ref{chhio} for a
\PAal \w{\Lambda=\piA\bY} may be thought of as obstructions to realizing an algebraic
isomorphism \w{\vth:\Lambda\cong\piA\bZ} by a map \w{f:\bY\to\bZ} (necessarily
an $\bA$-equivalence) \wh as well as constituting a complete set of higher invariants for
the $\bA$-homotopy type of objects in $\C$ realizing the given $\Lambda$.
In this section we address the analogous problem for arbitrary morphisms of \PAal[s.]

\begin{mysubsection}{$\A$-invisible maps}
\label{sainvis}
We begin with the simple but important case of a map \w{f:\bY\to\bZ} which is
``$\A$-invisible'' \wh that is, induces the zero map
\w[.]{0=f\sb{\#}:\piA\bY\to\piA\bZ} We can think of the associated higher invariants
as obstructions to $f$ being nullhomotopic. (Note that we do not have an
analogous situation for objects \w[:]{\bY\in\C} if \w[,]{\piA\bY=0} the map
\w{\bY\to\ast} is an $\bA$-equivalence, so $\bY$ is $\bA$-weakly contractible.)

Consider a sequential realization $\cW$ for $\bY$, with (actual) augmentations
\w{\bve{n}:\W{n}\to\bY} \wb[,]{n\geq 0} starting with
\w[.]{\bve{0}\sb{0}:\oW{0}\to\bY}
Since we assumed \w[,]{f\sb{\#}=0} the composite
\w{f':=f\circ\bve{0}\sb{0}:\oW{0}\to\bZ} is nullhomotopic, and we may choose a
nullhomotopy \w[.]{H\sb{0}:f'\sim 0} On the other hand, we have a nullhomotopy
\w{\Fk{0}:C\oW{1}\to\bY} for \w{\bve{0}\sb{0}\circ\udz{1}} (as in the second step
in the proof of Proposition \ref{pobst}). Thus we have the (ordinary) Toda bracket
$$
\lra{f,\,\bve{0}\sb{0},\,\udz{1}}~\subseteq~[\Sigma\oW{1},\bZ]~,
$$
\noindent associated to the diagram
\mydiagram[\label{eqtodaone}]{
&&&&\\
\oW{1}~\ar[rr]\sp{\udz{1}} \ar@/^{3.0pc}/[rrrr]\sp{\ast} &&
\Wn{0}{0} \ar[rr]\sp{\bve{0}\sb{0}} \ar@{=>}[u]\sp{\Fk{0}}
\ar@/_{3.2pc}/[rrrr]\sb{\ast}&& \bY \ar[rr]\sp{f} \ar@{=>}[d]\sp{H\sb{0}} && \bZ \\
&&&&
}
\noindent (compare \wref[).]{eqhochain} This serves as the first obstruction
to extending \w{H\sb{0}} to compatible nullhomotopies of the augmentations
\w[,]{f\circ\bve{n}:\W{n}\to\bZ} which would induce a nullhomotopy of the
map \w{\|f\circ\bv\|:\|\Wd\|\simeq\bY\to\bZ} (homotopic to the original $f$).

This is a special case of a more general setup:
\end{mysubsection}

\begin{mysubsection}{$n$-homotopies}
\label{dshhom}
For our new version of \S \ref{dshho}, the basic initial data \w{(\star)}
consists of two maps \w{\fu{0},\fu{1}:\bY\to\bZ} in $\C$ which induce the same
homomorphism of \PAal[s] \w[.]{\psi:\piA\bY\to\piA\bZ}
The specific initial data \w{(\star\star)} consists of a sequential
realization $\cW$ of a CW-resolution \w{\vare:\Vd\to\Lambda:=\piA\bY} for $\bY$
(induced by the augmentation \w[)]{\bv:\Wd\to\bY}
together with a homotopy \w{\Huk{0}{0}:\oW{0}\to\ZI} between
\w{\fu{0}\circ\bv\sb{0}} and \w{\fu{1}\circ\bv\sb{0}} making the following
diagram commute:
\mydiagram[\label{eqhukzero}]{
\Wn{0}{0}=\oW{0}~\ar[rrr]\sp{\Huk{0}{0}} \ar[d]\sb{\bve{0}\sb{0}} &&&
\ZI \ar[d]\sp{e=\ev\sb{0}\top\ev\sb{1}} \\
\bY \ar[rrr]\sp{\fu{0}\top\fu{1}} &&& \bZ\times\bZ ~.
}
\noindent Here factoring the diagonal \w{\Delta:\bZ\to\bZ\times\bZ} as an acyclic
cofibration \w{\bZ\hra\ZI} followed by a fibration \w{e:\ZI\epic\bZ\times\bZ}
makes \w{\ZI} a path object for $\bZ$ in the sense of \cite[\S 7.3.1]{PHirM}.
In general, \w{\ev\sb{j}:\ZI\to\bZ} \wb{j=0,1} are given by the structure maps for the
product; if \w[,]{\C=\Top} we may choose \w[,]{\ZI:=\bZ\sp{[0,1]}} with the obvious
evaluation maps \w{\ev\sb{j}} induced by the inclusions \w{i\sb{j}:\{j\}\hra[0,1]}
\wb[.]{j=0,1}

Since $\bZ$ is fibrant, the projections \w{\proj\sb{j}:\bZ\times\bZ\to\bZ} are fibrations,
and thus the composite \w{\proj\sb{j}\circ e:\ZI\epic\bZ} is a trivial fibration.
Since $\bZ$ is also cofibrant, this  map has a splitting \w{\sigma\sb{j}:\bZ\to\ZI}
\wb[.]{j=0,1} When \w{\C=\Top} and \w[,]{\ZI:=\bZ\sp{[0,1]}} we may let
\w[,]{\sigma\sb{0}=\sigma\sb{1}} sending \w{z\in\bZ} to the constant path at $z$.

As in \S \ref{szeroaug}, the map \w{\Huk{0}{0}} in \wref{eqhukzero} defines
a $0$-map \w[,]{\Huk{}{0}:={H'}\bp{0}\circ p:\W{0}\to\ZI\sp{\Du}}
which we call a $0$-\emph{homotopy} for \w[.]{\lra{\cW,\fu{0},\fu{1}}}

For any \w[,]{n\geq 1} we then have a corresponding notion of an $n$-map
as in \S \ref{dshho}, with \w[,]{\bX:=\ZI} called an $n$-\emph{homotopy} for
\w[:]{\lra{\cW,\fu{0},\fu{1}}} namely, a map \w{\Huk{}{n}:\W{n}\to\ZI\sp{\Du}}
extending the given $0$-homotopy \w[.]{\Huk{}{0}}
More generally we say \w{\Huk{}{n}} \emph{extends}
an \wwb{n-1}homotopy \w{\Huk{}{n-1}} if \w{\Huk{}{n}\circ\prn{n}=\Huk{}{n-1}}
(see \wref[).]{eqtower} A \emph{difference strand}  is a sequence
\w{\Htl{\infty}:=(\Huk{}{n})\sb{n=0}\sp{\infty}} such that
\w{\Huk{}{n}} extends \w{\Huk{}{n-1}} for each \w[.]{n\geq 1}
\end{mysubsection}

\begin{mysubsection}{Extending $n$-homotopies}
\label{snhtpy}
by Proposition \ref{pfoldpolymap}, given an \wwb{n-1}homotopy
\w{\Huk{}{n-1}:\W{n-1}\to\ZI\sp{\Du}} any choice of maps
\w{\hHuk{k}{n}:\oCsW{n-k-1}{n}\to\ZI\sp{\Deln{k}}} \wb{0\leq k\leq
n} satisfying \wref{eqnewfaces} determines a unique extension to
\w[.]{\tHuk{n}{}:\oW{n}\otimes\Pn{n}\to\ZI} We associate to this
data a map \w{\huk{n-1}:\oW{n}\otimes\partial\Pn{n}\to\ZI} as in
\S \ref{dvalstrand}, and (if $\C$ satisfies the assumptions of \S
\ref{spup}), this map is uniquely determined up to homotopy by the
induced map \w[.]{g\sb{n-1}:\Sigma\sp{n-1}\oW{n}\to\ZI} The
\emph{value} of the \wwb{n-1}homotopy \w{\Huk{}{n-1}} is then the
class \w{\Val{\Huk{}{n-1}}:=[g\sb{n-1}]} in
\w[,]{\Lambda\lin{\Sigma\sp{n-1}\oW{n}}} for
\w[.]{\Lambda:=\piA\bZ}

Moreover, by Proposition \ref{pvanish} the value for \w{\Huk{}{n-1}} is zero if and
only if the \wwb{n-1}homotopy extends to an $n$-homotopy. However,
  unlike the values \w{\Val{\bve{n}}} of \S \ref{dextnaug}, this depends also on our
initial choice of a $0$-homotopy \w{\Huk{}{0}} for \w[,]{\lra{\cW,\fu{0},\fu{1}}}
(see \S \ref{dshhom}).
Thus the specific initial data \w{(\star\star)} consists here of \w[.]{\lra{\cW,\Huk{}{0}}}
\end{mysubsection}

\begin{defn}\label{dcorrespstrmap}
Given \w{(\star)=(\fu{0},\fu{1}:\bY\to\bZ)} as in \S \ref{dshhom},
an $n$-stage comparison map \w{\Phi:\cW\to\,\ccWp} between two
  sequential realizations for $\bY$ as in \wref[,]{eqncorresp} and
  two $n$-homotopies \w{\Huk{}{n}} and \w{\Hupk{}{n}} for $\cW$ and \w[,]{\ccWp}
respectively,  as in \S \ref{dcorrespstr} we write \w{\Hupk{}{n}=\rs(\Huk{}{n})} if
\w{\Hupk{k}{m}=\rnk{m}{k}\circ\Huk{k}{m}:\Wpn{k}{m}\to\ZI\sp{\Deln{k}}}
and \w{\Huk{}{n}=\es(\Hupk{}{n})} if
\w{\Huk{k}{m}=\enk{m}{k}\circ\Hupk{k}{m}:\Wn{k}{m}\to\ZI\sp{\Deln{k}}}
for each \w[.]{0\leq k\leq m\leq n}

By \wref{eqcorrvals} we have
\begin{myeq}\label{eqcorrvalsmap}
\Val{\rs(\Huk{}{n})}=(\oar{n})\sb{\ast}(\Val{\Huk{}{n}})\hs \text{and}\hsm
\Val{\es(\Hupk{}{n})}=(\oen{n})\sb{\ast}(\Val{\Hupk{}{n}})~,
\end{myeq}
\noindent so by Lemma \ref{lindvanish}:
\begin{myeq}\label{eqvancorrmap}
\begin{array}{l}
(a)\hs \Val{\Huk{}{n}}=0\hsm \text{if and only if}\hsn \Val{\rs(\Huk{}{n})}=0\\
(b)\hs \text{If}\hsn\Val{\Hupk{}{n}}=0\hsm \text{then}\hsn \Val{\es(\Hupk{}{n})}=0~.
\end{array}
\end{myeq}
\end{defn}

\begin{defn}\label{dvanish}
Given \w{(\star)=(\fu{0},\fu{1}:\bY\to\bZ)} as in \S \ref{dshhom},
the universal homotopy operation \w{\llrra{\star}} of \S \ref{snouho}
will be denoted by \w[.]{\llrra{\fu{0,1}}=(\llrr{\fu{0,1}}{n})\sb{n=2}\sp{\infty}}
\end{defn}

We then have the following analogue of Theorem \ref{tvanish}:

\begin{thm}\label{tvanishmap}
  Let \w{\fu{0},\fu{1}:\bY\to\bZ} be maps between fibrant and cofibrant objects in
$\C$, inducing the same map of \PAal[s] \w[.]{\psi:\piA\bY\to\piA\bZ}
  Then \w{\fu{0}} and \w{\fu{1}} are $\bA$-equivalent (see \S \ref{rrightbous})
  if and only if the associated system of higher operations \w{\llrra{\fu{0,1}}}
  vanishes for some (and thus for any) sequential realization $\cW$ of $\bY$.
\end{thm}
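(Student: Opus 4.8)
The plan is to mimic the structure of the proof of Theorem \ref{tvanish}, replacing $n$-augmentations by $n$-homotopies and the target object $\bZ$ by the path object $\ZI$. First I would observe that the equivalence of vanishing for \emph{some} versus \emph{any} sequential realization $\cW$ of $\bY$ follows formally from Key Lemma \ref{lkey} together with Proposition \ref{pvanish}, exactly as in the first sentence of the proof of Theorem \ref{tvanish}; the only point to check is that the Key Lemma applies to the data $(\star)=(\fu{0},\fu{1})$ with $\bX:=\ZI$, which it does, since the higher operations $\llrra{\fu{0,1}}$ are an instance of the universal operations $\llrra{\star}$ of \S \ref{snouho}. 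Thus it suffices to prove the equivalence between $\bA$-equivalence of $\fu{0}$ and $\fu{1}$ and coherent vanishing for one fixed $\cW$.

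For the direction ``vanishing $\Rightarrow$ homotopic'', I would argue as in Theorem \ref{tvanish}(i)$\Rightarrow$(iii): coherent vanishing gives a difference strand $\Htl{\infty}=(\Huk{}{n})\sb{n=0}\sp{\infty}$ of $n$-homotopies, which assemble into a homotopy augmentation $\bv':\Wd\to\ZI\sp{\Du}$ of the colimit $\Wd=\hocolim\W{n}$. Composing with $\ev\sb{0}$ and $\ev\sb{1}$ (suitably replaced on $\bX\sp{\Du}$) and using Assumption \S \ref{setmc}(2), the realization $\|\bv'\|:\|\Wd\|\to\|\ZI\sp{\Du}\|\simeq\ZI$ is a map whose two ends realize $\fu{0}$ and $\fu{1}$ up to the zigzag of $\bA$-equivalences $\bY\simeq\|\Wd\|$ and $\ZI\simeq\bZ$; composing with the path-object projections shows $\fu{0}$ and $\fu{1}$ are $\bA$-equivalent (and, when $\bY,\bZ$ are CW complexes, genuinely homotopic, by Remark \ref{rrightbous}). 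The key compatibility to verify is that $\ev\sb{0}\circ\bv'$ and $\ev\sb{1}\circ\bv'$ recover the original augmentation $\bv:\Wd\to\bY$ up to the chosen zigzag, which follows from the commutativity of \wref{eqhukzero} and the fact that each $\Huk{}{n}$ extends $\Huk{}{0}$.

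For the converse, ``homotopic $\Rightarrow$ vanishing'', I would take an $\bA$-equivalence (or zigzag of such) witnessing $\fu{0}\simeq\fu{1}$; replacing it by a genuine homotopy $G:\bY\otimes\Deln{1}\to\bZ$, equivalently a map $G^{\flat}:\bY\to\ZI$ with $\ev\sb{j}\circ G^{\flat}=\fu{j}$, we can postcompose the actual augmentations $\bve{n}:\W{n}\to\bY$ of the sequential realization $\cW$ (lifted to $\bX\sp{\Du}$ via the fibrant replacement, using the functorial factorizations of \S \ref{sass}) with $G^{\flat}$ to produce $n$-homotopies $\Huk{}{n}:\W{n}\to\ZI\sp{\Du}$ for every $n$. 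These form a difference strand, so by Proposition \ref{pvanish} every $\Val{\Huk{}{n}}=0$, i.e.\ $\llrra{\fu{0,1}}$ vanishes coherently for $\cW$. One must check at the start that the initial $0$-homotopy $\Huk{0}{0}$ obtained this way satisfies the compatibility \wref{eqhukzero}; this is immediate from $\ev\sb{j}\circ G^{\flat}=\fu{j}$ and the definition of $\bve{0}\sb{0}$.

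The main obstacle, as in the analogous Theorem \ref{tvanish}, is bookkeeping the passage between strict augmentations, homotopy augmentations, and realizations: one needs that postcomposition with $G^{\flat}$ genuinely lands in $n$-homotopies (not merely $n$-maps for arbitrary $\bX$), which requires knowing that $\piA\ZI\cong\piA\bZ$ and that $G^{\flat}$ induces $\psi$ on $\piA$ in the appropriate sense, so that the $n$-homotopies realize $\psi\circ\vare:\Vd\to\piA\ZI$ through dimension $n$. Since $\ZI\simeq\bZ$ is a weak equivalence of fibrant objects and $\ev\sb{0}\circ G^{\flat}=\fu{0}$, $\ev\sb{1}\circ G^{\flat}=\fu{1}$ both induce $\psi$, this is forced; but spelling it out carefully — together with the fact that no compatibility is required in simplicial degree $-1$, so the path object $\ZI$ never needs to appear there — is where the work lies. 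I expect the rest to be a routine transcription of the object case.
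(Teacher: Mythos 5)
Your overall plan matches the paper's proof closely: the some/any equivalence via Key Lemma \ref{lkey} applied to $\bX=\ZI$; the ``vanishing $\Rightarrow$ equivalent'' direction by assembling a difference strand into a map $H:\Wd\to\ZI\sp{\Du}$ and realizing it to a genuine homotopy after applying $\|-\|$ and using \S\ref{setmc}(2) (the paper records this precisely through the commuting diagram \wref{eqpathobject} and then realizes it); and the converse by postcomposing the actual augmentations $\bve{n}$ with a path-object representative of the homotopy to get a difference strand, forcing each $\Val{\Huk{}{n}}$ to vanish by Proposition \ref{pvanish}. The bookkeeping issues you flag at the end (that the $n$-homotopies indeed realize $\psi\circ\vare$, and that nothing is required in degree $-1$) are exactly where the paper's careful setup of $\ZI$ and $\bZ\sp{\Du}$ is invoked. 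So the approach is essentially the paper's.

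The one genuine gap is in the converse direction: you write ``take an $\bA$-equivalence $\ldots$ replacing it by a genuine homotopy $G:\bY\otimes\Deln{1}\to\bZ$,'' but for general fibrant--cofibrant $\bY,\bZ$ (not assumed $\bA$-cellular) an $\bA$-equivalence between two maps does \emph{not} yield a genuine homotopy between them in $\C$ \wwh this is precisely the content of Remark \ref{rrightbous}, which only gives a genuine homotopy after applying $\CWA{-}$. The paper resolves this by passing to the cellularization: one gets a genuine homotopy $G:\CWA{\bY}\to\Path(\CWA{\bZ})$, uses Corollary \ref{csecoper} to regard a sequential realization of $\CWA{\bY}$ as one of $\bY$, and then transports $G$ along the natural levelwise $\bA$-equivalence $h:\Path(\CWA{\bZ})\to\ZI$ before postcomposing with the augmentations and lifting through $p\sp{\ast}:\ZI\to\ZI\sp{\Du}$. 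Without this detour your construction of the initial $0$-homotopy satisfying \wref{eqhukzero} is not available; with it, your argument goes through as intended.
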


\begin{proof}
If \w{\fu{0}} and \w{\fu{1}} are $\bA$-equivalent, then by Remark \ref{rrightbous}
\w{\CWA{\fu{0}}} and \w{\CWA{\fu{1}}} are homotopic. By post-composing
the augmentation of a sequential realization $\cW$ of \w{\CWA{\bY}} with the
$\bA$-equivalence \w[,]{\CWA{\bY}\to\bY} we may think of $\cW$ as
a sequential realization of $\bY$ (see Corollary \ref{csecoper}). Similarly, we have
a natural levelwise $\bA$-equivalence \w[.]{h:\Path(\CWA{\bZ})\to\ZI}
Therefore, given \w{G: \CWA{\bY}\to \Path(\CWA{\bZ})} providing a
homotopy \w{\CWA{\fu{0}}\sim\CWA{\fu{1}}} in $\C$ (\cite[\S 7.3.1]{PHirM}), the map
\w[,]{h\circ G:\CWA{\bY}\to\ZI} composed with each augmentation
\w[,]{\bve{n}:\W{n}\to\CWA{\bY}} defines a map \w[,]{\overline{G}\bp{n}:\W{n}\to\ZI}
which lifts by the splitting \w{p\sp{\ast}:\ZI\hra\ZI\sp{\Du}} (see \S \ref{raug}) to
\w[.]{G\bp{n}:\W{n}\to\ZI\sp{\Du}} This defines compatible
$n$-homotopies (see \S \ref{dshhom}) for all \w[,]{n\geq 1} showing that
\w{\llrra{\fu{0,1}}\sb{n}} vanishes by Proposition \ref{pvanish}.

Conversely, compatible $n$-homotopies for \w{n\geq 1} define a map
\w{H:\Wd\to\ZI\sp{\Du}} fitting into a commutative diagram of simplicial
objects:
\mydiagram[\label{eqpathobject}]{
\Wd~\ar[rrr]\sp{H} \ar[d]\sp{\bv} &&&
\ZI\sp{\Du} \ar[ld]\sb{\bev\sb{0}\sp{\Du}}  \ar[rd]\sp{\bev\sb{1}\sp{\Du}} & \\
\cd{\bY} \ar@/_{2.0pc}/[rrrr]\sb{\vfu{1}} \ar[rr]\sp{\vfu{0}} &&
\bZ\sp{\Du} \ar@/_{1.0pc}/[ur]\sb{\sigma\sb{0}\sp{\Du}} &&
\bZ\sp{\Du}~, \ar@/^{1.0pc}/[ul]\sp{\sigma\sb{1}\sp{\Du}}
}
\noindent where \w[,]{\vfu{j}:=p\sp{\ast}\circ\cd{\fu{j}}}
and the maps $\bv$, \w[,]{\bev\sb{j}} and \w{\sigma\sb{j}} \wb{j=0,1} are induced by
the corresponding maps of \S \ref{dshhom}, and \w{p\sp{\ast}:\cd{\bZ}\to\bZ\sp{\Du}}
is the Reedy weak equivalence of \S \ref{raug} (this time for $\bZ$).

Applying geometric realization to \wref{eqpathobject} yields a path object
$$
\ev\sb{0}',\ev\sb{1}':\|\ZI\sp{\Du}\|~\to~\|\bZ\sp{\Du}\|~\simeq~\bZ
$$
\noindent (see \cite[I, \S 1]{QuiH}), and thus
\w{\| H\|:\|\Wd\|\to\|\ZI\sp{\Du}\|} is a homotopy between
\w{\|p\sp{\ast}\|\circ\fu{0}\circ\|\bv\|} and \w[.]{\|p\sp{\ast}\|\circ\fu{1}\circ\|\bv\|}
Since \w{\|p\sp{\ast}\|:\bZ\to\|\bZ\sp{\Du}\|} is a weak equivalence and
\w{\|\bv\|:\|\Wd\|\to\bY} is an $\bA$-equivalence, this implies that
\w{\fu{0}} and \w{\fu{1}} are $\bA$-equivalent.
\end{proof}

From Theorem \ref{tvanishmap} and Proposition \ref{pvanish} we deduce:

\begin{corollary}\label{ccimaps}
If \w{\fu{0},\fu{1}:\bY\to\bZ} have
\w[,]{\fu{0}\sb{\#}=\fu{1}\sb{\#}:\piA\bY\to\piA\bZ} the
system of higher operations \w{\llrra{\fu{0,1}}} is a complete set of
invariants for distinguishing between the $\bA$-equivalence classes
\w{[\fu{0}]} and \w{[\fu{1}]} in \w{[\bY,\bZ]\sb{\bA}} (see \S \ref{rrightbous}).
\end{corollary}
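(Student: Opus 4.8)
The statement is essentially a repackaging of Theorem \ref{tvanishmap}, so the plan is short. First I would make precise what ``complete set of invariants'' means in this context: the system $\llrra{\fu{0,1}}=(\llrr{\fu{0,1}}{n})\sb{n\geq 2}$ is a function of the specific initial data $(\star\star)=\lra{\cW,\Huk{}{0}}$ whose $n$-th value records $\Vals{\Huk{}{n-1}}$ in $\Lambda\lin{\Sigma\sp{n-1}\oW{n}}$ (for $\Lambda=\piA\bZ$), and I would declare that $\llrra{\fu{0,1}}$ \emph{vanishes} precisely when, for each $n\geq 2$, we can choose an \wwb{n-1}homotopy $\Huk{}{n-1}$ with $\Val{\Huk{}{n-1}}=0$ — so that by Proposition \ref{pvanish} each such extends to an $n$-homotopy and we obtain a full difference strand $\Htl{\infty}$. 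The claim to prove is then: $[\fu{0}]=[\fu{1}]$ in $[\bY,\bZ]\sb{\bA}$ if and only if $\llrra{\fu{0,1}}$ vanishes.

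The reverse implication is immediate from Theorem \ref{tvanishmap}: if $\llrra{\fu{0,1}}$ vanishes for some sequential realization $\cW$ of $\bY$, then $\fu{0}$ and $\fu{1}$ are $\bA$-equivalent, i.e.\ they represent the same class in $[\bY,\bZ]\sb{\bA}$. The forward implication is the other half of Theorem \ref{tvanishmap}: if $[\fu{0}]=[\fu{1}]$, then $\CWA{\fu{0}}\sim\CWA{\fu{1}}$, and the proof of Theorem \ref{tvanishmap} produces compatible $n$-homotopies $G\bp{n}:\W{n}\to\ZI\sp{\Du}$ for every $n\geq 1$, whence $\Val{\Huk{}{n-1}}=0$ for all $n$ by Proposition \ref{pvanish}; so $\llrra{\fu{0,1}}$ vanishes.

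The one point requiring a word of care — and the place where the ``complete invariant'' phrasing has real content — is that vanishing of $\llrra{\fu{0,1}}$ is an intrinsic property of the pair $(\fu{0},\fu{1})$, not an artifact of the chosen $\cW$ or $\Huk{}{0}$. For this I would invoke Key Lemma \ref{lkey} together with the compatibility relations \wref{eqcorrvalsmap} and \wref{eqvancorrmap} of \S \ref{dcorrespstrmap} (themselves resting on Theorem \ref{tzigzag}): these show that if an \wwb{n-1}homotopy with zero value exists for one sequential realization of $\bY$, then one exists for every sequential realization of $\bY$. Hence ``vanishes for some $\cW$'' and ``vanishes for every $\cW$'' coincide, and the equivalence above is genuinely an equivalence between a property of $[\fu{0}]$ versus $[\fu{1}]$ and a property of the operation system. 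Finally, I would close by specializing to $\C=\Top\sb{0}$, $\bA=\bS{1}$, where Remark \ref{rrightbous} gives $[\bY,\bZ]\sb{\bA}=[\bY,\bZ]$ for CW complexes $\bY,\bZ$, so that this recovers Theorem \ref{tvanishmap} in the form stated as Theorem D in the introduction. I do not anticipate a genuine obstacle here beyond bookkeeping the choices; the substantive work has already been done in Theorem \ref{tvanishmap}, the Key Lemma, and Theorem \ref{tzigzag}.
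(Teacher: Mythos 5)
Your proposal is correct and takes the same route the paper does: the paper states the corollary follows directly from Theorem \ref{tvanishmap} together with Proposition \ref{pvanish}, and your write-up simply unwinds that deduction, adding the (correct) observation that Key Lemma \ref{lkey} and the compatibility relations \eqref{eqcorrvalsmap}--\eqref{eqvancorrmap} guarantee the vanishing is intrinsic to the pair $(\fu{0},\fu{1})$ rather than an artifact of the chosen $\cW$ or $\Huk{}{0}$. Nothing is missing.
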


\begin{mysubsection}{An example of an $\A$-invisible map}
\label{sexaim}
Consider the pinch map
\w[,]{\nabla:\Sigma\sp{n-1}\RR P\sp{2}=\bS{n}\cup\sb{2}\be{n+1}\to\bS{n+1}}
which collapses \w{\bS{n}} to a point.  If we apply the \wwb{n+1}Postnikov
section functor \w{P\sp{n+1}} to it, we obtain a map \w{f:\bY\to\bZ=\KZ{n+1}}
which represents  \w[,]{\beta\sb{\ZZ}(\iota\sb{n})} where
\w{\beta\sb{\ZZ}:H\sp{n}(\bY;\ZZ/2)\to H\sp{n+1}(\bY;\ZZ)} is the Bockstein and
\w{0\neq\iota\sb{n}\in H\sp{n}(\bY;\ZZ/2)=\ZZ/2} (see \cite[Ch.\ 3]{MTanC}).
In particular, $f$ is trivial in \w[.]{\pis} Thus we can use the simplified approach
of \S \ref{sainvis}:

The cofibration sequence
\mydiagram[\label{eqchcxres}]{
\bS{n}~\ar[r]\sp{2} & \bS{n}~\ar[r]^-{i} \ar@/^2em/[rr]^{\ast}&
\Sigma\sp{n-1}\RR P\sp{2} \ar[r]^-{\nabla} & \bS{n+1}
}
\noindent is also a fibration sequence in the stable range, so for \w{n\geq 3} we
have a free chain complex resolution of \wwb{n+1}truncated \Pa[s]:
\myvdiag[\label{eqchcxr}]{
\oV{3}=\pis\bS{n+1} \ar[r]\sp{2} & \oV{2}=\pis\bS{n+1} \ar[r]\sp{\eta\sp{n}} &
\oV{1}=\pis\bS{n} \ar[r]\sp{2} & \oV{0}=\pis\bS{n} \ar[r] & \Lambda=\pis\bY.
}
\noindent Thus the simplicial resolution \w{\oW{1}\toto\bW\sb{0}\xra{\vare}\bY}
in dimensions \www[,]{\leq 1} together with the map \w[,]{f:\bY\to\bZ} is given by
$$
\xymatrix@R=7pt{
\bS{n}~\ar[rr]\sp{\odz{}=2} \ar@{^{(}->}[rrdd]\sb{d\sb{1}} &&
\bS{n}~\ar@{^{(}->}[rr]^-{\inc} &&
\bS{n}\cup\sb{2}\be{n+1} \ar[rr]\sp{\nabla}&&\bS{n+1}\\
&& \vee && \\
&& C\bS{n}\ar[rruu]\sb{H} &&
}
$$
\noindent where $H$ is a nullhomotopy for \w[.]{2\cdot\inc}
Since \w{\nabla\circ\inc} is zero, diagram \wref{eqtodaone} simplifies to
the solid portion of
\myzdiag[\label{eqtoda}]{
C\bS{n} \ar@{.>}[rr]^{\Id\sb{C\bS{n}}} && C\bS{n}  \ar[drrrr]\sp{0} \\
\bS{n}~\ar[rr]\sp{2} \ar@{^{(}->}[d]\sp{\iota} \ar@{^{(}.>}[u]\sp{\iota}&&
\bS{n}~\ar@{^{(}->}[rr]^-{\inc}  \ar@{^{(}->}[u]\sp{\iota}&&
\bS{n}\cup\sb{2}\be{n+1} \ar[rr]\sb{\nabla} && \bS{n+1} ~,\\
C\bS{n}\ar[rrrru]\sb{g} &&&&&&
}
\noindent Here $g$, the structure map for the pushout defining
  \w[,]{\bS{n}\cup\sb{2}\be{n+1}} is the identity on the interior
  of \w[.]{C\bS{n}=\be{n+1}} Since the left copy of \w{\bS{n}} maps by $0$ to
  \w[,]{\bS{n+1}} the associated Toda bracket is simply the map
\w[,]{\Sigma\bS{n}\cong C\bS{n}/\bS{n}\xra{\cong}\bS{n+1}} which has degree $1$.

Since the indeterminacy is \w{2\cdot[\Sigma\bS{n},\,\bS{n+1}]=2\ZZ} inside
\w[,]{\pi\sb{n+1}\bS{n+1}=\ZZ} the Toda bracket does not vanish, which shows
(as expected) that $f$ is non-trivial, despite inducing the zero map in
\w{\pis} in the relevant range.
\end{mysubsection}

%
%
\sect{A filtration index invariant}
\label{cfii}

As an application of our methods, we show how our constructions can be used to
describe explicitly a certain filtration index invariant for mod $p$
cohomology classes, dual to the Adams filtration for elements in
\w[.]{\pis\widetilde{\bY}\sb{p}}

\supsect{\protect{\ref{cfii}}.A}{A reverse Adams spectral sequence}

In order to define our index, we now set up an ad hoc cohomological reverse Adams
spectral sequence (see \cite{BlaO}), which is not particularly well behaved
or accessible to computation, but suffices to show that our index is indeed
homotopy invariant.

From now on, let \w[,]{\C=\Topa} \w[,]{\bA=\bS{1}} and \w[,]{\kappa=\omega}
as in \S \ref{egmapalg}, and let \w{\bY\in\Topa} be simply connected,
with the \Pa \w{\Lambda:=\pis\bY} of finite type (i.e., a finitely generated
abelian group in each degree).
We choose some CW-resolution \w{\Vd} of $\Lambda$, with CW basis
\w[,]{\{\oV{n}\}\sb{n=0}\sp{\infty}} and a sequential realization $\cW$.

Next, fix a prime $p$ and let $\bK$ be a strict topological Abelian group model
of \w[,]{\Kp{N}} for some \w{N>\!\!>0} to be determined later. Applying the functor
\w{\mapa(-,\bK)} dimensionwise to each simplicial space in \wref{eqtower} yields
\begin{myeq}\label{eqmtower}
\dotsc~\to~\X{n+1}~\xra{(\prn{n+1})\sp{\ast}}~\X{n}~\xra{(\prn{n})\sp{\ast}}~\X{n-1}~
\to~\dotsc~\to~\X{1}~\xra{(\prn{1})\sp{\ast}}~\X{0}~,
\end{myeq}
\noindent with \w[.]{\X{n}:=\mapa(\W{n},\,\bK)}
Since \w{M\sp{n}\mapa(\Wd,\bK)=\mapa(L\sb{n+1}\Wd,\bK)} for any simplicial space
\w{\Wd} (see \cite[VII, \S 1,4]{GJarS}), we see that each \w{\X{n}} is Reedy fibrant
(since \w{\W{n}} is Reedy cofibrant), and the maps in \wref{eqmtower} are Reedy
fibrations. Thus the (homotopy) limit of this tower is
\w[.]{\Xu:=\mapa(\Wd,\,\bK)}

Moreover, applying \w{\Tot:=\map\sb{c\cS}(\Du,-)} to \wref{eqmtower}
also yields a tower of fibrations
\begin{myeq}\label{eqttower}
\dotsc~\to~\Tot\X{n+1}~\xra{(\prn{n+1})\sp{\ast}}~\Tot\X{n}~\to~\dotsc~\to~
\Tot\X{1}~\xra{(\prn{1})\sp{\ast}}~\Tot\X{0}~,
\end{myeq}
\noindent by \cite[II, \S 2, SM7]{QuiH}, with
\w[.]{\Tot\X{n}\cong\mapa(\|\W{n}\|,\,\bK)} By \cite[XII, 4.3]{BKanH}, its
(homotopy) limit is thus:
\begin{myeq}\label{eqmtot}
\Tot\Xu~\cong~\mapa(\|\Wd\|,\,\bK)~\simeq~\mapa(\bY,\bK)
\end{myeq}

\begin{mysubsection}{Identifying the fibers}
\label{sifib}
Let \w{\Sigma\Ds\bp{n}} denote the chain complex in \w{\Topa} with
\w{\oCsW{n}{n-k-1}} in dimension $k$ (see \S \ref{smakesc}), and
\w{\sDd{n}:=\cL\E\Sigma\Ds\bp{n}} the
corresponding simplicial space (see \S \ref{dlmo}-\ref{schaincx}).

By \S \ref{dsrar}(ii), \w{\W{n-1}\hra\vWu{n}\to\sDd{n}} is a (homotopy) cofibration
sequence in \w[,]{\Topa\sp{\Dop}} so if we set \w{\sEd{n}:=\mapa(\sDd{n},\bK)} and
\w[,]{\vXu{n}:=\mapa(\vWu{n},\bK)} we have a (homotopy) fibration sequence
\begin{myeq}\label{eqhfib}
\sEd{n}~\hra~\vXu{n}~\epic~\X{n-1}
\end{myeq}
\noindent of cosimplicial spaces. Applying \w{\Tot} to \wref{eqhfib} yields
another fibration sequence. Since \w{\W{n}} and \w{\vWu{n}} are weakly equivalent
Reedy cofibrant simplicial spaces, by \wref{eqdoublereplace} (where \w{\vWu{n}}
is denoted by \w[),]{\hXd[F]} \w{\vXu{n}} and \w{\X{n}} are weakly equivalent
Reedy fibrant objects in \w[,]{\Sa\sp{\Delta}} so we have a homotopy fibration sequence
\begin{myeq}\label{eqhpb}
\Tot\sEd{n}~\to~\Tot\X{n}~\to~\Tot\X{n-1}
\end{myeq}
\noindent by \cite[XI, 4.3]{BKanH}.

Since the restricted simplicial space \w{\E\Sigma\Ds\bp{n}} is contractible in all
simplicial dimensions but $n$, and \w{\sEd{n}} is a strict cosimplicial simplicial Abelian
group, the homotopy spectral sequence for \w{\sEd{n}} (see \cite[X, 6]{BKanH})
collapses at the \ww{E\sb{2}}-term, and thus we have a weak equivalence of
\ww{\Fp}-GEMs:
\begin{myeq}\label{eqtote}
\Tot\sEd{n}~\simeq~\Omega\sp{n}\mapa(\oW{n},\,\bK)~.
\end{myeq}
\end{mysubsection}

\begin{mysubsection}{Identifying the $E\sb{2}$-terms}
\label{siett}
Now consider the homotopy spectral sequence of the tower of fibrations
\wref[,]{eqttower} with
$$
E\sb{1}\sp{n,i}~:=~\pi\sb{i}\Tot\sEd{n}~\Rightarrow~\pi\sb{i}\Tot\Xu~.
$$
\noindent From \wref[,]{eqtote} \wref[,]{eqmtot} and the fact that \w{\bK=\Kp{N}}
we see that this is:
\begin{myeq}\label{eqeto}
E\sb{1}\sp{n,i}=H\sp{N-i-n}(\oW{n};\Fp)~\Longrightarrow~H\sp{N-i}(\bY;\Fp)~.
\end{myeq}

In fact, from the description in \S \ref{crsar}.B we see that the $n$-th normalized
cochain object \w{N\sp{n}\Xu} of the cosimplicial space \w{\Xu\simeq\mapa(\Wd,\bK)}
is weakly equivalent to \w[,]{\mapa(\oW{n},\bK)} so in fact from the
\ww{E\sb{1}}-term on our spectral sequence is naturally isomorphic to the homotopy
spectral sequence for \w{\Xu} (see \cite[X, 6]{BKanH}).

Note that \w{\oW{n}} is, up to homotopy, a wedge of spheres realizing the free
\Pa \w[,]{\oV{n}} the $n$-th CW basis of the given free simplicial resolution
\w[.]{\Vd\to\Lambda:=\pis\bY} Moreover, since
\w{H\sp{k}(\oW{n};\Fp)\cong\Hom(H\sb{k}\oW{n},\,\Fp)} and we have a natural
identification of \w{H\sb{\ast}\oW{n}} with \w[,]{Q\oV{n}} where
\w{Q:\PAlg\to\gr\Abgp} is the indecomposables functor of \cite[\S 2.2.1]{BlaH},
we can write \w[,]{E\sb{1}\sp{n,i}\cong T\sp{N-i-n}\oV{n}} where the graded functor
\w{T:\PAlg\op\to\gr\VF} is defined for any \Pa $V$ by
\w[.]{T(V)=\Hom(QV,\Fp)}

Moreover, since \w{T\Vd} is a cosimplicial graded \ww{\Fp}-vector space,
we can calculate the cohomotopy groups \w{\pi\sp{n}T\Vd} using the
Moore cochain complex \w{C\sp{\ast}T\Vd} (see \S \ref{dmco} and compare
\cite[1.8]{BSenH}), and as in \cite[\S 2]{BJTurnHA} we have a natural isomorphism
\w[.]{C\sp{n}T\Vd\cong T\oV{n}} Therefore, as in \cite[\S 3]{BlaH}, we can identify
the \ww{E\sb{2}}-term of our spectral sequence as
\begin{myeq}\label{eqett}
E\sb{2}\sp{n,i}~\cong~[L\sb{n}T\sp{N-i-n}](\Lambda)~,
\end{myeq}
\noindent the $n$-th derived functor of $T$ (in degree \w[),]{N-i-n} applied to the
\Pa \w[.]{\Lambda:=\pis\bY}
\end{mysubsection}

\begin{remark}\label{rdepn}
Since any two sequential resolutions are connected by zigzags of
comparison maps (as we saw in Section \ref{ccsr}), and these
induce weak equivalences of simplicial resolutions (in the sense of Proposition
\ref{psimptal}), we see that the associated spectral sequences are all isomorphic from
the \ww{E\sb{2}}-term on.

Moreover, since we assume that \w{\Lambda:=\pis\bY} is of finite type, we can
\emph{choose} a CW resolution \w{\Vd\to\Lambda} with each \w{\oV{n}} (and thus
each \w[)]{V\sb{n}} of finite type \wh so each
\w[,]{E\sb{1}\sp{n,i}} and thus each \w[,]{E\sb{2}\sp{n,i}} will actually be a finite
dimensional \ww{\Fp}-vector space, and thus a finite set.  This guarantees that the
spectral sequence converges strongly for any choice of sequential
realization $\cW$ (see \cite[IX, 3]{BKanH}).

Finally, although the \ww{E\sb{1}}-term for this spectral sequence
as defined vanishes unless \w[,]{0\leq i\leq N-n} it is clear from the construction
that replacing \w{\bK=\Kp{N}} by \w{\Kp{N-1}} has the effect of applying loops
to every space in the tower \wref[,]{eqmtower} and thus in the tower \wref[,]{eqttower}
too \wh which results in simply re-indexing the spectral sequence by one in
the $i$-grading. Thus in order to calculate a differential on a particular
element $\alpha$ in \w[,]{E\sb{r}\sp{n,i}} we may simply choose $N$ large
enough so both the source and target are defined, and disregard the dependence
of our construction on $N$.
\end{remark}

We may summarize our results so far in the following:

\begin{prop}\label{prass}
For each sequential realization of a simply-connected finite type \w[,]{\bY\in\Topa}
\w{\cW=\lra{\W{n},\,\prn{n}, \Dsn{n}, \Fn{n}, \Tn{n}}\sb{n=0}\sp{\infty}}
and \w[,]{N\geq 1} there is a strongly convergent spectral sequence with
$$
E\sb{1}\sp{n,i}=H\sp{N-i-n}(\oW{n};\Fp)~\Longrightarrow~H\sp{N-i}(\bY;\Fp)~.
$$
\noindent The \ww{E\sb{2}}-term is independent of the choice of $\cW$, and if we
replace $N$ by \w[,]{N'=N+1} then \w{E\sb{2}\sp{n,i}} for the new spectral sequence
is isomorphic to \w{E\sb{2}\sp{n-1,i}} for the old whenever the latter is non-zero.
\end{prop}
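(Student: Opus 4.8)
The plan is to assemble Proposition~\ref{prass} by simply collecting the pieces already established in \S\ref{cfii}.A. First I would recall the tower of fibrations \wref{eqttower}, noting that its homotopy limit is \w{\mapa(\bY,\bK)} by \wref{eqmtot}, and that by \wref{eqtote} the fiber \w{\Tot\sEd{n}} is an \ww{\Fp}-GEM with homotopy \w{H\sp{N-i-n}(\oW{n};\Fp)}. This immediately gives the homotopy spectral sequence of the tower with \ww{E\sb{1}}-term as stated, converging to \w[.]{\pis\Tot\Xu\cong H\sp{N-\ast}(\bY;\Fp)} For strong convergence one invokes \cite[IX,~3]{BKanH}: as observed in Remark~\ref{rdepn}, since \w{\Lambda=\pis\bY} is of finite type we may choose the CW-resolution \w{\Vd} so that each \w{\oV{n}}, hence each \w[,]{\oW{n}} is of finite type, making every \w{E\sb{1}\sp{n,i}} a finite-dimensional \ww{\Fp}-vector space; strong convergence then follows from the standard criterion.

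Next I would address independence of the \ww{E\sb{2}}-term from the choice of $\cW$. By \wref{eqett} we identified \w{E\sb{2}\sp{n,i}\cong[L\sb{n}T\sp{N-i-n}](\Lambda)}, the derived functors of the purely algebraic functor \w{T:\PAlg\op\to\gr\VF} applied to \w[,]{\Lambda=\pis\bY} and this description manifestly does not mention $\cW$. Alternatively, and more in the spirit of the paper, one cites Theorem~\ref{tzigzag}: any two sequential realizations of $\bY$ are connected by a locally finite zigzag of comparison maps, each inducing a weak equivalence of simplicial resolutions in the sense of Proposition~\ref{psimptal}, and hence an isomorphism of the associated spectral sequences from \ww{E\sb{2}} onward. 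Either route suffices; I would state it via the derived-functor description since that is the cleaner assertion.

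For the last clause, comparing $N$ and \w[,]{N'=N+1} I would use the observation already made at the end of Remark~\ref{rdepn}: replacing \w{\bK=\Kp{N}} by \w{\Kp{N+1}} applies a single loop to every space in the tower \wref[,]{eqmtower} and hence to \wref[,]{eqttower} which has the effect of re-indexing the spectral sequence by one in the $i$-grading --- more precisely, shifting so that \w{E\sb{r}\sp{n,i}} for $N'$ corresponds to \w{E\sb{r}\sp{n-1,i}} for $N$ (the shift lands in the $n$-grading because \w{\oW{n}} contributes in cohomological degree \w{N-i-n}, so bumping $N$ by one while keeping $i$ fixed is the same as bumping $n$). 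I would phrase this as a delooping of the entire diagram, note that delooping is a weak equivalence on each fiber and each total space, hence induces an isomorphism of spectral sequences under the stated re-indexing, and observe that this holds in the range where the relevant terms are non-zero.

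The only genuine subtlety --- and the step I would treat most carefully --- is the bookkeeping of the re-indexing in the final clause: one must check that the loop functor shifts the filtration (the $n$-grading) rather than merely the internal ($i$) degree, and that the isomorphism \w{E\sb{2}\sp{n,i}(N')\cong E\sb{2}\sp{n-1,i}(N)} is compatible with the caveat ``whenever the latter is non-zero,'' since delooping a space can lose its bottom homotopy. Concretely, \w{\Omega\Kp{N+1}\simeq\Kp{N}}, so the fiber identification \wref{eqtote} goes through verbatim with $N$ replaced by \w[,]{N+1} and the claim is just the translation of indices in \wref[;]{eqeto} I would spell out that \w{H\sp{(N+1)-i-n}(\oW{n};\Fp)=H\sp{N-i-(n-1)}(\oW{n};\Fp)} exhibits the shift, and that this passes to \ww{E\sb{2}} by naturality. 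Everything else is a direct citation of results already in hand, so the proof is essentially an assembly with one careful index computation.
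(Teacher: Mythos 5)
The assembly of the first two clauses --- the tower of fibrations \wref[,]{eqttower} the fiber identification \wref[,]{eqtote} the $E\sb{1}$-term \wref[,]{eqeto} strong convergence via a finite-type choice of resolution as in Remark~\ref{rdepn}, and independence of $E\sb{2}$ via either the derived-functor description \wref{eqett} or the zigzag of Theorem~\ref{tzigzag} --- is exactly the paper's implicit argument, and that part of your proposal is fine.

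The final clause is where the proposal breaks down. You argue that ``bumping $N$ by one while keeping $i$ fixed is the same as bumping $n$,'' resting on the identity \w[;]{H\sp{(N+1)-i-n}(\oW{n};\Fp)=H\sp{N-i-(n-1)}(\oW{n};\Fp)} but that identity is pure numerology: both sides involve the \emph{same} space \w[,]{\oW{n}} whereas a shift of the filtration index $n$ would require replacing it by \w[.]{\oW{n-1}} Delooping every space in the tower leaves the filtration degree $n$ unchanged and shifts only the homotopy degree $i$ --- this is exactly what Remark~\ref{rdepn} says (``re-indexing the spectral sequence by one in the $i$-grading''), and it is what \wref{eqett} gives directly:
\w[,]{E\sb{2}\sp{n,i}(N+1)\cong[L\sb{n}T\sp{(N+1)-i-n}](\Lambda)=[L\sb{n}T\sp{N-(i-1)-n}](\Lambda)\cong E\sb{2}\sp{n,i-1}(N)} a shift in $i$. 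Your sentence ``one must check that the loop functor shifts the filtration (the $n$-grading) rather than merely the internal ($i$) degree'' has it exactly backwards, and in fact contradicts Remark~\ref{rdepn}. Note also that \w{E\sb{2}\sp{n-1,i}(N)\cong[L\sb{n-1}T\sp{(N+1)-i-n}](\Lambda)} is a \emph{different} derived functor, and nothing in \S\ref{cfii}.A supplies an isomorphism \w[.]{L\sb{n}\cong L\sb{n-1}} So the proposal has a genuine gap in the last clause: the re-indexing should be in the $i$-grading, and the argument you give does not establish the $n$-shift you (and the statement) assert.
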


\supsect{\protect{\ref{cfii}}.B}{The filtration index}

The \ww{E\sb{1}}-exact couple for our spectral sequence has the form:

\myvdiag[\label{eqeoneec}]{
\pi\sb{N-k+1}\Tot\X{n} \ar[d]\sp{(\prn{n})\sp{\ast}}
\ar[r]\sp(0.56){\partial\sb{n}} & H\sp{k-n-1}\oW{n+1} \ar[r]\sp(0.45){j\sp{n-1}} &
\pi\sb{N-k}\Tot\X{n+1} \ar[d]\sp{(\prn{n+1})\sp{\ast}}
\ar[r]\sp(0.55){\partial\sb{n+1}} & H\sp{k-n-1}\oW{n+2}\\
\pi\sb{N-k+1}\Tot\X{n-1} \ar[r]\sp(0.6){\partial\sb{n-1}} &
H\sp{k-n}\oW{n} \ar[r]\sp(0.43){j\sp{n-1}} &
\pi\sb{N-k}\Tot\X{n} \ar[r]\sp(0.55){\partial\sb{n}} & H\sp{k-n}\oW{n+1}.
}

\begin{defn}\label{dfilti}
Consider an element
\w[.]{\gamma\in H\sp{k}(\bY;\Fp)\cong\pi\sb{N-k}\mapa(\bY,\bK)\cong
\pi\sb{N-k}\Tot\Xu} Its \emph{filtration index} \w{I(\gamma)} is the least
\w{n\geq 0} such that the image of $\gamma$ in \w{\pi\sb{N-k}\Tot\X{n}} (under the
iterated fibrations \w{(\prn{n})\sp{\ast}} in \wref[)]{eqttower} is non-zero.
Convergence of the spectral sequence implies that \w{I(\gamma)=\infty} if and only
if \w[.]{\gamma=0}

From \wref{eqeoneec} we see that this image lifts (though not uniquely) to
\w[.]{\pi\sb{N-k}\Tot\sEd{n}\cong H\sp{k-n}(\oW{n};\Fp)} This means that
$\gamma$ is represented by an element in \w[,]{E\sb{\infty}\sp{n,N-k}} and thus in
\w[,]{E\sb{2}\sp{n,N-k}} which is independent of $\cW$.
\end{defn}

\begin{mysubsection}{Lifting nullhomotopies}
\label{slnullh}
We can represent \w{\gamma\in H\sp{k}(\bY;\Fp)} by a map \w[.]{g:\bY\to\bK:=\Kp{k}}
Precomposing with the augmentations \w{\bve{n}:\Wn{0}{n}\to\bY} yields
a particularly simple map \w[,]{\Gan{n}:\|\W{n}\|\to\bK} which we can think of
as a $0$-simplex in \w[.]{\Tot\X{n}=\mapa(\|\W{n}\|,\,\bK)}

Noting that \w[,]{\|\W{0}\|\simeq\oW{0}} we see that \w{\Gan{0}} is not
nullhomotopic \wh that is, \w{I(\gamma)=0} \wwh if and only if $g$ is
``visible to homotopy'' \wh that is, \w{g\sb{\#}:\pis\bY\to\pis\bK} is
non-trivial. Otherwise we can choose a nullhomotopy \w{\Gn{0}} for
\w[,]{\Gan{0}} and try to extend it inductively to a nullhomotopy \w{\Gn{n}}
for \w[,]{\Gan{n}} for the \emph{largest} possible \w[.]{n\geq 0}

We therefore assume that we have a nullhomotopy \w{\Gn{n-1}} for \w[.]{\Gan{n-1}}
To extend it, it is more convenient to work with the explicit description of
\w{\vWu{n}} in \S \ref{crsar}.B \wh in fact, in view of the coproduct decomposition
of \wref[,]{eqdopc} it suffices to extend \w{\Gn{n-1}} to \w[.]{\tWd{n}}
By Remark \ref{rdepn} (and Section \ref{ccsr}) we can use any
sequential realization we like, so we may assume for simplicity
that we use the standard sequential realization with
\w[.]{\oCsW{j}{n}=C\Sigma\sp{j}\oW{n}} for all \w{-1\leq j<n}

From the usual description of \w{\|\W{n}\|} in \cite[VII, 3]{GJarS}
(or of \w{\Tot\X{n}} in \cite[X, 3.2]{BKanH}), we think of \w{\Gan{n-1}}
as a map of simplicial spaces \w{\W{n-1}\to\bK\sp{\Del}} (which happens to
factor through the constant simplicial space \w[).]{\cd{\bY}} However,
\w{\Gn{n-1}:\W{n-1}\to\bK\sp{C\Del}} (viewed as a reduced path space)
  does not have this simple form (unless $g$ itself is nullhomotopic).
  An extension to \w{\tWd{n}} thus consists of a sequence of maps
  \w{H\sb{j}=H\sb{j}\bp{n}:C\Sigma\sp{n-j-1}\oW{n}\to\bK\sp{C\Deln{j}}}
fitting into a commutative diagram:

\mydiagram[\label{eqexnullh}]{
&&&&& \bK\sp{C\Deln{j}} \ar[lld]\sp{p}
\ar@<-3.5ex>[ddd]\sb{(C\eta\sp{0})\sp{\ast}}
\sp{(C\eta\sp{1})\sp{\ast}}\ar[ddd]\sp{\dotsc}
\ar@<2.5ex>[ddd]\sp{(C\eta\sp{j})\sp{\ast}} \\
\Wn{j}{n-1} \ar@/^{2.0pc}/[rrrrru]\sp(0.3){\Gnk{n}{j}}
\ar@<-2.5ex>[d]\sb{d\sb{0}}\sp{d\sb{1}} \ar[d]\sp{\dotsc} \ar@<2ex>[d]\sp{d\sb{j}} &
\amalg\hspace*{4mm} C\Sigma\sp{n-j-1}\oW{n} \ar@{.>}[rrrru]\sp{H\bp{n}\sb{j}}
\ar[ld]\sp(0.4){w\sb{j}\Fk{j}} \ar[d]^{\diff{j}{D}} \ar[rr]\sb{\tGank{n}{j}} &&
\bK\sp{\Deln{j}} \ar@<-2.5ex>[d]\sb{(\eta\sp{0})\sp{\ast}}
\sp{(\eta\sp{1})\sp{\ast}} \ar[d]\sp{\dotsc} \ar@<2.5ex>[d]\sp{(\eta\sp{j})\sp{\ast}} \\
\Wn{j-1}{n-1} \ar@/_{2.0pc}/[rrrrrd]\sb(0.3){\Gnk{n}{j-1}} &
\amalg\hspace*{4mm}C\Sigma\sp{n-j}\oW{n} \ar[rr]\sp{\tGank{n}{j-1}}
\ar@{.>}[rrrrd]\sb{H\bp{n}\sb{j-1}} && \bK\sp{\Deln{j-1}}\\
&&&&& \bK\sp{C\Deln{j-1}} \ar[llu]\sb{p}
}
\noindent for each \w[,]{1\leq j\leq n} where the path fibration
\w{p:\bK\sp{C\Deln{j}}\to\bK\sp{\Deln{j}}} is induced by the inclusion of
the cone base \w[.]{\Deln{j}\hra C\Deln{j}}

Note that the maps
\w{\ttGank{n}{j}:\tWn{n}{j}=\Wn{n}{j}\amalg C\Sigma\sp{n-j-1}\oW{n}\to\bK\sp{\Deln{j}}}
have the form \w[,]{\Gank{n-1}j\bot h\sb{j}} where
\w{h\sb{j}:C\Sigma\sp{n-j-1}\oW{n}\to\bK\sp{\Deln{j}}} factors through the
iterated face map \w[,]{D\sb{j}:C\Sigma\sp{n-j-1}\oW{n}\to\bY} and thus through
\w[,]{d\sb{j}} which vanishes for \w{j\geq 2} by the description in the proof of
Lemma \ref{lconef}.

Identifying \w{C\Deln{j}} with \w[,]{\Deln{j+1}} as in \S \ref{sfoldp}
the adjoint of each \w{H\sb{j}\bp{n}} defines a pointed map
\w[.]{\tHuk{n}{j}:\oW{n}\otimes\Deln{n+1}\to\bK}
If we denote the copy of \w{\Deln{n+1}} associated to \w{\tHuk{n}{j}} by
\w[,]{\Delnk{n+1}{j}} then the $0$-th facet of \w{\Delnk{n+1}{j}} corresponds
to the cone direction of \w{C\Deln{j}} in the adjunction to \w[,]{\bK\sp{C\Deln{j}}}
facets \w{1,\dotsc,j+1} correspond to facets \w{0,\dotsc,j} of \w{\Deln{j}}
 the next \w{n-j-1} facets correspond to the suspension directions of
 \w[,]{C\Sigma\sp{n-j-1}\oW{n}} and the \wwb{n+1}st facet corresponds to the
 cone direction of \w[.]{C\Sigma\sp{n-j-1}\oW{n}}

Commutativity of \wref{eqexnullh} translates into the requirement that
\begin{myeq}\label{eqadjfaces}
\tHuk{n}{j}\circ\eta\sp{i}~=~\begin{cases}
\widetilde{h\sb{j}} & \hsm\text{if}\ i=0 \\
\widetilde{\Gnk{n}{j-1}w\sb{j}\Fk{j}} & \hsm\text{if}\ i=1 \\
\widetilde{H\bp{n}\sb{j-1}\diff{j}{D}} & \hsm\text{if \ $i=2$ \ and} \ j\geq 1\\
\widetilde{H\bp{n}\sb{j}\diff{j+1}{D}} & \hsm\text{if \ $i=n+1$ \ and} \ j<n \\
0~&\hsm\text{otherwise}~.
\end{cases}
\end{myeq}
\noindent for each \w[.]{0\leq i\leq j\leq n}
\end{mysubsection}

\begin{defn}\label{dmfoldp}
For each \w[,]{n\geq 1} the $n$-th \emph{modified folding polytope} \w{\hP{n}}
is obtained from the disjoint union of \w{n} $n$-simplices
\w{\Delnk{n}{0},\dotsc,\Delnk{n}{n-1}} by identifying
\w{\partial\sb{n}\Delnk{n}{j-1}} with \w{\partial\sb{2}\Delnk{n}{j}}
for each \w{1\leq j\leq n} (see \cite[\S 4.11]{BBSenH}). Its \emph{boundary}
\w{\partial\hP{n}} is the image of all facets of the $n$-simplices
\w{\Delnk{n}{j}} \wb{1\leq j\leq n} not identified as above.

Note that a nullhomotopy  \w{\Gn{n-1}:\W{n-1}\to\bK\sp{C\Del}} for \w{\Gan{n-1}}
determines a pointed map \w{\Psi'\lo{F,G}:\oW{n}\otimes\partial\hP{n+1}\to\bK}
with \w[,]{\Psi'\lo{F,G}\rest{\partial_{1}\Delnk{n+1}{k}}=
\widetilde{\Gnk{n}{j-1}w\sb{j}\Fk{j}}}
and \w{\Psi'\lo{F,G}=\ast} on all other (non-identified) facets of
\w[.]{\Delnk{n+1}{k}} As in \S \ref{dvalstrand}, \w{\Psi'\lo{F,G}}
induces a unique map \w[.]{\Psi=\Psi\lo{F,G}:\oW{n}\wedge\partial\hP{n+1}\to\bK}
\end{defn}

We now have the following analog of Lemma \ref{lfoldpoly}:

\begin{lemma}\label{lmfoldp}
For each \w[,]{n\geq 1} the pair \w{(\hP{n},\partial\hP{n})} is
homeomorphic to \w[.]{(\bD\sp{n},\,\bS{n-1})}
\end{lemma}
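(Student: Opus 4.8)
The plan is to prove Lemma \ref{lmfoldp} by induction on $n$, closely paralleling the proof of Lemma \ref{lfoldpoly} for the ordinary folding polytope. The base case $n=1$ is immediate: $\hP{1}$ is built from a single $1$-simplex $\Delnk{1}{0}$ with no identifications, so $(\hP{1},\partial\hP{1})\cong(\Deln{1},\partial\Deln{1})\cong(\bD\sp{1},\bS{0})$. For the inductive step, I would isolate the $n$-simplex $\Delnk{n}{n-1}$, whose facet $\partial\sb{2}\Delnk{n}{n-1}$ is glued to $\partial\sb{n}\Delnk{n}{n-2}$ inside the union $\Delnk{n}{0}\cup\dotsc\cup\Delnk{n}{n-2}$ of the remaining simplices. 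The key combinatorial observation is that this latter union, with the stated identifications among $\Delnk{n}{j}$ for $0\le j\le n-2$, is itself a polytope of the same general shape — essentially a ``shifted copy'' of $\hP{n-1}$ crossed with an interval, or more precisely it is a topological $n$-ball whose boundary contains the face $\partial\sb{n}\Delnk{n}{n-2}$ as an $(n-1)$-disk. Gluing a single $n$-simplex $\Delnk{n}{n-1}$ to a ball along one face of its boundary (matched with one facet of the simplex, which is an $(n-1)$-disk) again yields an $n$-ball, and one checks that the boundary of the result is exactly $\partial\hP{n}$, i.e.\ the union of the unidentified facets, which is an $(n-1)$-sphere.

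The cleanest way to organize this is probably to exhibit an explicit homeomorphism, or at least an explicit PL/CW structure, realizing $\hP{n}$ as a ball. I would follow the approach indicated by the parenthetical reference to \cite[\S 4.11]{BBSenH} and the remark in \S \ref{sfoldp} that each $n$-simplex here is thought of as a quotient of an $n$-cube: under that identification, the $n$ simplices $\Delnk{n}{0},\dotsc,\Delnk{n}{n-1}$ and the gluings $\partial\sb{n}\Delnk{n}{j-1}\sim\partial\sb{2}\Delnk{n}{j}$ assemble into a subdivided $n$-cube (or a prism over $\hP{n-1}$), which is manifestly a ball, and the boundary facets assemble into its boundary sphere. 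Concretely I would set up a parametrization of $\hP{n}$ as $[0,n]\times\bD\sp{n-1}$ (or as the mapping cylinder of a suitable self-map), with the $j$-th simplex occupying the slab $[j,j+1]\times\bD\sp{n-1}$, the identified facets $\partial\sb{n}\Delnk{n}{j}=\partial\sb{2}\Delnk{n}{j+1}$ corresponding to the internal walls $\{j+1\}\times\bD\sp{n-1}$, and $\partial\hP{n}$ corresponding to $(\{0,n\}\times\bD\sp{n-1})\cup([0,n]\times\bS\sp{n-2})$, which is the boundary of $[0,n]\times\bD\sp{n-1}\cong\bD\sp{n}$.

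The main obstacle, as in the proof of Lemma \ref{lfoldpoly}, is purely bookkeeping: one must verify that under the identifications, the facets of the various $\Delnk{n}{j}$ do fit together into a genuine manifold with boundary (no bad self-intersections or non-manifold points along the codimension-two strata where several facets meet) and that the facets declared to lie in $\partial\hP{n}$ are precisely those not consumed by the gluing. Because each identification is between a single pair of facets — and in the modified polytope, unlike the ordinary one, we only identify the $n$-facet of $\Delnk{n}{j-1}$ with the $2$-facet of $\Delnk{n}{j}$ — the gluing pattern is a ``linear'' chain with no branching, so the local structure is always that of a half-space; I expect this to go through without difficulty once the prism/cube model is set up. I would also double-check the boundary count: $\Delnk{n}{j}$ has $n+1$ facets, two are used in gluings for interior $j$ (one on each side), so $n-1$ survive per interior simplex, and the two end simplices each contribute one extra surviving facet; summing and accounting for the shared $(n-2)$-dimensional corners gives a sphere, consistent with $\partial(\bD\sp{n})$.
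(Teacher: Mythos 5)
The paper states this lemma without a written proof, deferring to \cite[\S 4.11]{BBSenH}, so you are filling a real gap. Your core argument is correct and sufficient: a linear chain of $n$-simplices, each glued to the next along a single facet, is a PL $n$-ball, because attaching an $n$-simplex to an $n$-ball along an $(n-1)$-disk in its boundary sphere again yields a ball, and the identification scheme \ $\partial\sb{n}\Delnk{n}{j-1}\sim\partial\sb{2}\Delnk{n}{j}$ \ is a chain with no branching and no cycle, so nothing goes wrong along the codimension-two strata; the manifold boundary of the result is then exactly the union of the unglued facets. Note, however, that this is an induction on the \emph{number of $n$-simplices in the chain} at fixed $n$, not an induction on $n$: the complex $\Delnk{n}{0}\cup\cdots\cup\Delnk{n}{n-2}$ appearing in your inductive step is $n$-dimensional and has nothing to do with $\hP{n-1}$ (which is built from $(n-1)$-simplices), so phrases like ``shifted copy of $\hP{n-1}$ crossed with an interval'' should be dropped -- you already state the correct fact (it is a topological $n$-ball) in the same sentence, and that is what the induction actually produces.

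The prism and cube parametrizations you offer as the ``cleanest'' route would fail if carried out and should be abandoned. For $n\ge 3$ the two glued facets $\partial\sb{2}\Delnk{n}{j}$ and $\partial\sb{n}\Delnk{n}{j}$ are \emph{not disjoint}: they share the codimension-two face spanned by the vertices $\{0,1,3,\dots,n-1\}$. Consequently $\Delnk{n}{j}$ is not a product of an interval with a disk having these two facets at the ends, and the slab model $\hP{n}\cong[0,n]\times\bD\sp{n-1}$ with the $j$-th simplex filling $[j,j+1]\times\bD\sp{n-1}$ does not exist. Likewise the $n$ $n$-simplices do not form a simplicial subdivision of the $n$-cube when $n\ge3$ (a subdivision would force the volumes to add up to $1$, but they total $n/n!$), and a genuine prism $\hP{n-1}\times I$ would be cut into $n(n-1)$ $n$-simplices, not $n$. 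These pictures are the sort of thing that looks right for $n=2$ (two triangles gluing to a square) and silently breaks at $n=3$. The PL ball-gluing induction you sketch in your first and third paragraphs is the proof; run that and discard the product model.
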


Choosing \w{f=*} in
Propositions \ref{pfoldpolymap} and \ref{pvanish}, we have:

\begin{prop}\label{pmfoldpmap}
Given a sequential realization $\cW$ for $\bY$ as above, a
map \w{g:\bY\to\bK=\Kp{k}} extending by iterated face maps to
\w{\Gan{m}:\|\W{m}\|\to\bK\sp{\Del}} for each \w[,]{m\geq 0}
and a nullhomotopy \w{\Gn{n-1}:\W{n-1}\to\bK\sp{C\Del}} for \w[,]{\Gan{n-1}}
the map \w{\Psi\lo{F,G}:\oW{n}\wedge\partial\hP{n+1}\to\bK} of \S \ref{dmfoldp}
is null-homotopic if and only if \w{\Gn{n-1}} extends to a nullhomotopy
\w{\Gn{n}:\W{n}\to\bK\sp{C\Del}} for \w[.]{\Gan{n}}
\end{prop}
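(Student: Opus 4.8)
The plan is to recognize Proposition \ref{pmfoldpmap} as the ``$f=*$'' case of the \wwb{n-1}homotopy machinery of \S \ref{snhtpy}---which is itself the instance $\bX=\Path(\bZ)$ of the $n$-map machinery of \S \ref{dshho}, here further specialized to the reduced path object $\bK\sp{C\Del}$ and to the trivial homotopy class---and then to transcribe Propositions \ref{pfoldpolymap} and \ref{pvanish} into this notation. First I would make the data identification explicit: precomposing $g:\bY\to\bK$ with the augmentations yields the maps $\Gan{m}$, which factor through the constant simplicial space $\cd{\bY}$, so they play the role of the common underlying map ($\fu{0}=g$, $\fu{1}=*$); the reduced path object $\bK\sp{C\Del}$ plays the role of $\Path(\bZ)$; and the given $\Gn{n-1}:\W{n-1}\to\bK\sp{C\Del}$ is exactly an \wwb{n-1}homotopy for $\lra{\cW,g,*}$. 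Thus ``extending $\Gn{n-1}$ to a nullhomotopy $\Gn{n}$ for $\Gan{n}$'' \emph{is} ``extending an \wwb{n-1}homotopy to an $n$-homotopy''. As recalled in \S \ref{slnullh}, via the coproduct decomposition \wwref{eqdopc} this extension problem reduces to extending over $\tWd{n}$, and by the explicit description in the proof of Lemma \ref{lconef} it amounts precisely to choosing maps $H\bp{n}\sb{j}:\oCsW{n-j-1}{n}\to\bK\sp{C\Deln{j}}$ subject to the face identities \wwref{eqadjfaces} and the commuting diagram \wwref{eqexnullh}.

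Next I would invoke Proposition \ref{pfoldpolymap}: any compatible such collection of maps on the cone summands of $\tWd{n}$ assembles into a single map out of $\oW{n}$ tensored with a folding polytope, and conversely. The only real work is to check that, with $\fu{1}=*$, the relevant polytope is the \emph{modified} folding polytope $\hP{n+1}$ of Definition \ref{dmfoldp} rather than $\Pn{n}$. Two bookkeeping points enter. First, because we use the \emph{reduced} path object $\bK\sp{C\Del}$, whose $j$-th value involves the cone $C\Deln{j}\cong\Deln{j+1}$, each adjoint $\tHuk{n}{j}$ is a map out of an \wwb{n+1}simplex $\Delnk{n+1}{j}$ rather than an $n$-simplex---this is the index shift $n\rightsquigarrow n+1$. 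Second, the face relations \wwref{eqadjfaces} prescribe that facet $2$ of $\Delnk{n+1}{j}$ and the top facet of $\Delnk{n+1}{j-1}$ carry the same map and are to be identified, while the suspension facets and (since $\fu{1}=*$) the facet that would meet the collapsed end are all sent to the basepoint. These are exactly the identifications and collapses defining $\hP{n+1}$; Lemma \ref{lmfoldp} then gives $\partial\hP{n+1}\cong\bS{n}$.

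With the polytope identified, Proposition \ref{pvanish} completes the argument: the obstruction to extending the collection $\{H\bp{n}\sb{j}\}$---equivalently, to extending $\Gn{n-1}$ to $\Gn{n}$---is the nullhomotopy of the restriction of the assembled map to $\oW{n}\otimes\partial\hP{n+1}$. Reading off \wwref{eqadjfaces}, that restriction sends $\partial\sb{1}\Delnk{n+1}{j}$ by $\widetilde{\Gnk{n}{j-1}w\sb{j}\Fk{j}}$---a piece determined solely by $\cW$ (through $F=\Fn{n}$) and by the chosen nullhomotopy $G=\Gn{n-1}$, not by any attempted extension---the $0$-facets by the face-map terms $h\sb{0},h\sb{1}$ (the higher $h\sb{j}$ vanishing for $j\geq 2$, as in the proof of Lemma \ref{lconef}), and all remaining facets to the basepoint; this is precisely the map $\Psi'\lo{F,G}$ of Definition \ref{dmfoldp}. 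Finally, since the $0$-vertex $v\sb{0}$ is carried to the basepoint, the half-smash splitting of Lemma \ref{rhalfsmash} (used as in \S \ref{dvalstrand}) replaces $\Psi'\lo{F,G}$ by $\Psi\lo{F,G}:\oW{n}\wedge\partial\hP{n+1}\to\bK$ without changing whether it is nullhomotopic, which is the stated equivalence.

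I expect the main obstacle to be the combinatorial verification in the second paragraph: matching the facet-by-facet prescription of $\hP{n+1}$ in Definition \ref{dmfoldp} with the gluings and collapses forced by the simplicial identities \wwref{eqadjfaces} on the cone summands of $\tWd{n}$, while keeping the index shift $n\rightsquigarrow n+1$ and the ``which facets die'' bookkeeping straight (the modified analogue of Figure \ref{fig4}). Everything else is a faithful transcription of Propositions \ref{pfoldpolymap} and \ref{pvanish} to the $\A$-invisible setting.
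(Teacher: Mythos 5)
Your proposal is correct and matches the paper's approach: the paper's entire proof is the one-line remark ``Choosing $f=*$ in Propositions \ref{pfoldpolymap} and \ref{pvanish}'' (and \S \ref{sindhho} notes that the argument for general $\cW$ is ``as in the proof of Proposition \ref{pfoldpolymap}''), and you have simply unwound that reference, correctly accounting for the index shift $\Pn{n}\rightsquigarrow\hP{n+1}$ coming from $C\Deln{j}\cong\Deln{j+1}$ in the reduced path object and checking that the facet identifications forced by \eqref{eqadjfaces} are exactly those defining $\hP{n+1}$.
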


\begin{mysubsection}{Higher homotopy operations and the filtration index}
\label{sindhho}
Although the maps \w{\Psi\lo{F,G}} were formally defined only for \emph{standard}
sequential realizations (with \w{\oCsW{j}{n}=C\Sigma\sp{j}\oW{n}} for all $j$ and $n$),
one can show (as in the proof of Proposition \ref{pfoldpolymap}) that
Proposition \ref{pmfoldpmap} in fact holds for any sequential realization $\cW$.

This allows us to think of the cohomology class
\w{[\widetilde{\Psi\lo{F,G}}]\in H\sp{k-n}(\oW{n};\Fp)} as the \emph{value}
of a \emph{system of higher homotopy operations}
\w{\llrra{\gamma}=(\llrr{\gamma}{n})\sb{n=1}\sp{\infty}} associated to the class
\w[.]{\gamma\in H\sp{k}(\bY;\Fp)} This value is determined by the choice of
a nullhomotopy \w{\Gn{n-1}} in \w[,]{\Tot\X{n}=\mapa(\|\W{n}\|,\,\bK)} and serves
as the obstruction to lifting it to \w[.]{\Gn{n}}

Moreover, one can use Theorem \ref{tzigzag} to show, as in the proof of Lemma
\ref{lkey}, that if for \emph{some} sequential realization $\cW$,
\w{\Gan{n}:\|\W{n}\|\to\bK\sp{\Del}} has a nullhomotopy \w[,]{\Gn{n}} then this holds
for every sequential realization.

We may thus summarize the situation in the following
\end{mysubsection}

\begin{prop}\label{pindhho}
The filtration index \w{I(\gamma)} of a cohomology class
\w{\gamma\in H\sp{k}(\bY;\Fp)} is the largest $n$ for which \w{\llrr{\gamma}{n}}
does not vanish.
In particular, the system \w{\llrra{\gamma}} vanishes coherently if
and only if \w[.]{\gamma=0}
\end{prop}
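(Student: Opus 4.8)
The plan is to reduce everything to the single concrete question of whether the maps $\Gan{n}\colon\|\W{n}\|\to\bK$ of \S\ref{slnullh} are nullhomotopic, and then to read off both assertions. First I would reinterpret $I(\gamma)$. Fix $g\colon\bY\to\bK=\Kp{k}$ representing $\gamma$, and recall from \eqref{eqmtot} and the identification $\Tot\X{n}\cong\mapa(\|\W{n}\|,\bK)$ of \S\ref{cfii}.A that $\pi\sb{0}\Tot\Xu\cong H\sp{k}(\bY;\Fp)$ and $\pi\sb{0}\Tot\X{n}\cong H\sp{k}(\|\W{n}\|;\Fp)$, the iterated fibrations $(\prn{n})\sp{\ast}$ of \eqref{eqttower} being restriction along the compatible maps $\|\W{n}\|\to\|\W{n+1}\|\to\dotsb\to\|\Wd\|\simeq\bY$. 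Hence the image of $\gamma$ in $\pi\sb{0}\Tot\X{n}$ is the class of $\Gan{n}=g\circ(\|\W{n}\|\to\bY)$, and by Definition \ref{dfilti}, $I(\gamma)$ is the least $n\ge 0$ with $\Gan{n}\not\simeq\ast$. Since the augmentations are compatible with the tower \eqref{eqtower}, $\Gan{m}$ factors through $\Gan{n}$ for $m<n$, so $\Gan{n}\simeq\ast$ holds for all $n<I(\gamma)$ and fails for all $n\ge I(\gamma)$, with $I(\gamma)=\infty\iff\gamma=0$ by convergence.

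Next I would connect this to the operations. By Proposition \ref{pmfoldpmap}, applied with $f=\ast$, the map $\Psi$ of \S\ref{dmfoldp} attached to a nullhomotopy $\Gn{n-1}$ of $\Gan{n-1}$ — whose class $[\widetilde{\Psi}]\in H\sp{k-n}(\oW{n};\Fp)$ is the value of $\llrr{\gamma}{n}$ on this choice of data — is nullhomotopic if and only if $\Gn{n-1}$ extends to a nullhomotopy $\Gn{n}$ of $\Gan{n}$; and, as observed in \S\ref{sindhho}, this holds for every sequential realization $\cW$, by Theorem \ref{tzigzag} argued exactly as in the proof of Key Lemma \ref{lkey}. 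Consequently $\llrr{\gamma}{n}$ is defined (there is an $(n-1)$-homotopy, i.e.\ a coherent family of nullhomotopies through level $n-1$) precisely when $\Gan{n-1}\simeq\ast$, in which case one is obtained by restricting any nullhomotopy of $\Gan{n-1}$. When defined, $\llrr{\gamma}{n}$ \emph{vanishes} — $0$ is among its values — iff some such $\Gn{n-1}$ extends to level $n$: the point, which I would state carefully, is that if $\Gan{n}\not\simeq\ast$ then by the converse half of Proposition \ref{pmfoldpmap} \emph{no} choice of $\Gn{n-1}$ extends, so every value is nonzero, whereas if $\Gan{n}\simeq\ast$ the restriction of a nullhomotopy of $\Gan{n}$ is an extendable $\Gn{n-1}$. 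Thus $\llrr{\gamma}{n}$ is defined and vanishes iff $\Gan{n}\simeq\ast$, and is defined but non-vanishing iff $\Gan{n-1}\simeq\ast$ while $\Gan{n}\not\simeq\ast$ — that is, iff $n-1<I(\gamma)\le n$, i.e.\ iff $n=I(\gamma)$.

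Combining the two steps, $\llrr{\gamma}{n}$ is defined and fails to vanish at exactly one level, namely $n=I(\gamma)$ (for $n<I(\gamma)$ it is defined and vanishes; for $n>I(\gamma)$ it is undefined, as $\Gan{n-1}\not\simeq\ast$), so $I(\gamma)$ is the largest level at which $\llrr{\gamma}{n}$ does not vanish. For the final clause, $\llrra{\gamma}$ vanishes coherently exactly when there is a difference strand $(\Gn{n})\sb{n\ge 0}$ of compatible nullhomotopies; if $\gamma=0$ such a strand is obtained by restricting a nullhomotopy of $g$ transported to $\|\Wd\|\simeq\bY$, and conversely, since realization commutes with the sequential colimit $\colim\sb{n}\|\W{n}\|=\|\Wd\|$ and $\|\Wd\|\to\bY$ is an $\bA$-equivalence, a strand glues to a nullhomotopy of $g$, forcing $\gamma=0$ (equivalently $I(\gamma)=\infty$). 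The one genuine obstacle, handled by the two-directional Proposition \ref{pmfoldpmap}, is controlling the indeterminacy in the second step: showing that varying the lower-level nullhomotopies cannot make $\llrr{\gamma}{n}$ vanish once $\Gan{n}$ is essential; the remaining arguments are routine bookkeeping with \eqref{eqmtower}, \eqref{eqttower}, \eqref{eqmtot}, and Remark \ref{rdepn}.
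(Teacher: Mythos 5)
Your proof is correct and assembles exactly the ingredients the paper intends: the reinterpretation of $I(\gamma)$ via the maps $\Gan{n}$, Proposition~\ref{pmfoldpmap} as the extension criterion, Theorem~\ref{tzigzag} for independence of $\cW$, and convergence for the final clause. The paper does not write out a separate proof of Proposition~\ref{pindhho} — it states it as a summary of \S\ref{slnullh} and \S\ref{sindhho} — so you are really supplying the details left implicit there; the one step that genuinely needs care, and which you handle correctly, is the indeterminacy argument: since a nullhomotopy $\Gn{n-1}$ can extend only if a nullhomotopy $\Gn{n}$ of $\Gan{n}$ exists, essentiality of $\Gan{n}$ forces \emph{every} value of $\llrr{\gamma}{n}$ to be nonzero, while nullhomotopy of $\Gan{n}$ yields an extendable $\Gn{n-1}$ by restriction, so the operation is defined and non-vanishing at exactly $n=I(\gamma)$.
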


\begin{example}\label{eqsecopn}
  Let \w{\bY=P\sp{n+1}(\bS{n}\cup\sb{2}\be{n+1})} (the \wwb{n+1}Postnikov section
of a Moore space), with \w{f:\bY\to\bZ=\KZ{n+1}}
induced by the pinch map, as in \S \ref{sexaim}, and let
\w{g:\bY\to\KP{\Ft}{n+1}} be the composite \w[,]{\rho\circ f} with \
\w{\rho:\KZ{n+1}\to\KP{\Ft}{n+1}} the reduction mod $2$ map. Thus
\w[,]{\gamma:=[g]=\Sq{1}\circ p} with \w{p:\bY\to\KP{\Ft}{n}} the Postnikov
fibration.

As in \wref[,]{eqchcxr} \w{g\sb{\#}:\pis\bY\to\pis\KP{\Ft}{n+1}} is trivial,
and since the Toda bracket \w{\lra{\nabla,\inc,2}} of \wref{eqtoda} is nontrivial,
the same is true for \w{\lra{g,\inc,2}} (see \cite[I]{TodC}). Thus
\w[,]{\llrr{\gamma}{1}\neq 0} so $\gamma$ has filtration index $1$.
\end{example}

\appendix
%
%
\sect{Comparison Maps}\label{Appa}

In this appendix we state and prove two facts about the comparison maps
of Section \ref{ccsr}  needed in the paper;  we deferred the proofs until now
because they are somewhat technical.

\begin{defn}\label{fibrational}
A sequential realization $\cW$ will be called \emph{fibrational}
if for each \w[,]{n\geq 1} the chain map \w{\Fn{n}:\Dsn{n}\epic\cMs\W{n-1}} is a
(levelwise) fibration in the (projective) model category of chain complexes
\w{\ChC\sp{\leq n-1}} of \S \ref{smodchaincx}.
\end{defn}

We now have a mild extension of Theorem \ref{tres}.

\begin{lemma}\label{tresext}
For \w{\bA\in\C} as in \S \ref{setmc},
any CW-resolution \w{\Vd} of a realizable \PAal \w{\Lambda=\piA\bY}
has a fibrational sequential realization
$$
\cW=\lra{\W{n},\,\prn{n}, \Dsn{n}, \Fn{n}, \Tn{n}}\sb{n=0}\sp{\infty}.
$$
\end{lemma}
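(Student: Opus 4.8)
The plan is to modify the proof of Theorem \ref{tres} so that each attaching-map realization $\Fn{n}$ is a levelwise fibration, exploiting the freedom we already built into the construction of strongly cofibrant chain complexes in \S \ref{crsar}.A. Recall that in \S \ref{smakesc} we only required a (strict) cofibration sequence $\osB{k}\hookrightarrow\oCsB{k}\twoheadrightarrow\osB{k+1}$ with each $\oCsB{k}$ contractible — not the \emph{standard} cone. The key observation is that we may instead run the construction of $\bDs$ \emph{after} choosing $\Fn{n}$ on the cofibrant replacement of $\oB$, and at each descending stage replace the candidate cone by a mapping path space so that the resulting differential-plus-$\Fn{n}$ assembles into a fibration. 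Concretely, I would interleave the factorization of $\Fn{n}$ into an acyclic cofibration followed by a fibration with the inductive construction of $\bDs$, just as one does when building a Reedy fibrant replacement.

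First I would set up the induction exactly as in Proposition \ref{pobst}, starting from $\Fn{n}\sb{n-1}:\oB\to\cM{n-1}\W{n-1}$ realizing $\alpha$, but now, before passing to the next (lower) coskeletal stage, I would factor the relevant structure map through a fibration. At the $k$-th stage, instead of simply choosing a nullhomotopy $\Fk{k}:\bD\sb{k}\to\cM{k}\Xd$ landing in a standard cone $\oCsB{n-k-2}$, I would take the map $\bD\sb{k+1}\amalg\cM{k+1}\Xd$-type data and apply the functorial factorization of \S \ref{sass} in the projective model category $\ChC\sp{\leq k}$ to replace the partially-built chain complex by one mapping by a fibration to $\csk{k}\cMs\Xd$. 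The contractibility of each $\bD\sb{j}$ for $j\ne n-1$ is preserved because the spaces being factored are themselves contractible (being cones), and the cofibration sequences \eqref{eqconesus} persist since a fibrant replacement of a contractible object over a contractible base stays contractible. This yields a new strongly cofibrant $\bDs$ together with $\Fn{n}:\bDs\twoheadrightarrow\cMs\W{n-1}$ a levelwise fibration, and then \S \ref{crsar}.C produces $\W{n}=\W{n-1}[\Fn{n}]$ as before. I would also need to supply the acyclic cofibration $\Tn{n}:\Dsn{n}(\oW{n})\to\Dsn{n}$ of condition \eqref{approxseqreal}: since $\Dsn{n}(\oW{n})$ is the \emph{standard} strongly cofibrant replacement and $\bDs=\Dsn{n}$ is \emph{some} strongly cofibrant replacement of the same object $\oW{n}\oS{n-1}$, the map $\Tn{n}$ exists by the left-lifting property (both being cofibrant replacements of a weak equivalence class), exactly as in the proof of Theorem \ref{tres}.

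The main obstacle I expect is verifying that the factorization can be carried out \emph{compatibly down the coskeletal tower} — that is, that factoring at stage $k$ does not destroy the strong cofibrancy established at stages $>k$, nor the precise form of the differentials \eqref{eqdif} needed for the pushout description in Lemma \ref{lconef} and for the folding-polytope bookkeeping of \S \ref{sfoldp}. The cleanest way around this is to observe that the construction of a Reedy fibrant replacement in $\ChC\sp{\leq n-1}$ with the \emph{injective} model structure (whose fibrant objects are precisely the strict fibration sequences \eqref{eqstfibseq}) is dual to the construction of a strongly cofibrant replacement; so rather than factoring $\Fn{n}$ by hand, I would instead note that since $\cMs\W{n-1}$ is injectively fibrant (by Lemma \ref{lfibwe}, as $\W{n-1}$ is Reedy fibrant) one may, starting from \emph{any} map to it out of a strongly cofibrant source, factor it functorially in $\ChC\sp{\leq n-1}$ with the \emph{projective} structure as an acyclic cofibration followed by a fibration; the source of the fibration is again strongly cofibrant (acyclic cofibrations preserve this, since cofibrant objects are closed under them) and still contractible in all degrees $\ne n-1$ (the acyclic cofibration is a levelwise weak equivalence). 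Taking this source as the new $\Dsn{n}$ gives the required fibrational sequential realization, and all the structural identities of \S \ref{crsar}.A are inherited because a strongly cofibrant chain complex contractible outside degree $n-1$ automatically has the cofibration-sequence structure \eqref{eqconesus}. This reduces the whole lemma to the standard fact that functorial factorizations exist in $\ChC\sp{\leq n-1}$ (which holds by \S \ref{sass} and \cite[\S 11.6]{PHirM}), so the proof is genuinely a short addendum to Theorem \ref{tres}.
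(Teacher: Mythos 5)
Your final paragraph is exactly the paper's proof: factor $F$ in the projective model structure on $\ChC\sp{\leq n-1}$ as an acyclic cofibration $\Tn{n}:\Dsn{n}(\oW{n})\to\Dsn{n}$ followed by a fibration $\Fn{n}:\Dsn{n}\epic\cMs\W{n-1}$, and note that the intermediate object is still strongly cofibrant and levelwise contractible away from degree $n-1$. The interleaved coskeletal machinery of your first two paragraphs (and in particular the claim there that $\Tn{n}$ ``exists by the left-lifting property,'' which would not automatically produce an \emph{acyclic cofibration}) is superfluous once you reach this cleaner factorization argument, which is what the paper does.
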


\begin{proof}
  To make  the sequential realization $\cW$ of Theorem \ref{tres} fibrational,
  we factor $F$ as an acyclic cofibration \w{\Tn{n}:\Dsn{n}(\oW{n})\to\Dsn{n}}
  followed by a fibration \w{\Fn{n}:\Dsn{n}\epic\cMs\W{n-1}}
in the model category \w[.]{\ChC\sp{\leq n-1}}

\end{proof}

Note that the vertical acyclic cofibrations of \wref{eqmodconesusp} are obtained
from the map \w[.]{\Tn{n}}

\begin{prop}\label{pralgcomp}
For any algebraic comparison map \w{\Psi:\Vd\to\Vdp} for $\bY$ and sequential
realization $\cW$ of \w[,]{\Vd} there is a fibrational sequential
realization \w{\ccWp} of \w{\Vdp} with a comparison map \w{\Phi:\cW\to\,\ccWp}
over $\Psi$.
\end{prop}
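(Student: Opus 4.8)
The plan is to construct the fibrational sequential realization $\ccWp$ of $\Vdp$ together with the comparison map $\Phi:\cW\to\,\ccWp$ by a simultaneous induction on the simplicial dimension $n$, building the components of $\Phi=\lra{\en{n},\rn{n},\hon{}{n},\hor{}{n},\oon{}{n},\oor{}{n}}$ as in Definition \ref{dcompar} alongside the data $\lra{\Wp{n},\prn{n}',\Dsnp{n},\Fn{n}',\Tn{n}'}$ of $\ccWp$. First I would fix, for each $n$, a fibrant cofibrant $\oWp{n}\in\A$ realizing $\oVp{n}$ together with a split cofibration $\oon{}{n}:\oW{n}\to\oWp{n}$ realizing the algebraic inclusion $\ophl{n}:\oV{n}\hra\oVp{n}$ with retraction $\oor{}{n}:\oWp{n}\epic\oW{n}$ realizing $\orh{n}$; since $\oV{n}$ and $\oVp{n}$ are free and $\oVp{n}\cong\oV{n}\amalg\oV{n}''$ by Assumption \ref{afree}\eqref{freesplit}, one can take $\oWp{n}=\oW{n}\vee\oW{n}''$ for an appropriate realization $\oW{n}''$ of $\oV{n}''$. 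The induced map $\oon{\ast}{n}:\Dsn{n}(\oW{n})\to\Dsnp{n}(\oWp{n})$ of standard strongly cofibrant replacements (with retraction $\oor{\ast}{n}$) is then forced by functoriality of the construction of \S \ref{crsar}.A.

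The inductive step proceeds as follows. Assume $\en{n-1}:\W{n-1}\to\Wp{n-1}$ (with retraction $\rn{n-1}$) has been constructed, realizing $\varphi$ and $\rho$ through dimension $n-1$, with $\Wp{n-1}$ Reedy fibrant and cofibrant. The algebraic attaching map $\odz{V'_{n}}:\oVp{n}\to V_{n-1}'$ for $\Vdp$ is realized, via Proposition \ref{pobst} applied to $\Xd=\Wp{n-1}$, by a chain map into $\cMs\Wp{n-1}$; but I must choose it \emph{compatibly} with the already-realized $\Fn{n}:\Dsn{n}\to\cMs\W{n-1}$ for $\cW$. The key point is that, because $\oon{}{n}$ realizes the coproduct inclusion $\ophl{n}$ and $\cMs\en{n-1}$ realizes $\varphi$ through dimension $n-1$, the composite $\cMs(\en{n-1})\circ\Fn{n}$ realizes the restriction of $\odz{V'_{n}}$ to the summand $\oV{n}$; I then extend this over the complementary summand $\oV{n}''$ using the acyclicity of $\Vdp$ exactly as in the proof of Proposition \ref{pobst}, obtaining a chain map $G:\Dsnp{n}(\oWp{n})\to\cMs\Wp{n-1}$ realizing $\odz{V'_{n}}$ whose restriction along $\oon{\ast}{n}$ equals $\cMs(\en{n-1})\circ\Fn{n}\circ\Tn{n}$. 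To make the realization fibrational, I factor $G$ as an acyclic cofibration $\Tn{n}':\Dsnp{n}(\oWp{n})\hra\Dsnp{n}$ followed by a fibration $\Fn{n}':\Dsnp{n}\epic\cMs\Wp{n-1}$, as in Lemma \ref{tresext}; using the functorial factorizations in $\ChC\sp{\leq n-1}$, the splitting $\oor{\ast}{n}$ and the map $\Fn{n}$ for $\cW$, one obtains the split cofibration $\hon{}{n}:\Dsn{n}\to\Dsnp{n}$ with retraction $\hor{}{n}$ making \wref{eqmapchaincpx} commute (in both vertical directions). Then setting $\Wp{n}=\Wp{n-1}[\Fn{n}']$ via the construction of \S \ref{crsar}.C, the naturality statement of Lemma \ref{lextnat}, applied to the commuting square with vertical maps $\hon{}{n}$, $\cMs\en{n-1}$ and horizontal maps $\Fn{n},\Fn{n}'$, produces a map $\W{n}=\W{n-1}[\Fn{n}]\to\Wp{n-1}[\Fn{n}']=\Wp{n}$; choosing the functorial factorizations consistently, one gets a split map $\en{n}:\W{n}\to\Wp{n}$ with retraction $\rn{n}$ coming from the parallel application of Lemma \ref{lextnat} to the retracting square (with vertical maps $\hor{}{n}$, $\cMs\rn{n-1}$). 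Finally one checks that $\en{n}$ and $\rn{n}$ realize $\varphi$, $\rho$ through dimension $n$, using condition (i) of Definition \ref{dsrar} for both $\cW$ and $\ccWp$ and the fact that $\piA$ applied to $\en{n}$ recovers $\varphi$ by construction.

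The main obstacle I expect is the \emph{compatible} choice in the inductive step: Proposition \ref{pobst} only asserts the \emph{existence} of a realization of the algebraic attaching map, with genuine indeterminacy at each stage of its internal downward induction on coskeleta (governed by the classes $[\eta]$ that adjust $F_k$ by concatenation). I must rerun that downward induction for $\Vdp$ while \emph{prescribing} the values on the sub-coproduct corresponding to $\oV{n}$ to agree with the already-chosen $\Fn{n}$ for $\cW$ (transported via $\cMs\en{n-1}$), and only exercising freedom on the complementary summand $\oV{n}''$. This is possible precisely because the coproduct decomposition $\oVp{n}\cong\oV{n}\amalg\oV{n}''$ and the corresponding decomposition of $\Dsnp{n}(\oWp{n})$ respect the differential structure, so the obstruction-class computation splits as a direct sum and the $\oV{n}$-summand contribution is fixed; hence one needs the full strength of Assumption \ref{afree}, especially \eqref{freesplit} and \eqref{splitbit}, to guarantee the relevant groups $[\Sigma\osWp{}{n},\cM{k}\Wp{n-1}]$ split compatibly. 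Verifying that the resulting $\hon{k}{n}:\osW{k}{n}\to\osWp{k}{n}$ are genuine split cofibrations in $\C$ (so that the diagrams \wref{eqmapmodconesusp} and \wref{eqcompf} of Remark \ref{rtechdesccm} commute in both directions) is then a bookkeeping matter, following from the functoriality of the construction in \S \ref{crsar}.A applied to $\oon{}{n}$ and $\oor{}{n}$. The remaining properties — $\Wp{n}$ Reedy fibrant and cofibrant, $\prn{n}'$ a Reedy cofibration — come for free from \S \ref{crsar}.C applied to $\Fn{n}'$, and fibrationality is built in by the factorization above.
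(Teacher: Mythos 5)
Your overall plan — induct on $n$, build $\Dsnp{n}$ and $\Fn{n}'$ compatibly with the given $\Fn{n}$ and the splitting, then invoke Lemma \ref{lextnat} — matches the paper's. But the decisive technical step is not adequately handled: diagram \wref{eqmapchaincpx} must commute in \emph{both} vertical directions, i.e.\ besides $\ppp\Fn{n}\circ\hon{}{n}=\cMs\en{n-1}\circ\Fn{n}$ you also need the retraction identity $\cMs\rn{n-1}\circ\ppp\Fn{n}=\Fn{n}\circ\hor{}{n}$ to hold strictly. Your re-run of Proposition \ref{pobst} with the $\oV{n}$-summand of $G$ prescribed to be $\cMs\en{n-1}\circ\Fn{n}\circ\Tn{n}$ fixes the downward direction, but on the complementary summand your $T$ (realizing $\tau=\ppp\odz{n}\rest{\oU{n}}$) must satisfy $\cMs\rn{n-1}\circ T=\Fn{n}\circ\Tn{n}\circ R$ \emph{on the nose}, where $R$ is what $\oor{\ast}{n}$ does on the complement. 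The two sides agree only up to homotopy a priori (they both realize $\odz{n}\circ\orh{n}\rest{\oU{n}}$, using $C\sb{n-1}\rho\circ\ppp\odz{n}=\odz{n}\circ\orh{n}$), and nothing in the obstruction-theory navigation of Proposition \ref{pobst}, nor "functorial factorizations and the splitting $\oor{\ast}{n}$", forces strict equality. This is precisely where one has to work.

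The paper's device is to form the pullback $\bPs$ of $\Fn{n}$ along $\cMs\rn{n-1}$, take the section $e:\Dsn{n}\to\bPs$ induced by $\cMs\en{n-1}$, and — crucially — \emph{modify $R$ up to homotopy} using the fact that $\Fn{n}$ is a levelwise fibration (here fibrationality of $\cW$ is essential, and this is why Lemma \ref{tresext} precedes the proof), so that $\Fn{n}\circ R=\cMs\rn{n-1}\circ T$ strictly; this lets $R,T$ induce $S:\bGs\to\bPs$. One then factors $S\bot e:\bGs\amalg\bDs\to\bPs$ as an acyclic cofibration $j$ into a new strongly cofibrant $\bEs$ followed by a fibration $Q$, and sets $\ppp\Fn{n}:=q\circ Q$, $\hon{}{n}:=j\circ\inc\sb{\bDs}$, $\hor{}{n}:=p\circ Q$. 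Both directions of \wref{eqmapchaincpx} then commute automatically from the pullback square and $p\circ e=\Id$. Also note the paper takes $\Dsnp{n}$ to be built from $\bGs\amalg\bDs$ (with $\bDs=\Dsn{n}$ the one already chosen for $\cW$, not the standard replacement), rather than from the standard $\Dsnp{n}(\oWp{n})$ as you do — this is what lets $\hon{}{n}$ be a genuine split cofibration from $\cW$'s chosen $\Dsn{n}$. You should incorporate the pullback-and-lift step and the cofibration-of-coproduct step; without them the retraction half of \wref{eqmapchaincpx} is unjustified.
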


\begin{proof}
We construct \w[,]{\ccWp} with the cofibrations \w{\en{n}:\W{n}\hra\Wp{n}}
and retractions \w{\rn{n}:\Wp{n}\to\W{n}} by induction on \w[:]{n\geq 0}

Since \w{\oV{n}} is a coproduct summand in \w[,]{\oVp{n}=\oU{n}\amalg\oV{n}}
say, if we realize \w{\oV{n}} by \w{\oW{n}} and \w{\oU{n}} by \w{\oX{n}}
then \w{\oVp{n}} is realized by \w[.]{\hWp{n}:=\oX{n}\amalg\oW{n}}
By Definition \ref{dsrar}, the $n$-th stage of $\cW$ is determined
by the choice of strongly cofibrant replacement \w{\bDs}
of \w[,]{\oW{n}\oS{n-1}} equipped with a levelwise fibration
\w{F:\bDs\to\cMs\W{n-1}} realizing the given attaching map
\w[.]{\odz{n}:\oV{n}\to C\sb{n-1}\Vd}  If \w{\bGs} is similarly a strongly cofibrant
replacement for \w[,]{\oX{n}\oS{n-1}} note that the attaching map
\w{\ppp\odz{n}:\oVp{n}\to C\sb{n-1}\Vdp} has the form \w[,]{\odz{n}\bot\tau}
and we may realize \w{\tau:\oU{n}\to C\sb{n-1}\Vdp} by
\w[,]{T:\bGs\to\cMs\Wp{n-1}} with \w{\ppp\oar{n}:\oX{n}\to\oW{n}} inducing
\w[.]{R:\bGs\to\bDs}  We then realize \w{\orh{n}:\oVp{} \rightarrow {\oV{n}}}
by \w[.]{\ppp\oar{n}\bot\Id\sb{\oW{n}}:\oX{n}\amalg\oW{n}\to\oW{n}}

Following \S \ref{crsar}.A-B, we now consider the following diagram in
the projective model category of $n$-truncated chain complexes over $\C$,
in which \w{\bPs} is the pullback of the lower right square.
\mydiagram[\label{eqpbatt}]{
\bGs \ar@{.>}[rr]\sb{S} \ar@/^2.5pc/[rrrr]\sp{R}
\ar@/_1.0pc/[rrd]\sb{T} &&
\ar @{} [drr]|(0.21){\framebox{\scriptsize{PB}}}
\bPs \ar[rr]\sb{p} \ar@{->>}[d]\sb{q} &&
\bDs \ar@{->>}[d]\sp{F} \ar@/_0.9pc/[ll]\sb{e} \\
&& \cMs\Wp{n-1} \ar[rr]\sp{\cMs\rn{n-1}} &&
\cMs\W{n-1} \ar@/^1.1pc/[ll]\sp{\cMs\en{n-1}}
}
\noindent and the section $e$ for $p$ is induced by the section
\w{\cMs\en{n-1}} for \w[.]{\cMs\rn{n-1}}

Since by Definition \ref{dacomp}
\w[,]{C\sb{n-1}\rho\circ\ppp\odz{n}=\odz{n}\circ\orh{n}} also
\w[,]{C\sb{n-1}\rho\circ\tau=\odz{n}\circ\orh{n}\rest{\oU{n}}}
so the outer square in \wref{eqpbatt} commutes up to homotopy.  Since
$F$ is a fibration, we may change $R$ up to homotopy to make it commute on the
nose by \cite[Lemma 5.11]{BJTurnR}. The maps $R$ and $T$ then induce $S$ as
indicated. This allows us to extend \wref{eqpbatt} to the solid commuting diagram
\mydiagram[\label{eqpbat}]{
\bGs\,\amalg\,\bDs \ar[rr]\sb{S\bot e} \ar@{^{(}.>}[d]_{j}^{\simeq} &&
\bPs \ar[rr]\sb{p} \ar@{->>}[d]\sb{q} &&
\bDs \ar@{->>}[d]\sp{F} \ar@/_0.9pc/[ll]\sb{e} \ar@/_2.5pc/[llll]\sb{\inc} \\
\bEs \ar@{.>>}[urr]_{Q} && \cMs\Wp{n-1} \ar[rr]\sp{\cMs\rn{n-1}} &&
\cMs\W{n-1} \ar@/^1.1pc/[ll]\sp{\cMs\en{n-1}}
}
\noindent with \w{\bGs\amalg\bDs} strongly cofibrant.

If we now factor \w{S\bot e} as an acyclic cofibration
\w{j:\bGs\,\amalg\,\bDs\hra\bEs} followed by a fibration
\w{Q:\bEs\epic\bPs} and set \w{G:\bEs\to\cMs\Wp{n-1}} equal to
\w[,]{q\circ Q} \w{\overline{e}:\bDs\to\bEs} equal to \w[,]{j\circ\inc}
and \w{\overline{r}:\bEs\to\bDs} equal to \w[,]{p\circ Q}
we see that \w{\bEs} is strongly cofibrant (since $j$ is a cofibration),
$F$ is a levelwise fibration, and they fit into a diagram
\mydiagram[\label{eqpba}]{
\bEs \ar[rr]\sp{\overline{r}} \ar@{->>}[d]\sp{G} &&
\bDs \ar@{->>}[d]\sp{F} \ar@/_1.5pc/[ll]\sb{\overline{e}} \\
\cMs\Wp{n-1} \ar[rr]\sp{\cMs\rn{n-1}} &&
\cMs\W{n-1} \ar@/^1.1pc/[ll]\sp{\cMs\en{n-1}}
}
\noindent in which the squares commute in both horizontal directions, and
\w[.]{\overline{r}\circ\overline{e}=\Id}

By Lemma \ref{lextnat} and the fact that the map induced by the identity clearly is
another identity, we obtain an $n$-stage comparison map
\w{\Phi:\cW\to\cWp} extending the given \wwb{n-1}stage comparison map.
\end{proof}

We now prove Theorem \ref{tzigzag}, which we re-state as follows:

\begin{thm}\label{tzigzagb}
Any two sequential realizations \w{\cuW{0}} and \w{\cuW{1}} of two CW resolutions
\w{\Vud{0}} and \w[,]{\Vud{1}} for two $\bA$-equivalent spaces \w{\Yi{0}} and \w[,]{\Yi{1}}
respectively, are weakly equivalent under a (locally finite) zigzag of comparison maps,
in the sense of Definition \ref{dwesr}.
\end{thm}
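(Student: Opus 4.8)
The argument proceeds in three stages, reducing the general statement to successively more rigid situations. First I would reduce to the case $\Yi{0}=\Yi{1}=\bY$ with a \emph{single} space. Indeed, the spaces $\Yi{0}$ and $\Yi{1}$ are connected by a zigzag of $\bA$-equivalences, and since all objects are fibrant and the $\bA$-localization has cofibrant replacement $\CWA{-}$ (see Remark \ref{rrightbous}), it suffices to handle one $\bA$-equivalence $h:\Yi{0}\to\Yi{1}$ at a time. Given a sequential realization $\cuW{1}$ for $\Yi{1}$, postcomposing its augmentation $\|\cuW{1}\|\simeq\Yi{1}$ with a homotopy inverse of $h$ (using the functorial factorizations of \S \ref{sass} to lift to $\C\sp{\Dp\op}$) turns $\cuW{1}$ into a sequential realization of the \emph{same} CW resolution $\Vud{1}$, but now for $\Yi{0}$; this is exactly the mechanism already used in part (b) of the proof of Theorem \ref{tvanish} and in Corollary \ref{csecoper}. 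So from now on both realizations sit over a fixed $\bY$.

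Second, I would reduce to comparing two sequential realizations of the \emph{same} CW resolution. Given CW resolutions $\Vud{0}\to\Lambda$ and $\Vud{1}\to\Lambda$ with bases $(\ouV{n}{0})$ and $(\ouV{n}{1})$, Lemma \ref{lcylind} produces a third CW resolution $\Vd\to\Lambda$ with basis $\bigl(\oU{n}\amalg\ouV{n}{0}\amalg\ouV{n}{1}\bigr)$ together with algebraic comparison maps $\Psi\up{i}:\Vud{i}\to\Vd$ for $i=0,1$. Applying Proposition \ref{pralgcomp} to each $\Psi\up{i}$ and the given sequential realization $\cuW{i}$ yields a fibrational sequential realization $\cW$ of $\Vd$ (we may arrange a \emph{common} such $\cW$ by first building it for $\Psi\up{0}$ and then, since $\Vud{1}$ is also a coproduct summand of $\Vd$, checking that the same $\cW$ admits a comparison map from $\cuW{1}$ — or, more safely, running the two constructions and then invoking the already-reduced ``same resolution'' case to connect the two resulting $\cW$'s). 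This gives cospans $\cuW{0}\xra{\Phi\up{0}}\cW\xla{\Phi\up{1}}\cuW{1}$, so it remains to treat two fibrational sequential realizations $\cW,\cWpp$ of one fixed CW resolution $\Vd$.

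Third, for that remaining case I would build the comparison directly by induction on the simplicial dimension $n$, at each stage embedding the two $(n-1)$-stage data into a common target. Assuming inductively that $\W{n-1}$ and $\Wpp{n-1}$ are connected by a trivial comparison map, I would apply Lemma \ref{lcylin} (the ``common cylinder'' lemma) to the two weakly equivalent fibrant-cofibrant simplicial objects to produce a Reedy fibrant--cofibrant $\widehat{\W{}}\lp{n-1}$ into which both embed as strong deformation retracts, compatibly with the augmentations to $\bY$. One then needs to produce the $n$-th attaching data on $\widehat{\W{}}\lp{n-1}$: the two realized attaching maps $\Fn{n}:\Dsn{n}\epic\cMs\W{n-1}$ and $\Fnp{n}:\Dsnp{n}\epic\cMs\Wpp{n-1}$ must be amalgamated over the common $\cMs\widehat{\W{}}\lp{n-1}$ by pulling back along the retractions (exactly the pullback argument of \eqref{eqpbatt}--\eqref{eqpba} in the proof of Proposition \ref{pralgcomp}), using the fibrational hypothesis so the pullbacks stay homotopically meaningful, and then factoring to restore strong cofibrancy. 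Feeding this into the construction $\Xd\mapsto\Xd[F]$ of \S \ref{crsar}.C and Lemma \ref{lextnat} extends the comparison to stage $n$; the splitting and weak-equivalence conditions of Definition \ref{dcompar} follow from the retraction structure built into Lemma \ref{lcylin} and from Remark \ref{rcomp}. Finally, since at dimension $n$ only finitely many stages are affected, the resulting infinite zigzag is locally finite in the sense of Definition \ref{dwesr}.

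\textbf{Main obstacle.} The delicate point is the inductive amalgamation of attaching data in the third stage: the two strongly cofibrant replacements $\Dsn{n}$ and $\Dsnp{n}$ of $\oW{n}\oS{n-1}$ need not agree, and after forming the pullback $\bPs$ in \eqref{eqpbatt} one must re-cofibrantize \emph{without} destroying the compatibility with the cofibration sequences \eqref{eqmodconesusp} of cones and suspensions that \eqref{eqmapmodconesusp} demands. Controlling these suspension/cone filtrations through the factorization — so that each induced map $\hon{k}{n}:\osW{k}{n}\hra\osWp{k}{n}$ is a weak equivalence, making the comparison \emph{trivial} — is the technical heart of the proof, and is precisely where the fibrational refinement (Lemma \ref{tresext}, Proposition \ref{pralgcomp}) is indispensable. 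This is the Eckmann--Hilton dual of \cite[Theorem 3.20]{BBSenH}, and the dualization is not formal: fibrations replace cofibrations in the relevant lifting arguments, and one must verify that the path-object constructions behave as expected in the Reedy model structure on $\C\sp{\Dp\op}$.
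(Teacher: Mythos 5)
Your three-stage decomposition matches the paper's strategy at the coarse level: reducing to a common CW resolution via Lemma \ref{lcylind} and Proposition \ref{pralgcomp}, and then arguing by induction on simplicial dimension. Your step 1 (reducing to a single space by composing the augmentation with a homotopy inverse) makes explicit something the paper treats only implicitly, and it is a reasonable way to organize the argument.

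However, your step 3 misplaces the crucial application of Lemma \ref{lcylin}. You propose to apply the common-cylinder lemma to the two Reedy fibrant-cofibrant simplicial objects \w{\W{n-1}} and \w{\Wpp{n-1}} to produce a common target. That is not what the paper does, and it would not by itself yield a sequential realization: Lemma \ref{lcylin} is a bare model-category construction and the object $Z$ it produces carries no tower $\W{0}\to\cdots\to\W{n-1}$, no chosen strongly cofibrant chain complexes \w[,]{\Dsn{k}} and no attaching maps \w[;]{\Fn{k}} the maps into it are not a priori comparison maps. The paper sidesteps this by maintaining, as its inductive invariant, a \emph{cospan of \wwb{n-1}stage trivial comparison maps} \w{\Phip{i}:\cuW{i}\to\cW} into a common \wwb{n-1}stage sequential realization \w[;]{\cW} the common simplicial object already exists, so no amalgamation at the simplicial level is needed or performed. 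Lemma \ref{lcylin} is instead applied to the two \emph{attaching data} \w{G\up{0},G\up{1}} regarded as weakly equivalent fibrant-cofibrant objects of the slice category \w{\ChC\sp{\leq n}/\cMs\W{n-1}} (obtained after the pullback diagram \eqref{eqpbattach} and the factorization of \w[)]{e\up{i}}; the resulting object $G$ in the slice category, fed into Lemma \ref{lextnat}, gives the new $n$-stage realization \w{\cWp} together with trivial comparison maps from both \w[.]{\cWpi{i}} This is exactly the ``amalgamation'' you gesture at but do not carry out: without Lemma \ref{lcylin} applied in the slice category, there is no mechanism for choosing a single \w{\bEs\epic\cMs\W{n-1}} receiving cofibrations from both \w{\uEs{0}} and \w{\uEs{1}} with the required retraction structure, and the inductive step does not close. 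You should reformulate your inductive hypothesis as a cospan into a common truncated sequential realization and apply Lemma \ref{lcylin} at the level of chain complexes over \w[,]{\cMs\W{n-1}} following the argument from \eqref{eqpbattach} through \eqref{eqnewcospan}.
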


\begin{proof}
Assume \w{\cuW{0}} and \w{\cuW{1}} are associated respectively to the two
CW-resolutions \w{\Vud{0}} and  \w{\Vud{1}} of the \PAal
\w[,]{\Lambda=\piA\Yi{0}\cong\piA\Yi{1}} with CW bases
\w{(\ouV{n}{i})\sb{n\in\NN}} for \w[.]{i=0,1}

By Lemma \ref{lcylind}, there is a third CW resolution \w[,]{\ppp\vare:\Vdp\to\Lambda}
with CW basis \w[,]{(\oVp{n})\sb{n\in\NN}}
equipped with algebraic comparison maps \w{\Psi\up{i}:\Vud{i}\to\Vdp} \wb[.]{i=0,1}
By Proposition \ref{pralgcomp}, there are then two fibrational sequential
realizations \w{\cWpi{i}} of \w[,]{\Vdp\to\Lambda}  for \w[,]{i=0,1} each equipped
with a comparison map \w{\Phi\up{i}:\cuW{i}\to\cWpi{i}} over \w[.]{\Psi\up{i}}
Thus we are reduced to dealing with the case where the two fibrational sequential
realizations \w{\cuW{0}} and \w{\cuW{1}} (i.e., the \w{\cWpi{i}}
just constructed) are of the same CW resolution
\w{\Vd\to\Lambda} (i.e., the above
\w[),]{\Vdp} with CW basis \w[.]{(\oV{n})\sb{n\in\NN}}
We construct a zigzag of comparison maps between them, by induction on
\w{n\geq 0} (where the case \w{n=0} is trivial):

We assume by induction the existence of a cospan of \wwb{n-1}stage trivial
comparison maps \w{\Phip{i}:\cuW{i}\to\cW} \wb{i=0,1} over \w[.]{\Id\sb{\Vd}}
By Definition \ref{dsrar}, the $n$-th stage for \w{\cuW{i}} is determined
by the choice of strongly cofibrant replacements \w{\uDs{i}}
of \w{\oW{n}\oS{n-1}} (where \w{\oW{n}} is some realization of the $n$-th
algebraic CW basis object \w[),]{\oV{n}} together with levelwise fibrations
\w{F\up{i}:\uDs{i}\epic\cMs\Wi{n-1}{i}} \wb{i=0,1} realizing the given
attaching map \w[.]{\odz{n}:\oV{n}\to C\sb{n-1}\Vd}

Again following \S \ref{crsar}.A-B, we consider the following diagram in
the projective model category of $n$-truncated chain complexes over $\C$,
in which \w{\uPs{i}} is the pullback of the lower square
\mydiagram[\label{eqpbattach}]{
\uEs{i} \ar@{.>>}[d]\sb{S\up{i}}\sp{\simeq} &&& \\
\ar @{} [drrr]|(0.23){\framebox{\scriptsize{PB}}}
{\uPs{i}} \ar[rrr]\sb{p\up{i}}\sp{\simeq} \ar@{->>}[d]\sb{q\up{i}}
&&& {\uDs{i}} \ar@{->>}[d]\sp{F\up{i}} \ar@/_2.0pc/[lll]\sp{e\up{i}}
\ar@/_1.7pc/@{_{(}.>}[ulll]\sb{\xi\up{i}}\\
\cMs\W{n-1} \ar[rrr]\sp{\cMs\rni{n-1}{i}}\sb{\simeq} &&&
\cMs\Wi{n-1}{i}~, \ar@/^1.5pc/[lll]\sp{\cMs\eni{n-1}{i}}
}
\noindent The section \w{e\up{i}} for \w{p\up{i}} is induced by the section
\w{\cMs\eni{n-1}{i}} for \w[.]{\cMs\rni{n-1}{i}} We then factor \w{e\up{i}}
as a cofibration \w{\xi\up{i}} followed by the acyclic fibration
\w{S\up{i}:\uEs{i}\epic\uPs{i}} (so \w{\uEs{i}} is a cofibrant replacement for
\w[).]{\uPs{i}}

Applying Lemma \ref{lextnat} to the following diagram:
\mydiagram[\label{eqmaparrow}]{
\uEs{i} \ar[rr]\sb{p\up{i}\circ S\up{i}}\sp{\simeq}
\ar@{->>}[d]\sb{q\up{i}\circ s\up{i}=G\up{i}} &&
\uDs{i} \ar@{->>}[d]\sp{F\up{i}} \ar@/_1.5pc/[ll]\sb{\xi\up{i}} \\
\cMs\W{n-1} \ar[rr]\sp{\cMs\rni{n-1}{i}}\sb{\simeq} &&
\cMs\Wi{n-1}{i}~, \ar@/^1.5pc/[ll]\sp{\cMs\eni{n-1}{i}}
}
\noindent we obtain $n$-stage trivial comparison maps
\w{\Phi\up{i}:\cuW{i}\to\cWpi{i}} \wb{i=0,1} extending the given ones to
\w[.]{\cW}

Note, however, that \w{G\up{0}} and \w{G\up{1}} are weakly equivalent
fibrant and cofibrant objects in the slice category
\w[,]{\ChC\sp{\leq n}\,/\,\cMs\W{n-1}} with its standard model category
structure (see \cite[Theorem 7.6.5(a)]{PHirM}). We can therefore apply
Lemma \ref{lcylin} to obtain an intermediate object $G$ in the slice category fitting
into the following diagram:
\mydiagram[\label{eqmaparrows}]{
\uEs{0} \ar@{^{(}->}[rr]\sb{f\up{0}}\sp{\simeq} \ar@{->>}[rrd]\sb{G\up{0}} &&
\bEs \ar@/^1.5pc/[rr]\sp{s\up{1}} \ar@{->>}[d]\sp{G}
\ar@/_1.5pc/[ll]\sb{s\up{0}} &&
\uEs{1} \ar@{_{(}->}[ll]\sp{f\up{1}}\sb{\simeq} \ar@{->>}[lld]\sp{G\up{1}} \\
&& \cMs\W{n-1} &&
}
\noindent in which all four triangles commute, and \w{s\up{i}\circ f\up{i}=\Id}
\wb[.]{i=0,1}

Applying Lemma \ref{lextnat} yields a new $n$-stage sequential realization \w{\cWp}
(corresponding to \w[),]{G:\bEs\epic\cMs\W{n-1}} with two new $n$-stage trivial
comparison maps \w{\ppp\Phi\up{i}:\cWpi{i}\to\cWp} \wb[.]{i=0,1}

The two composites:
\mydiagram[\label{eqnewcospan}]{
\cuW{0} \ar[r]\sp{\Phi\up{0}} & \cWpi{0} \ar[r]\sp{\ppp\Phi\up{0}} & \cWp &
\cWpi{1} \ar[l]\sb{\ppp\Phi\up{1}} & \cuW{1} \ar[l]\sb{\Phi\up{1}}
}
\noindent then yield the required cospan of $n$-stage comparison maps.
\end{proof}

\end{document}